\def\Tree{{\sf T}}
\newcommand{\Graph}{{\sf G}}
\newcommand{\sfB}{{\sf B}}
\newcommand{\sfK}{{\sf K}}
\newcommand{\sfR}{\sf R}
\newcommand{\sfcK}{\sK}
\newcommand{\loc}{\stackrel{loc}{\longrightarrow}}
\newcommand{\st}{\preccurlyeq}
\DeclareMathOperator*{\argmaxx}{argmax}
\newcommand{\lwc}{\stackrel{\text{\it lwc}}{\longrightarrow}}
\newcommand{\hts}{s}
\newcommand{\htgs}{\gs}
\newcommand{\ws}{\hat s}
\newcommand{\ueta}{\ul \eta}
\renewcommand{\red}{}
\newcommand{\tfa}{\frak{a}}
\def\tTree{{\bf T}}
\newcommand{\tGraph}{{\bf G}}
\newcommand{\tsfB}{{\bf B}}
\newcommand{\sfS}{{\bf S}}
\renewcommand{\purple}{}
\begin{document}
\title[Potts and Random Cluster Measures on Locally Tree-Like Graphs]{
Potts 
and random cluster measures\\ on locally regular-tree-like graphs}

\author[A.\ Basak]{Anirban Basak
}
 \author[A.\ Dembo]{Amir Dembo
 }
 \author[A. \ Sly]{Allan Sly
 }

 \address{$^\star$International Centre for Theoretical Sciences
 \newline\indent Tata Institute of Fundamental Research
 \newline\indent Bangalore 560089, India
}

 \address{$^\dagger$Department of Mathematics and Statistics, Stanford University
\newline\indent Sequoia Hall, 390 Serra Mall, Stanford, CA 94305, USA}

\address{$^{\$}$Department of Mathematics, Princeton University
\newline\indent Fine Hall, Princeton, NJ 08540, USA}

\date{\today}

\subjclass[2010]{
60K35, 82B20, 
82B26.}

\keywords{Potts model, random cluster measure, random sparse graphs, local weak convergence.}

\maketitle

\begin{abstract}
Fixing $\beta \ge 0$ and an integer $q \ge 2$, consider the ferromagnetic $q$-Potts measures
$\mu_n^{\beta,B}$ on finite graphs $\Graph_n$ on $n$ vertices, with external 
field strength $B \ge 0$ 
and the corresponding random cluster measures $\varphi^{q,\beta,B}_{n}$.
Suppose that as $n \to \infty$ the uniformly sparse 
graphs $\Graph_n$ converge locally to an infinite $d$-regular tree $\Tree_{d}$, 
$d \ge 3$. 
We show that the convergence of the Potts free energy density to its Bethe-Peirles replica symmetric 
prediction (which has been proved in case $d$ is even, or when $B=0$), yields the local weak convergence 
of $\varphi^{q,\beta,B}_n$ and $\mu_n^{\be,B}$ 
to the corresponding free or wired random cluster measure, Potts measure, respectively,
on $\Tree_{d}$.
The choice of free versus wired limit  is according to which has the larger  Potts Bethe functional value, with  mixtures of these two appearing {as limit points on} the critical line $\beta_c(q,B)$
where these two values of the Bethe functional coincide. 
For $B=0$ and $\beta>\beta_c$,  we further  establish a pure-state decomposition by 
showing that conditionally on the same 
dominant color $1 \le k \le q$, the $q$-Potts measures on such edge-expander 
graphs $\Graph_n$ converge locally to the $q$-Potts measure on $\Tree_{d}$ 
with a boundary wired at color $k$.   
\end{abstract}

\section{Introduction}
{For a finite graph} $\Graph=(V,E)$, {parameters $\be, B \in \R$} and an integer $q \ge 2$, 
the Potts measure $\mu_\Graph^{\be, B}(\cdot)$ on $\Graph$ is a probability measure on $[q]^V$, 
given by
\[
\mu_{\Graph}^{\be,B}(\ul \sigma) :=\f{1}{Z_\Graph(\be, B)} \exp \Big\{ \be \sum_{(i,j) \in E} \delta_{\sigma_i, \sigma_j} + B \sum_{i \in V} \delta_{\sigma_i, 1}\Big\}, \quad \ul \sigma \in [q]^V,
\]
where $\delta_{\sigma, \sigma'} := {\bf 1} \left\{ \sigma= \sigma' \right\}$ and $Z_\Graph(\be, B)$ is the normalizing constant (commonly known as the {\em partition function}). 
{Setting $[n]:=\{1,2,\ldots, n\}$, our goal here is to characterize the limits, as $n \to \infty$, of Potts measures in the ferromagnetic regime $\be,B \ge 0$, for \emph{uniformly sparse}} 
graphs $\{\Graph_n:=([n], E_n)\}_{n \in \N}$ that converge
locally 
to the $d$-regular infinite tree $\Tree_d:=(\Tree_d,o)$ 
rooted at a distinguished vertex $o$, as defined next.
\begin{dfn}[Uniform sparsity and local weak convergence of graphs]\label{dfn:loc-graphs}
A sequence of graphs $\Graph_n=([n], E_n)$ (possibly random) is {\em uniformly sparse}, if
\beq\label{eq:unif-sparse}
\lim_{L \to \infty}\limsup_{n \to \infty} \E_n [\Delta_{I_n} {\bf 1} \{\Delta_{I_n} \ge L\}] =0,
\eeq
where $I_n$ is uniform on $[n]$, $\E_n$ denotes the expectation with respect to the (possible) randomness of the graph $\Graph_n$ and the randomly chosen vertex $I_n$, and $\Delta_i$ denotes the degree of a vertex $i \in [n]$. 
Given a graph $\Graph$ (possibly infinite), and $t \in \N$, {let} $\sfB_{v,\Graph}(t)$ be the 
subgraph induced by 
$\{v' \in V(\Graph): {\rm dist}_\Graph(v,v') \le t\}$, {for the graph distance} ${\rm dist}_\Graph(\cdot, \cdot)$ {on} $\Graph$, {writing 
$\sfB_v(t)$ when 
$\Graph$ is clear from the context and $\Tree_d(t):= \sfB_{o,\Tree_d}(t)$.}
A sequence $\Graph_n=([n], E_n)$ is said to \emph{converge locally weakly} to 
$\Tree_d$, 
if
\beq\label{eq:loc-conv}
{\lim_{n \to \infty} \E_n \big[ {\bf 1} (\sfB_{I_n}} (t) \ncong \Tree_d(t)) \big]  = 0, 
\quad 
\quad \text{ for all } t \in \N\,,
\eeq
{and} by a slight abuse of notation, we write $\Graph_n \loc \Tree_d$ {when both} 
\eqref{eq:unif-sparse} and \eqref{eq:loc-conv} hold.
\end{dfn}

It is widely believed that statistical physics models on large locally tree-like graphs are a good proxy for models on the integer lattice $\Z^d$ for large $d$ or for those with long interaction range, and therefore the study of such models on locally tree-like graphs is considered 
a test bed {for}
 {\em mean-field theory}. Based on the {\em cavity method} physicists predict {that}, for a wide class of ferromagnetic spin systems, the asymptotic {\em free energy density} 
{is} given by the {\em Bethe-Peirles prediction}, the maximum of the {\em Bethe free energy functional} (see Definition \ref{dfn:bethe-recursion} below) over all `meaningful' fixed points of the {\em belief propagation equations} 
(cf.~\red{\cite[Chapter 17]{MM09} and the references therein.  See also \cite[Chapter 3]{DM2} for a mathematical 
treatment of this prediction,  in the replica-symmetric regime,  which is expected to hold for 
all ferromagnetic models}).
In the large $n$ limit, the relative probabilities of these meaningful fixed points, commonly termed as the `pure states', are further conjectured to be dictated by their respective Bethe free energies. In particular, if the Bethe free energy is maximized by a unique pure state then the ferromagnetic spin system, in the large $n$ limit, 
{is} governed by that pure state, while if there are multiple maximizers 
{then the system be governed in the limit}
by a mixture of such maximizers. 
{While these} conjectures 
been 
in the physics literature for quite 
some time,
verifying them rigorously poses 
serious mathematical challenges. In this paper,  we prove the latter conjecture for the ferromagnetic Potts model, with an external magnetic field, on locally tree-like graphs when the limiting tree is a regular tree.

{After a series of works, the Bethe-Peirles prediction for the free energy density {of} Ising measures 
(namely, Potts measures with $q=2$), on locally tree-like graphs was rigorously established in \red{\cite{DMS}} in full generality (see the references \red{in \cite{DMSS}}
for a historical account). 
In the context of the limit of Ising measures on locally tree-like graphs, the first success was attained in \cite{MMS}, where they showed that under the assumption 
%
$\Graph_n \loc \Tree_d$ the measures
$\mu_{\Graph_n}^{\be, B}$, for $q=2$, 
locally weakly convergence to the free or wired/plus 
Ising measures on $\Tree_d$ (the choice of limit depends on whether both $B=0$ 
and $\be>\be_c$, with $\be_c$ being the critical inverse temperature, or not)}. Such local weak convergence has been strengthened in \cite{BD} to show that it holds for a much larger collection of graph sequences $\{\Graph_n\}$, covering 
also sequences that converge locally to certain non-regular, possibly random trees
(e.g. to a multi-type Galton-Watson tree of minimum degree $d_{\min} \ge 3$). 

{As mentioned above, this phenomenon should extend} 
to Potts measures with $q \ge 3$. However, as explained in the sequel, 
{proving such a convergence is way} more challenging, especially 
for the two-dimensional region of parameters $\sfR_{\ne}$, 
where a-priori two different scenarios 
are possible for such a limit law. 
{The emergence of the two-dimensional region $\sfR_{\ne}$ is due to the presence \red{of} two distinct phase transitions for the Potts measures with $q \ge 3$ on $\Tree_d$:~the uniqueness/non-uniqueness transition and the disordered/ordered transition, a phenomenon absent in the Ising setting. Moreover, on the critical line $\sfR_c$ {within $\sfR_{\ne}$,
where} the Bethe free energies of both meaningful belief propagation fixed points are the same, the predicted limit is a mixture of both pure states, a behavior also absent in the Ising case. 

We note in passing that the phase diagram of Potts measures, and in particular the local weak
convergence of $\mu_n^{\be,B} := \mu^{\be,B}_{\Graph_n}$, is related to algorithmic questions of much
current interest (see \cite[Section 1.2]{HJP22+} and the references therein, where $B=0$).
Such convergence is also related to the metastability of {the} Glauber dynamics and {of the} Swendsen-Wang chain for $\mu^{\be,B}_n$, {as well as the non-reconstructions of the paramagnetic and ferromagnetic states}
when $(\be,B) \in \sfR_{\ne}$ (for $B=0$, see \cite{CO23} {and the references therein for a list representative works in this area}).

To state our result about such convergence of Potts measures 
$\mu_n^{\be, B}$ when $\Graph_n \loc \Tree_d$, we proceed 
to define the} Potts measures on $\Tree_d$ {that one expects to find in the limit}.
\begin{dfn}[Potts {on trees}, with given boundary conditions]\label{dfn:potts-bounary-B-0}
For each $t \in \N$ and $\ddagger \in [q] \cup \{\free\}$ consider the following Potts measures on $\Tree_d(t)$:
\begin{eqnarray}
\mu^{\be, B}_{ \ddagger,t}(\ul{\sigma})= \mu^{\be, B}_{ \ddagger,\Tree_d, t}(\ul{\sigma}) & \propto & \exp \Big\{ \beta \sum_{(i,j) \in E(\Tree_d(t))} \delta_{\sigma_i, \sigma_j} + B \sum_{i \in V(\Tree_d(t))} \delta_{\sigma_i, 1}\Big\}  \prod_{u \in \partial \Tree_d(t)}  \red{\wt \nu_\ddagger}(du)
\notag
\end{eqnarray}
where $\partial \Tree_d(t):=V(\Tree_d(t)) \setminus V(\Tree_d(t-1))$, $\red{\wt \nu_{\free}}$ is the uniform measure on $[q]$, and $\red{\wt \nu_\ddagger}$ is the Dirac measure at $\ddagger$ for $\ddagger \in [q]$. 
The Potts measure with $\ddagger$ boundary condition
$\mu_{\ddagger}^{\be, B}:= \displaystyle{\lim_{t \uparrow \infty}}
\mu_{\ddagger, \Tree_d, t}^{\be, B}$ 
{exists}
(in the sense of weak convergence 
over $[q]^{V(\Tree_d)}$ 
{with its cylindrical} $\sigma$-algebra), 
for any $\be, B \ge 0$ and $\ddagger \in \{\free, 1\}$, {and} 
\(
\mu_{\ddagger}^{\be,0}
\)
{also} exists for any $\ddagger \in [q]$ and $\be \ge 0$ {(see} 
\cite[Chapter IV, Corollary 1.30]{liggett}, {in case $q=2$, and} 
Remark \ref{rmk:infinite-vol-exist} for an outline of the proof when $q \ge 3$). 
\end{dfn}

{For} $q=2$, if $B>0$ {or $\be \le \be_c(d)
$}
the measures $\mu_{1}^{\be, B}$ and $\mu_{\free}^{\be, B}$ coincide (see \cite[Section 4.1]{DMS}), {while for any} $\be > \be_c(d)$ the measures $\mu_{1}^{\be, 0}$ and $\mu_{\free}^{\be, 0}$ differ 
(see \cite{L89}). {As mentioned earlier, for} $q \ge 3$ the picture is more delicate
{and the} two-dimensional non-uniqueness 
region {$\sfR_{\ne}$ of $(\be, B)$ values} where $\mu_{1}^{\be, B} \ne \mu_{\free}^{\be, B}$
(see \cite[Theorem 1.11]{DMS}), 
{plays} a significant role in {describing our results. Indeed, within this region, the limit is 
determined by the relation between the}
{free energy densities} of $\mu_n^{\be,B}$
\beq\label{eq:Phi-G-def}
\Phi_n(\be, B) := \f{1}{n} \log Z_{\Graph_n} (\be, B),
\eeq
{and the Bethe (free energy) functional at certain Bethe recursion fixed points,
which we define next.}
\begin{dfn}[Bethe functional; Bethe recursion and its fixed points]\label{dfn:bethe-recursion}
{Denoting by} $\cP([q])$ the set of all probabilities on $[q]$, the Bethe functional 
$\Phi: \cP([q]) \mapsto \R$ {is given by} 
\beq\label{eq:BP-fun}
\Phi(\nu) = \Phi^{\be, B}(\nu):=\log \Big\{ \sum_{\sigma \in [q]} e^{B \delta_{\sigma,1}}\left( (e^{\be}-1) \nu(\sigma)+1\right)^d \Big\} - \frac{d}{2} \log \Big\{ (e^{\be}-1)\sum_{\sigma \in [q]} \nu(\sigma)^2+1 \Big\}.
\eeq
Of primary interest to us 
{is the value of $\Phi(\cdot)$ at} $\nu_{\free}^{\be, B}$ and at $\nu_1^{\be, B}$. {The latter  
are the fixed points of the} Bethe recursion ${\sf BP}: \cP([q]) \mapsto \cP([q])$ 
\beq\label{eq:BP-recursion}
({\sf BP} \nu)(\sigma) := \f{1}{z_\nu} e^{B \delta_{\sigma,1}} \left( (e^{\be} -1) \nu(\sigma) +1\right)^{d-1}, \quad \sigma \in [q],
\eeq
({with} $z_\nu$ {denoting} the corresponding normalizing constant), {obtained 
as} the limit of successive iterations of ${\sf BP}$ starting from the uniform probability measure on $[q]$ and from the probability measure supported on spin $1$, respectively (cf.~
\cite[Page 4208, Section 4.3]{DMS} for the existence of both limits, which we also denote as
$\nu_{\free}$ and $\nu_1$,
respectively).
\end{dfn}

\purple{Throughout this paper we {make} the assumption that $\Graph_n \loc \Tree_d$ for some $d \ge 3$. We also need the following result on
Potts free energy densities {$\Phi_n(\be,B)$} on them.} 
\begin{prop}\label{asmp:main}
\purple{Fix $\be, B \ge 0$ and integer $q \ge 2$. 
Suppose $\Graph_n \loc \Tree_d$ for some $d \ge 3$. Then} 
\beq\label{eq:free-energy-limit-assume}
{\lim_{n \to \infty}}
\Phi_n(\be, B)  =  \max\big\{ \Phi(\nu_{\free}^{\be, B}),\Phi(\nu_1^{\be, B})\big\}.
\eeq
\end{prop}
\purple{Proposition \ref{asmp:main} is a culmination of several works. For Ising measures (namely $q=2$), the {identity} \eqref{eq:free-energy-limit-assume}
for any $\be, B \ge 0$ and $d \ge 3$ was proved in 
\cite{DMS}. For $q \geq 3$ this was established for $d \in 2 \N$ in
\cite{DMS, DMSS},  and later for a $d$-regular graph sequence $\{\Graph_n\}_{n \in \N}$ and $B=0$ (see \cite{BBC22+}, following
upon \cite{HJP22+} which required large $q \ge d^{C d}$ and $d \ge 5$).  Very recently, Proposition \ref{asmp:main} was proved in \cite{CH25} in its full generality (as stated above).} 
\red{See also the  
analysis in \cite{GSVY} of the first and second moments of $Z_{\Graph_n}(\be,B)$ 
for uniformly random $d$-regular $\Graph_n$}.

{In view of \eqref{eq:free-energy-limit-assume}, we expect for $B>0$ to arrive at a limit law 
$\mu^{\be,B}_{\free}$, $\mu^{\be,B}_1$ or a mixture thereof, according to the partition of 
$\sfR_{\ne}$ to $\sfR_{\free}$, $\sfR_1$, and $\sfR_{c}$ which correspond, respectively,
to whether $\Phi(\nu_{\free})$ is larger, smaller, or equal to $\Phi(\nu_1)$. Thus, prior to
stating our limit theorem, we first describe the shape of this partition of $\sfR_{\ne}$
(taken mostly from the literature, with a few items supplemented in Appendix \ref{sec:app}).} 
See also Figure \ref{fig1} for a pictorial representation {of this description.}
\begin{prop}[Description of {the} non-uniqueness region]\label{prop:non-unique-regime}
{Fix} $q, d \ge 3$.
\newline
There exist $0 < B_+ = B_+(q,d) < \infty$ and smooth curves $\be_{\free}, \be_c$, and $\be_+$ defined on $[0,B_+]$, with $0< \beta_{\free} (B) < \be_c (B) < \beta_+ (B)$ for $B \in [0,B_+)$ and $\beta_{\free}(B_+)=\beta_c(B_+) = \beta_+(B_+)$ {such that} 
\beq\label{eq:sf R}
\sfR_{\ne} :=\left\{(\be', B') \in (0,\infty)^2:  0<B' < B_+, \be' \in [\be_{\free}(B'), \be_+(B')]\right\}\cup \left\{(\be',0): \be' \ge \be_{\free}(0) \right\},
\eeq
and the following holds:
\begin{enumeratei}
\item If $(\be, B) \in [0,\infty)^2 \setminus \sfR_{\ne}$ then $\nu_{\free}=\nu_1$ 
{and consequently, also}
\begin{align}\label{eq:f-eq-1-B-ge-0}
\mu_{1}^{\be,B} &= \mu_{\free}^{\be, B}, \qquad \text{ whenever } \qquad \quad B >0, \\
\label{eq:f-eq-all-B-eq-0}
\mu_{\ddagger}^{\be,0} &= \mu_{\free}^{\be, 0} \qquad \text{ for all } \ddagger \in [q], \qquad B =0.
\end{align}
\item If $(\be, B) \in \sfR_{\neq}$ then $\nu_{\free}
(1) < \nu_1
(1)$.
{Further, setting}
\beq
\sfR_{\free}:= \left\{ (\be, B) \in \sfR_{\neq}: \be < \be_c(B)\right\},  \sfR_{1}:= \left\{ (\be, B) \in \sfR_{\neq}: \be > \be_c(B)\right\}, \sfR_c:= \sfR_{\neq} \setminus (\sfR_{\free} \cup \sfR_1),
\eeq
{we have that}
\beq\label{eq:free-v-1}
\left\{
\begin{array}{ll}
\Phi(\nu_{\free}^{\be,B}) > \Phi(\nu_1^{\be,B})
& \mbox{ if } (\be, B) \in \sfR_{\free},\\
\Phi(\nu_{\free}^{\be,B}) = \Phi(\nu_1^{\be,B})
& \mbox{ if } (\be, B) \in \sfR_c,\\
\Phi(\nu_{\free}^{\be,B}) < \Phi(\nu_1^{\be,B})
& \mbox{ if } (\be, B) \in \sfR_1.
\end{array}
\right.
\eeq
\end{enumeratei}
\end{prop}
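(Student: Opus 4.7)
The plan is to reduce the bifurcation analysis of ${\sf BP}$ to a one-dimensional scalar problem, exploiting the permutation symmetry among the colors $\{2, \ldots, q\}$. Writing any symmetric probability as $\nu_x(1) = x$, $\nu_x(\sigma) = (1-x)/(q-1)$ for $\sigma \geq 2$, I would first verify that ${\sf BP}$ preserves this one-parameter family, and that on it the recursion reduces to $x \mapsto F_{\be, B}(x)$ with
\[
F_{\be, B}(x) = \Bigl[ 1 + (q-1) e^{-B} \bigl( C_{\be}(x)/A_{\be}(x) \bigr)^{d-1} \Bigr]^{-1}, \quad A_{\be}(x) := (e^\be - 1)x + 1, \; C_{\be}(x) := (e^\be - 1)\tfrac{1-x}{q-1} + 1.
\]
A direct computation shows $F_{\be, B}$ is $C^\infty$ and strictly increasing on $[0,1]$; hence the ${\sf BP}$ iterations from $\nu_{1/q}$ and from $\delta_1$ converge monotonically to the smallest and largest fixed points $\nu_\free^{\be, B} = \nu_{x_\free}$ and $\nu_1^{\be, B} = \nu_{x_1}$ of $F_{\be, B}$ in $[1/q, 1]$, respectively. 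Asymmetric fixed points, relevant only at $B = 0$, are obtained from $\nu_1^{\be, 0}$ by permuting colors.

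The region ${\sf R}_{\ne}$ is then characterized by $G_{\be, B} := F_{\be, B} - \mathrm{id}$ having more than one root in $[1/q, 1]$. To show that ${\sf R}_{\ne}$ has the claimed shape, I would analyze the derivatives of $G_{\be, B}$ to verify that $F_{\be, B}$ is $S$-shaped, so $G_{\be, B}$ admits at most three roots. The boundary curves $\be_\free(B), \be_+(B)$ are then the saddle-node loci where $G_{\be, B}$ has a double root, and the implicit function theorem applied to $\{G_{\be, B}(x) = 0, \partial_x G_{\be, B}(x) = 0\}$ yields their smoothness. The cusp $B_+$ is the unique value at which $\partial_x^2 G_{\be, B}$ also vanishes, at which the standard Morse-type normal form shows that the two branches meet tangentially with $\be_\free(B_+) = \be_+(B_+)$. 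This bifurcation-geometry step is the main technical obstacle, since one must also globally rule out further roots of $G_{\be, B}$.

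To upgrade the identity $\nu_\free = \nu_1$ outside ${\sf R}_{\ne}$ to the measure equalities \eqref{eq:f-eq-1-B-ge-0}--\eqref{eq:f-eq-all-B-eq-0}, I would invoke the monotone coupling induced by the random cluster representation: on $\Tree_d(t)$ one has $\mu_{\free, t}^{\be, B} \preceq \mu_{1, t}^{\be, B}$ in the FKG order adapted to color $1$, so that agreement of the ${\sf BP}$ fixed points forces agreement of the $t \uparrow \infty$ marginals on every finite ball, hence equality of the Gibbs measures. For $B = 0$, color permutation gives $\mu_\ddagger^{\be, 0} \cong \mu_1^{\be, 0}$ for every $\ddagger \in [q]$, so \eqref{eq:f-eq-all-B-eq-0} follows from \eqref{eq:f-eq-1-B-ge-0}.

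Finally, to obtain $\be_c(B)$ and the sign classification \eqref{eq:free-v-1}, I would study $h(\be, B) := \Phi^{\be, B}(\nu_\free^{\be, B}) - \Phi^{\be, B}(\nu_1^{\be, B})$. On the open region ${\sf R}_\free \cup {\sf R}_c \cup {\sf R}_1$ the fixed points $\nu_\free, \nu_1$ are real-analytic in $(\be, B)$ by the implicit function theorem, hence so is $h$. Since the first-order variation of $\Phi$ vanishes at the ${\sf BP}$ fixed points, a direct computation gives $\partial_\be h < 0$, with the sign determined by $\nu_\free(1) < \nu_1(1)$. Combined with $h > 0$ near the $\be_\free$ boundary (where $\nu_1$ first branches off the free branch) and $h < 0$ near the $\be_+$ boundary, this pins down a unique smooth zero curve $\be = \be_c(B)$, establishing \eqref{eq:free-v-1}.
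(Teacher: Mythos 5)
Your overall strategy — reduce ${\sf BP}$ to a scalar iteration, locate the saddle-node bifurcation boundaries $\be_\free,\be_+$, and then pin down $\be_c$ by tracking the sign of $h = \Phi(\nu_\free) - \Phi(\nu_1)$ via $\partial_\be h < 0$ and its values at the two boundaries — is structurally the same as the paper's. However, two of the steps you treat as routine are exactly where the paper has to do real work, and in one place your proposed mechanism would not go through as stated.

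First, the boundary sign claims ``$h>0$ near the $\be_\free$ boundary'' and ``$h<0$ near the $\be_+$ boundary'' (the paper's \eqref{eq:val-bdry-1}--\eqref{eq:val-bdry-2}) are asserted via the heuristic that $\nu_1$ ``first branches off the free branch.'' That is not a proof, and it is not automatic from saddle-node normal form: what must actually be compared are the Bethe functional values at the \emph{surviving} fixed point and the merging pair, along a curve on which both $\be$ and $B$ vary. The paper establishes these inequalities in the Appendix by introducing $\Psi_\mp(\be)=\Phi^{\be,B_\pm(\be)}(r_1)-\Phi^{\be,B_\pm(\be)}(r_\free)$, showing $\Psi_\mp$ vanishes at the cusp $\be_-$, and then proving strict monotonicity of $\Psi_\mp$ along $\partial{\sf R}_{\ne}^\free$ and $\partial{\sf R}_{\ne}^{+}$ via an explicit computation with the auxiliary functions $h_1,h_2,g_{\be_0}$ and the relation $\rho_\pm = r_{1/\free}$ on the boundary. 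That computation is the technical core of the proposition and is missing from your proposal.

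Second, the passage from $\nu_\free=\nu_1$ to $\mu_\free^{\be,B}=\mu_1^{\be,B}$ cannot be done by ``FKG order adapted to color 1'' on the Potts spins directly, for exactly the reason the paper discusses in the introduction: for $q\ge 3$ the Potts model lacks the needed monotonicity. Even the stochastic domination of the $\delta_{\sigma_v,1}$ indicators (which one can extract from the RCM coupling) gives only one-point marginal control; equality of the one-point $\delta_{\sigma_v,1}$ marginals together with stochastic domination does not yield equality of the full joint law. The paper instead uses Lemma \ref{lem:marginals}, an explicit formula for the two-point marginal $\mu_\ddagger^{\be,B}(\sigma_i,\sigma_j)\propto e^{\beta\delta_{\sigma_i,\sigma_j}}\nu_\ddagger(\sigma_i)\nu_\ddagger(\sigma_j)$, which immediately shows all two-point marginals agree once $\nu_\free=\nu_1$; the tree Markov property (Remark \ref{rmk:infinite-vol-exist}) then forces agreement of all finite-dimensional marginals. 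You should adopt that route rather than an FKG order on spins.

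Finally, you correctly flag the global bifurcation geometry (the S-shape / at-most-three-roots property and the existence and smoothness of $\be_\free,\be_+$ with the cusp $B_+$) as the ``main technical obstacle.'' The paper does not re-derive this but cites \cite[Theorem 1.11(a)]{DMS}; if you intend to keep your proof self-contained you need to actually establish the sign pattern of $\partial_x^2 G_{\be,B}$ globally, not merely observe it yields a saddle-node. The step $\partial_\be h < 0$ itself is fine in spirit (envelope argument at the BP fixed points, reduced to comparing $\partial_\be\Phi^{\be,B}(\nu)$ at $\nu_\free$ versus $\nu_1$ via a monotone function of $\nu(1)$), matching the paper's use of \eqref{eq:phi-derivative} and the monotonicity argument.
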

\purple{Let us mention in passing that an explicit description of the critical line $\sfR_c$ can be found in \cite[Eqn.~(1.53)]{CH25} (appeared after the posting of this paper).}

\red{Note that from Proposition \ref{prop:non-unique-regime}, for $B=0$, we have $\mu_{\free}^{\be,0} \ne \mu_1^{\be,0}$
for any $\be \in [\be_{\free}(0), \infty)$. Thus $\be_{\free}(0)$ is the so called uniqueness/non-uniqueness transition threshold. On the other hand, as will be evident from Theorems \ref{thm:main-1} and \ref{thm:main-2} below, $\be_c(0) (> \be_{\free}(0))$ is the disordered/ordered transition threshold. This is in stark contrast to the Ising setting where $\mu_1 \ne \mu_{\free}$ only when $B=0$ and $\be > \be_c$.}

{The resolution of \red{the Ising} case in \cite{MMS,BD} crucially relies on the  
\abbr{FKG} inequality for a stochastic ordering of the edge-correlations for all plausible local marginals 
of $\mu^{\be,0}_n$ between those two extreme candidates. Lacking such 
monotonicity property when $q \ge 3$, we instead couple each Potts measure on $\Graph_n$ with  
the corresponding random cluster measure (\abbr{rcm}),
thereby allowing us to utilize the \abbr{FKG} property of the latter, {and for doing so when} 
$B>0$,
we
first amend our graphs by a \emph{ghost vertex}.}
\begin{dfn}[{Amending graphs} by a ghost vertex]\label{dfn:ghost-vertex}
{From a finite or infinite} graph $\Graph=(V,E)$
{we get} $\Graph^\star$ by adding from every $v \in V$ 
an edge to the additional \emph{ghost vertex} $v^\star$. That is, $\Graph^\star=(V^\star, E^\star)$
for $V^\star:= V \cup \{v^\star\}$ and $E^\star:= E \cup \{(v, v^\star), v \in V\}$. 
With a slight abuse of notation, for any $i \in V$ and $t \in \N$, we take for $\sfB_i^\star(t)$ 
the subgraph $\sfB_{i,\Graph}(t)$ amended by 
$v^\star$ and all edges between $v^\star$ and {$\sfB_{i,\Graph}(t)$.}
That is, $V(\sfB_i^\star(t))=V(\sfB_i(t))\cup \{v^\star\}$ and $E(\sfB_i^\star(t))=
E(\sfB_i(t)) \cup \{(v, v^\star): v \in V(\sfB_i(t))\}$.
We likewise set $\Tree_d^\star(t):=\sfB_o^\star(t)$ for the root $o$ of $\Tree_d$ (whereby 
$\Tree_d^\star$ is also the increasing limit of $\Tree_d^\star(t)$ as $t \uparrow \infty$).
Denoting by $\cG(t)$ the collection of all rooted graphs $(\Graph,o)$ 
of depth $t \in \N$ (namely, with all vertices at distance at most $t$ from the root $o$), and 
correspondingly setting $\cG^\star(t):= \{\sfB_o^\star(t): (\Graph, o) \in \cG(t)\}$, we further 
see 
that $\sfB_i^\star(t) \in \cG^\star(t)$ for any $i \in V(\Graph)$ and $t \in \N$.
\end{dfn}
 
{As promised, we proceed with the definition of the \abbr{rcm} on $\Graph^\star$ that
corresponds to the $q$-Potts measure $\mu^{\be,B}_{\Graph}$, for a finite graph $\Graph$.}
\begin{dfn}[Random cluster measure for a finite graph]\label{dfn:rcm_dfn}
Let $\Graph=(V,E)$ be a finite graph and $\Graph^\star$ be as in Definition \ref{dfn:ghost-vertex}. 
Fixing $q>0$, the 
\abbr{RCM} with external field $B \ge 0$
and percolation parameter $\be \ge 0$, 
is the probability measure on subgraphs of $\Graph^\star$,
given by
\[
\varphi_{\Graph}^{\be,B}(\ul{\eta}) \propto \Big[\prod_{e \in E^\star} p_e^{\eta_e}(1-p_e)^{1-\eta_e}\Big] 
q^{|{\sf C}(\ul \eta)|}, \qquad \ul{\eta} \in \{0,1\}^{E^\star},
\]
where ${\sf C}(\ul \eta)={\sf C}_{\Graph^\star}(\ul \eta)$ denotes the collection of connected components for edge configuration 
$\ul \eta$,  $|A|$ denotes \red{the} size of a finite set $A$ and 
\beq\label{eq:dfn:p-e}
p_e:=\left\{\begin{array}{ll}
1-e^{-\beta} &\mbox{if } e \in E\\
1-e^{-B} & \mbox{if } e \in E^\star\setminus E 
\end{array}
\right. .
\eeq
Summing $\varphi_{\Graph}^{\be,B}(\cdot)$ over 
bonds configurations on $E^\star \setminus E$, yields {an alternative \abbr{RCM} with} external field 
\beq\label{eq:mar-rcm}
\wt \varphi^{\be, B}_\Graph (\ul \eta) \propto \prod_{e \in E} p_e^{\eta_e}(1-p_e)^{1-\eta_e}
\prod_{C \in {\sf C}(\ul \eta)} (1+(q-1)e^{-B|C|}), \qquad \ul{\eta} \in \{0,1\}^{E} 
\eeq
(now without a ghost vertex, see also \cite{BBCK00} for such \abbr{RCM}-s 
in the presence of several external fields).
\end{dfn}

\begin{rmk} \red{For a lighter presentation,  we use the notation $\varphi^{\be,B}_\Graph$ and call this measure
the \abbr{rcm} on $\Graph$,  even though in case $B>0$ it is actually a measure on $\Graph^\star$.  We further}
use the shorthand $\varphi_n^{\be, B}$ for $\varphi_{\Graph_n}^{\be, B}$, where 
$\{\Graph_n\}_{n \in \N}$ is a sequence of graphs under consideration. Anticipating 
our Edwards-Sokal coupling of $\varphi_{\Graph}^{\be, B}$ with 
the {$q$-}Potts measure $\mu_{\Graph}^{\be, B}$ (cf.~Section \ref{sec:subseq-limit}), 
we have opted to index the \abbr{rcm} in Definition \ref{dfn:rcm_dfn} by 
$\be$ instead of $p$. {Indeed, for} $\varphi_\Graph^{\be,B}$ the 
Edwards-Sokal coupling 
yields a nicer conditional distribution of the spin {(Potts)} variables given 
the bond {(\abbr{rcm})} variables {$\ul{\eta}$, than what we get 
if using instead $\wt \varphi^{\be,B}_\Graph$} 
(compare Lemma \ref{lem:coupling-es}(ii) and \cite[Eqn.~(17)]{DMSS}).
{However,} for $B=0$ necessarily $\eta_e=0$ at any edge 
$e \in E^\star\setminus E$, with $\varphi_\Graph^{\be,0}(\cdot)
=\wt \varphi^{\be,0}_{\Graph}(\cdot)$ thus 
{matching the} \emph{standard} 
\abbr{rcm} definition (see \cite[Section 1.2]{G-RC}). 
\end{rmk}

Since {our proof of} the local weak limits for $\mu_n^{\be, B}$ goes via a coupling of those measures with $\varphi_n^{\be, B}$, it also {requires us to identify} the local weak limits of 
$\varphi_n^{\be, B}$. Similarly to the Potts case, {these involve the special choices of free and wired
\abbr{rcm} on $\Tree^\star_d$, which we define next.}
\begin{dfn}[The free and the wired \abbr{rcm}-s on $\Tree_d$]\label{dfn:inf-vol-RC}
Suppose first that $B >0$. Fixing $t \in \N$, let {$\wh \sfB_i^\star(t)$} be the 
graph {$\sfB_i^\star(t)$} amended by all edges of the star graph {$\sfK^\star(\partial \sfB_i (t))$}
for ghost $v^\star$ and the complete graph {$\sfK(\partial \sfB_i(t))$ on $\partial \sfB_i(t)$. 
In case $\Graph=(\red{\Tree_d},o)$, we denote $\wh \sfB_o^\star(t)$ by $\wh \Tree^\star_d(t)$,} calling hereafter the edges of $\sfK^\star(\partial \Tree_d(t))$ as
{\em boundary edges} of $\wh \Tree_d^\star(t)$
(there are in $\wh \Tree_d^\star(t)$ two distinct copies of each edge between $v^\star$ and 
$\partial \Tree_d(t)$, {but} only one of them is a boundary edge).
Let $\gf(\free)=0$ and $\gf(\wired)=1$, setting for $\ddagger \in \{\free, \wired\}$ 
the probability measures
\begin{equation}\label{eq:f-w-rc-t}
\varphi_{\ddagger, t}^{\be, B}(\cdot) := \varphi_{\wh \Tree_d^\star(t)}^{\be, B}(\cdot\,|\, \eta_e =\gf(\ddagger) , \text{ for all } \;\; e \in E(\sfK^\star(\partial\Tree_d(t)))),
\end{equation}
{and define} the wired and the free \abbr{rcm} on $\Tree_d$ with parameters $\be$ and $B$, 
{as their limits}
\beq\label{eq:f-w-rc-limit}
\varphi_{\wired}^{\be, B}:= \lim_{t \uparrow \infty}\varphi_{\wired, t}^{\be, B} \qquad \text{ and } \qquad \varphi_{\free}^{\be, B}:= \lim_{t \uparrow \infty}\varphi_{\free, t}^{\be, B}
\eeq 
\red{(when $B>0$ these are actually measures on $\Tree_d^\star$).}
Similar to {Definition 
\ref{dfn:potts-bounary-B-0}, the limits in \eqref{eq:f-w-rc-limit}} 
are in the sense of weak convergence of probability measures 
{over} $\{0,1\}^{E(\Tree_d^\star)}$ {with its cylindrical} $\sigma$-algebra
{and} the existence of {these} limits is straightforward (see 
the proof of Lemma \ref{lem:rc-fw-limit}). 
{For $B=0$ the ghost $v^\star$ is an isolated vertex, so proceeding without it, we consider
\eqref{eq:f-w-rc-t} for
$\wh \Tree^o_d(t)$ based on $\Tree_d(t)$ and $\sfK(\partial \Tree_d(t))$, in lieu of 
$\wh \Tree_d^\star (t)$, $\Tree_d^\star(t)$ and $\sfK^\star(\partial \Tree_d(t))$, 
respectively. The measures $\varphi_{\ddagger}^{\be, 0}$ 
on $\{0,1\}^{E(\Tree_d)}$ which are thereby defined via \eqref{eq:f-w-rc-limit},}
match the standard notion of 
 free and wired \abbr{rcm} on $\Tree_d$ (as defined in \cite[Chapter 10]{G-RC}). 
\end{dfn} 

We note in passing the results of  \cite{BGJ96} about the asymptotic size and shape of the large
connected components of the \abbr{rcm} for complete graphs $\Graph_n=\sfK_n$,  at 
$\beta=\lambda/n$ and $B=0$.

{Having defined the various parameter domains and candidate limit laws, 
we arrive at the final ingredient for stating our theorem, namely that of} local weak convergence 
of probability measures, {to which end we must first define the 
appropriate spaces of such measures.}
\begin{dfn}[Spaces of probability measures]\label{dfn:prob-space}
{Fixing} finite sets $\cX$ and $\cY$ (throughout this paper $\cX=[q]$ and $\cY=\{0,1\}$),
we equip $\cX^{V(\Tree_d^\star)} \times \cY^{E(\Tree_d^\star)}$ {with the product topology.
We denote by} $\cP_d^\star$ the set of all {Borel} probability measures on 
$\cX^{V(\Tree_d^\star)} \times \cY^{E(\Tree_d^\star)}$, endowed 
with the topology of weak convergence. {Similarly,} for any $t \in \N$, let $\cP_d^{\star,t}$ denote the set of all probability measures on {the finite set} $\cX^{V(\Tree_d^\star(t))} \times \cY^{E(\Tree_d^\star(t))}$, {equipping $\cP_d^\star$ with the cylindrical $\sigma$-algebra that corresponds to 
$\{\cP_d^{\star,t}, t \in \N \}$.}
Next, for a  probability measure $\gm$ on $\cP_d^\star$ and any $t \in \N$, denote by $\gm^t$ the (Borel) 
probability measure on $\cP^{\star, t}_d$ obtained from $\gm$ by projection, {which 
we hereafter call} the \emph{$t$-dimensional marginal} of $\gm$. 
\end{dfn}

{Utilizing Definition \ref{dfn:prob-space}, we next define the 
local weak convergence of probability measures.}

\begin{dfn}[Local weak convergence of probability measures]\label{dfn:lwc-measure}
Given graphs $\Graph_n = {([n],E_n)}$ 
and probability measures $\zeta_n$ on $\cX^{[n]\cup\{v^\star\}} \times \cY^{E_n^\star}$,
for any $t \in \N$ and $i \in [n]$, let ${\sf P}_{\zeta_n}^t(i)$ {denote} the law of the triplet $(\sfB_i^\star(t), \ul \sigma_{\sfB_i^\star(t)}, \ul \eta_{\sfB_i^\star(t)})$, {for} $(\ul \sigma, \ul \eta) \in \cX^{[n]\cup\{v^\star\}} \times \cY^{E_n^\star}$ drawn according the law $\zeta_n$.
Combined with uniformly chosen $I_n \in [n]$
this yields random distributions ${\sf P}_{\zeta_n}^t(I_n)$. {While their first marginal 
can be anywhere in $\cG^\star(t)$, when $\Graph_n \loc \Tree_d$
it must converge 
to $\delta_{\Tree^\star_d(t)}$. We then} say that 
$\{\zeta_n\}$
 {\em converges locally weakly} to a probability measure $\gm$ on $\cP_d^\star$ if {further}
${\sf P}_{\zeta_n}^t(I_n) \Rightarrow \delta_{\Tree_d^\star(t)} \otimes \gm^t$ for {any fixed} $t \in \N$, \red{where the notation $\Rightarrow$ is used to denote the convergence in distribution in weak topology}. 
{In case $\gm=\delta_\nu$ for some} $\nu \in \cP_d^\star$, we say that $\{\zeta_n\}$ 
{\em converge locally weakly in probability} to $\nu$, denoted by $\zeta_n \lwc \nu$. 
Denoting the marginals of $\zeta_n$ in the spin variables $\ul \sigma$ and in the bond variables $\ul \eta$, by $\zeta_{n,\spin}$ and $\zeta_{n,\bond}$, 
the local weak convergence of $\{\zeta_{n,\ddagger}\}$ to $\gm_\ddagger$ {is similarly defined for}
$\ddagger \in \{\spin,\bond\}$, 
where $\gm_{\spin}$ and $\gm_{\bond}$ are distributions over the set of probabilities on 
$\cX^{V(\Tree_d^\star)}$ and $\cY^{E(\Tree_d^\star)}$, respectively, and if 
$\gm_\ddagger = \delta_{\nu_\ddagger}$ we say that $\{\zeta_{n,\ddagger}\}$ converges
locally weakly in probability to $\nu_\ddagger$, denoted by $\zeta_{n, \ddagger} \lwc \nu_\ddagger$. 
{Lastly,} if $\zeta_{n,\spin}(\sigma_{v^\star}=1)=1$ for all $n \in \N$ and 
$\zeta_{n,\spin} \lwc \nu_{\spin}$, {we can and shall view} $\zeta_{n,\spin}$ and 
$\nu_{\spin}$ as probability measures on {$\cX^{[n]}$ and} 
$\cX^{V(\Tree_d)}$, respectively.  
\end{dfn}

Introducing the notation
\beq\label{eq:mu-wired}
\mu_{\wired}^{\be, B}:= \left\{
\begin{array}{ll}
\mu_1^{\be, B} & \mbox{if } B >0,\\
\f1q \displaystyle{\sum_{k=1}^q} \mu^{\be, 0}_k & \mbox{ if } B=0 \,,
\end{array}
\right.
\eeq
our first main result is the rigorous justification of the Bethe-Peirles replica-symmetric 
heuristic for our ferromagnetic Potts models. Namely, that at a non-critical $\be,B \ge 0$ the Potts 
laws $\{\mu_n^{\be,B}, n \gg 1\}$ are locally near $\mu_{\free}^{\be,B}$ or near $\mu_{\wired}^{\be,B}$, 
taking per $(\be,B)$ among these two limits the one with larger  
Bethe functional value.
\begin{thm}\label{thm:main-1}
\purple{Under the assumption $\Graph_n \loc \Tree_d$}, \red{we have {in terms of}
$\sfR_{\neq}$, $\sfR_{\free}$, and $\sfR_1$} of Proposition \ref{prop:non-unique-regime}, the following limits:
\begin{enumeratei}
\item \red{If $(\be, B) \in [0, \infty)^2\setminus \sfR_{\neq}$ then $\mu_n^{\be, B} \lwc \mu_{\free}^{\be, B}=\mu_{\wired}^{\be,B}$ and $\varphi_n^{\be, B} \lwc \varphi_{\free}^{\be, B}= \varphi_{\wired}^{\be,B}$, as $n \to \infty$.}
\item \red{If $(\be, B) \in \sfR_{\free}$ then $\mu_n^{\be, B} \lwc \mu_{\free}^{\be, B}$ and $\varphi_n^{\be, B} \lwc \varphi_{\free}^{\be, B}$, as $n \to \infty$.}
\item If $(\be, B)  \in \sfR_{1}$ then $\mu_n^{\be, B} \lwc \mu_{\wired}^{\be,B}$ and $\varphi_n^{\be, B} \lwc \varphi_{\wired}^{\be, B}$, as $n \to \infty$.
\end{enumeratei}
\end{thm}

Further, only global mixtures of $\mu_{\free}^{\be,B}$ and $\mu_{\wired}^{\be,B}$ 
may emerge as the limit points, at any parameter value on the critical line $\sfR_c$ 
(where $\Phi(\nu_{\free})=\Phi(\nu_{\wired}))$.
\begin{thm}\label{thm:crit-1}
Let $d \in 2 \N$, {$d \ge 3$.} Then, 
for $(\be, B) \in \sfR_c$, {$B > 0$ and any $\Graph_n \loc \Tree_d$,}
all local weak limit points of $\{\mu_n^{\be, B}\}$ and $\{\varphi_n^{\be, B}\}$, as $n \to \infty$,
are supported on
\beq
\cM_{\spin}^{\be, B}:= \{{\bm \al} \mu_{\wired}^{\be,B} + (1-{\bm \al}) \mu_{\free}^{\be, B}, {\bm \al} \in [0,1]\}  \text{ and }  \cM_{\bond}^{\be, B}:= \{{\bm \al} \varphi_{\wired}^{\be,B} + (1-{\bm \al}) \varphi_{\free}^{\be, B}, {\bm \al} \in [0,1]\},
\eeq
respectively.
\end{thm}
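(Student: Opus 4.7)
I would argue via subsequential limits, the \abbr{FKG} monotonicity of the \abbr{RCM} in $\be$, and extremal decomposition of Gibbs states on $\Tree_d^\star$, then transfer the conclusion from bonds to spins through the Edwards--Sokal coupling. By compactness of the relevant spaces of probability measures on $\cP_d^{\star,t}$ for each $t \in \N$, every subsequence of $\{\varphi_n^{\be,B}\}_n$ admits a further subsequence along which it converges locally weakly to some $\gm$; since the Edwards--Sokal coupling (cf.\ Lemma \ref{lem:coupling-es}) yields the Potts measure from the \abbr{RCM} by an affine operation that, when $B>0$, forces the component of $v^\star$ to have color $1$, it suffices to prove that $\gm(\cM_{\bond}^{\be,B})=1$.

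\textbf{Monotone sandwich.} By the smoothness of $\be_c(\cdot)$ in Proposition \ref{prop:non-unique-regime}, for every sufficiently small $\epsilon>0$ one has $(\be-\epsilon,B)\in{\sf R}_{\free}$ and $(\be+\epsilon,B)\in{\sf R}_1$. Since $d\in 2\N$, Assumption \ref{asmp:main} holds at both perturbed parameters, and Theorem \ref{thm:main-1} gives $\varphi_n^{\be-\epsilon,B}\lwc\varphi_{\free}^{\be-\epsilon,B}$ and $\varphi_n^{\be+\epsilon,B}\lwc\varphi_{\wired}^{\be+\epsilon,B}$. The \abbr{FKG} property together with monotonicity of the bond weights $p_e$ in $\be$ (see \eqref{eq:dfn:p-e}) yields the stochastic domination $\varphi_n^{\be-\epsilon,B}\preceq_{\rm st}\varphi_n^{\be,B}\preceq_{\rm st}\varphi_n^{\be+\epsilon,B}$, a relation that survives projection onto cylindrical marginals and passage to the local weak limit. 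Sending $\epsilon\downarrow 0$ and invoking continuity in $\be$ of $\varphi_{\free}^{\be,B}$ and $\varphi_{\wired}^{\be,B}$ (visible from the monotone definitions in \eqref{eq:f-w-rc-limit}) sandwiches the expectation under $\gm$ of any cylindrical increasing observable between its values under $\varphi_{\free}^{\be,B}$ and $\varphi_{\wired}^{\be,B}$.

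\textbf{Gibbs property and extremal decomposition.} To upgrade this average sandwich to a pointwise-in-$\gm$ characterization, I would show that the finite-volume \abbr{DLR} equations satisfied by each $\varphi_n^{\be,B}$ pass to the limit under $\Graph_n\loc\Tree_d$ with uniform sparsity \eqref{eq:unif-sparse}, so that $\gm$-a.s.\ the limiting $\varphi$ is an infinite-volume Gibbs \abbr{RCM} on $\Tree_d^\star$ at parameters $(\be,B)$. Combined with the stochastic sandwich and the fact (cf.\ \cite[Chapter 10]{G-RC}) that $\varphi_{\free}^{\be,B}$ and $\varphi_{\wired}^{\be,B}$ are the stochastically minimal and maximal Gibbs \abbr{RCM}-s on $\Tree_d^\star$, the extremal decomposition of the Gibbs simplex reduces matters to identifying the $\Tree_d$-automorphism-invariant extremals inside the sandwich interval. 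When $B>0$, these should be precisely $\varphi_{\free}^{\be,B}$ and $\varphi_{\wired}^{\be,B}$ (the external field rules out the $q-1$ spurious extremals arising at $B=0$ via relabelling of non-dominant colors), so $\varphi=\al\,\varphi_{\wired}^{\be,B}+(1-\al)\varphi_{\free}^{\be,B}$ for some $\al=\al(\varphi)\in[0,1]$. Feeding this through Edwards--Sokal converts the conclusion into $\gm(\cM_{\spin}^{\be,B})=1$ for the Potts limits.

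\textbf{Main obstacle.} The hardest ingredient is the classification of $\Tree_d$-automorphism-invariant extremal Gibbs \abbr{RCM}-s on $\Tree_d^\star$ that are sandwiched between free and wired: because the ghost $v^\star$ connects to every vertex, the standard ``local boundary condition'' tree arguments must be adapted to accommodate this long-range interaction, and the role of $B>0$ in breaking the $q$-fold color degeneracy -- thereby excluding the additional extremal phases one would encounter at $B=0$ -- has to be exploited carefully. A secondary technical point is the justification of passing the \abbr{DLR} equations to the local weak limit along the random root $I_n$, but uniform sparsity \eqref{eq:unif-sparse} and boundedness of the single-edge conditional probabilities should provide the standard ingredient needed there.
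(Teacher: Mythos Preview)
Your proposal has a genuine gap at the classification step, and it diverges sharply from the paper's actual argument.

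\textbf{The gap.} The ``hardest ingredient'' you identify --- classifying the $\Tree_d$-automorphism-invariant extremal Gibbs \abbr{RCM}-s on $\Tree_d^\star$ sandwiched between free and wired --- is not a known result you can cite, and you give no argument for it. On trees the simplex of Gibbs \abbr{RCM}-s is typically rich (already for $q=2$ and $B=0$ there are uncountably many extremals), and nothing in your sketch explains why the presence of the ghost vertex with $B>0$ collapses the invariant ones to just $\{\varphi_{\free}^{\be,B},\varphi_{\wired}^{\be,B}\}$. Your remark that ``$B>0$ rules out the $q-1$ spurious extremals arising via relabelling'' is a Potts-side intuition; the \abbr{RCM} has no color symmetry to break, so this does not address the issue. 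In effect your outline reduces the theorem to a statement of comparable difficulty. There is a second, more structural problem: local weak convergence only guarantees that the \emph{law} $\gm_{\bond}$ (equivalently the barycenter $\bar\varphi=\int\varphi\,d\gm_{\bond}$) is invariant under re-rooting, not that $\gm_{\bond}$-a.e.\ $\varphi$ is automorphism-invariant. So even a full classification of invariant extremals would not directly apply to the individual $\varphi$ in the support of $\gm_{\bond}$.

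\textbf{What the paper does instead.} The paper never attempts an abstract classification. It introduces \abbr{RCM} \emph{messages} $s_{u\to v}(\varphi)=\varphi[u\leftrightarrow v^\star\mid \eta_{(u,v)}=0,\sT^\star]$, shows they satisfy a BP recursion with $s_{u\to v}\in[b_{\free},b_{\wired}]$, and proves (Lemma \ref{lem:prop-rcm-msg}(c)) that $\varphi\in\cM_{\bond}^{\be,B}$ whenever $s_{\partial o}(\varphi)$ is supported on $\{(b_\ddagger,\ldots,b_\ddagger):\ddagger\in\{\free,\wired\}\}$. The contradiction then runs through free energy: if $\gm_{\bond}$ charges $\cQ_\star\setminus\cM_{\bond}^{\be,B}$, one finds (Lemmas \ref{l:Psi.msup.Lambda}--\ref{c.lowermean}) a positive density of vertices where the symmetric Bethe functional $\log\Psi^{\rm sym}$ falls strictly below $\wh\Phi$, and by a vertex-removal-and-rewiring surgery (possible precisely because $d\in 2\N$, so the $d$ neighbors of a deleted vertex can be paired) one manufactures graphs $\Graph_n'\loc\Tree_d$ whose free energy density exceeds $\wh\Phi$, contradicting \cite[Theorem 1]{DMSS}. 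Thus $d\in 2\N$ enters not merely to validate Assumption \ref{asmp:main} but as the engine of the whole argument; your sketch uses even $d$ only incidentally, which is a sign that the actual mechanism is missing.
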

\begin{rmk}
As $\wt \varphi^{\be,B}_\Graph$ is the marginal of $\varphi_\Graph^{\be, B}$, both 
Theorems \ref{thm:main-1} and \ref{thm:crit-1} hold {also} for the former \abbr{RCM}-s 
(taking for the free and the wired \abbr{RCM}-s, the marginals of 
$\varphi_{\free}^{\be,B}$ and $\varphi_{\wired}^{\be, B}$ on $\{0,1\}^{E(\Tree_d)}$, respectively). 
\end{rmk}

\begin{rmk}
Recently \cite{S23}, using results on the {\em sofic entropy} for a {\em sofic approximation} of the free group of rank {$d/2$ (for $d \in 2 \N$, $d \ge 4$),} has shown  
that {for Potts measures with parameters $(\be, B)=(\be_c(0),0)$ on random $d$-regular 
graphs $\Graph_n$ chosen according to the uniform-permutation-model, as $n \to \infty$,
any local weak limit point in probability,} must be supported on $\cM_{\spin}^{\be_c(0),0}$.
Although neither Theorem \ref{thm:crit-1} nor \cite[Proposition 3.4]{S23} shows that the limit point in context is a genuine measure on $\cM_{\spin}^{\be, B}$ (i.e.~neither Dirac at $\mu_{\free}^{\be, B}$ nor at $\mu_{\wired}^{\be, B}$), this is believed to be the case for $(\be, B) \in \sfR_c$.
Indeed, \cite[Theorem 2]{HJP22+} confirms the latter prediction when 
$d \ge 5$, $q \ge d^{Cd}$ ($C< \infty$ is some absolute constant), $(\be, B)=(\be_c(0),0)$
and $\Graph_n$ are uniformly random $d$-regular graphs. 
{While} their method {might plausibly} be adapted to all $(\be, B) \in \sfR_c$ {we do}
not pursue this here. 
\end{rmk}

\begin{figure}[htbp]
  \centering
   \begin{minipage}[b]{0.46\linewidth}
   \includegraphics[width=\textwidth]{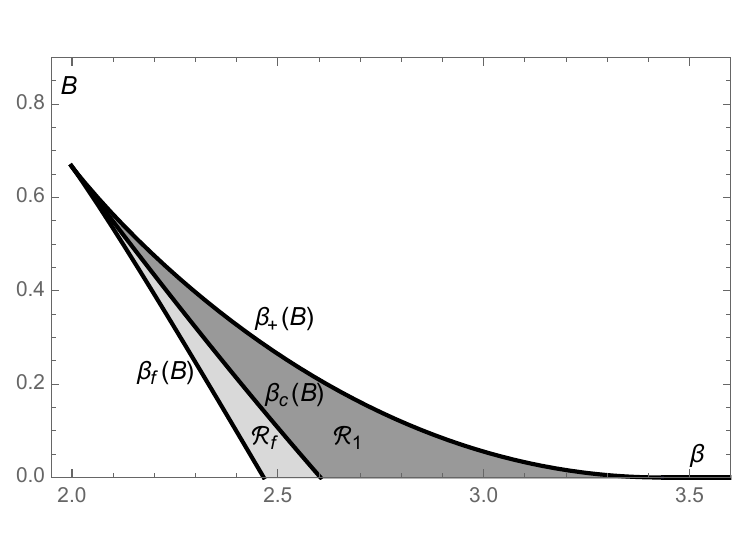}
 \end{minipage}
 \hspace{1cm}
 \begin{minipage}[b]{0.46\linewidth}
  \includegraphics[width=\textwidth]{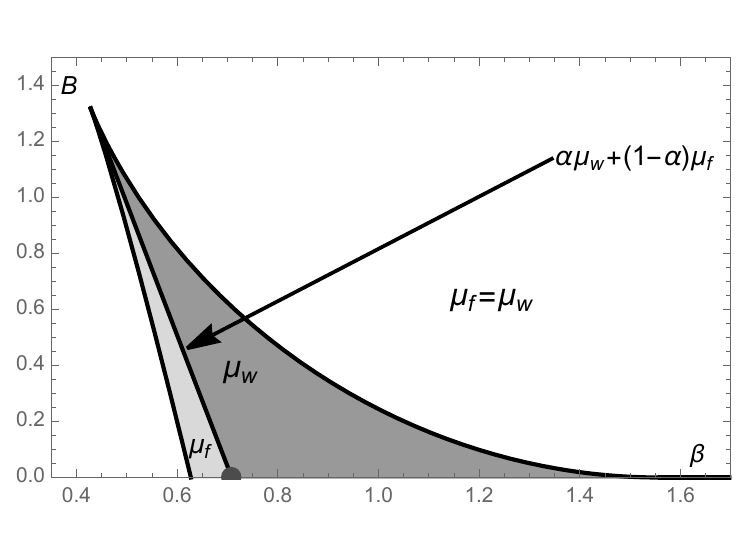}
 \end{minipage}
 \caption{The left panel shows the non-uniqueness regime for the Potts measures on $\Tree_d$  for $q=30$ and $d=3$ \purple{(see Proposition \ref{prop:non-unique-regime} for its definition)}. The lightly shaded region is $\sfR_{\free}$, while the darker region is $\sfR_1$. These two regions are separated by the critical curve $\be_c(B)$, and the non-uniqueness regime $\sfR_{\neq}$ is separated from its complement by the two curves $\be_{\free}(B)$ and $ \be_+(B)$. 
 \newline
 The right panel shows the non-uniqueness regime along with the local weak limits for Potts measures for $q=30$ and $d=10$. Notice the difference in shapes and sizes  of $\sfR_1$ and $\sfR_{\free}$ compared to the left panel. Note that Theorems \ref{thm:main-1} and \ref{thm:crit-1} identify the local weak limit (points) except when $(\be,B)=(\be_c(0),0)$, marked black in the right panel. 
}
  \label{fig1}
\end{figure}

{For any $\be > \be_c(0)$, the $q$-symmetry of the Potts model at $B=0$ gave rise in 
Theorem \ref{thm:main-1}
to a limit $\mu^{\be,0}_{\wired}$ which is a balanced mixture of the $q$ possible pure-states
Potts limit measures $\mu^{\be,0}_k$ on $\Tree_d$. In the Ising setting ($q=2$), this has
been further refined and explained in \cite{MMS}, by showing that for $n \gg 1$ 
the choice between the two pure state limits
matches with high probability
the (random) dominating value $\sK_n(\ul \sigma)$ at the spin configuration 
$\ul \sigma$ on $\Graph_n$.  Towards such a refinement for $q$-Potts
with $q \ge 3$, we proceed to define the Potts measure with fixed dominating spin value.}
\begin{dfn}[Potts with a given dominating spin {value}]\label{dfn:dom-spin}
For a graph $\Graph_n=([n], E_n)$ with spin configuration $\ul \sigma \in [q]^{[n]}$ define
\beq
\sK_n(\ul \sigma):= \text{arg} \max_{k \in [q]} \Big\{ \sum_{i \in [n]} {\bf 1}(\sigma_i =k) \Big\}, \label{eq:sK_n}
\eeq
breaking ties uniformly among the subset of $[q]$ of all maximizer values.\\
For any $\be, B \ge 0$ and $k \in [q]$, we call the probability measure 
\[
\mu_{n, k}^{\be, B} (\cdot):= \mu_n^{\be, B} (\cdot \,|\, \sK_n(\cdot)=k) \,,
\]
{which is supported on $\{\ul \sigma : \sK_n(\ul \sigma)=k \}$, the $q$-Potts with dominating spin $k$.}
\end{dfn}

{As demonstrated in \cite{MMS}, even in the Ising setting, having a pure-state decomposition 
according to the dominating spin value, requires a certain uniform edge-expansion property of $\Graph_n$.
Our next definition states the relevant notion of edge expansion, followed by the statement of the promised
pure-state decomposition according to the dominant spin value.}
\begin{dfn}[Uniformly edge-expander graphs]
A finite graph $\Graph=(V,E)$ is  a $(\delta_1, \delta_2, \lambda)$ edge-expander, if for any set of vertices $S \subset V$ with $\delta_1 |V| \le |S| \le \delta_2 |V|$ we have that $|\partial S| \ge \lambda |S|$, where $\partial S\red{=E(S,S^c)}$ denotes the set of edges between $S$ and $S^c$.  \red{We say that graphs $\{\Graph_n= ([n], E_n)\}_{n \in \N}$ are uniformly edge-expanders if 
\(
\inf\{ n^{-1} |E_n(S,S^c)| : S \subset [n],  \vep n \le |S| \le \frac{n}{2} \} > 0
\) for any fixed $\vep>0$.}
\end{dfn}

\begin{thm}\label{thm:main-2}
Recall the definition of $\be_c(0)$ from Proposition \ref{prop:non-unique-regime}. \purple{Under the assumption $\Graph_n \loc \Tree_d$} 
we have the following limits:

\begin{enumeratei}

\item For $\be \in [0,\be_c(0))$ and any $k \in [q]$ we have $\mu_{n,k}^{\be, 0} \lwc \mu_{\free}^{\be, 0}$.

\item \red{Additionally assume that $\{\Graph_n\}_{n \in \N}$ are uniformly edge-expanders.} 
For $\be > \be_c(0)$ and $k \in [q]$ we have $\mu_{n,k}^{\be,0} \lwc \mu_{k}^{\be, 0}$. 

\end{enumeratei}
\end{thm}

\subsection{Proof outline and organization of the paper}
A key ingredient for the proofs of the main results in \cite{BD, MMS}, the Ising case, is the monotonicity property of edge-correlations, which in turn enables one to argue that, for $B=0$, the sums of the edge-correlations at the root for any Ising Gibbs measure on the limiting tree that is not a mixture of the plus and the minus Ising measure must be strictly smaller than that of the plus (and hence minus) Ising measure (see \cite[Lemma 3.2]{MMS} and \cite[Lemma 3.6]{BD} for precise formulations for the regular and the general cases, respectively). This observation essentially allowed them to restrict the set of all possible local weak limit points to those that are mixtures of the plus and minus Ising measures.

In the absence of such monotonicity property for Potts measures,  we {had to devise an alternate route to 
reach such conclusion. To this end},  we work with the Edwards-Sokal coupling of our $q$-Potts measures and their \abbr{rcm} counterparts.  {Whereas the} \abbr{RCM}-s have the relevant monotonicity properties,  {this came at the cost of having non-quasi-local Hamiltonian.  Thereby,} 
much of the standard theory of Gibbs states does not apply {for the \abbr{rcm}-s,  with the} 
definition and analysis of random cluster Gibbs (\abbr{RCG}) states {posing} technical difficulties (cf.~\cite{BBCK00}).  {Indeed,}  instead of analyzing relevant properties of the \abbr{RCG} states (e.g.~two point connectivity; {as} done for $q=2$ in \cite{BD, MMS}),  {we derive} in Section \ref{sec:subseq-limit}
several salient features of the $t$-dimensional spin and bond marginals of local weak limit points \red{(i.e.~probability measure induced by the spin and the bond configurations in a ball of radius $t$ around the root of the limiting tree $\Tree_d$)} of Edward-Sokal measures on $\Graph_n$.  In particular,  it is shown 
\red{in Lemma \ref{lem:limit-point-properties}(c)-(d),}
that any such $t$-dimensional bond marginal must be a mixture of \abbr{RCM} on $\Tree_d(t)$ with boundary conditions {that are} sandwiched,  in the sense of stochastic ordering,  between the marginals of the free and wired \abbr{rcm}-s on the infinite tree $\Tree_d$,  and moreover any such $t$-dimensional spin marginal {follows specific} conditional distribution given the bond configurations.  
We believe {that} these results 
\red{which apply for $q$-\abbr{rcm} at any $q \in [1,\infty)$, }
should admit broader usage.

\red{Moreover,  we show in Remark \ref{rmk:split-ES} and Lemma \ref{lem:potts-rc-fw} that
the} $t$-dimensional spin marginals of any local weak limit point of Edward-Sokal measures 
on $\Graph_n$,  follow specific conditional distributions given the bond configurations.  
\red{Combining these facts with our results about the bond marginals and} \purple{Proposition} \ref{asmp:main},  
we prove Theorem \ref{thm:main-1} in Section \ref{sec:pf-uncond-limit}. 


The key to the proof of Theorem \ref{thm:main-2}(i) is a coupling of $\mu_{n,k}^{\be,0}$ and $\mu_{n,k'}^{\be, 0}$, $k \ne k' \in [q]$, so that the number of disagreements between the spin configurations induced by $\mu_{n,k}^{\be,0}$ and $\mu_{n,k'}^{\be,0}$ is negligible.
{This is done by a construction of independent uniform colors for the connected 
components of the \abbr{rcm},  which may be of independent interest (cf.~Lemma \ref{lem:sim-unif} and
the proof of \eqref{eq:small-disagreement}).}

Our proof, in Section \ref{subsec-pure-states}, of
Theorem \ref{thm:main-2}(ii),  uses the assumed edge-expansion property,  to argue that $\sK_n$ is well approximated by the dominant color in large neighborhood of a uniformly chosen vertex in
$\Graph_n$.
By Theorem \ref{thm:main-1},  this is in turn further approximated by the {dominant color}
in a large neighborhood of the root in {$\Tree_d$} (under the wired Potts measure),  and thereby  
we complete the proof upon noting that under $\mu_k^{\be,0}$ the latter dominant color
is $k$ (see Lemma \ref{lem:pott-inifinite-prop}).  While at a high level the proof of Theorem \ref{thm:main-2}(ii)
has {a similar} outline as the proof of \cite[Theorem 2.4 II]{MMS},  extra care is needed here,  since 
unlike $q=2$,  one can no longer reduce the identity of the dominant color to the sign of a single variable.

{The most delicate argument of this work is the proof of Theorem \ref{thm:crit-1}.  The challenge here is 
that being along the critical line,  for which there is no Ising analog (due to the difference in the order of phase transitions:~second and first order phase transitions for $q=2$ and $q \ge 3$, respectively),  the limit is no longer an extremal
Potts (or \abbr{rcm}) measure on $\Tree_d$.  Our approach to handle such a scenario,  thus proceeds via 
the following three} key steps.  First, Section \ref{subsec-rcm-messages} shows
that any limit point outside $\cM_{\bond}^{\be_c(B),B}$ yields 
`messages' to the root {(see} Definition \ref{dfn:rcm-msg2}), of Bethe functional value
smaller than $\max\{\Phi(\nu_{\free}^{\be_c(B), B}), \Phi(\nu_{1}^{\be_c(B), B})\}$. For $B>0$, if a local weak limit is not supported on $\cM_{\bond}^{\be_c(B),B}$, the same can be shown to hold when the messages on {$\Tree_d$} are replaced by their local analogs on large neighborhoods {of a uniformly 
chosen vertex in} $\Graph_n$.  For $d \in 2\N$,  this,  together with a randomization over the choices of vertices in $\Graph_n$ and the concentration of sample mean to the population mean,  allow us 
in Section \ref{subsec-d-even}
to procure a sequence of subgraphs $\{\Graph_{n}'\}_{n \in \N}$ so that $\Graph_n' \loc \Tree_d$ with asymptotic Potts free energy density 
exceeding the \abbr{rhs} of \eqref{eq:free-energy-limit-assume},
in contradiction with \cite[Theorem 1]{DMSS}. 

Appendix \ref{sec:app} provides several properties of the infinite volume 
Potts and the Bethe fixed points, {on which we rely in these proofs.}

\section{Existence and properties of limit points}\label{sec:subseq-limit}

{Hereafter we fix $\be,B \ge 0$ and integer $q \ge 2$. For a finite graph $\Graph=(V,E)$, let 
$\Graph^\star$ be as in Definition \ref{dfn:ghost-vertex}, with 
$\be^\star_{(i,j)} :=\be$ for $(i,j) \in E$ and
$\be^\star_{(i,j)} :=B$ for $(i,j) \in E^\star \setminus E$. We consider 
the Potts measure 
\beq\label{dfn:Potts-star}
\mu_{\Graph^\star}^{\be,B}(\ul \sigma) = \frac{1}{Z_{\Graph}(\be,B)} \exp \Big\{ \sum_{(i,j) \in E^\star} 
\be^\star_{(i,j)} \delta_{\sigma_i, \sigma_j} \Big\} \cdot 
\delta_{\sigma_{v^\star},1}, \qquad \ul \sigma \in [q]^{V^\star} \,,
\eeq
proceeding to define the corresponding Edwards-Sokal measure.}
\begin{dfn}[Edwards-Sokal measure on a finite graph]\label{dfn:es_dfn}
{Fix an integer $q \ge 2$. For a finite graph $\Graph$, 
$\Graph^\star$ of Definition \ref{dfn:ghost-vertex}, and $\be,B \ge 0$, set}
$p_e$ as in \eqref{eq:dfn:p-e} {and $\delta_e(\ul{\sigma}):=\delta_{\sigma_i,\sigma_j}$ for 
$e=(i,j) \in E^\star$.
The} Edwards-Sokal probability measure on the joint
spin $\ul{\sigma} \in [q]^{V^\star}$ and 
bond $\ul \eta \in \{0,1\}^{E^\star}$ configuration, is given by
\beq
\varpi_{\Graph^\star}^{\be,B}(\ul{\sigma},\ul{\eta}) \propto \prod_{e \in E^\star} 
\left[ (1-p_e)({1-\eta_e}) + p_e {\eta_e} \delta_e(\ul{\sigma})  
\right] \cdot  \delta_{\sigma_{v^\star},1} \,.
\eeq
\end{dfn}
{Our next lemma, whose elementary proof is omitted, states that for  
integer any $q \ge 2$, such Edwards-Sokal measure
} gives a useful coupling between 
$\mu^{\be,B}_{\Graph^\star}$ and the {\abbr{rcm} of Definition \ref{dfn:rcm_dfn}.}
\begin{lem}\label{lem:coupling-es}
Fix an integer $q \ge 2$, a finite graph $\Graph=(V,E)$, {and} ${B},\be \ge 0$. 
\begin{enumeratei}
\item The marginal of $\varpi_{\Graph^\star}^{\be,B}(\cdot, \cdot)$ in the spin variable 
$\ul \sigma$ is $\mu_{\Graph^\star}^{\be,B}(\cdot)$, whereas the marginal in the bond variable $\ul \eta$ is {the \abbr{rcm}} $\varphi_\Graph^{\be,B}(\cdot)$.

\item For any $\ul \eta \in \{0,1\}^{E^\star}$, with {induced connected components} 
$\sC(\ul \eta)=(C_1,C_2,\ldots,C_k)$, for some $k \ge 1$, the conditional distribution $\varpi_{\Graph^\star}^{\be,B}(\cdot| \ul \eta)$ is {such that}  
the same spin $\sigma'$ is assigned to all vertices in each connected component $C$, independently of 
{all} other components. If $v^\star \notin C$, then the law of $\sigma'$ is uniform on $[q]$, whereas $\sigma'=1$ if $v^\star \in C$.
\end{enumeratei}
\end{lem}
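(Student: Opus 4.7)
The plan is to verify both parts by direct computation from the definition of $\varpi_{\Graph^\star}^{\be,B}$, using the single edge-by-edge identity
\[
(1-p_e)(1-\eta_e) + p_e \eta_e \delta_e(\ul{\sigma})
= \begin{cases} 1-p_e = e^{-\be_e^\star} & \text{if } \eta_e=0,\\ p_e \delta_e(\ul{\sigma}) & \text{if } \eta_e=1.\end{cases}
\]
Since $\varpi_{\Graph^\star}^{\be,B}(\ul\sigma, \ul\eta)$ is a product over $e \in E^\star$ times the boundary pin $\delta_{\sigma_{v^\star},1}$, both the spin marginal and the bond marginal reduce to clean product computations through this identity.

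For the spin marginal in part (i), I would sum each $\eta_e \in \{0,1\}$ independently, obtaining the per-edge factor $(1-p_e) + p_e \delta_e(\ul\sigma) = e^{-\be_e^\star}\,e^{\be_e^\star \delta_e(\ul\sigma)}$. Taking the product over $e \in E^\star$ and reattaching $\delta_{\sigma_{v^\star},1}$ recovers, up to a multiplicative constant independent of $\ul\sigma$, the Potts weight on $\Graph^\star$ of \eqref{dfn:Potts-star}, giving $\mu_{\Graph^\star}^{\be,B}$. For the bond marginal, I would sum $\ul\sigma$ subject to $\sigma_{v^\star}=1$: edges with $\eta_e=1$ contribute the indicator $\delta_e(\ul\sigma)$, whose product forces $\ul\sigma$ to be constant on every connected component of $\ul\eta$. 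The component containing $v^\star$ is then pinned to spin $1$, while each of the remaining $|{\sf C}(\ul\eta)|-1$ components contributes a free factor $q$, so the marginal is proportional to $\bigl[\prod_{e \in E^\star} p_e^{\eta_e}(1-p_e)^{1-\eta_e}\bigr]\, q^{|{\sf C}(\ul\eta)|-1}$, which equals $\varphi_\Graph^{\be,B}(\ul\eta)$ after normalization (the extra $q^{-1}$ factor being $\ul\eta$-independent).

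Part (ii) then follows directly: conditionally on $\ul\eta$, the density $\varpi_{\Graph^\star}^{\be,B}(\ul\sigma \mid \ul\eta)$ is proportional, as a function of $\ul\sigma$, to $\prod_{e:\,\eta_e=1} \delta_e(\ul\sigma) \cdot \delta_{\sigma_{v^\star},1}$, whose support consists exactly of the spin configurations that are constant on each connected component of $\ul\eta$ and equal $1$ on the component of $v^\star$. The conditional law is thus uniform on this set, which by the product structure amounts to assigning independent uniform spins in $[q]$ to each component not containing $v^\star$, and spin $1$ to the component containing $v^\star$.

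Since this is the classical Edwards--Sokal coupling (see e.g.~\cite{G-RC}) with the ghost vertex encoding the external field as a frozen spin, no substantive obstacle arises. The only point worth remarking is that treating $E$ and $E^\star \setminus E$ on the same footing, via the per-edge coupling $\be_e^\star$, is precisely what turns the external-field contribution into a uniform edge computation and allows the argument to go through unchanged from the standard $B=0$ case.
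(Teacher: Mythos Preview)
Your argument is correct and is exactly the standard Edwards--Sokal computation; the paper in fact omits the proof entirely, calling it elementary, so there is nothing to compare against beyond noting that your write-up is the natural one and matches the paper's conventions (the per-edge weight $\be_e^\star$ of \eqref{dfn:Potts-star} is precisely what makes the $B>0$ case go through identically to $B=0$, as you observe).
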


We use throughout $\mu(f)$ or $\mu[f]$ for the expectation of  $\R$-valued function $f$
with respect to a probability measure $\mu$. 
{Further, suppressing the dependence of our} Potts, random cluster, and Edwards-Sokal measures
{on the fixed integer} $q \ge 2$, 
we write for brevity $\varpi_n^{\be, B}:= \varpi_{\Graph^\star_n}^{\be, B}$ 
and $\varphi_n^{\be, B}:= \varphi_{\Graph_n}^{\be, B}$. {As 
the marginal of $\mu_{\Graph^\star}^{\be, B}$ on $[q]^V$ is} 
$\mu^{\be, B}_{\Graph}$, hereafter $\mu_n^{\be, B}$ stands also for
$\mu_{\Graph_n^\star}^{\be, B}$. 
 
The following {immediate} corollary of Lemma \ref{lem:coupling-es} {is crucial in our} proof of Theorem \ref{thm:main-1}. 
\begin{cor}\label{cor:fin-rc-potts}
{For any integer $q \ge 2$, finite graph $\Graph=(V,E)$, and ${B},\be \ge 0$,}
we have that
\beq\label{eq:corr-potts-rc}
\mu_{\Graph}^{\be,B} \left(\sigma_i=\sigma_j\right)  =\Big(1 - \f1q\Big)\varphi_\Graph^{\be,B}\left(i \leftrightarrow j \right) +\f1q, \qquad \forall (i,j) \in E,
\eeq
where $\{ i' \leftrightarrow j'\}$ denotes the event that $i'$ and $j'$ are in the same connected component
{of $\Graph^\star$}, or equivalently, {that} there is an open path {in $\Graph^\star$}
connecting $i'$ and $j'$.
\end{cor}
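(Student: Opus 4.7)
The plan is to derive \eqref{eq:corr-potts-rc} directly from the Edwards--Sokal coupling. First, since the marginal of $\varpi_{\Graph^\star}^{\be,B}$ on the spin variables is $\mu_{\Graph^\star}^{\be,B}$ (Lemma \ref{lem:coupling-es}(i)), and because the projection of $\mu_{\Graph^\star}^{\be,B}$ onto $[q]^V$ is $\mu_\Graph^{\be,B}$, one has
\[
\mu_\Graph^{\be,B}(\sigma_i = \sigma_j) = \varpi_{\Graph^\star}^{\be,B}(\sigma_i = \sigma_j)
\qquad \text{for every } (i,j) \in E.
\]
I would then evaluate the right-hand side by conditioning on the bond configuration $\ul \eta$, using the tower property
\[
\varpi_{\Graph^\star}^{\be,B}(\sigma_i = \sigma_j)
= \sum_{\ul \eta} \varphi_\Graph^{\be,B}(\ul \eta) \, \varpi_{\Graph^\star}^{\be,B}(\sigma_i = \sigma_j \mid \ul \eta),
\]
where I again used Lemma \ref{lem:coupling-es}(i) for the bond marginal.

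Next, I would split the sum according to whether $\ul \eta$ connects $i$ and $j$ in $\Graph^\star$. By Lemma \ref{lem:coupling-es}(ii) all vertices in a common component of $\sC(\ul \eta)$ receive the same spin, so on the event $\{i \leftrightarrow j\}$ we automatically have $\sigma_i = \sigma_j$, giving a conditional probability of one. On the complementary event $\{i \not\leftrightarrow j\}$, the vertices $i$ and $j$ lie in two distinct components $C_i$ and $C_j$, and since $v^\star$ is a single vertex, at most one of $C_i, C_j$ contains $v^\star$. In the case where neither does, the two components receive independent uniform spins from $[q]$, producing agreement with probability $1/q$; in the case where exactly one of them contains $v^\star$, that component is pinned to spin $1$ while the other is independent uniform on $[q]$, so agreement still occurs with probability $1/q$. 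Thus $\varpi_{\Graph^\star}^{\be,B}(\sigma_i = \sigma_j \mid \ul \eta) = 1/q$ whenever $i \not\leftrightarrow j$.

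Combining these two cases and using linearity gives
\[
\varpi_{\Graph^\star}^{\be,B}(\sigma_i = \sigma_j)
= \varphi_\Graph^{\be,B}(i \leftrightarrow j) + \frac{1}{q}\bigl(1 - \varphi_\Graph^{\be,B}(i \leftrightarrow j)\bigr)
= \Big(1 - \frac{1}{q}\Big)\varphi_\Graph^{\be,B}(i \leftrightarrow j) + \frac{1}{q},
\]
which is exactly \eqref{eq:corr-potts-rc}. There is no real obstacle: the content is a clean unconditional consequence of the two parts of Lemma \ref{lem:coupling-es}, and the only point requiring a moment's care is the case analysis around the ghost vertex $v^\star$, where one must observe that the pinning $\sigma_{v^\star}=1$ does not alter the conclusion that disconnected pairs agree with probability $1/q$.
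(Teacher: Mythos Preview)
Your proof is correct and follows essentially the same approach as the paper: both use Lemma~\ref{lem:coupling-es}(i) to pass to the Edwards--Sokal measure, condition on $\ul\eta$, and then invoke Lemma~\ref{lem:coupling-es}(ii) to compute the conditional probability of $\{\sigma_i=\sigma_j\}$. The only difference is that the paper leaves the case analysis around $v^\star$ implicit, whereas you spell it out explicitly.
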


\begin{proof}
{In view of Lemma \ref{lem:coupling-es}(i), for any $(i,j) \in E^\star$,} 
\[
\mu_{\Graph^\star}^{\be,B} (\sigma_i=\sigma_j) = 
\varpi_{\Graph^\star}^{\be,B} \left[{\bf 1}(\sigma_i=\sigma_j) \right] =
\varphi_\Graph^{\be,B}\left[ 
\varpi_{\Graph^\star}^{\be,B} \left[{\bf 1}(\sigma_i=\sigma_j)  \big| \ul \eta\right] \right].
\] 
Now \eqref{eq:corr-potts-rc} follows from Lemma \ref{lem:coupling-es}(ii)
{(recall also 
that $\mu_{\Graph}^{\be,B}$ is the $[q]^V$-marginal of 
$\mu_{\Graph^\star}^{\be,B}$).}
\end{proof}

{Hereafter, we denote by $\cP(S)$ the set of probability measures on a 
Polish space $S$. Further, adopting the notation of Definition \ref{dfn:inf-vol-RC}, 
for $\dagger \in \{o,\star\}$ let
$\bS^\dagger_t:=[q]^{V(\Tree_d^\dagger(t))}$ and 
$\B_t^\dagger := \{0,1\}^{E(\Tree_d^\dagger(t))}$ denote
the domains of the spins, and the bonds, respectively, in Edwards-Sokal measures on $\Tree^\dagger_d(t)$.
Using $\Xi_t^\dagger := \{0,1\}^{E(\sfK^\dagger(\partial \Tree_d(t)))}$ and
${\wh \B_t^\dagger :=} \{0,1\}^{E(\wh\Tree_d^\dagger(t))}$ for such 
domains of boundary edge bonds, and of the bonds on the tree with its 
edge boundary, respectively, we shall view each bond assignment $\wh{\ul \eta} \in \wh{\B}_t^\dagger$ on $\wh\Tree_d^\dagger(t)$
as the pair $\wh{\ul \eta}=(\ul \eta, {\bm \eta})$ of assignments $\ul \eta \in \B_t^\dagger$ for bonds
in $\Tree_d^\dagger(t)$ and ${\bm \eta} \in \Xi_t^\dagger$ for the boundary edge bonds.}

{We shall} show that the Edwards-Sokal measures on $\Graph_n$
(and hence also the corresponding Potts and random cluster measures), 
admit local weak limits along sub sequences satisfying {certain} key properties. 
{To this end, we proceed to define} the spaces in which such local weak limits {must be}
{(where to lighten our notation we suppress the dependence of these spaces 
on $d$, $q$ and $\be,B \ge 0$).}

\begin{dfn}[Mixtures of Edwards-Sokal and \abbr{rcm}-s {on $\Tree_d$}]\label{dfn:mixture-rc}
Fixing $t \in \N$, let $\gC_\star:=\gC_\star(t)$ denote
the {collection} of all partitions $\sC={\{}C_\star, C_1, C_2, \ldots, C_k{\}}$ of {the labeled set} $\partial \Tree_d(t) \cup \{v^\star\}$ where $v^\star \in C_\star$ and
$\gC_o:=\gC_o(t)$ be all such partitions for which $C_\star=\{v^\star\}$.
Recalling Definition \ref{dfn:inf-vol-RC} 
of $\sfK^\dagger(\partial \Tree_d(t))$ and $\wh\Tree_d^\dagger(t)$, $\dagger \in \{o, \star\}$, 
{we identify each} 
$\sC \in \gC_\dagger$ {with}
the subgraph of $\sfK^\dagger(\partial \Tree_d(t))$ {having} 
edges between $i,j \in \partial \Tree_d(t) \cup \{v^\star\}$ {\em if and only if} $i$ and $j$ belong to {the same
block of} $\sC$.
{Alternatively, each $\sC \in \gC_\dagger$ corresponds to the boundary edge 
assignment ${\bm \eta}(\sC)$ such that 
${\bm \eta}_e = 1$ if and only if the edge $e$ is within a block of $\sC$.
Now, let $\varpi^{\be, B, t}_\sC$} be 
the Edwards-Sokal measure on $\wh\Tree_d^\dagger(t)$ conditioned {on 
${\bm \eta}={\bm \eta}(\sC)$ (namely, on open} bonds in $\sC$ 
and {closed bonds in} $\sfK^\dagger(\partial \Tree_d(t))\setminus \sC$). 
Thus, for {$\sC \in \gC_\dagger$,} $\ul \sigma \in \bS_t^\dagger$, and 
$\ul \eta \in \B_t^\dagger$,
\begin{equation}\label{dfn:ES-wh-cR}
\varpi_{\sC}^{\be, B, t}(\ul \sigma, \ul \eta)  \propto \prod_{e \in E(\Tree^\dagger_d(t))} \left[ 
(1-p_e) (1-\eta_e) +
p_e \eta_e \delta_e(\ul \sigma) \right] \prod_{j=1}^k \prod_{u,v \in C_j} \delta_{\sigma_u, \sigma_v}  \prod_{v \in C_\star} \delta_{\sigma_v,1},
\end{equation}
{restricting for $\dagger=o$ to $B=0$ and eliminating then the (trivial) product over $C_\star$.
We further view $\varpi_{\sC}^{\be, B, t}(\ul \sigma,\ul \eta)$ of \eqref{dfn:ES-wh-cR} also 
as the probability of $(\ul \sigma, \wh{\ul \eta})$ for $\wh{\ul \eta}=(\ul \eta,{\bm \eta}(\sC))$.}
Now define 
\begin{align}\label{eq:def-wh-cR}
{\wt\cR}_\dagger(t)&:=\Big\{\varpi: \varpi= \sum_{\sC \in \gC_\dagger} \rho (\sC)  \varpi_{\sC}^{\be, B, t}  \text{ for some } \rho \in \cP(\gC_\dagger)\Big\},  \qquad \dagger \in \{o,\star\}.
 \end{align} 
{That is, $\wt \cR_\dagger(t)$ denotes} the collection of measures on
{spins and bonds of $\Tree_d^\dagger(t)$ induced by} mixtures of Edwards-Sokal measures 
on $\wh \Tree_d^\dagger(t)$ conditioned to have the edge boundary {bonds ${\bm \eta}(\sC)$.}
{Likewise, the \abbr{rcm} conditioned to such edge boundary, gives the measure on $\B_t^\dagger$}
\beq\label{eq:varphi-sC1}
\varphi^{\be, B, t}_\sC(\ul \eta):= \varphi^{\be, B}_{\wh\Tree_d^\dagger (t)}( (\ul \eta, {\bm \eta}) \,|\,
{\bm \eta} = {\bm \eta} (\sC) 
), 
\eeq
with the corresponding space of mixtures 
\beq\label{eq:def-cR}
 \cR_\dagger(t):=\Big\{\varphi: \varphi= \sum_{\sC \in \gC_\dagger}  \rho (\sC) 
 \varphi^{\be, B, t}_\sC  \text{ for some } \rho \in \cP(\gC_\dagger) \Big\} \,, \qquad 
 \dagger \in \{o,\star\} \,,
 \eeq
 {where for $\dagger=o$ we restrict \eqref{eq:def-wh-cR}-\eqref{eq:def-cR} to $B=0$
 (and eliminate in this case the irrelevant $v^\star$).}
{Further viewing each $\varphi^{\be, B, t}_\sC(\ul \eta)$ also as the 
probability of $\wh{\ul \eta}=(\ul \eta,{\bm \eta}(\sC))$ makes
$\cR_\dagger(t)$ a subset of $\cP(\wh \B_t^\dagger)$, and the 
mixture coefficients are then uniquely determined by $\rho (\sC) = \varphi({\bm \eta} = {\bm \eta}(\sC))$.}

\bigskip
{We next show} that the $t$-marginals of both the free and wired \abbr{rcm}-s on $\Tree_d$ 
reside in the spaces {$\cR_\dagger(t)$ of Definition \ref{dfn:mixture-rc}.}

\begin{lem}\label{lem:rc-fw-limit}
For $\be, B \ge 0$ and $\ddagger \in \{\free, \wired\}$ the \abbr{rcm} 
$\varphi_{\ddagger}^{\be, B}$ on $\Tree_d^\star$ exists {with marginals}
$\varphi_{\ddagger}^{\be, B,t} \in \cR_\star(t)$ {and}
$\varphi_{\ddagger}^{\be, 0,t} \in \cR_o(t)$ for any $t \in \N$.
\end{lem}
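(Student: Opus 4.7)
The plan is to combine the spatial Markov property of the \abbr{rcm} with standard FKG monotonicity.

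For existence of the limits in \eqref{eq:f-w-rc-limit}, by the finiteness of $\wh \B_s^\star$ for each $s$ and Kolmogorov consistency it suffices to show that for every fixed $t$ the sequence $\{\varphi_{\ddagger, s}^{\be, B}|_{\B_t^\star}\}_{s \ge t}$ converges as $s \uparrow \infty$. For $\ddagger = \wired$, I would condition $\varphi_{\wired, s}^{\be, B}$ on the bonds of $\wh \Tree_d^\star(s)$ lying outside $\Tree_d^\star(t)$: the inner measure is then an \abbr{rcm} on $\Tree_d^\star(t)$ whose boundary is wired according to the random partition $\sC^{\mathrm{ext}}$ of $\partial \Tree_d(t) \cup \{v^\star\}$ induced by external open paths. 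Since every such $\sC^{\mathrm{ext}}$ is a refinement of the single-block (fully wired) partition, the standard Holley--FKG comparison of \abbr{rcm}s with partition boundary conditions yields
\[
\varphi_{\wired, s}^{\be, B}\big|_{\B_t^\star} \;\preceq_{\mathrm{st}}\; \varphi_{\wired, t}^{\be, B}\big|_{\B_t^\star},
\]
and applying the same argument at any intermediate depths $t \le s_1 \le s_2$ gives a monotone decreasing sequence in $s$. Symmetrically, the free boundary corresponds to the all-singleton (minimal) partition, so $\varphi_{\free, s}^{\be, B}|_{\B_t^\star}$ is stochastically increasing in $s$. Both monotone limits thus exist on the finite state space $\B_t^\star$.

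For the structural statement I would fix $t$ and $s > t$ and use the same spatial Markov decomposition to write
\[
\varphi_{\ddagger, s}^{\be, B}\big|_{\B_t^\star} \;=\; \sum_{\sC \in \gC_\star(t)} \rho_{\ddagger, s}(\sC)\; \varphi_\sC^{\be, B, t},
\]
where $\rho_{\ddagger, s}(\sC)$ is the $\varphi_{\ddagger, s}^{\be, B}$-probability that the bonds outside $\Tree_d^\star(t)$ induce the partition $\sC$ on $\partial \Tree_d(t) \cup \{v^\star\}$. This identity is an instance of \eqref{eq:varphi-sC1}: wiring the blocks of $\sC$ through the auxiliary graph $\sfK^\star(\partial \Tree_d(t))$ in $\wh \Tree_d^\star(t)$ reproduces the same partition of the boundary, and hence the same cluster-count dependence on bonds in $\B_t^\star$, as do the external connections in $\wh \Tree_d^\star(s)$. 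So each such $t$-marginal lies in $\cR_\star(t)$; since $\cR_\star(t)$ is the image of the compact simplex $\cP(\gC_\star(t))$ under the continuous affine map $\rho \mapsto \sum_\sC \rho(\sC)\, \varphi_\sC^{\be, B, t}$ it is closed, so passing to the limit $s \uparrow \infty$ concludes $\varphi_\ddagger^{\be, B, t} \in \cR_\star(t)$.

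When $B = 0$ one has $p_e = 0$ on every $e \in E^\star \setminus E$, hence $\eta_e = 0$ almost surely on such edges and $v^\star$ remains an isolated singleton of $\sC^{\mathrm{ext}}$; this forces $\sC^{\mathrm{ext}} \in \gC_o(t)$ and places the resulting marginal in $\cR_o(t)$ after discarding the trivial $v^\star$ coordinate. The main obstacle I anticipate is a careful bookkeeping of the cluster-count factor $q^{|\sC(\ul \eta)|}$ in the spatial Markov identification: the conditional \abbr{rcm} on $\B_t^\star$ counts clusters formed through bonds in $\wh \Tree_d^\star(s) \setminus \Tree_d^\star(t)$, while $\varphi_\sC^{\be, B, t}$ counts clusters formed through the auxiliary edges of $\sfK^\star(\partial \Tree_d(t))$ with $\bm \eta = \bm \eta(\sC)$ fixed; both descriptions induce the same partition of $\partial \Tree_d(t) \cup \{v^\star\}$, so the cluster counts on $\Tree_d^\star(t)$ coincide up to an $\sC$-dependent constant absorbed into the normalization.
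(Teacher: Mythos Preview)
Your proposal is correct and follows essentially the same route as the paper. Both arguments use FKG monotonicity of the \abbr{rcm} for the existence of the limits in \eqref{eq:f-w-rc-limit}, and both use the spatial Markov property to identify the $t$-marginal as a mixture $\sum_{\sC}\rho(\sC)\varphi_\sC^{\be,B,t}$ over partitions of $\partial\Tree_d(t)\cup\{v^\star\}$, then pass to the limit via compactness of $\cR_\star(t)$; the only cosmetic difference is that the paper packages the structural claim as a one-step consistency implication $\varphi\in\cR_\star(t+1)\Rightarrow\varphi^t\in\cR_\star(t)$ (your \eqref{clm:marginal}) and iterates, whereas you decompose directly from depth $s$ to depth $t$.
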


\begin{proof} {Fix $\be,B \ge 0$. For the existence of $\varphi_\ddagger^{\be, 0}$ on $\Tree_d$}
see \cite[Theorem 10.67]{G-RC}. {More generally, fix $t \in \N$ and note} that, by definition, 
\beq\label{eq:varphi-t-marg}
\varphi_{\wired, t}^{\be, B}(\cdot)= \varphi_{\wh\Tree_d^\star(t+1)}^{\be, B}(\cdot|\eta_e=1, e \in \sfK^\star(\partial \Tree_d(t+1)) \cup \sfK^\star(\partial \Tree_d(t)) \cup E(\partial\Tree_d(t), \partial \Tree_d(t+1))), 
\eeq
where for two sets of vertices $S$ and $S'$ the notation $E(S,S')$ denotes the collection of edges between $S$ and $S'$. As $\red{\varphi_{\wh \Tree_d^\star(t+1)}^{\be, B}}$ is a strictly positive measure satisfying \abbr{FKG} inequality (see \cite[Proposition 4.3]{DMS}), by \cite[Theorem 2.24]{G-RC} it is monotonic. This observation together with the definition of $\varphi_{\wired, t+1}^{\be, B}$ and \eqref{eq:varphi-t-marg}  entails that $\varphi_{\wired, t}^{\be, B}[f] \ge \varphi_{\wired, t+1}^{\be, B}[f]$ for any increasing function $f$ on 
$\B^\star_s$ with $s \le t$. {As each $\B^\star_s$ is in the linear span of increasing indicator functions,
the} existence of 
$\lim_{t \to \infty}\varphi_{\wired, t}^{\be, B}[f]$ for {any $s \in \N$ and every increasing} $f$
on $\B^\star_s$ implies the existence of the limit $\varphi_{\wired}^{\be, B}$ {(in the sense of 
weak convergence). The existence of $\varphi_{\free}^{\be, B}$ follows similarly, since  
$\varphi_{\free, t}^{\be, B}[f] \le \varphi_{\free, t+1}^{\be, B}[f]$ for any 
increasing function $f$ on $\B^\star_s$ with $s \le t$.}

Turning to prove that $\varphi_\ddagger^{\be,B, t} \in \cR_\star(t)$, we start by showing that
\beq\label{clm:marginal}
\varphi \in \cR_\star(t+1) \qquad \Longrightarrow \qquad {\varphi^t} \in \cR_\star(t) \,,
\eeq 
{where $\varphi^t$ denotes the marginal of $\varphi$ on bonds of $\Tree_d^\star(t)$.} 
Indeed, setting $\wt \Xi_t:=\{0,1\}^{E(\Tree_d^\star(t+1))\setminus E(\Tree_d^\star(t))}$, 
any pair $\ul \eta \in \wt \Xi_t$ and $\sC \in \gC_\star(t+1)$ induces
a partition $\sC'(\ul \eta, \sC) \in \gC_\star(t)$ {and further determines the difference between 
the number of connected components of $\wh{\Tree}^\star_d(t)$ with boundary $\sC'$ and the number of 
connected components of $\wh{\Tree}^\star_d(t+1)$ with boundary $\sC$. Hence, by definition} 
\beq\label{eq:t+1-to-t-pre}
\varphi_{\sC}^{\be,B,t+1}(\ul \eta^0 \,|\, \ul\eta)= \varphi^{\be, B,t}_{\sC'(\ul \eta, \sC)}(\ul \eta^0) \,,
\qquad \forall \ul \eta^0 \in \B^\star_t \,.
\eeq  
Thus, if $\varphi \in \cR_\star(t+1)$, then for some $\rho \in \cP(\gC_\star(t+1))$ and 
any $\ul \eta^0 \in \B^\star_t$,
\beq\label{eq:t+1-to-t}
\varphi (\ul\eta^0) =  \sum_{\sC \in\gC_\star(t+1)} \sum_{\ul \eta \in \wt \Xi_t}
\rho(\sC) \varphi_{\sC}^{\be,B,t+1} (\ul \eta) \varphi^{\be, B, t}_{\sC'(\ul \eta, \sC)}(\ul\eta^0)  
= \sum_{\sC_0 \in \gC_\star(t)} \rho'(\sC_0) \varphi^{\be, B, t}_{\sC_0}(\ul\eta^0) \,,
\eeq
where $\rho'(\sC_0)$ denotes the sum of $\rho(\sC) \varphi_{\sC}^{\be,B,t+1} (\ul \eta)$ 
over all pairs $(\ul \eta, \sC)$ for which $\sC'(\ul \eta, \sC)=\sC_0$. {Having thus established
\eqref{clm:marginal}, recall that $\varphi_{\ddagger, s}^{\be, B} \in \cR_\star(s)$ for any $s \in \N$. 
Hence, by iteratively applying \eqref{clm:marginal}
we deduce that} the $t$-dimensional marginal $\varphi_{\ddagger, s}^{\be, B, t}$ 
of $\varphi_{\ddagger, s}^{\be, B}$ is in {the compact set $\cR_\star(t)$ for any $s \ge t$.} 
Since $\varphi_{\ddagger,s}^{\be, B, t} \Rightarrow \varphi_{\ddagger}^{\be, B, t}$ as $s \to \infty$, 
{we conclude} that $\varphi_{\ddagger}^{\be, B, t} \in \cR_\star(t)$.

{For $B=0$ we have in \eqref{eq:dfn:p-e} that $p_e=0$ for any $e \in E^\star \setminus E$ touching 
$v^\star$. Thus, then} $v^\star$ is an isolated vertex {and in particular $C_\star=\{v^\star\}$.  
So, in this case we can \abbr{wlog} replace $\gC_\star(t)$ by $\gC_o(t)$ throughout the 
preceding proof, while also
replacing our spin and bond domains $\Tree_d^\star(t)$ and $\sfK^\star(\partial \Tree_d(t))$ by 
$\Tree_d (t)$ and $\sfK (\partial \Tree_d(t))$, respectively, to conclude that}  
$\varphi_\ddagger^{\be, 0, t} \in \cR_o(t)$.
\end{proof}

{Having a lattice of partitions $\gC_\dagger(t)$ induces a stochastic ordering on
the
\abbr{rcm} mixtures in $\cR_\dagger(t)$.}

\begin{dfn}[Stochastic ordering on $\cR_\dagger(t)$]\label{dfn:st-order}
Fix $t \in \N$ and $\dagger \in \{o, \star\}$. Recall from Definition \ref{dfn:mixture-rc} 
{our embedding of $\gC_\dagger(t)$ inside $\Xi^\dagger_t$ via $\sC \mapsto {\bm \eta}(\sC)$
and the one-to-one mapping it induces between $\cR_\dagger(t)$ and $\cP(\gC_\dagger(t))$, by 
matching each
\[
\varphi (\wh{\ul \eta}) = \sum_{\sC \in \gC_\dagger(t)} \rho(\sC) \varphi_{\sC}^{\be,B,t} (\wh{\ul \eta}) \,,
\]
with the distribution $\rho(\sC)=\varphi({\bm \eta} = {\bm \eta}(\sC))$ of a  
{\em random edge boundary} ${\bm \eta}$ supported on $\gC_\dagger(t) \subset \Xi^\dagger_t$.}
An edge boundary ${\bm \eta}$ of law $\rho$ is stochastically dominated by ${\bm \eta}'$ of law
$\rho'$, denoted by ${\bm \eta} \st {\bm \eta'}$ (or by $\rho \st \rho'$), if $\rho (f) \le \rho'(f)$ for every function $f$
which is non-decreasing with respect to the {usual} 
partial ordering of $\Xi^\dagger_t$. Equivalently, ${\bm \eta} \st {\bm \eta}'$ if and only if there is a coupling such that ${\bm \eta} \le {\bm \eta}'$ in the sense of partial ordering on $\Xi^\dagger_t$. {For random edge boundaries in $\gC_\dagger(t)$ this is further}
equivalent to a coupling {with such partial order on the} induced partitions $\sC({\bm \eta}) \le \sC({\bm \eta}')$
{(i.e.~where $\sC({\bm \eta})$ is a refinement of $\sC({\bm \eta}')$).}
This notion extends to a stochastic ordering on $\cR_\dagger(t)$ by saying that 
$\varphi \in \cR_\dagger(t)$ is stochastically dominated by $\varphi'
\in \cR_\dagger(t)$ (denoted by $\varphi \preccurlyeq \varphi'$), if and only if 
${\bm \eta} \st {\bm \eta'}$ for the corresponding random edge boundaries.
\end{dfn}

{The proof of Theorem \ref{thm:main-1} crucially relies on our next lemma, which reduces 
the question whether two stochastically ordered measures in $\cR_\dagger(s+1)$ are equal, 
to the evaluation of the corresponding expectations for} a {\em single functional} $\gF_s$ (defined below).
\begin{lem}\label{lem:ordering-gF}
Fix $s \in \N$ and $\dagger \in \{\star, o\}$ {(with $B=0$ if $\dagger=o$).}\\
Suppose $\varphi, \wt \varphi \in \cR_\dagger(s+1)$ are such that $\varphi \st \wt \varphi$. 
\begin{enumerate}
\item[(i)] Then, $\varphi^s \st \wt \varphi^s$ for the $s$-dimensional marginals
of $\varphi$ and $\wt \varphi$. 
\item[(ii)] {If also $\varphi \ne \wt \varphi$,
then $\varphi(\gF_s) < \wt\varphi(\gF_s)$} for 
\begin{equation}\label{eq:def-gF}
\gF_s:= \sum_{i \in \partial \Tree_d(s)} \sum_{j \in \partial i} {\bf 1}(i \bij j),
\end{equation}
{where $\partial i$ denotes the neighborhood of $i$ in $\Tree_d(s+1)$ and 
$\{i \bij j\}$ denotes the event that there exists an open path in $\wh \Tree^\dagger_d(s+1)$
connecting $i$ and $j$.} 
\end{enumerate}
\end{lem}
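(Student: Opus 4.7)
The plan for both parts is to build a monotone coupling of the full edge configurations on $\wh\Tree_d^\dagger(s+1)$ by combining Strassen's theorem for the edge boundaries with \abbr{FKG} monotonicity of the conditional \abbr{RCM}-s in their boundary condition, and then for part (ii) to upgrade this weak monotonicity to \emph{strict} monotonicity using that every $p_e \in (0,1)$.

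For part (i), I first apply Strassen to the hypothesis $\varphi \st \wt\varphi$ to obtain a coupling $({\bm \eta}, \wt{\bm \eta})$ of the boundary edge configurations with ${\bm \eta} \le \wt{\bm \eta}$ a.s. The conditional \abbr{RCM}-s satisfy $\varphi_\sC^{\be, B, s+1} \st \varphi_{\sC'}^{\be, B, s+1}$ whenever $\sC \le \sC'$, since this amounts to conditioning on more open bonds in the \abbr{FKG}-monotonic \abbr{RCM} (cf.\ \cite[Theorem 2.24]{G-RC} and \cite[Proposition 4.3]{DMS}). Applying this conditionally on $({\bm \eta}, \wt{\bm \eta})$ lets me further couple the tree bonds $(\ul \eta, \wt{\ul \eta})$ with $\ul \eta \le \wt{\ul \eta}$ a.s. The $s$-marginal $\varphi^s$ is determined by the restriction of $(\ul \eta, {\bm \eta})$ to $E(\Tree_d^\dagger(s))$-bonds together with the partition of $\partial \Tree_d(s) \cup \{v^\star\}$ induced by connectivity through the remaining ``outer'' bonds (those in $E(\wh\Tree_d^\dagger(s+1)) \setminus E(\Tree_d^\dagger(s))$); both of these projections are monotone in $(\ul \eta, {\bm \eta})$, so $\varphi^s \st \wt\varphi^s$.

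For part (ii), set $G(\sC) := \varphi_\sC^{\be, B, s+1}(\gF_s)$ and aim to prove that $G$ is \emph{strictly} increasing on the lattice $(\gC_\dagger(s+1), \le)$. Granting this, Strassen delivers a coupling $\sC \le \wt\sC$ of the $\varphi$- and $\wt\varphi$-boundaries with $\P(\sC \ne \wt\sC) > 0$ (since $\varphi \ne \wt\varphi$), whence splitting on $\{\sC = \wt\sC\}$ and its complement yields $\varphi(\gF_s) = \E[G(\sC)] < \E[G(\wt\sC)] = \wt\varphi(\gF_s)$. It suffices to establish strict increase of $G$ when $\sC'$ is obtained from $\sC$ by merging the two blocks containing the endpoints of a single boundary edge $e = (u, v)$ with ${\bm \eta}(\sC)_e = 0$. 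The weak inequality $G(\sC') \ge G(\sC)$ follows from the monotone coupling above and the fact that $\gF_s$ is non-decreasing in the bond configuration. For strict inequality, I will exhibit a specific tree-bond configuration $\ul \eta^\ast$ on which $\gF_s(\ul \eta^\ast, {\bm \eta}(\sC')) > \gF_s(\ul \eta^\ast, {\bm \eta}(\sC))$; this suffices because every $p_e \in (0,1)$ makes such $\ul \eta^\ast$ have positive probability under $\varphi_{\sC'}^{\be, B, s+1}$ (the conditioning only fixes the boundary edges).

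The desired $\ul \eta^\ast$ is constructed by a short case analysis in $e$. If $u, v \in \partial \Tree_d(s+1)$ share a parent $i$ at level $s$, take $(i, u)$ open and all other bonds in $E(\Tree_d^\dagger(s+1))$ closed; then $i \not\bij v$ before $e$ is opened (as ${\bm \eta}(\sC)_e = 0$ forces $u, v$ into distinct $\sC$-blocks) while $i \bij v$ through $i - u - v$ afterwards, adding the pair $(i, v)$ to $\gF_s$. If $u, v$ have distinct parents $i_u, i_v$, let $P$ be the unique tree path from $u$ to $v$ through their lowest common ancestor; open every edge of $P$ except the one joining $i_u$ to its parent on $P$, and close every other bond of $E(\Tree_d^\dagger(s+1))$. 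Then $i_u$ is separated from this parent before $e$ is opened but connected to it via the alternate route $i_u - u - v - i_v - \cdots$ once $e$ is added, producing a newly counted neighbor-pair. Finally, if $v = v^\star$, open only the tree edge $(i_u, v^\star)$; then $u$ is disconnected from $i_u$ before $e$ (because $u$ is not in the $\sC$-block of $v^\star$) but $u \bij i_u$ through $u - v^\star - i_u$ afterwards. The main obstacle will be checking, in each case, that no other block of $\sC$ inadvertently wires the relevant pair through back-paths before $e$ is opened; this is handled by closing essentially all tree bonds so that each boundary block of $\sC$ remains trapped among the level-$(s+1)$ vertices and $v^\star$.
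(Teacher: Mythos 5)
Your overall strategy matches the paper's: an FKG/Strassen monotone coupling plus a witnessing bond configuration of positive probability on which $\gF_s$ strictly increases. Part (i) is essentially identical to the paper's argument. For part (ii), you reorganize the strict-increase step by proving that $G(\sC):=\varphi_\sC^{\be,B,s+1}(\gF_s)$ is strictly increasing on the partition lattice, reducing to covering relations (single merges) and running a three-case tree-geometry analysis of the merged boundary edge $(u,v)$. The paper instead fixes a single boundary pair $(u,u')$ on which the coupling disagrees, passes to the parent $w$ of $u$, and designs the unified isolation event $\cA$ so that $\{u\bij w\}\cap \cA^c$ forces the route through the newly opened boundary bond; this produces a single construction for the witness (through $v^\star$ when $B>0$, along a tree path when $B=0$, $\be>0$). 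Your lattice-monotonicity framing is conceptually tidy but pays for it with the three-way geometric split, and both routes ultimately rest on the same ingredients.

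There is, however, a genuine gap. In your cases 1 and 2 the witnessing configuration opens non-ghost tree edges, whose opening probability is $p_e=1-e^{-\be}$, and this vanishes at $\be=0$. The lemma is asserted for all $\be,B\ge 0$, and $\be=0$ with $B>0$ (so $\dagger=\star$) is an admissible parameter: for such $(\be,B)$ every witnessing configuration you construct in cases 1 and 2 has zero probability under $\varphi_{\sC'}^{\be,B,s+1}$, so the strict inequality $G(\sC)<G(\sC')$ does not follow from your argument. The paper avoids this by always routing the witness through $v^\star$ whenever $B>0$ (opening only the ghost edges $(w,v^\star)$ and $(u',v^\star)$, whose probability $1-e^{-B}$ is positive for any $\be\ge 0$), reserving the tree-path construction for $B=0$, $\be>0$, and then noting that $\be=B=0$ is degenerate. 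To repair your proof, for $B>0$ replace the tree-edge witnesses in cases 1 and 2 with ghost-edge witnesses running through $v^\star$ from the relevant level-$s$ vertex to the appropriate level-$(s+1)$ vertex, as in the paper's unified construction, after first ensuring (by swapping $u$ and $v$ if necessary) that the vertex you isolate is not already in $C_\star$.
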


\begin{proof}
(i). Since $\varphi \st \wt \varphi $ there is a coupling such that {$\sC({\bm \eta}) \le \sC(\wt {\bm \eta})$
for the partitions induced by} the corresponding random edge boundaries
${\bm \eta}$ and $\wt{\bm \eta}$. 
Let ${\bm \eta}_s$ and $\wt{\bm \eta}_s$ denote the random vectors in $\wt \Xi_s$ 
distributed according to the corresponding marginal of $\varphi^{\be, B, s+1}_{\sC({\bm \eta})}$ 
and $\varphi^{\be, B, s+1}_{\sC(\wt{\bm \eta})}$, respectively. Since the \abbr{rcm} $\varphi_{\wh\Tree_d^\star(s+1)}^{\be, B}$ is monotonic (see \cite[Proposition 4.3]{DMS} and \cite[Theorem 2.24]{G-RC}), we have
that 
\beq\label{eq:monotonic-0}
\varphi_{\sC({\bm \eta})}^{\be, B, s+1}(f) \le \varphi_{\sC(\wt{\bm \eta})}^{\be, B, s+1}(f)
\eeq
for any increasing function {$f$ on $\wh \B^\star_{s+1}$.} This implies in turn that 
{$\varphi (g) \le \wt \varphi (g)$ for any function $g$ of $({\bm \eta}_s,{\bm \eta})$
which is increasing on $\wt \Xi_s \times \Xi^\star_{s+1}$. That is,} 
$({\bm \eta}_s, {\bm \eta}) \st (\wt{\bm \eta}_s, \wt{\bm \eta})$. Recall that any given pair 
$\bar{\bm \eta}_s \in \wt \Xi_s$ and $\bar{\bm \eta} \in \Xi^\star_{s+1}$ induces 
an edge boundary $\bar{\bm \eta}' \in \Xi^\star_{s}$. Furthermore, the map $(\bar{\bm \eta}_s, \bar{\bm \eta}) \mapsto \bar{\bm \eta}'$ is increasing, {hence also} ${\bm \eta}' \st  \wt{\bm \eta}'$ for
the random edge boundaries ${\bm \eta}'$ and $\wt{\bm \eta}'$ induced by
the pairs $({\bm \eta}_s, {\bm \eta})$ and $(\wt{\bm \eta}_s, \wt{\bm \eta})$, respectively.  {Recall 
\eqref{eq:t+1-to-t-pre}} that ${\bm \eta}'$ and $\wt {\bm \eta}'$ are {precisely the} 
edge boundaries {of} $\varphi^s$ and $\wt \varphi^s$, respectively. Thus,
$\varphi^s \st \wt \varphi^s$ {as claimed}. In case $B=0$ and $\dagger=o$ {we can and shall 
follow the same argument, while eliminating throughout the isolated ghost vertex $v^\star$.} 

\noindent
(ii). Upon considering \eqref{eq:monotonic-0} for the increasing events $\{i \bij j \}$, we deduce 
that $\varphi(i \bij j) \le \wt \varphi (i \bij j)$ for any $(i,j) \in \Tree^\star_d(s+1)$, hence also 
$\varphi (\gF_s) \le \wt \varphi(\gF_s)$. Now since $\varphi \ne \wt \varphi$ and $\gC_\star(s+1)$
is a finite set, there exist $u, u' \in \sfK^\star (\partial \Tree_d(s+1))$ 
such that under the monotone coupling ${\bm \eta} \le \wt{\bm \eta}$,  
with positive probability $\wt {\bm \eta}_{(u,u')} = 1$ while $\bm \eta_{(u,u')}=0$.
In particular, $u$ and $u'$ are in different blocks of $\sC({\bm \eta})$ and we may
further assume that ${\bm \eta}_{(u,v^\star)}=0$ (or else, exchange $u$ with $u'$).  
It thus suffices to show that any such boundary edges result with 
\beq\label{eq:monotonic-st}
\varphi_{\sC({\bm \eta})}^{\be, B, s+1}(u \bij w) < \varphi_{\sC(\wt{\bm \eta})}^{\be, B, s+1}(u \bij w) \,,
\eeq
where $w \in \partial \Tree_d(s)$ is the parent of $u$. 
Turning to prove \eqref{eq:monotonic-st}, consider the event 
\[
\cA := \{u \stackrel{{\rm in}}{\bij} v^\star\} \cup
\{w \stackrel{{\rm in}}{\bij} \partial \Tree_d(s+1) \setminus \{u'\} \} \,,
\]
where $\{U \stackrel{{\rm in}}{\bij} U'\}$ denotes the event of an {\em open path within} 
$\Tree_d^\star(s+1)$ 
between some vertex of $U$ and some vertex of $U'$. With $\{u \bij w\} \cap \cA$ an increasing event,
for which \eqref{eq:monotonic-0} holds, we arrive at \eqref{eq:monotonic-st} upon showing that
\beq\label{eq:monotonic-stt}
\varphi_{\sC(\wt{\bm \eta})}^{\be, B, s+1}(\{u \bij w\} \cap \cA^c) > 0  \quad  \text{and} \quad
\varphi_{\sC({\bm \eta})}^{\be, B, s+1}(\{u \bij w\} \cap \cA^c) =0 \,.
\eeq
To this end, the event $\cA^c$ implies that apart from boundary edges, $u$ is an 
isolated vertex and there is no open path between $w$ and $\partial \Tree_d(s+1)$
except possibly between $w$ and $u'$. Thus,
\[
\{u \bij w \} \cap \cA^c = \Big( \{u \stackrel{{\rm ex}}{\bij} u' \stackrel{{\rm in}}{\bij} w\}
\cup \{u \stackrel{{\rm ex}}{\bij} v^\star  \stackrel{{\rm in}}{\bij} w \} \Big) \cap \cA^c \,,
\]
where $\{u \stackrel{{\rm ex}}{\bij} u'\}$ denotes the existence of an open path with
{\em only boundary edges} between $u$ and $u'$. The latter event amounts to having
both $u$ and $u'$ in the same block of $\sC$, so our 
observation that $u$ is neither in the same block of $\sC({\bm \eta})$ as $u'$ nor in that of
$v^\star$,
yields the right part of \eqref{eq:monotonic-stt}. Further, with $\wt {\bm \eta}_{(u,u')} = 1$, 
the left side of \eqref{eq:monotonic-stt} holds as soon as
\beq\label{eq:monotonic-1}
\varphi_{\sC(\wt{\bm \eta})}^{\be, B, s+1} (\{w \stackrel{{\rm in}}{\bij} u'\} \cap \cA^c) > 0 \,. 
\eeq
For $B>0$ we get \eqref{eq:monotonic-1} 
by opening only the edges $(w,v^\star)$ and $(u',v^\star)$ of $\Tree_d^\star(s+1)$. In case $B=0$ 
we follow the same reasoning in $\Tree_d(s+1)$ (i.e.~without the isolated $v^\star$), 
now satisfying the event $\{w \stackrel{{\rm in}}{\bij} u'\} \cap \cA^c$ by opening only 
the edges of $\Tree_d(s+1)$ which lie on the unique path from $w$ to $u'$, 
the probability of which is strictly positive whenever $\be>0$. Finally, in case
of $B=\be=0$ there is nothing to prove, for then $\cR_\star(s+1)$ is a singleton
(as all bonds of $\Tree^\star_d(s+1)$ are closed, regardless of
the boundary edge configuration $\sC$).
\end{proof}

{Thanks to our stochastic ordering of $\cR_\dagger(t)$,} all
marginals of local weak limit points of $\{\varphi_n^{\be, B}\}$ are
supported on the set of measures $\cS_\dagger(t)$ and $\cQ_\star(t)$ (the latter for $B >0$), 
as defined and shown below. Namely, such local weak limits 
are all sandwiched between the free and the wired \abbr{rcm}-s on $\Tree_d$, and additionally for $B>0$  their random edge boundary at level $t$ have edges {\em only} between $\partial \Tree_d(t)$ and the ghost $v^\star$. 
\begin{dfn}\label{dfn:btwn-f-w}
Fix $t \in \N$. For $\ddagger \in \{\free, \wired\}$, let $\varphi_\ddagger^{\be, B,t}$ be the 
marginal of $\varphi_\ddagger^{\be, B}$ on {$\B_t^\star$} and define 
 \[
\cS_\dagger(t):= \Big\{ \varphi \in \cR_\dagger(t):  
\varphi^{\be, B, t}_{\free} \st \varphi \st \varphi^{\be, B, t}_{\wired} \Big\}\,,
\]
where $\dagger=\star$ if $B>0$ and $\dagger=o$ if $B=0$. Let $\gD_\star = \gD_\star(t) \subset \gC_\star(t)$ be the collection of all partitions $\sC=\{C_\star, C_1, C_2, \ldots, C_k\}$ of the labeled set $\partial \Tree_d(t) \cup \{v^\star\}$ with $v^\star \in C_\star$ such that $|C_i|=1$ for all $i \ge 1$. 
Define
\[
\cQ_\star(t):=\Big\{\varphi \in \cS_\star(t): \varphi= \sum_{\sC \in \gD_\star}  \rho (\sC) 
 \varphi^{\be, B, t}_\sC  \text{ for some } \rho \in \cP(\gD_\star) \Big\}.
\]
\end{dfn}

As promised, we now show that the finite dimensional marginals of sub-sequential 
local weak limits of {our} Edwards-Sokal measures {must be supported on the} 
spaces {from Definition \ref{dfn:mixture-rc} and those for our \abbr{rcm}-s
must further be supported on the spaces from Definition \ref{dfn:btwn-f-w}.} 
\end{dfn}
\begin{lem}\label{lem:limit-point-properties}
Fix $\be, B \ge 0$, integer $q \ge 2$ and $\Graph_n \loc \Tree_d$.  Set $\dagger=\star$ if $B>0$
and $\dagger=o$ if $B=0$. 
\begin{enumeratea}
\item The sequence of measures $\{\varpi_n^{\be,B}\}$ admits sub-sequential local weak limits,
as do its marginals $\{\varphi_n^{\be,B}\}$ and $\{\mu_n^{\be,B}\}$, in bond and spin variables, 
respectively.
\item Any $t$-dimensional marginal of a local weak limit point $\gm_{\spin,\bond}$  
of $\{\varpi_n^{\be,B}\}$ satisfies $ \gm_{\spin,\bond}^t \in \cP(\wt \cR_\dagger(t))$. 
\item Any $t$-dimensional marginal of a local weak limit point $\gm_{\bond}$ of 
$\{\varphi_n^{\be,B}\}$ satisfies $\gm_{\bond}^t \in \cP(\cS_\dagger (t))$. 

\item For $B >0$, a $t$-dimensional marginal of 
such 
local weak limit point $\gm_{\bond}$ 
satisfies $\gm_{\bond}^t \in \cP(\cQ_\star (t))$.

\end{enumeratea}
\end{lem}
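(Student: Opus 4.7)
I would handle the four parts in order: (a)--(c) rely on a careful bookkeeping of the Edwards--Sokal structure together with FKG-monotonicity of finite-volume random cluster measures on $\wh\Tree_d^\star(T)$, while (d) is the delicate step and will require an additional decay-of-correlations input specific to $B>0$. For part (a), I first observe that, since the configuration space $\cX^{V(\Tree_d^\star(t))}\times\cY^{E(\Tree_d^\star(t))}$ is finite for each $t$, $\cP_d^{\star,t}$ is a compact simplex and $\cP(\cP_d^{\star,t})$ is compact in the weak topology. The laws of the random measures ${\sf P}_{\varpi_n^{\be,B}}^t(I_n)$ live in this compact set, and a standard diagonal extraction over $t\in\N$ yields subsequential local weak limits of $\{\varpi_n^{\be,B}\}$; their spin and bond projections handle $\{\mu_n^{\be,B}\}$ and $\{\varphi_n^{\be,B}\}$.

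For part (b), I would fix $n$, $t$, and $i\in[n]$ with $\sfB_i^\star(t)\cong\Tree_d^\star(t)$, and split $E_n^\star$ into the interior edges $E(\sfB_i^\star(t))$ and the external ones. The crucial input is Lemma \ref{lem:coupling-es}: the Edwards--Sokal density factorizes over edges and the only nonlocal coupling of spins occurs through the connected-component structure. Consequently, conditioning $\varpi_n^{\be,B}$ on the external bond configuration $\ul\eta_{\text{ext}}$ only sees it through the induced partition $\sC(\ul\eta_{\text{ext}})\in\gC_\star(t)$ of $\partial\sfB_i(t)\cup\{v^\star\}$ (two boundary vertices lie in a common block iff some open external path joins them, and $v^\star$ is placed accordingly), with conditional law on $\sfB_i^\star(t)$ equal to $\varpi_\sC^{\be,B,t}$ from \eqref{dfn:ES-wh-cR}. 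Averaging over $\sC$ gives ${\sf P}_{\varpi_n^{\be,B}}^t(i)=\sum_{\sC}\rho_n^{(i)}(\sC)\,\varpi_\sC^{\be,B,t}\in\wt\cR_\star(t)$, and since $\wt\cR_\star(t)$ is closed in the compact $\cP_d^{\star,t}$ while $\{\sfB_{I_n}(t)\cong\Tree_d(t)\}$ has probability $\to 1$, any limit point $\gm_{\spin,\bond}^t$ is supported on $\wt\cR_\star(t)$. When $B=0$, $v^\star$ is isolated, so $C_\star=\{v^\star\}$ always and we land in $\wt\cR_o(t)$.

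Projecting (b) onto bond variables already gives $\gm_\bond^t\in\cP(\cR_\dagger(t))$, so only the stochastic sandwiching of part (c) remains. I would fix $T\ge t$ and repeat the decomposition at scale $T$: on $\{\sfB_i^\star(T)\cong\Tree_d^\star(T)\}$ the bond-marginal of $\varphi_n^{\be,B}$ over $E(\sfB_i^\star(T))$ is a mixture $\sum_{\sC_T}\rho_n^{(i,T)}(\sC_T)\varphi_{\sC_T}^{\be,B,T}$, and the FKG-monotonicity of $\varphi_{\wh\Tree_d^\star(T)}^{\be,B}$ (cf.\ \cite[Proposition 4.3]{DMS} and \cite[Theorem 2.24]{G-RC}) stochastically sandwiches each $\varphi_{\sC_T}^{\be,B,T}$, and hence any mixture, between the extremal boundary choices $\varphi_{\free,T}^{\be,B}$ and $\varphi_{\wired,T}^{\be,B}$. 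Projecting to $E(\Tree_d^\star(t))$ preserves this ordering; since $\{\varphi^t:\varphi^t\st\cdot\}$ is weakly closed, the sandwich survives the local weak limit in $n$, and then letting $T\to\infty$ together with the monotone convergence $\varphi_{\ddagger,T}^{\be,B,t}\to\varphi_\ddagger^{\be,B,t}$ from Lemma \ref{lem:rc-fw-limit} yields $\gm_\bond^t\in\cP(\cS_\dagger(t))$.

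Part (d) is the \emph{main obstacle}, sharpening $\cS_\star(t)$ to $\cQ_\star(t)$ when $B>0$. Writing $A_{u,u'}$ for the event that distinct boundary vertices $u,u'\in\partial\Tree_d(t)$ lie in a common non-$v^\star$ block, the goal is $\E_{\gm_\bond^t}[\varphi^t(A_{u,u'})]=\lim_n\E_n[\varphi_n^{\be,B}(A_{u,u'}^{I_n})]=0$, where $A_{u,u'}^{I_n}$ corresponds to an open $\varphi_n^{\be,B}$-path in $\Graph_n\setminus(V(\sfB_{I_n}(t))\setminus\partial\sfB_{I_n}(t))$ joining $u$ to $u'$ whose cluster in $\Graph_n$ avoids $v^\star$. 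On the high-probability event $\sfB_{I_n}(T)\cong\Tree_d(T)$ any such path has length $\ge 2(T-t)$, so it suffices to show the probability of such long $v^\star$-avoiding connections vanishes with $T$. I would combine FKG-decorrelation between the increasing event ``$\pi$ open in the bulk'' and the decreasing event ``every ghost edge from $V(\pi)$ is closed'' with Bernoulli stochastic comparison of the RCM to obtain, path-by-path, a bound of order $p_\be^{|\pi|}(1-p_B')^{|V(\pi)|}$, where $p_B'=(1-e^{-B})/(1+(q-1)e^{-B})>0$, summing via the tree-like count $(d-1)^L$ of simple paths of length $L$. The hard part is that this direct estimate only converges when $(d-1)p_\be(1-p_B')<1$, which fails for large $\be$; to cover all $\be\ge 0$, I would replace the naive bound with exponential decay of the $v^\star$-avoiding two-point function under the external-field RCM, valid whenever $B>0$ via the sharpness-of-phase-transition argument (Aizenman--Barsky--Fern\'andez type) adapted to random cluster with a ghost vertex. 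Sending $T\to\infty$ drives the probability to $0$ and gives $\gm_\bond^t\in\cP(\cQ_\star(t))$.
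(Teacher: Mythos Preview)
Your handling of parts (a)--(c) is essentially the same as the paper's: compactness plus diagonal extraction for (a), the Edwards--Sokal decomposition ${\sf P}_{\varpi_n^{\be,B}}^t(i)=\sum_{\sC}\rho(\sC)\varpi_\sC^{\be,B,t}$ for (b), and the FKG sandwich at scale $T\ge t$ projected down and then sent $T\to\infty$ for (c). One remark: the claim ``projecting to $E(\Tree_d^\star(t))$ preserves the ordering'' is exactly Lemma~\ref{lem:ordering-gF}(i), which the paper proves separately (the ordering on $\cR_\dagger(t)$ is via random edge boundaries, not the usual coordinatewise one, so this step does need an argument).

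Part (d) is where your proposal has a real gap. You work on $\Graph_n$ and try to bound the probability of a long $v^\star$-avoiding open connection between two boundary vertices. Your path-counting estimate $(d-1)^L p_\be^L(1-p_B')^L$ only converges for small $\be$, as you acknowledge; the proposed rescue via an Aizenman--Barsky--Fern\'andez/sharpness argument adapted to the random cluster model with a ghost vertex is not available off the shelf and would itself be a substantial piece of work. Moreover, once the path leaves $\sfB_{I_n}(T)$ you lose control of the path count on $\Graph_n$, so the $(d-1)^L$ enumeration is not justified.

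The paper's argument for (d) avoids all of this and works for every $\be\ge 0$ with no external input. The key move is to argue \emph{on the limiting tree measure} rather than on $\Graph_n$: after (c), $\gm_\bond$-a.e.\ $\varphi$ has $\varphi^{t+s}\in\cR_\star(t+s)$ for all $s\ge 0$, so one only needs to show that any such $\varphi$ gives zero mass to boundary partitions $\sC_0\in\gC_\star(t)$ containing a non-$v^\star$ block $\{u,v,\ldots\}$. By consistency, $\varphi^t({\bm\eta}={\bm\eta}(\sC_0))$ is bounded by the $\varphi^{t+s}$-probability of the event $\Omega_{u,v}(s)$ that $u$ is connected to $v$ inside $\wh\Tree_d^\star(t+s)\setminus\Tree_d^\star(t)$ while avoiding $v^\star$. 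Because $\Tree_d$ is a tree, on $\Omega_{u,v}(s)$ there are two \emph{specific} open rays $P_u,P_v$ of length $s$ (from $u,v$ down to $\partial\Tree_d(t+s)$), and every one of the $2s$ ghost edges along $V(P_u\cup P_v)$ must be closed. Conditioning on all other bonds, the probability of this is at most $q^2 e^{-2Bs}$: the Bernoulli part gives $e^{-2Bs}$, and opening any subset of these ghost edges can change the cluster count by at most $2$ (merging the $u$- and $v$-clusters into the $v^\star$-cluster), so the cluster-weight ratio is bounded by $q^2$. Hence $\varphi^t({\bm\eta}={\bm\eta}(\sC_0))\le q^2 e^{-2Bs}\to 0$. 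No path summation, no sharpness machinery---the tree pins down two rays, and the ghost-edge decay does the rest.
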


\begin{proof}
(a).
As $\Graph_n \loc \Tree_d$, for any $t \in \N$ and $\vep >0$, there exists an $n_0(\vep)$ such that
\[
\inf_{n \ge n_0(\vep)} {\sf P}_{\varpi_n^{\be,B}}^t(I_n)(\cP_d^{\star,t}) \ge 1-\vep,
\]
where $\cP_d^{\star,t}$ and ${\sf P}_{\varpi_n^{\be,B}}^t(\cdot)$ are as in Definitions \ref{dfn:prob-space} and \ref{dfn:lwc-measure}, respectively. Since $\cP_d^{\star, t}$ is compact,
{from} Prokhorov's theorem it follows that for fixed $t \in \N$ the random probability 
measures $\{{\sf P}_{\varpi_n^{\be,B}}^t(I_n)\}$ admit a sub-sequential limit $\bar\gm_t$
{which is a Borel probability measure on $\cP_d^{\star,t}$.} Upon extracting successive subsequences, from the definition of ${\sf P}_{\varpi_n^{\be,B}}^t(I_n)$ it further follows that for every $t \in \N$ the marginal of $\bar{\gm}_{t+1}$ on $\cP_{d}^{\star,t}$ is $\bar\gm_{t}$.
{Choosing} the diagonal subsequence and using Kolmogorov's extension theorem we {thus}
establish the existence of a sub-sequential local weak limit {point $\bar{\gm}$} of 
$\{\varpi_n^{\be,B}\}$, {with $\bar{\gm}$ a probability measure on $\cP_d^\star$.
As both} $\varphi_n^{\be,B}$ and $\mu_n^{\be, B}$ are marginals of $\varpi_n^{\be,B}$,
the existence of {sub-sequential} local weak limits of $\{\varphi_n^{\be,B}\}$ and 
$\{\mu_n^{\be, B}\}$ {is} now immediate.

\noindent
(b). Fixing $t \in \N$ and $(\ul\sigma^0, \ul \eta^0) \in {\bS^\star_t \times\B_t^\star}$ we have that {for any} $\ell \in [n]$,
\begin{align}\label{eq:ES-decompose}
 & \,{\sf P}_{\varpi_n^{\be,B}}^t(\ell)  (\Tree_d^\star(t), \ul\sigma^0, \ul \eta^0) \notag\\
& = \, 
 {\bm 1} (\sfB_\ell(t) \cong \Tree_d(t)) \, {q_{\ell,n}^t (\ul \sigma^0
 )}
 \prod_{e \in E(\sfB_\ell^\star(t))} \left[ (1-p_e)({1-\eta^0_e}) + p_e {\eta_e^0} \delta_{e} (\ul \sigma^0) \right] \cdot {\bf 1} (\sigma^0_{v^\star}=1) \,, 
\end{align}
where
\begin{align}\label{eq:p-k-ES}
{q_{\ell,n}^t} (\ul \sigma^0)  & := \f{1}{Z_n} 
\sum_{\ul \eta |_{E_n^\star \setminus E(\sfB_\ell^\star(t))} }
\sum_{\ul \sigma : \ul \sigma |_{V(\sfB_\ell^\star(t))} = \ul \sigma^0} 
\prod_{e \in E_n^\star \setminus
E(\sfB_\ell^\star(t))} \left[ (1-p_e)({1-\eta_e}) + p_e {\eta_e} \delta_{e} (\ul \sigma) \right],
\end{align}
depends {only} on $\ul \sigma^0|_{\partial \sfB_\ell(t)}$ and $Z_n$ is 
some normalizing constant. 
{Now, split the outer} sum over $\ul \eta$ in \eqref{eq:p-k-ES} according to which vertices in 
$\partial  \sfB_\ell(t) \cup \{v^\star\}$ are connected among themselves via an open path
induced by $\ul \eta$. {Whenever $\sfB_\ell(t) \cong \Tree_d(t)$,  
this amounts to splitting the sum over $\ul \eta$ per partitions} 
$\sC
\in \gC_\star(t)$ 
{of $\partial \Tree_d(t) \cup \{v^\star\}$ with $v^\star \in C_\star$. Further, $\sigma_{v^\star}=1$
and any open path $\gamma$ in $\ul \eta$ induces the multiplicative factor 
$\prod_{e \in \gamma} \delta_e(\ul \sigma)$ in \eqref{eq:p-k-ES}. Hence, if $\ul \eta$ 
induces the partition $\sC$, then} the sum over the spins in the \abbr{RHS} of \eqref{eq:p-k-ES} be zero 
unless $\gI_\sC(\ul\sigma^0):=\prod_{j=1}^k \prod_{u,v \in C_j} \delta_{\sigma_u^0, \sigma_v^0} \cdot \prod_{v \in C_\star} \delta_{\sigma_v^0,1}=1$. Therefore,
\[
q_{\ell,n}^t (\ul\sigma^0)= \sum_{\sC \in \gC_\star(t)} \vartheta_{\ell,n} (\sC) \gI_\sC(\ul\sigma^0),
\]
for some nonnegative constants $\{\vartheta_{\ell,n}(\sC)\}_{\sC \in \gC_\star(t)}$. 
{Plugging this into \eqref{eq:ES-decompose} results with}
\begin{align}\label{eq:es-pre-limit}
 {\sf P}^t_{\varpi_n^{\be,B}}(\ell)(\Tree_d^\star(t), \ul \sigma^0, \ul \eta^0) = {\bm 1} (\sfB_\ell(t) \cong \Tree_d(t)) 
\wt{\sf P}_n^{\be, B, t}(\ell)  (\ul \sigma^0, \ul \eta^0) \,,
\end{align}
{where for each $\ell \in [n]$,
\begin{align}
\label{eq:wt-sfP}
\wt{\sf P}_n^{\be, B, t}(\ell) (\cdot,\cdot) :=  \sum_{\sC \in \gC_\star(t)}
\varrho_{\ell,n}(\sC) \varpi_{\sC}^{\be, B,t} (\cdot,\cdot) \in \wt \cR_\star(t) \,,
\end{align}
namely, each $\varrho_{\ell,n} \in \cP(\gC_\star(t))$ (and 
when $\sfB_\ell(t) \not\cong \Tree_d(t)$ we can choose an arbitrary $\varrho_{\ell,n}$ of this form).
Recall that the set of probability measures on the finite dimensional simplex
$\cP(\gC_\star(t) \times \bS^\star_t \times \B_t^\star)$ is compact under weak convergence.} 
In particular,  
the probability measures $\wt{\sf P}_n^{\be, B, t}(I_n)$ on the 
{compact subset $\wt \cR_\star(t)$ of $\cP(\bS^\star_t \times \B_t^\star)$}
admit sub-sequential limits in the topology of weak convergence
on $\wt \cR_\star(t)$. As $\Graph_n \loc \Tree_d$, 
by \eqref{eq:es-pre-limit} the 
sub-sequential limit points of $\{{\sf P}^t_{\varpi_n^{\be,B}}(I_n)\}_{n \in \N}$ 
{coincide with those of}  
$\{\wt{\sf P}_n^{\be, B, t}(I_n)\}_{n \in \N}$. {In particular, the $t$-marginal $\gm^t_{\spin,\bond}$ 
of any local weak limit point of $\{\varpi_n^{\be,B}\}$ must also be in $\cP(\wt \cR_\star(t))$.}

{As noted at the end of the proof of Lemma \ref{lem:rc-fw-limit}, if $B=0$
we can omit throughout the isolated vertex $v^\star$ and the preceding argument 
then yields that $\gm^t_{\spin,\bond} \in \cP(\wt \cR_o(t))$.}

\noindent
(c). {First recall that for any $t \in \N$ and $\sC \in \gC_\star(t)$, the bond-marginal
of $\varpi^{\be, B, t}_\sC$ is $\varphi_\sC^{\be, B, t}$.}
Thus, by \eqref{eq:wt-sfP}, for any $\ell \in [n]$, 
\beq\label{eq:wt-sfP1}
\wt{\sf P}_{n, \bond}^{\be,B,t}(\ell)(\cdot):= \sum_{\ul \sigma} \wt {\sf P}_{n}^{\be,B,t}(\ell)(\ul\sigma, 
\cdot)= \sum_{\sC \in \gC_\star(t)} \varrho_{\ell,n}(\sC) \varphi_\sC^{\be, B, t}(\cdot) \in 
\cR_\star(t) \,.
\eeq
{From \eqref{eq:es-pre-limit}, the bond-marginal of 
${\sf P}^t_{\varpi_n^{\be,B}}(\ell)(\Tree_d^\star(t), \cdot, \cdot)$ is 
${\bm 1} (\sfB_\ell(t) \cong \Tree_d(t)) \wt{\sf P}_{n,\bond}^{\be, B, t}(\ell)$. Hence, applying the preceding 
reasoning for limit points of the probability measures $\wt{\sf P}_{n,\bond}^{\be, B, t}(I_n)$
on the compact $\cR_\star(t)$, we conclude that the $t$-marginal $\gm^t_{\bond}$ of any limit point
of the \abbr{rcm}-s $\{\varphi_n^{\be,B}\}$ must be in $\cP(\cR_\star(t))$.
Further, by Definitions \ref{dfn:inf-vol-RC} and \ref{dfn:st-order} we know that 
\[
 \cR_\star(s) = \{\varphi \in \cR_\star(s): 
 \varphi_{\free,s}^{\be, B} \st
 \varphi \st \varphi_{\wired, s}^{\be, B}\} \,, \qquad \forall s \in \N \,.
\]
Next, for $s>t$ let $\cR_\star^t(s)$ be the collection of all $t$-dimensional 
marginals of measures from $\cR_\star(s)$, noting that since $\gm^t_{\bond}$ is 
also the $t$-marginal of $\gm^{s}_{\bond}$ for any $s>t$, necessarily 
\beq\label{eq:sandwich-1}
\gm_{\bond}^t\Big(\bigcap_{s>t} \cR_\star^t(s) \Big)=1 \,, \qquad \forall t \in \N \,.
\eeq
Now, any $\wt \varphi \in \cR_\star^t(s)$ is the $t$-marginal of some $\varphi \in \cR_\star(s)$
and in particular $\varphi_{\free,s}^{\be,B} \st \varphi \st \varphi_{\wired,s}^{\be, B}$. 
In view of \eqref{clm:marginal} and Lemma \ref{lem:ordering-gF}(i), it thus follows that 
\[
\cR_\star^t(s) \subset \{ \wt \varphi \in \cR_\star(t) : 
\varphi^{\be, B, t}_{\free,s} \st \wt\varphi \st \varphi^{\be, B, t}_{\wired,s} \} \,.
\]}
Since $\varphi_{\ddagger, s}^{\be, B, t} \Ra \varphi_{\ddagger}^{\be, B, t}$ when $s \to \infty$,
we thus deduce that 
\[
\bigcap_{s>t} \cR_\star^t(s) \subset \{ \wt \varphi \in \cR_\star(t) : 
\varphi^{\be, B, t}_{\free} \st \wt\varphi \st \varphi^{\be, B, t}_{\wired} \} = \cS_\star(t) 
\]
(see Definition \ref{dfn:btwn-f-w}), which together with \eqref{eq:sandwich-1} implies
that $\gm_{\bond}^t \in \cP(\cS_\star(t))$, as claimed.

As explained at the end of the proof of  part (b), for $B=0$ we omit the isolated $v^\star$ 
and re-run the same argument on the original tree $\Tree_d$, to arrive at
$\gm^t_{\bond} \in \cP(\cS_o(t))$. 

\noindent
(d). Fix any $t \in \N$, $u,v \in \partial \Tree_d(t)$, and $\sC_0\in \gC_\star(t)$ such that $u$ and $v$ belong to a same block of $\sC_0$ {\em not} containing $v^\star$. Fix a $\varphi \in \cP(\{0,1\}^{E(\Tree_d^\star)})$ such that its $(t+s)$-dimensional marginal $\varphi^{t+s} \in \cR_\star(t+s)$ for all integer $s\ge 0$. Recall from Definition \ref{dfn:mixture-rc} that $\varphi^{t+s}$ can be viewed as a measure on the bonds of $\wh \Tree_d^\star(t+s)$. For $s \ge 0$ we let $\Omega_{u,v}(s)$ 
be the event of bond configurations for which the cluster of $u$, induced by the open bonds in $\wh \Tree_d^\star(t+s)\setminus \Tree_d^\star(t)$, contains $v$ but does {\em not} contain the ghost vertex $v^\star$.
Due to consistency of the marginals of $\varphi$ and that \eqref{clm:marginal} holds we therefore have    
\beq\label{eq:bdry-edge-1}
\varphi^t({\bm \eta} = {\bm \eta}(\sC_0)) \le \varphi^{t+s}(\Omega_{u,v}(s)) \text{ for all integer } s \ge 0. 
\eeq
We will show that for any $B >0$ and $s \in \N$
\beq\label{eq:bdry-edge-2}
\varphi^{t+s}(\Omega_{u,v}(s)) \le q^2 \exp(-2Bs). 
\eeq
This together with \eqref{eq:sandwich-1} and \eqref{eq:bdry-edge-1} yield the desired result. Turning to prove \eqref{eq:bdry-edge-2}, since $\varphi^{t+s} \in \cR_\star(t+s)$ there exists some probability measure $\rho \in \cP(\gC_\star(t+s))$ such that
\[
\varphi^{t+s}(\Omega_{u,v}(s)) = \sum_{\sC \in \gC_\star(t+s)} \varphi^{\be,B, t+s}_\sC(\Omega_{u,v}(s)) \rho(\sC). 
\] 
Hence, it suffices to prove \eqref{eq:bdry-edge-2} with $\varphi^{t+s}$ replaced by $\varphi^{\be,B,t+s}_\sC$ with $\sC \ni C \not\owns v^\star$ such that $u',v' \in C$ for some descendants of $u$ and $v$, respectively.
To this end, fix such a $\sC$ and split the bond configurations of $\Tree_d^\star(t+s)$ into three pieces:~$\ul \eta^{(1)} \in \B_t^\star$, $\ul \eta^{(2)} \in \B_{t+s}\setminus \B_t$, and $\ul \eta^{(3)} \in \B_{t+s}^\star \setminus (\B_t^\star \cup \B_{t+s})$. 
Notice that the event $\Omega_{u,v}(s)$ does not depend on $\ul \eta^{(1)}$. Further, {upon ordering
$\partial \Tree_d(t+s)$ in some fixed, non-random manner}
observe that $(\ul \eta^{(2)}, \ul \eta^{(3)}) \in \Omega_{u,v}(s)$ {requires that the following}
events hold:
\begin{enumeratei}
\item There exist open paths $P_u$ and $P_v$, determined by $\ul \eta^{(2)}$, from $u$ and $v$ 
to $\wt u$ and $\wt v \in \partial \Tree_d(t+s)$, respectively, where $\wt u$ and $\wt v$ are 
descendants of $u$ and $v$, respectively. {When} there are multiple {such}
$\wt u$ and $\wt v$ {we fix the first of these as our} $\wt u$ and $\wt v$, respectively.
\item We must have $\ul \eta^{(3,1)} \equiv 0$, where $\ul \eta^{(3)} = (\ul \eta^{(3,1)}, \ul \eta^{(3,2)})$, $\ul \eta^{(3,1)}$ is the bond configuration for the {collection of} edges between $V(P_u \cup P_v)$ 
and $v^\star$, and $\ul \eta^{(3,2)}$ is the rest of the bond configurations. 

\end{enumeratei}

Set $\wh \Omega_{u,v}(s):=\{\ul \eta^{(2)}: (\ul \eta^{(2)}, \ul \eta^{(3)}) \in \Omega_{u,v}(s) \text{ for some } \ul \eta^{(3)}\}$. Note that given any $\ul \eta^{(1)}$, $\ul \eta^{(2)} \in \wh \Omega_{u,v}(s)$, and $\ul \eta^{(3,2)}$ the only indeterminacy in the cluster structure through the choice of $\ul \eta^{(3,1)}$ is whether the clusters of $u$ and $v$ contains the ghost $v^\star$ or not. Therefore, using that $|V(P_u \cup P_v)|=2s$ and that $1-p_e=e^{-B}$ for $e \in \Tree_d^\star(t+s)\setminus \Tree_d(t+s)$, we have
\beq\label{eq:bdry-edge-3}
\varphi^{\be, B, t+s}(\ul \eta^{(3,1)} \equiv 0 |\ul \eta^{(1)}, \ul \eta^{(2)}, \ul \eta^{(3,2)}) \le q^2 \exp(-2Bs).
\eeq
Finally using observations (i) and (ii) above, and taking an average over $(\ul \eta^{(1)}, \ul \eta^{(2)}, \ul \eta^{(3,2)})$ we obtain \eqref{eq:bdry-edge-2} with $\varphi^{t+s}$ replaced by $\varphi^{\be,B,t+s}_\sC$ with $\sC$ as above. This completes the proof. 
 \end{proof}

\begin{rmk}\label{rmk:split-ES}
For $\wh{\ul \eta} \in \wh \B_t^\dagger$ let 
$\gu_\dagger^t(\wh{\ul \eta})$ denote the probability measure on {spins of} 
$\Tree_d^\dagger(t)$, with the same spin value across all vertices in each connected component and 
\abbr{iid} uniform on $[q]$ spin values across different connected components, {except for} the component containing $v^\star$ whose spin is $\sigma_{v^\star}=1$.
{For $\sC \in \gC_\dagger(t)$
let $\gu_\dagger^t(\varphi^{\be,B,t}_\sC)$ denote the 
\emph{random}} probability measure induced by $\gu_\dagger^t(\wh{\ul \eta})$ for random
$\wh{\ul \eta}$ whose law is $\varphi_{\sC}^{\be, B, t}$ {(restricting to $B=0$ when 
$\dagger=o$), and for any $\varphi \in \cR_\dagger(t) \subset \cP(\wh \B_t^\dagger)$} let
\[
\theta_\dagger^t(\varphi) := \sum_{\sC \in \gC_\dagger(t)}  {\varphi({\bm \eta}(\sC))}
\E_{\bond}\big[\gu_\dagger^t (\varphi_\sC^{\be, B, t}) \big],
\]
where $\E_{\bond}[\cdot]$ denotes the expectation with respect to the bond variables. 
\red{Setting $\varphi_\sC^{\be, B, t} (\wh {\ul \eta})=0$ whenever $\wh {\ul \eta} = (\ul \eta, {\bm \eta}')$ is such that ${\bm \eta}(\sC) \ne {\bm \eta}'$,
by} Lemma \ref{lem:coupling-es}(ii),
\begin{align}\label{eq:es-gu-rcm}
\varpi_\sC^{\be, B, t} (\cdot, \wh {\ul \eta}) &= 
{
\varphi_\sC^{\be, B, t} (\wh {\ul \eta}) \gu_\dagger^t (\wh {\ul \eta}) (\cdot)
\,, \quad \forall \sC \in \gC_\dagger(t), \, \wh{\ul \eta} \in \wh \B_t^\dagger},
\end{align}
and therefore
\begin{align}
\label{eq:th-varphi}
\sum_{\ul{\wh \eta}} \varpi_\sC^{\be, B, t} (\cdot, \wh{\ul \eta}) &=
\E_{\bond}\big[\gu_\dagger^t (\varphi_\sC^{\be, B, t}) \big] (\cdot)
\,, \qquad \forall \sC \in \gC_\dagger(t) \,.
\end{align}
Thus, with $\varpi_{\spin}$ and $\varpi_{\bond}$ denoting the spin and bond marginals of 
$\varpi \in \cP(\bS^\dagger_t \times \wh \B_t^\dagger)$, 
\beq\label{eq:def-wt-cR}
{ \varpi \in \wt\cR_\dagger(t) \quad \Longrightarrow \quad }  
\varpi =  \varpi_{\bond} (\wh {\ul \eta}) \gu_\dagger^t (\wh {\ul \eta}), \quad
\varpi_{\spin}= \theta_\dagger^t(\varpi_{\bond}) \quad \text{and}  
\quad \varpi_{\bond} \in \cR_\dagger(t), \quad \dagger \in \{o, \star\}.
\eeq
\end{rmk}

{Utilizing Remark \ref{rmk:split-ES}, we now} relate marginals of the 
free and wired Potts and \abbr{rcm}-s on $\Tree_d$.
\begin{lem}\label{lem:potts-rc-fw}
{Let $\dagger=\star$ if $B>0$ and $\dagger=o$ if $B=0$. Then,} for any $t \in \N$:
\begin{enumeratea}
\item The $t$-dimensional marginal of $\varpi =(\varpi_{\spin}, \varpi_{\bond})\in \wt \cR_\dagger(t+1)$
is $\varpi^t=(\varpi_{\spin}^t, \varpi_{\bond}^t) \in \wt \cR_\dagger(t)$. 
\item The Potts and \abbr{rcm} marginals are such that  
$\mu^{\be, B, t}_\ddagger= \theta_\dagger^t(\varphi^{\be,B,t}_\ddagger)$,
for $\ddagger \in \{\free, \wired\}$.

\item For $\ddagger \in \{\free, \wired\}$ we have
\beq\label{eq:inf-vol-rc-potts-e}
\mu_{\ddagger}^{\be, B} (\sigma_i = \sigma_j) = \Big(1 -\f1q\Big) \varphi_{\ddagger}^{\be,B}(
i \leftrightarrow j) +\f1q, \qquad (i,j) \in E(\Tree_d).
\eeq
\end{enumeratea}
\end{lem}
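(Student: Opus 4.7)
The plan is to prove (a), (b), (c) in sequence, each building on the previous.

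For (a), I would write $\varpi = \sum_{\sC \in \gC_\dagger(t+1)} \rho(\sC)\, \varpi^{\be,B,t+1}_\sC$ and compute its $t$-marginal via the Edwards--Sokal factorization of Remark~\ref{rmk:split-ES}, $\varpi^{\be,B,t+1}_\sC(\ul\sigma,\wh{\ul\eta}) = \varphi^{\be,B,t+1}_\sC(\wh{\ul\eta})\,\gu^{t+1}_\dagger(\wh{\ul\eta})(\ul\sigma)$. Summing out the level-$(t+1)$ bonds (those internal to $\Tree_d^\dagger(t+1)$ together with the boundary bonds of $\wh\Tree_d^\dagger(t+1)$) inside $\varphi^{\be,B,t+1}_\sC$ produces, by the argument behind \eqref{eq:t+1-to-t-pre}--\eqref{eq:t+1-to-t}, a mixture over $\sC' \in \gC_\dagger(t)$ of $\varphi^{\be,B,t}_{\sC'}$, where $\sC'$ is the partition of $\partial\Tree_d(t) \cup \{v^\star\}$ induced by the collapsed level-$(t+1)$ clusters. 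Since $\gu^{t+1}_\dagger$ assigns a common spin to each cluster of $\wh\Tree_d^\dagger(t+1)$ and the restriction of that cluster structure to $V(\Tree_d^\dagger(t))$ coincides with the cluster structure of $\wh\Tree_d^\dagger(t)$ carrying boundary $\sC'$, summing out the level-$(t+1)$ spins leaves exactly $\gu^t_\dagger$ on $\bS_t^\dagger$. Combining, each $(\varpi^{\be,B,t+1}_\sC)^t$ is itself a mixture of $\{\varpi^{\be,B,t}_{\sC'}\}_{\sC'\in\gC_\dagger(t)}$, so $\varpi^t\in\wt\cR_\dagger(t)$. The hard part here is the bookkeeping for the induced map $(\sC,\ul\eta^{(t+1)})\mapsto\sC'$: I must verify simultaneously both the bond mixture identity and the cluster-preservation property that controls the spin distribution.

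For (b), the finite-volume identification is direct: $\mu^{\be,B}_{\ddagger,s}$ on $\Tree_d(s)$, viewed on $\Tree_d^\dagger(s)$, equals the spin marginal of $\varpi^{\be,B,s}_{\sC_\ddagger}$, where $\sC_\free$ is the all-singletons partition of $\partial\Tree_d(s)\cup\{v^\star\}$ (so boundary spins are iid uniform on $[q]$), and $\sC_\wired$ is either the single-block partition $\{v^\star\}\cup\partial\Tree_d(s)$ when $B>0$ (pinning boundary spins to $1$, matching $\mu_1^{\be,B}$) or the single-block partition $\partial\Tree_d(s)$ when $B=0$ (yielding the symmetric mixture in \eqref{eq:mu-wired}). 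By \eqref{eq:def-wt-cR}, this spin marginal is $\theta^s_\dagger(\varphi^{\be,B,s}_{\sC_\ddagger}) = \theta^s_\dagger(\varphi^{\be,B,s}_{\ddagger,s})$, hence $\mu^{\be,B}_{\ddagger,s} = \theta^s_\dagger(\varphi^{\be,B,s}_{\ddagger,s})$. Iterating (a) from level $s$ down to level $t$ for $s>t$ yields the analogous identity between the $t$-marginals of $\mu^{\be,B}_{\ddagger,s}$ and $\varphi^{\be,B,s}_{\ddagger,s}$. Letting $s\to\infty$, weak convergence of both sides (Definitions~\ref{dfn:potts-bounary-B-0} and \ref{dfn:inf-vol-RC}, and Lemma~\ref{lem:rc-fw-limit}) together with continuity of the linear map $\theta^t_\dagger$ on the finite-dimensional simplex $\cR_\dagger(t)$ gives $\mu^{\be,B,t}_\ddagger = \theta^t_\dagger(\varphi^{\be,B,t}_\ddagger)$.

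For (c), I would fix $(i,j)\in E(\Tree_d)$, choose any $t$ with $(i,j)\in E(\Tree_d(t))$, and apply (b). Unpacking $\theta^t_\dagger$, the constant-on-cluster structure of $\gu^t_\dagger(\wh{\ul\eta})$ (equal to $1$ on any ghost-containing cluster and uniform on $[q]$ otherwise) yields, via case analysis on the clusters of $i$ and $j$, $P(\sigma_i=\sigma_j\mid\wh{\ul\eta}) = \mathbf{1}(i\bij j) + (1/q)(1-\mathbf{1}(i\bij j))$, where $\bij$ denotes connectivity in $\wh\Tree_d^\dagger(t)$; integrating,
\[
\mu^{\be,B}_\ddagger(\sigma_i=\sigma_j) = (1-1/q)\,\varphi^{\be,B,t}_\ddagger(i\bij j) + 1/q.
\]
It remains to match this event with $\{i\leftrightarrow j\text{ in }\Tree_d^\star\}$ under $\varphi^{\be,B}_\ddagger$. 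Viewing $\varphi^{\be,B,t}_\ddagger\in\cR_\dagger(t)$ as the push-forward of $\varphi^{\be,B}_\ddagger$ under the map that records the bonds of $\Tree_d^\star(t)$ together with the partition of $\partial\Tree_d(t)\cup\{v^\star\}$ induced by open connections in $\Tree_d^\star\setminus\Tree_d^\star(t)$, I observe that every open path in $\Tree_d^\star$ between $i$ and $j$ decomposes into inside segments (realized as open paths in $\wh\Tree_d^\dagger(t)$) and outside segments connecting boundary vertices (realized as open boundary edges in $\wh\Tree_d^\dagger(t)$), and conversely; hence the two events have equal probability and \eqref{eq:inf-vol-rc-potts-e} follows. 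For $B=0$ the ghost is isolated and the same argument runs on $\Tree_d$ directly.
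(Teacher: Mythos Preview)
Your proposal is correct and follows essentially the same route as the paper's own proof: part (a) via the Edwards--Sokal factorization \eqref{eq:es-gu-rcm} combined with the bond-marginal reduction \eqref{eq:t+1-to-t-pre} and the cluster-compatibility of $\gu^{t+1}_\dagger$ (which the paper isolates as identity \eqref{eq:gu-star-marginal}); part (b) via the finite-volume identification $\mu^{\be,B}_{\ddagger,s}=\theta^s_\dagger(\varphi^{\be,B}_{\ddagger,s})$, iteration of (a) down to level $t$, and passage to the limit $s\to\infty$; and part (c) by unpacking $\theta^t_\dagger$ as in Corollary~\ref{cor:fin-rc-potts}. One minor difference: in (c) you supply a path-decomposition argument to identify $\varphi^{\be,B,t}_\ddagger(i\bij j)$ in $\wh\Tree_d^\dagger(t)$ with $\varphi^{\be,B}_\ddagger(i\leftrightarrow j)$ in $\Tree_d^\star$, whereas the paper simply asserts this equality; your version is a welcome clarification, though note that the interpretation of the boundary law $\rho$ of $\varphi^{\be,B,t}_\ddagger\in\cR_\dagger(t)$ as the push-forward of the outside-induced partition under $\varphi^{\be,B}_\ddagger$ is itself inherited from the iterative construction in Lemma~\ref{lem:rc-fw-limit} and should be cited as such.
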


\begin{proof} 
(a). With $\varpi^t$ denoting the marginal of $\varpi$
to the sub-tree $\Tree^\dagger_d(t)$, clearly one may compute the marginals 
$(\varpi^t)_{\spin}$ and $(\varpi^t)_{\bond}$ on the spins and bonds of
$\Tree^\dagger_d(t)$, respectively, also in the opposite order. 
That is, as stated, $(\varpi^t)_{\spin}=(\varpi_{\spin})^t$ and $(\varpi^t)_{\bond}=(\varpi_{\bond})^t$.
For our remaining claim, that $\varpi^t \in \wt \cR_\dagger(t)$ for any $\varpi \in \wt \cR_\dagger(t+1)$,
it suffices to consider
only the extremal measures of  
$\wt \cR_\dagger(t+1)$. Namely, to fix $\sC \in \gC_\dagger(t+1)$ and
consider the $t$-marginal of $\varpi^{\be,B}_\sC$ (with boundary specification 
$\bm \eta=\bm \eta(\sC)$ on $\sfK^\dagger(\partial \Tree_d(t+1))$). Doing this, we denote by 
$(\ul \sigma^0,\ul \eta^0)$ the configuration on $\Tree_d^\dagger(t)$, so upon splitting the 
spins and bonds of $\Tree_d^\dagger(t+1)$ as $(\ul \sigma^0,\ul \sigma)$ and $(\ul \eta^0,\ul \eta)$, 
our task is to show that for some $\rho \in \cP(\gC_\dagger(t))$, 
\[
\sum_{\ul \eta} \sum_{\ul \sigma} \varpi^{\be,B,t+1}_{\sC} (\ul \sigma^0,\ul \sigma, \ul \eta^0, \ul \eta) 
= \sum_{\sC^0 \in \gC_\dagger(t)} \rho (\sC^0) \varpi_{\sC^0}^{\be,B,t} (\ul \sigma^0,\ul \eta^0) \,,
\qquad \forall \ul \sigma^0, \ul \eta^0 \,.
\]
Applying \eqref{eq:es-gu-rcm} on both sides of this identity and then utilizing
\eqref{eq:t+1-to-t-pre}, it remains only to show that
\beq\label{eq:gu-star-marginal}
\sum_{\ul \sigma} \gu^{t+1}_\dagger (\ul \eta^0,\ul \eta, \sC) (\ul \sigma^0,\ul \sigma) 
= \gu^t_\dagger (\ul \eta^0,\sC'(\ul \eta,\sC) ) (\ul \sigma^0)
\,, 
\qquad \forall (\ul \eta^0,\ul \eta), 
\eeq
where $\sC'=\sC'(\ul \eta,\sC) \in \gC_\dagger(t)$ denotes the partition corresponding
to open components 
of $\partial \Tree_d(t)$ induced by $\ul \eta$ and the given partition $\sC$ of the 
boundary of $\wh \Tree^\dagger_d(t+1)$. Next, recall 
that under $\gu_\dagger^{t+1}(\cdot)$ all spins in each $C_i \in \sC$ must take
the same value, if $C_i$ has an open path to some
$v \in {\partial \Tree_d (t)}$ then the spin of $C_i$ must match $\sigma^0_v$
while otherwise the spin of $C_i$ be chosen uniformly in $[q]$, independently 
of everything else. Further, $(\ul \eta,\sC)$ determines the 
connectivity between ${(\partial \Tree_d (t+1))^\star}$ and $(\partial \Tree_d (t))^\star$
and $\sC'(\ul \eta,\sC)$ is the partition induced on 
$(\partial \Tree_d (t))^\star$ by our requirement that $\sigma^0_v=\sigma^0_u$
whenever $u$ and $v$ connect, via $\ul \eta$, to the same block of $\sC$. 
Combining these observations leads to \eqref{eq:gu-star-marginal}.

\noindent
(b). Fix $\ddagger=\{\free,\wired\}$, $B>0$ and $s \in \N$. By Lemma \ref{lem:coupling-es}(i),
Definition \ref{dfn:mixture-rc}, and Definition \ref{dfn:inf-vol-RC}, we have that 
$\varphi_{\ddagger,s}^{\be, B}$ are the bond marginals
of the Edwards-Sokal measure $\varpi_{\sC}^{\be,B}$ on $\wh{\Tree}^\star_d(s)$
with ${\bm \eta}(\sC) \equiv \gf(\ddagger)$, where $\gf(\cdot)$ is as in \eqref{eq:f-w-rc-t}. We further claim that then 
$\mu_{\ddagger,s}^{\be,B}$ of Definition \ref{dfn:potts-bounary-B-0} is such that 
\beq\label{eq:rcm-to-Potts}
\mu_{\ddagger,s}^{\be,B} (\cdot) = \sum_{\ul \eta} \varpi_{\sC}^{\be,B} (\cdot,\ul \eta)  = 
\theta_\star^s(\varphi_{\ddagger,s}^{\be, B}) \,.
\eeq
Indeed, the \abbr{lhs} of \eqref{eq:rcm-to-Potts} follows from Lemma \ref{lem:coupling-es}(i) 
upon observing that by Lemma \ref{lem:coupling-es}(ii), 
here $\varpi_{\sC}^{\be,B} =\varpi^{\be,B}_{\Tree^\star_d(s)}$ with  
\abbr{iid} uniform spins at $\partial \Tree_d(t)$ if $\ddagger=\free$ 
and all such spins set to $1$ if $\ddagger=\wired$ (whereas
the \abbr{rhs} of \eqref{eq:rcm-to-Potts} is merely an application of Remark \ref{rmk:split-ES}).
Next, in view of part (a) we further deduce from \eqref{eq:rcm-to-Potts} that
$\mu_{\ddagger, s}^{\be, B, t} = \theta_\star^t(\varphi_{\ddagger,s}^{\be, B, t})$ for any $s>t$. Since 
$\mu_{\ddagger, s}^{\be, B, t} \Ra \mu_{\ddagger}^{\be, B, t}$ and $\varphi_{\ddagger, s}^{\be, B, t} \Ra \varphi_{\ddagger}^{\be, B, t}$ when $s \to \infty$, we arrive at 
$\mu_{\ddagger}^{\be, B,t}= \theta_\star^t(\varphi_{\ddagger}^{\be, B,t})$.

For $B=0$ we omit the isolated $v^\star$ and arrive by the preceding reasoning at
$\mu_{\free}^{\be, 0,t} = \theta_o^t(\varphi_{\free}^{\be, 0,t})$. We likewise get 
that $\mu_{\wired}^{\be, 0,t} = \theta_o^t(\varphi_{\wired}^{\be, 0,t})$ upon 
verifying that the \abbr{lhs} of \eqref{eq:rcm-to-Potts} holds in that case, which 
in view of \eqref{eq:mu-wired} and both parts of Lemma \ref{lem:coupling-es}
amounts to checking that  
\[
\frac{1}{q} \sum_{k=1}^q \mu^{\be,0}_{k,s} (\cdot) =  
\mu^{\be,0}_{\Tree_d(s)}(\cdot \, | \, \ul \sigma |_{\partial \Tree_d(s)} \equiv \sigma' \in [q] ) \,.
\]
By symmetry the value of $\mu^{\be,0}_{\Tree_d(s)}( \ul \sigma |_{\partial \Tree_d(s)} \equiv k )$ is
independent of $k$ and as $\mu^{\be,0}_{k,s}$ equals $\mu^{\be,0}_{\Tree_d(s)}$ conditional to 
the latter event (see Definition \ref{dfn:potts-bounary-B-0}), the preceding identity follows.

\noindent
(c). For any $(i,j)\in E(\Tree_d)$ there exists some $t \in \N$ such that $(i,j) \in E(\Tree_d(t))$ 
{in which case} 
$\mu_\ddagger^{\be, B}(\sigma_i=\sigma_j)=\mu_\ddagger^{\be,B,t}(\sigma_i=\sigma_j)$ and $\varphi_\ddagger^{\be,B}(i \bij j) = \varphi_\ddagger^{\be, B,t}(i \bij j)$. Thus, by part (b) and arguing 
as in the proof of Corollary \ref{cor:fin-rc-potts} 
{(now conditionally to $\bm \eta \equiv \gf(\ddagger)$ on $\sfK^\dagger(\partial \Tree_d(t))$),}
{results with} \eqref{eq:inf-vol-rc-potts-e}. 
\end{proof}

\section{{Bethe replica symmetry:} proof of Theorem \ref{thm:main-1}}\label{sec:pf-uncond-limit}
 
{We set as usual $\dagger=\star$ if $B>0$ and $\dagger=o$ if $B=0$. Throughout this 
section also $\ddagger = \free$ if 
$(\be, B) \in \sfR_{\free}$ and $\ddagger=\wired$ if $(\be, B) \in \sfR_1$.}
Our next lemma (the proof of which is postponed to Section \ref{sec:pf-correlation-limit}),
identifies the limit of the internal energy per vertex for
$\mu_n^{\be,B}$,  {which is hence the} key to pinning down the local weak limits {as those 
supported on one specific degenerate measure}.
\begin{lem}\label{lem:correlation-limit}
\purple{Under the assumption $\Graph_n \loc \Tree_d$} {(with} $\ddagger = \free$ if 
$(\be, B) \in \sfR_{\free}$; $\ddagger=\wired$ if $(\be, B) \in \sfR_1${) we have,}
\beq\label{eq:correlation-limit}
\lim_{n \to \infty} \f{1}{n} \sum_{i=1}^n \sum_{j \in \partial i} \mu_n^{\be, B}(\sigma_i = \sigma_j) = 
 \sum_{i \in \partial o} \mu_{\ddagger}^{\be, B} (\sigma_o=\sigma_i)\,.
\eeq
\end{lem}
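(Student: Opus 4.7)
My plan is to read off both sides of \eqref{eq:correlation-limit} as $\beta$-derivatives of free-energy-like quantities, and then transfer the pointwise convergence of Assumption \ref{asmp:main} to a convergence of derivatives via convexity. Throughout, set $\Phi_\infty(\be',B):=\max\{\Phi(\nu_\free^{\be',B}),\Phi(\nu_1^{\be',B})\}$, the right-hand side of \eqref{eq:free-energy-limit-assume}.

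Direct differentiation of $\log Z_{\Graph_n}(\be,B)$ in $\beta$ gives
\[
\partial_\beta\Phi_n(\be,B) \,=\, \f{1}{n}\sum_{(i,j)\in E_n}\mu_n^{\be,B}(\sigma_i=\sigma_j) \,=\, \f{1}{2n}\sum_{i=1}^n\sum_{j\in\partial i}\mu_n^{\be,B}(\sigma_i=\sigma_j),
\]
so the left-hand side of \eqref{eq:correlation-limit} equals $2\partial_\beta\Phi_n(\be,B)$. On the tree, the Gibbs marginal on an edge $(o,i)\subset\Tree_d$ under $\mu_\ddagger^{\be,B}$ is proportional to $e^{B(\delta_{\sigma_o,1}+\delta_{\sigma_i,1})}\psi_\ddagger(\sigma_o)^{d-1}\psi_\ddagger(\sigma_i)^{d-1}e^{\beta\delta_{\sigma_o,\sigma_i}}$, where $\psi_\ddagger(\sigma):=(e^\beta-1)\nu_\ddagger(\sigma)+1$. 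The BP fixed-point identity $e^{B\delta_{\sigma,1}}\psi_\ddagger(\sigma)^{d-1}=z_{\nu_\ddagger}\nu_\ddagger(\sigma)$ from \eqref{eq:BP-recursion} then yields after elementary cancellation
\[
\mu_\ddagger^{\be,B}(\sigma_o=\sigma_i) \,=\, \f{e^\beta\sum_{\sigma\in[q]}\nu_\ddagger(\sigma)^2}{(e^\beta-1)\sum_{\sigma\in[q]}\nu_\ddagger(\sigma)^2+1},\qquad i\in\partial o,
\]
and summing over the $d$ neighbors of $o$, one identifies the right-hand side of \eqref{eq:correlation-limit} with $2\partial_\beta\Phi(\nu,\be,B)|_{\nu=\nu_\ddagger}$, where the latter is obtained by differentiating \eqref{eq:BP-fun} in $\beta$ at fixed $\nu$ and simplifying via the same BP relation. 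Because $\nu_\ddagger$ is a constrained critical point of $\Phi(\cdot,\be,B)$ on the simplex $\cP([q])$ (that being the stationarity condition behind \eqref{eq:BP-recursion}), the envelope identity $\frac{d}{d\be'}\Phi(\nu_\ddagger^{\be',B})\big|_{\be'=\be}=\partial_\beta\Phi(\nu,\be,B)|_{\nu=\nu_\ddagger}$ holds. Proposition \ref{prop:non-unique-regime} and smoothness of $\be'\mapsto\nu_\ddagger^{\be',B}$ further give $\Phi_\infty(\be',B)=\Phi(\nu_\ddagger^{\be',B})$ throughout a neighborhood of $\be$, since the lemma's choice of $\ddagger$ implicitly restricts to $(\be,B)\notin{\sf R}_c$. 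Hence the right-hand side of \eqref{eq:correlation-limit} equals $2\partial_\beta\Phi_\infty(\be,B)$.

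It remains to show $\partial_\beta\Phi_n(\be,B)\to\partial_\beta\Phi_\infty(\be,B)$. The functions $\be'\mapsto\Phi_n(\be',B)$ are convex (rescaled log-moment generating functions) and locally uniformly bounded, so Helly's selection theorem produces, along any subsequence, a further subsequence on which $\Phi_n(\cdot,B)$ converges pointwise on $[0,\infty)$ to a convex function $F(\cdot,B)$ with $F(\be,B)=\Phi_\infty(\be,B)$ by Assumption \ref{asmp:main}. The universal Bethe lower bound $\liminf_n\Phi_n(\be',B)\ge\Phi_\infty(\be',B)$ at every $\be'\ge 0$, which for ferromagnetic Potts models follows from the interpolation method (cf.\ the references preceding Assumption \ref{asmp:main}), gives $F(\cdot,B)\ge\Phi_\infty(\cdot,B)$ pointwise. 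Thus the nonnegative function $F-\Phi_\infty$ attains its minimum $0$ at $\be$, and differentiability of $\Phi_\infty$ at $\be$ (valid off ${\sf R}_c$) forces $F$ to be differentiable there with $F'(\be,B)=\partial_\beta\Phi_\infty(\be,B)$. By standard convex function theory, $\partial_\beta\Phi_n(\be,B)\to F'(\be,B)$ along the subsequence, and since the limiting value is independent of the subsequence, the full sequence converges to $\partial_\beta\Phi_\infty(\be,B)$, yielding \eqref{eq:correlation-limit}.

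The main obstacle is that \eqref{eq:free-energy-limit-assume} only provides pointwise convergence of $\Phi_n$ at the single $(\be,B)$, whereas the lemma concerns its $\beta$-derivative there. Bridging this gap requires the convexity of $\Phi_n(\cdot,B)$ together with the universal one-sided Bethe lower bound at neighboring $\be'$, and the smoothness of $\Phi_\infty$ off the critical line ${\sf R}_c$.
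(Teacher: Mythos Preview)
Your overall strategy matches the paper's: identify the left-hand side of \eqref{eq:correlation-limit} with $2\partial_\beta\Phi_n(\be,B)$, identify the right-hand side with $2\,\partial_\beta\Phi(\nu_\ddagger^{\be,B})$ via the edge-marginal formula and the stationarity of $\nu_\ddagger$ under the Bethe functional, and then pass from convergence of $\Phi_n$ to convergence of its $\beta$-derivative using convexity. Your explicit computation of $\mu_\ddagger^{\be,B}(\sigma_o=\sigma_i)$ and the envelope identity are correct; this is what the paper obtains by quoting Lemma~\ref{lem:marginals} and \cite[Proposition~2.4]{DMS}.

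There is, however, a genuine gap in your convergence step. You correctly flag that Assumption~\ref{asmp:main} gives $\Phi_n(\be,B)\to\Phi_\infty(\be,B)$ only at the single point, and you try to work around this with Helly selection plus the one-sided Bethe lower bound $\liminf_n\Phi_n(\be',B)\ge\Phi_\infty(\be',B)$. But the implication ``$F\ge\Phi_\infty$ with equality at $\be$ and $\Phi_\infty$ differentiable there forces $F$ to be differentiable there with the same derivative'' is false. For convex $F$ one only obtains $F'_-(\be)\le\Phi_\infty'(\be)\le F'_+(\be)$, and a kink of $F$ at $\be$ is not excluded (e.g.\ $\Phi_\infty\equiv 0$, $F(x)=|x|$). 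Without differentiability of the subsequential limit $F$ at $\be$, the standard convergence-of-derivatives result for convex functions does not apply, and $\partial_\beta\Phi_n(\be,B)$ need not converge along your subsequence. A one-sided bound on $\Phi_n$ at neighboring $\be'$ is simply not enough to pin down the derivative at $\be$.

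The paper does not attempt this refinement: it invokes Assumption~\ref{asmp:main} throughout a neighborhood $U_{\be_0,B_0}$ (so the full pointwise limit of $\Phi_n(\cdot,B)$ there equals $\Phi(\nu_\ddagger^{\cdot,B})$), uses Lemma~\ref{lem:smoothness-fixed-pt} to show $\be\mapsto\nu_\ddagger^{\be,B}$ is $C^1$ off $\partial{\sf R}_{\ne}^{\pm}$ so that the limit is differentiable, and then the convex-functions argument applies directly. In effect Assumption~\ref{asmp:main} is read as holding for all $(\be,B)$, which is indeed the case in the instances ($d$ even, or $B=0$ with $d$-regular $\Graph_n$) mentioned right after its statement.
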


We proceed with the following three steps of our proof of Theorem \ref{thm:main-1}.

\bigskip
\noindent
{{\bf Step I.} From \abbr{rcm} limits to Potts limits.\\
Suppose $\varphi_{n_k}^{\be,B} \lwc \varphi  \in \cM_{\bond}^{\be,B}$, {where}
$\varphi = {\bm \al} \varphi_{\wired}^{\be,B}+(1-{\bm \al}) \varphi_{\free}^{\be,B}$ for
some $[0,1]$-valued ${\bm \al}$. The local weak limit points
$\gm_{\spin,\bond}$ of the Edwards-Sokal measures $\varpi_{n_k}^{\be,B}$ 
(which exist in view of Lemma \ref{lem:limit-point-properties}(a)), must all
have the same} bond marginal with $\gm_{\bond} (\{ \varphi \})=1$.  Recall 
from Lemma \ref{lem:limit-point-properties}(b) that each $\gm_{\spin,\bond}^t$ is supported on 
$\wt \cR_\dagger(t)$. 
{In view of 
\eqref{eq:def-wt-cR}, this} in turn implies that $\gm_{\spin}^t(\{\theta_\dagger^t(\varphi)\})=1$.  
Applying Lemma \ref{lem:potts-rc-fw}(b) we deduce that 
$\gm_{\spin}^t(\{ {{\bm \al} \mu_{\wired}^{\be,B,t}+(1-{\bm \al}) \mu_{\free}^{\be,B,t}} \})=1$
for all $t \in \N$.  That is, 
$\mu_{n_k}^{\be,B} \lwc {{\bm \al} \mu_{\wired}^{\be,B}+(1-{\bm \al}) \mu_{\free}^{\be,B}}$.
In particular, in the setting of {Theorems \ref{thm:main-1} and \ref{thm:crit-1}}
it suffices to prove only the
stated local weak convergence for the \abbr{rcm}-s $\varphi_n^{\be,B}$.

\bigskip
\noindent
{{\bf Step II.} The uniqueness regime $(\be,B) \in [0,\infty)^{\red{2}} \setminus \sfR_{\ne}$.\\
Here} $\mu_{\wired}^{\be, B}= \mu_{\free}^{\be, B}$
(see Proposition \ref{prop:non-unique-regime}(i)),  
{hence $\varphi_{\wired}^{\be,B}(i \leftrightarrow j) = \varphi_{\free}^{\be,B}(i \leftrightarrow j)$ 
at any edge $(i,j)$ of $\Tree_d$
(see Lemma \ref{lem:potts-rc-fw}(c)), and in particular $\varphi_{\wired}^{\be,B,s+1}(\gF_s) =
\varphi_{\free}^{\be,B,s+1}(\gF_s)$ for all $s \in \N$ (recall \eqref{eq:def-gF}). From Lemma \ref{lem:ordering-gF} it then follows}
that $\varphi_{\free}^{\be,B,t}=\varphi_{\wired}^{\be,B,t}$ {and hence
$\cS_\dagger(t)=\{\varphi_{\free}^{\be,B,t}\}$ for all $t \in \N$ (see Definition \ref{dfn:btwn-f-w}).
In view of Lemma \ref{lem:limit-point-properties}(c), we conclude that}
$\varphi_n^{\be,B} \lwc \varphi_{\free}^{\be,B}$, as claimed.

\bigskip
\noindent
{{\bf Step III.} Non-uniqueness, with $(\be,B) \in \sfR_{\free} \cup \sfR_1$.\\
Consider the uniformly bounded functionals
\begin{equation*}
\gH^{\spin}_t := \f{1}{|\partial \sfB_o(t)|}\sum_{i \in \partial \sfB_o(t)} \f{1}{\Delta_i} \sum_{j \in \partial i} {\bf 1}(\sigma_i = \sigma_j) \quad \text{ and } \quad
\gH^{\bond}_t := \f{1}{|\partial \sfB_o(t)|}\sum_{i \in \partial \sfB_o(t)} \f{1}{\Delta_i} \sum_{j \in \partial i} {\bf 1}(i \bij j) 
\end{equation*}
(see Definitions \ref{dfn:ghost-vertex} and \ref{dfn:inf-vol-RC}), 
where the 
event $\{i \bij j\}$ denotes that there is an open path between $i$ and $j$ in $\wh \sfB_o^\dagger(t+1)$. 
With $\Tree_d$ vertex transitive, $\mu_{\ddagger}^{\be, B}$ is translation invariant, 
so for any $t \in \N$,
\beq\label{eq:trans}
\frac{1}{d} \sum_{j \in \partial o} \mu_{\ddagger}^{\be, B}(\sigma_o=\sigma_j)= \f{1}{|\partial \tTree_d(t)|}\sum_{i \in \partial \tTree_d(t)} \frac{1}{\Delta_i} \sum_{j \in \partial i} 
\mu^{\be,B}_{\ddagger}(\sigma_i=\sigma_j) = \Big(1-\frac{1}{q} \Big) 
\varphi^{\be, B,t+1}_{\ddagger} (\gH^{\bond}_t) + \frac{1}{q} \,,
\eeq
where the right identity is due to Lemma \ref{lem:potts-rc-fw}(c). Consider
the $[0,1]$-valued functions on $V(\Graph_n)$,
\[
\red{\tfa}_n^t(i):= \sum_{k \in \partial \sfB_i(t)} 
\frac{{\bf 1}(\sfB_k(2t) \cong \Tree_d(2t))}{|\partial \sfB_k(t)|}
= \left\{\begin{array}{ll}
0 & \mbox{ if } \sfB_i(t) \ncong \Tree_d(t),\\
1 & \mbox{ if } \sfB_i(3t) \cong \Tree_d(3t).
\end{array}
\right.
\]
For $\Graph_n \loc \Tree_d$ we have that $\E_n [\ga_n^t(I_n)] \to 1$, hence it follows from
\eqref{eq:correlation-limit} and \eqref{eq:trans} that 
\begin{align}
\Big(1-\frac{1}{q} \Big) 
\varphi^{\be, B,t+1}_{\ddagger} (\gH^{\bond}_t) + \frac{1}{q} &=
\lim_{n \to \infty} \f{1}{d n} \sum_{i=1}^n \sum_{j \in \partial i} \ga_n^t (i) \varpi_n^{\be, B}(\sigma_i = \sigma_j) \nonumber \\
&= 
\lim_{n \to \infty} \E_n \Big[
{\bf 1}(\sfB_{I_n}(2t) \cong \Tree_d(2t))
{\sf P}^{t+1}_{\varpi_n^{\be,B}} (I_n) (\gH_t^{\spin}) \Big]
\label{eq:cor-limit1}
\end{align}
(in the last identity we used that $k \in \partial \sfB_i(t)$ if and only if $i \in \partial \sfB_k(t)$). 
From \eqref{eq:cor-limit1} we deduce that for any weak limit point $\gm_{\spin,\bond}$ 
of $\{\varpi_n^{\be,B}\}$
\beq\label{eq:extreme-pre-limit}
\Big(1-\frac{1}{q} \Big) 
\varphi^{\be, B,t+1}_{\ddagger} (\gH^{\bond}_t) + \frac{1}{q} 
= \gm_{\spin,\bond}^{t+1} \big( \varpi(\gH^{\spin}_t) \big), \qquad \forall t \in \N \,.
\eeq
Applying Corollary \ref{cor:fin-rc-potts} on the graph $\wh \Tree^\dagger_d(t+1)$ with boundary 
edges per $\bm \eta(\sC)$, results with
\[
\varpi_{\sC}^{\be,B,t+1}(\gH^{\spin}_t) = 
\Big(1-\frac{1}{q} \Big) 
\varphi^{\be, B,t+1}_{\sC} (\gH^{\bond}_t) + \frac{1}{q}, \qquad \forall t \in \N \,, \quad \forall \sC \in 
\gC_\dagger \,.
\]
Plugging the latter identity into \eqref{eq:extreme-pre-limit} we arrive at
\[
\varphi^{\be, B,t+1}_{\ddagger} (\gH^{\bond}_t) =
\gm_{\spin,\bond}^{t+1} \big( \varphi(\gH^{\bond}_t) \big) = \gm_{\bond}^{t+1} \big(\varphi(\gH_t^{\bond})\big), 
\qquad \forall t \in \N,
\]
which since 
$\gH^{\bond}_t=\frac{1}{c_t} \gF_t$ on $\wh{\Tree}_d^{\dagger}(t+1)$ for any boundary edges
(with $c_t=d |\partial \Tree_d(t)|$), implies that for any local weak limit point $\gm_{\bond}$ of 
$\varphi_n^{\be, B}$, 
\beq\label{eq:extreme-limit}
{\gm_{\bond}^{t+1} \big( \varphi(\gF_t) \big) =
 \varphi^{\be, B,t+1}_{\ddagger}  (\gF_t) }
, \qquad \forall t \in \N \,.
\eeq
By Lemma \ref{lem:limit-point-properties}(c) {and Definition \ref{dfn:btwn-f-w}, we know} that  
$\varphi_{\free}^{\be,B,s+1} \st \varphi \st \varphi_{\wired}^{\be, B, s+1}$
for $\gm_{\bond}^{s+1}$-a.e.~$\varphi$. Combining Lemma \ref{lem:ordering-gF}
with \eqref{eq:extreme-limit} then yields that 
$\gm_{\bond}^{s+1}(\{\varphi_{\ddagger}^{\be, B, s+1}\})=1$ for all $s \in \N$. 
{Since this applies for} any local weak limit point $\gm_{\bond}$, as claimed 
$\varphi_n^{\be, B} \lwc \varphi_{\ddagger}^{\be, B}$ when $n \to \infty$.


\subsection{Proof of Lemma \ref{lem:correlation-limit}}\label{sec:pf-correlation-limit}

{Setting hereafter $\ddagger \in \{\free,1\}$ we shall rely} on the {next two lemmas about} smoothness of the fixed points $\nu_{\ddagger}$ of the ${\sf BP}$ recursion 
and {the corresponding} marginals of two adjacent spins 
under $\mu_{\ddagger}^{\be, B}$ {(proofs of which} 
are postponed to Appendix \ref{sec:app}). 

\begin{lem}[Smoothness of $\nu_\ddagger$]\label{lem:smoothness-fixed-pt}
Let $\partial\sfR_{\neq}$ denote the boundary of the non-uniqueness regime $\sfR_{\neq}$ defined in Proposition \ref{prop:non-unique-regime}. Define
\[
\partial \sfR_{\neq}^{\free} := \left\{ (\be', B') \in \partial \sfR_{\neq}: \be'= \be_{\free}(B')\right\} \quad \text{ and } \quad \partial \sfR_{\neq}^{+} := \left\{ (\be', B') \in \partial \sfR_{\neq}: \be'= \be_{+}(B')\right\}.
\]
\begin{enumeratei}

\item If $(\be_0, B_0) \in \sfR_{\neq}\setminus \partial \sfR_{\neq}^{\free}$ then the map $(\beta, B) \mapsto \nu_1^{\be, B}$ is continuously differentiable at $(\be_0, B_0)$.

\item If $(\be_0, B_0) \in \sfR_{\neq} \setminus \partial \sfR_{\neq}^{+}$ then the same conclusion holds for $\nu_{\free}^{\be, B}$.

\end{enumeratei}
\end{lem}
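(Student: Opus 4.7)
My plan is to reduce the problem to a one-dimensional implicit function theorem via the evident $(q-1)$-fold symmetry. Both initial conditions for the BP iteration in Definition \ref{dfn:bethe-recursion}—the uniform measure on $[q]$ and the Dirac mass at spin $1$—are invariant under permutations of $\{2, \ldots, q\}$, and the recursion \eqref{eq:BP-recursion} preserves this symmetry. Writing $\nu_a \in \cP([q])$ for the probability measure with $\nu_a(1) = a$ and $\nu_a(\sigma) = (1-a)/(q-1)$ for $\sigma \ne 1$, it follows that the limits $\nu_\ddagger$ of Definition \ref{dfn:bethe-recursion} satisfy $\nu_\ddagger = \nu_{a_\ddagger}$ for a single scalar $a_\ddagger(\be, B) \in [1/q, 1]$. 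Since $a \mapsto \nu_a$ is affine, it suffices to prove $C^1$ dependence of $a_\ddagger$ on $(\be, B)$.

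Restricted to the invariant family $\{\nu_a\}$, the BP recursion collapses to a scalar map $a \mapsto g(a; \be, B)$ which is real-analytic in $(a, \be, B)$. Setting $H(a; \be, B) := a - g(a; \be, B)$, each $a_\ddagger(\be_0, B_0)$ is a root of $H(\,\cdot\,; \be_0, B_0)$, and the classical implicit function theorem furnishes a $C^1$ local branch $a_\ddagger(\be, B)$ near $(\be_0, B_0)$ whenever $1 - \partial_a g(a_\ddagger; \be_0, B_0) \ne 0$. The heart of the proof is therefore to identify the loci where $\partial_a g(a_\ddagger) = 1$.

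Inside ${\sf R}_{\neq}$, the equation $H(\,\cdot\,; \be, B) = 0$ admits exactly three solutions $a_{\free} < a_u < a_1$ on $[0, 1]$, by the monotonicity/convexity analysis of $g(\,\cdot\,; \be, B)$ already used to characterize the non-uniqueness regime (see Proposition \ref{prop:non-unique-regime} and \cite[Section 4]{DMS}). The extremal roots $a_{\free}$ and $a_1$ are the stable branches with $\partial_a g < 1$, while the middle root $a_u$ is unstable with $\partial_a g > 1$. Crossing $\partial{\sf R}_{\neq}^{\free}$ the branches $a_1$ and $a_u$ coalesce in a saddle-node bifurcation, forcing $\partial_a g(a_1) = 1$ precisely there; crossing $\partial{\sf R}_{\neq}^{+}$ the branches $a_{\free}$ and $a_u$ coalesce, forcing $\partial_a g(a_{\free}) = 1$ precisely there. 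At every other point of ${\sf R}_{\neq}$ the extremal root under consideration stays strictly separated from $a_u$, so $\partial_a g(a_\ddagger) < 1$ there, which is the hypothesis needed for the implicit function theorem.

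The main obstacle is to verify rigorously that the bifurcation picture just described is \emph{exhaustive}, i.e.~that no other critical point of $H(\,\cdot\,; \be, B)$ produces $\partial_a g(a_\ddagger) = 1$ outside the two stated boundary curves. I would settle this by sign-change counting for $H(\,\cdot\,; \be, B)$ on $[0, 1]$, as in the analysis underlying Proposition \ref{prop:non-unique-regime}: those monotonicity arguments cap the number of fixed points at three throughout ${\sf R}_{\neq}$, force the generic non-degeneracy of the extremal roots, and identify their coalescence loci with $\partial{\sf R}_{\neq}^{\free}$ (for $a_1$) and $\partial{\sf R}_{\neq}^{+}$ (for $a_{\free}$). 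Parts (i) and (ii) then follow from the implicit function theorem applied on ${\sf R}_{\neq} \setminus \partial{\sf R}_{\neq}^{\free}$ and ${\sf R}_{\neq} \setminus \partial{\sf R}_{\neq}^{+}$ respectively, with the smoothness of $\nu_\ddagger$ inherited from that of $a_\ddagger$ via the affine map $a \mapsto \nu_a$.
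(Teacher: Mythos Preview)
Your proposal is essentially the same strategy as the paper's: reduce by the $\{2,\ldots,q\}$-symmetry to a one-dimensional fixed-point equation and apply the implicit function theorem, with the work being to locate where the derivative of the scalar recursion equals $1$. The only notable difference is parametrization: the paper works with the log-ratio $r=\log(\nu(1)/\nu(2))$, which makes the fixed-point equation $F(r;\be,B)=r$ with $F(r;\be,B)=B+(d-1)\log\bigl((e^{\be+r}+q-1)/(e^r+e^\be+q-2)\bigr)$ additive in $B$; this lets the paper pin down the two solutions $\rho_\pm(\be)$ of $\partial_r F=1$ explicitly and identify them with the boundary curves via $B_\pm(\be)=\rho_\mp(\be)-F(\rho_\mp(\be);\be,0)$, citing \cite[Lemma 4.6, Theorem 1.11]{DMS}. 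Your variable $a=\nu(1)$ is a diffeomorphic reparametrization, so the content is the same, but the paper's choice makes the case analysis cleaner.

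Where your sketch is thinner than the paper's argument is the ``exhaustive bifurcation'' step. The paper does not simply assert that coalescence happens only on the stated curves; it rules out two separate possibilities for each branch. For $\nu_1$: if $r_1=\rho_+$ then $(\be_0,B_0)\in\partial{\sf R}_{\ne}^{\free}$ directly, but $r_1=\rho_-$ is excluded by producing a fixed point $r_\star>\rho_+(\be_0)$, contradicting maximality of $r_1$. For $\nu_{\free}$ the analogous second case requires first showing $\rho_-(\be_0)>0$ (using $\be_0<\be_+(0)$), then exhibiting a smaller nonnegative fixed point. Your appeal to ``sign-change counting'' covers the generic interior picture but does not by itself dispose of these degenerate-but-excluded configurations; you would still need something like the paper's two short contradiction arguments to close the proof.
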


\begin{lem}[Marginals of $\mu_{\ddagger}^{\be, B}$]\label{lem:marginals}
For $\ddagger \in \{\free,1\}$ and any $\be, B \ge 0$,  
\begin{align}\label{eq:mg-pairs}
\mu_{\ddagger}^{\be,B} (\sigma_i, \sigma_j) & \propto e^{\beta \delta_{\sigma_i, \sigma_j}} \nu_{\ddagger}^{\be, B}(\sigma_i) \nu_{\ddagger}^{\be, B}(\sigma_j), \; \qquad \forall (i, j) \in E(\Tree_d), \quad
\forall \sigma_i, \sigma_j \in [q],\\
\label{eq:mg-single}
\mu_\ddagger^{\be, B}(\sigma_i) &\propto \left((e^\be-1)\nu_\ddagger^{\be, B}(\sigma_i)+1\right)\nu_\ddagger^{\be, B}(\sigma_i), \qquad \forall i \in V(\Tree_d), \quad \forall \sigma_i \in [q].
\end{align}
\end{lem}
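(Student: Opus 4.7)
My plan is to compute the marginals on each finite-depth tree $\Tree_d(t)$ via the standard cavity (message-passing) recursion, pass to the limit $t\uparrow\infty$ using the weak convergence $\mu_{\ddagger,t}^{\be,B}\to\mu_\ddagger^{\be,B}$ from Definition \ref{dfn:potts-bounary-B-0}, and finally invoke the fixed-point identity ${\sf BP}\nu_\ddagger^{\be,B}=\nu_\ddagger^{\be,B}$ to rewrite the single-vertex answer in the stated form.

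Fix $t$ large enough to contain the vertex $i$ (or the edge $(i,j)$) well inside $\Tree_d(t)$, and root the tree at $i$. For each directed edge $(v,u)$ with $u$ the parent of $v$, define the message
\[
\eta_{v\to u,t}(\sigma) \;\propto\; e^{B\delta_{\sigma,1}} \prod_{k\,:\,k\text{ child of }v}\big((e^\be-1)\eta_{k\to v,t}(\sigma)+1\big),
\]
with the leaf initial condition $\eta_{v\to u,t}(\cdot):=\nu_\ddagger(\cdot)$ at every $v\in\partial\Tree_d(t)$, i.e.\ the uniform measure on $[q]$ when $\ddagger=\free$ and the Dirac mass at $1$ when $\ddagger=1$. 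Since $\Tree_d$ is $d$-regular, every internal vertex has exactly $d-1$ children, so induction on $r={\rm dist}(v,\partial\Tree_d(t))$ shows that $\eta_{v\to u,t}$ depends only on $r$ and equals the $r$-th iterate ${\sf BP}^r\nu_\ddagger$ of the recursion from Definition \ref{dfn:bethe-recursion}. The spatial Markov property of $\mu_{\ddagger,\Tree_d,t}^{\be,B}$, obtained by summing out the spins outside $\{i\}$ (resp.\ $\{i,j\}$) layer by layer from the boundary inwards, then yields
\begin{align*}
\mu_{\ddagger,\Tree_d,t}^{\be,B}(\sigma_i) &\;\propto\; e^{B\delta_{\sigma_i,1}}\prod_{k\in\partial i}\big((e^\be-1)\eta_{k\to i,t}(\sigma_i)+1\big),\\
\mu_{\ddagger,\Tree_d,t}^{\be,B}(\sigma_i,\sigma_j) &\;\propto\; e^{\be\delta_{\sigma_i,\sigma_j}}\,P_{i\to j,t}(\sigma_i)\,P_{j\to i,t}(\sigma_j),
\end{align*}
where $P_{i\to j,t}(\sigma)\propto e^{B\delta_{\sigma,1}}\prod_{k\in\partial i\setminus\{j\}}((e^\be-1)\eta_{k\to i,t}(\sigma)+1)$.

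Passing to the limit $t\uparrow\infty$, Definition \ref{dfn:bethe-recursion} gives ${\sf BP}^r\nu_\ddagger\to\nu_\ddagger^{\be,B}$, so $\eta_{k\to i,t}\to\nu_\ddagger^{\be,B}$ for every $k\in\partial i$, and hence $P_{i\to j,t}\to{\sf BP}\nu_\ddagger^{\be,B}=\nu_\ddagger^{\be,B}$ by the fixed-point property. Combined with $\mu_{\ddagger,t}^{\be,B}\to\mu_\ddagger^{\be,B}$, the pair display immediately gives \eqref{eq:mg-pairs}, while the single-vertex display gives $\mu_\ddagger^{\be,B}(\sigma_i)\propto e^{B\delta_{\sigma_i,1}}\big((e^\be-1)\nu_\ddagger^{\be,B}(\sigma_i)+1\big)^{d}$. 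Multiplying the fixed-point identity $z_{\nu_\ddagger}\nu_\ddagger^{\be,B}(\sigma)=e^{B\delta_{\sigma,1}}((e^\be-1)\nu_\ddagger^{\be,B}(\sigma)+1)^{d-1}$ through by $(e^\be-1)\nu_\ddagger^{\be,B}(\sigma)+1$ converts this into \eqref{eq:mg-single}. The whole argument is essentially a routine tree recursion; the only bookkeeping subtlety is the $d$-vs.-$(d-1)$ distinction between the full marginal at $i$ and the cavity messages $\eta_{k\to i}$ feeding into it, which is precisely what produces the extra factor $\nu_\ddagger^{\be,B}(\sigma)$ in \eqref{eq:mg-single}.
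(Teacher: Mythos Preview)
Your proof is correct and follows essentially the same cavity/message-passing route as the paper: compute the pair marginal on $\Tree_d(t)$ via the ${\sf BP}$ recursion from the $\nu_\ddagger$ boundary data, identify the incoming messages as iterates ${\sf BP}^r\nu_\ddagger$, and pass to the limit using Definition~\ref{dfn:bethe-recursion}. The only cosmetic difference is in obtaining \eqref{eq:mg-single}: the paper gets it in one line by summing \eqref{eq:mg-pairs} over $\sigma_j$, whereas you derive the single-vertex marginal separately and then invoke the fixed-point identity to convert the $d$-th power into the stated product form; both are fine. Note also that your phrase ``root the tree at $i$'' is implicitly the translation-invariance reduction to $i=o$ that the paper states explicitly.
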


\begin{proof}[Proof of Lemma \ref{lem:correlation-limit}]
{First note} that for $(\be_0, B_0) \in \sfR_{\free}$
\beq\label{eq:partials-converge}
\lim_{n \to \infty} \f{1}{n}\sum_{(i,j) \in E_n} \mu_n^{\be_0, B_0}(\sigma_i = \sigma_j)  = \lim_{n \to \infty} \f{\partial }{\partial \be} \Phi_n(\be_0, B_0) = \f{\partial }{\partial \be} \Phi(\nu_{\free}^{\be_0, B_0}).
\eeq
{Indeed, the left equality} in \eqref{eq:partials-converge} follows by a straightforward computation, {whereas for} the right equality recall that the derivatives in $\be$ of
convex functions $\{\Phi_n(\cdot, B)\}_{n \in \N}$ converge to the derivative of their (convex) limit, 
whenever the latter exists. {Now,} as $\red{\sfR_c \subset (\sfR_{\free})^c}$, {it follows} by Proposition \ref{prop:non-unique-regime} and \purple{Proposition} \ref{asmp:main} that for any $(\be_0, B_0) \in \sfR_{\free}$ 
there exists some open neighborhood $U_{\be_0, B_0} \ni (\be_0, B_0)$ such that 
\beq\label{eq:Phi-not-on-c}
\lim_{n \to \infty} \Phi_n(\be, B) = \max\{\Phi(\nu_{\free}^{\be, B}), \Phi(\nu_{1}^{\be, B})\}=\Phi(\nu_{\free}^{\be, B}), \quad \text{ for any } (\be, B) \in U_{\be_0,B_0}. 
\eeq
{From Lemma \ref{lem:smoothness-fixed-pt}(ii) we know that}
$(\be,B) \mapsto \nu_{\free}^{\be, B}$ is differentiable 
at $(\be_0, B_0) \in \sfR_{\free} \subset \sfR_{\neq}\setminus \partial \sfR_{\neq}^+$ and 
{with} $(\nu, \be, B) \mapsto \Phi^{\be, B}(\nu)$ differentiable, by the chain rule 
{the limit 
in \eqref{eq:Phi-not-on-c}}
is differentiable in $\be$ at $(\be_0, B_0)$, yielding the right equality in \eqref{eq:partials-converge}. 
{To complete the proof of \eqref{eq:correlation-limit} for $(\be_0, B_0) \in \sfR_{\free}$
note that the identity}
\beq\label{eq:phi-derivative}
\f{\partial}{\partial \be} \Phi(\nu_{\free}^{\be_0,B_0}) 
= \f{1}{2} \sum_{i \in \partial o} \mu_{\free}^{\be_0, B_0} (\sigma_o=\sigma_i),
\eeq
{is a special case of \cite[Proposition 2.4]{DMS}. Indeed, the positivity and finite mean 
conditions \cite[(H$1$) and (H$2$)]{DMS} apply for the Potts model and $\Tree_d$, respectively, 
their differentiability requirement (H$3^\be$) is covered for $(\be,B) \mapsto \nu_{\free}^{\be, B}$
by Lemma \ref{lem:smoothness-fixed-pt}(ii), and comparing \eqref{eq:mg-pairs} with 
\cite[Eqn.~(1.22)]{DMS} shows that the translation invariance
(hence unimodular), measure $\mu_{\free}^{\be,B}$ is in the space ${\mathcal H}^\star$ of \cite{DMS}.}

{Following the same line of reasoning} we find that for $(\be_0, B_0) \in \sfR_1$ the equality \eqref{eq:correlation-limit} holds with $\mu_{\wired}^{\be,B}$ replaced by $\mu_1^{\be,B}$. 
{Finally, by} construction $\mu_{\wired}^{\be,0}(\sigma_o=\sigma_i)= \mu_k^{\be,0}(\sigma_o=\sigma_i)$ for any $k \in [q]$, and $\mu_1^{\be, B}= \mu_{\wired}^{\be, B}$ for $B >0$ (recall \eqref{eq:mu-wired}). 
\end{proof}

\section{{Dominating spin and pure state:} proof of Theorem \ref{thm:main-2}}\label{sec:pf-cond-thm}

{At $B=0$ the Potts measure is invariant under a global spin color permutation. Hence, 
$\mu_n^{\be,0}(\sK_n(\ul \sigma)=k)=1/q$ for all $k \in [q]$} (see \eqref{eq:sK_n} for 
$\sK_n(\cdot)$), with $\mu_{n,k}^{\be,0}(\cdot)=q \mu_n^{\be,0}(\cdot,\sK_n(\cdot)=k)$.

{In Section \ref{subsec-cond-free} we show that for $\be < \be_c(0)$ and 
any $k_1 \ne k_2 \in [q]$,
\beq\label{eq:small-disagreement}
\lim_{n \to \infty}  \E_n \Big| {\sf P}_{\mu_{n,k_1}^{\be,0}}^t(I_n)
(\ul \sigma_{\sfB_{I_n}(t)}=\ul \sigma) - 
{\sf P}_{\mu_{n,k_2}^{\be,0}}^t(I_n)
(\ul \sigma_{\sfB_{I_n}(t)}=\ul \sigma) \Big| = 0 \,, \qquad \forall \ul \sigma \in \bS_t, \quad 
\forall t \in \N \,. 
\eeq
By Theorem \ref{thm:main-1}\red{(i)-(ii)}, as for such $\beta$
\[
\mu_n^{\be,0}=\frac{1}{q} \sum_{k=1}^q \mu_{n,k}^{\be,0} \lwc \mu_{\free}^{\be,0} \,,
\]
we deduce in view of \eqref{eq:small-disagreement} that
the same must apply also for each $\mu_{n,k}^{\be,0}$, yielding Theorem \ref{thm:main-2}(i).

In contrast, to get $\mu_{n,k}^{\be,0} \lwc \mu_k^{\be,0}$ in Theorem \ref{thm:main-2}(ii)
amounts to showing that for $\be>\be_c(0)$,  
\beq\label{eq:cond-limit-large}
\lim_{n \to \infty} 
\E_n \big[ \mu_{n}^{\be,0} (\ul \sigma_{\sfB_{I_n}(t)}=\ul \sigma, \sK_n(\cdot)=k) \big]
= \frac{1}{q} \mu_{k}^{\be, 0}(\ul{\sigma}_{\Tree_d(t)}= \ul\sigma) \,,  
\qquad \forall \ul \sigma \in \bS_t, \quad \forall t \in \N.
\eeq
To this end, we show in Section \ref{subsec-pure-states} that for such $\be$,}
due to the edge expansion property of $\{\Graph_n\}$, with {high $\mu_n^{\be,0}$-probability}
the dominating color $\sK_n$ coincides with that for a randomly chosen `large' 
local neighborhood. {Employing} the local weak convergence from Theorem {\ref{thm:main-1}\red{(iii)}, we relate the latter functional} to the dominating color 
of $\Tree_d(t)$ {under the wired \abbr{rcm} and the proof is then completed upon identifying 
{the behavior that corresponds to a dominant color in this tree} setting.}

\subsection{Free limit ($\be<\be_c(0)$): Proof of Theorem \ref{thm:main-2}(i)}\label{subsec-cond-free}

We establish \eqref{eq:small-disagreement} by a coupling 
with an arbitrarily small fraction of disagreements between the spins under
$\mu_{n,k_1}^{\be,0}$ and under $\mu_{n,k_2}^{\be,0}$. To this end, we first show
that the free \abbr{rcm} 
on $\Tree_d$, does not have an infinite cluster when $\be < \be_c(0)$.
\begin{lem}\label{lem:phi-f-no-prcolatn}
For any $d\ge 3$, $q \ge 2$ and $\be \in [ 0, \be_{c}(0))$,
\[
\varphi_{\free}^{\be, 0}(o \bij \infty):= \lim_{t \to \infty} \varphi_{\free}^{\be, 0}(o \bij \partial \sfB_o(t)) =0. 
\]
\end{lem}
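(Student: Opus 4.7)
I would identify $\varphi_{\free}^{\be,0}$ on $\Tree_d$ with ordinary Bernoulli bond percolation, and then read off the non-percolation regime from branching-process extinction.  At $B=0$ every edge incident to the ghost $v^\star$ has $p_e=0$, so the approximants $\varphi_{\free,t}^{\be,0}$ in Definition~\ref{dfn:inf-vol-RC} are supported on bonds of $\Tree_d(t)$ and coincide there with the standard free \abbr{rcm}.  On a tree every open bond reduces the component count by exactly one, whence $|\sC(\ul\eta)| = |V(\Tree_d(t))| - \sum_e \eta_e$, so substituting into Definition~\ref{dfn:rcm_dfn} collapses the $q^{|\sC|}$ factor into a per-edge weight; the resulting product measure is Bernoulli$(\tilde p)$ on $E(\Tree_d(t))$ with
\[
\tilde p := \frac{e^{\be}-1}{e^{\be}+q-1}.
\]
Letting $t \uparrow \infty$ and using Lemma~\ref{lem:rc-fw-limit}, the limit $\varphi_{\free}^{\be,0}$ is then Bernoulli$(\tilde p)$ percolation on $\Tree_d$.

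Under this Bernoulli measure the path from $o$ to any fixed $v \in \partial \sfB_o(t)$ is unique of length $t$, so $\varphi_{\free}^{\be,0}(o \bij v) = \tilde p^{\,t}$; and since $|\partial \sfB_o(t)| = d(d-1)^{t-1}$, a union bound yields
\[
\varphi_{\free}^{\be,0}\bigl(o \bij \partial \sfB_o(t)\bigr) \;\le\; d\,(d-1)^{t-1}\,\tilde p^{\,t},
\]
which tends to $0$ as $t \to \infty$ whenever $(d-1)\tilde p < 1$, equivalently $\be < \be_{\mathrm{KS}}$ where $\be_{\mathrm{KS}} := \log\frac{d+q-2}{d-2}$ is the linear-stability/Kesten--Stigum threshold characterized by $(d-1)\tilde p(\be_{\mathrm{KS}}) = 1$.

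It therefore remains to establish $\be_c(0) \le \be_{\mathrm{KS}}$.  The symmetric measure $(1/q,\ldots,1/q)$ is a fixed point of ${\sf BP}$ at $B=0$ and, since iterating ${\sf BP}$ from the uniform probability preserves uniformity, it coincides with $\nu_{\free}^{\be,0}$ for every $\be \ge 0$.  Linearizing ${\sf BP}$ at this symmetric fixed point gives Jacobian eigenvalue $(d-1)\tilde p$ on the tangent subspace $\{\sum_\sigma \delta\nu(\sigma)=0\}$, so it is linearly stable precisely for $\be \le \be_{\mathrm{KS}}$.  I would then invoke the Bethe-functional analysis of Appendix~\ref{sec:app} to conclude that for $\be > \be_{\mathrm{KS}}$ any non-symmetric fixed point $\nu_1^{\be,0}$ of ${\sf BP}$ satisfies $\Phi(\nu_1^{\be,0}) > \Phi(\nu_{\free}^{\be,0})$, placing $(\be,0) \in \mathsf{R}_1$ by Proposition~\ref{prop:non-unique-regime}(ii) and hence forcing $\be_c(0) \le \be_{\mathrm{KS}}$.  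This threshold comparison is the only delicate ingredient; the Bernoulli identification and the union-bound estimate are elementary.
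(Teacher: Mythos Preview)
Your Bernoulli identification and union-bound/branching argument are correct and match the paper's proof exactly: the free \abbr{rcm} on $\Tree_d$ at $B=0$ is i.i.d.\ Bernoulli with parameter $\tilde p=(e^\beta-1)/(e^\beta+q-1)$, and non-percolation holds once $(d-1)\tilde p<1$, i.e.\ $\beta<\beta_{\mathrm{KS}}$.

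The gap is in your final step, the inequality $\beta_c(0)\le\beta_{\mathrm{KS}}$. Linear instability of the symmetric point under the ${\sf BP}$ iteration does \emph{not} by itself imply that $\nu_{\free}$ fails to maximize the Bethe functional: the relationship between the ${\sf BP}$ Jacobian and the Hessian of $\Phi$ is subtle, and nothing in Appendix~\ref{sec:app} supplies the implication ``${\sf BP}$-unstable $\Rightarrow$ $\Phi(\nu_1)>\Phi(\nu_{\free})$'' that you invoke. (Indeed for $q\ge3$ the converse phenomenon occurs: on the interval $(\beta_c(0),\beta_{\mathrm{KS}})$ the symmetric point is still ${\sf BP}$-linearly stable yet $\Phi(\nu_1)>\Phi(\nu_{\free})$, so stability and maximization decouple.)

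The paper closes this gap differently: it quotes the explicit formula $e^{\beta_c(0)}=(q-2)/((q-1)^{1-2/d}-1)$ for $q\ge3$ (and $e^{\beta_c(0)}=d/(d-2)$ for $q=2$) from \cite{BBC22+}, and verifies $m(\beta_c(0))=(d-1)\tilde p(\beta_c(0))\le1$ by an elementary Jensen-type inequality. If you want to salvage your abstract route, the cleanest fix is to observe that your Jacobian computation $\partial_r F(0;\beta,0)=(d-1)\tilde p$ identifies $\beta_{\mathrm{KS}}$ with the upper spinodal $\beta_+(0)$ (since $\beta_+(0)$ is characterized in the Appendix by $\rho_-(\beta_+(0))=0$, which forces $\partial_r F(0;\beta_+(0),0)=1$); then $\beta_c(0)<\beta_+(0)=\beta_{\mathrm{KS}}$ follows directly from Proposition~\ref{prop:non-unique-regime}.
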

\begin{proof} Recall \cite[Theorem 10.67]{G-RC} that the \abbr{rcm} $\varphi_{\free}^{\be, 0}$
on $\Tree_d$ with free boundary condition is merely the product measure on $\{0,1\}^{E(\Tree_d)}$, where each edge is open with probability $\pi(\be):= \f{p}{p+q(1-p)}$ for $p=p_e=1-e^{-\be}$. Since the {\em branching number} of $\Tree_d$ is $(d-1)$ (see \cite[Section 2]{L90} for a definition), the stated result follows from \cite[Theorem 6.2]{L90}, upon verifying that
\beq\label{eq:m-beta}
m(\be) := (d-1) \pi(\be) = \f{(d-1)(e^\be -1)}{e^\be + q -1} < 1 \,, \qquad \forall \be \in [0, \be_c(0))\,.
\eeq
By the monotonicity of $y \mapsto \f{y}{y+a}$ whenever $a >0$, it suffices for \eqref{eq:m-beta} to show that $m(\be_c(0)) \le 1$. To this end, note that by Jensen's inequality,
\[
h(x):=\f{x^{1-d/2} (x^{d-1} -1)}{(d-1)(x-1)} = \frac{1}{d-1} \sum_{j=0}^{d-2} x^{j+1-d/2} > 1 \,,
\qquad \forall x > 1 \,.
\]
Now, recall from the discussion after \cite[Theorem 1.1]{BBC22+}, that for any $d,q \ge 3$,
\[
e^{\be_c(0)} = \f{q-2}{(q-1)^{1-2/d} -1} \,.
\]
In view of \eqref{eq:m-beta}, it then follows that $m(\be_c(0))=1/h((q-1)^{2/d})<1$. Similarly, 
$e^{\be_c(0)} =d/(d-2)$ when $q=2$, in which case $m(\be_c(0))=1$. Thus,
$m(\be) < m(\be_c(0)) \le 1$ for all $q \ge 2$, as claimed.
\end{proof}


With $\varphi_n^{\be,0} \lwc \varphi_{\free}^{\be, 0}$
(see Theorem \ref{thm:main-1}\red{(i)-(ii)}), we next show that Lemma \ref{lem:phi-f-no-prcolatn} 
implies having only a small fraction of the vertices of $\Graph_n$ in large open connected 
components of $\varphi_n^{\be,0}$ (whenever $\be < \be_c(0)$).
%
\begin{lem}\label{lem:no-large-cluster} 
Let $\sN_n(r):=\sN_n(r, \ul \eta)$, $r \in \N$, denote the number of open connected 
components of size $r$ in $\Graph_n$ equipped with bond configuration 
$\ul \eta \in \{0,1\}^{E_n}$. For any $\vep >0$ and $\beta \in [0, \beta_{c}(0))$, there exists some 
$\ell_\star = \ell_\star(\vep, \beta) < \infty$ such that 
\[
\limsup_{n \to \infty} \varphi_n^{\be, 0}\Big(\sum_{r \ge \ell_\star} r \sN_n(r) \ge \vep n \Big) \le \vep. 
\]
\end{lem}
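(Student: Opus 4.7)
I first rewrite
\[
\sum_{r \ge \ell_\star} r \sN_n(r,\ul \eta) = |\{i \in [n] : |C(i;\ul \eta)| \ge \ell_\star\}| =: \sM_n(\ell_\star,\ul \eta),
\]
where $C(i;\ul \eta)$ denotes the open cluster of $i$; the plan is then to bound the annealed expectation $\E_n[\varphi_n^{\be,0}(\sM_n(\ell_\star)/n)]$ and invoke Markov's inequality. Given $\vep > 0$, by Lemma \ref{lem:phi-f-no-prcolatn} there exists $t = t(\vep,\be)$ such that $\varphi_{\free}^{\be,0}(o \bij \partial \Tree_d(t)) \le \vep^2/2$, and with this $t$ I set $\ell_\star := |\Tree_d(t)| + 1$.

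The key deterministic ingredient is that whenever $\sfB_i(t) \cong \Tree_d(t)$ and $|C(i)| \ge \ell_\star > |\sfB_i(t)|$, the cluster $C(i)$ must contain a vertex outside $\sfB_i(t)$; truncating any open path from $i$ to such a vertex at its first hitting time of $\partial \sfB_i(t)$ produces an open path from $i$ to $\partial \sfB_i(t)$ lying entirely inside $\sfB_i(t)$. Hence
\[
\{\sfB_i(t) \cong \Tree_d(t),\ |C(i)| \ge \ell_\star\} \ \subseteq\ \{\sfB_i(t) \cong \Tree_d(t),\ i \bij \partial \sfB_i(t)\ \text{via}\ \ul\eta|_{\sfB_i(t)}\},
\]
and the latter event is a bounded measurable function of the local pair $(\sfB_i(t),\ul\eta|_{\sfB_i(t)})$.

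Next, using Theorem \ref{thm:main-1}(i), namely $\varphi_n^{\be,0} \lwc \varphi_{\free}^{\be,0}$, applied to this bounded local observable rooted at the uniform vertex $I_n$, together with $\Pr(\sfB_{I_n}(t) \ncong \Tree_d(t)) \to 0$ from $\Graph_n \loc \Tree_d$ (to absorb the negligible fraction of vertices with irregular neighborhoods), I obtain
\[
\limsup_{n \to \infty} \E_n \varphi_n^{\be,0}[\sM_n(\ell_\star)/n] \ \le\ \varphi_{\free}^{\be,0}(o \bij \partial \Tree_d(t)) \ \le\ \vep^2/2.
\]
Markov's inequality then yields $\limsup_{n \to \infty} \E_n \varphi_n^{\be,0}(\sM_n(\ell_\star) \ge \vep n) \le \vep/2 \le \vep$, as required. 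The only subtle point—and essentially the only place the argument is more than bookkeeping—is the truncation reducing the global event ``$i$ lies in a cluster of size $\ge \ell_\star$'' to a local event on $\sfB_i(t)$; without this reduction, local weak convergence could not be invoked, since connectivity \emph{a priori} depends on arbitrarily distant parts of $\Graph_n$. Otherwise the argument is a direct consequence of the absence of percolation for $\varphi_{\free}^{\be,0}$ in the subcritical regime $\be < \be_c(0)$ provided by Lemma \ref{lem:phi-f-no-prcolatn}.
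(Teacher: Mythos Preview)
Your proof is correct and follows essentially the same approach as the paper's: both rewrite $\sum_{r \ge \ell_\star} r\sN_n(r)$ as the number of vertices in clusters of size at least $\ell_\star$, reduce the global event $\{|C(i)| \ge \ell_\star\}$ to the local event $\{i \bij \partial\sfB_i(t)\ \text{within}\ \sfB_i(t)\}$ on tree-like balls, invoke Theorem~\ref{thm:main-1}(i) and $\Graph_n \loc \Tree_d$ to pass to $\varphi_{\free}^{\be,0}(o \bij \partial\Tree_d(t))$, and conclude via Lemma~\ref{lem:phi-f-no-prcolatn} and Markov's inequality. The only cosmetic difference is that you fix $t$ first and set $\ell_\star = |\Tree_d(t)|+1$, whereas the paper fixes $\ell$ and defines $t_\ell$ as the least $t$ with $|\Tree_d(t)| \ge \ell$.
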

\begin{proof}
Fixing $\ell \in \N$, note that the event $\{\sum_{r \ge \ell} r \sN_n(r) \ge \vep n\}$ implies 
having at least $\vep n$ vertices of $\Graph_n$ with open connected components 
of size at least $\ell$. Setting the smallest $t_\ell \in \N$ such that $|\Tree_d(t_\ell)| \ge \ell$, where for a finite graph $\Graph$ the notation $|\Graph|$ is used to denote the cardinality of its vertices,
for any $i \in [n]$ with open connected component of size at least $\ell$ and 
$\sfB_i(t_\ell) \cong \Tree_d(t_\ell)$, there must be an open path within $\sfB_i(t_\ell)$
from $i$ to $\partial \sfB_i(t_\ell)$.
This together with Markov's inequality, Theorem \ref{thm:main-1}\red{(i)-(ii)} for the \abbr{rcm}-s 
$\varphi_n^{\be,0}$, and the fact that $\Graph_n \loc \Tree_d$, entail that
\begin{multline*}
\limsup_{n \to \infty} \varphi_n^{\be, 0}\Big(\sum_{r \ge \ell} r \sN_n(r) \ge \vep n \Big) \\
\le \vep^{-1} \lim_{n \to \infty} \E_n \left[\varphi_n^{\be, 0}(I_n \stackrel{{\rm in}}{\bij}
 \partial \sfB_{I_n}(t_\ell)) \cdot {\bf 1}(\sfB_{I_n}(t_\ell) \cong \Tree_d(t_\ell))\right]=  \vep^{-1} \varphi_{\free}^{\be, 0}( o \leftrightarrow \partial \sfB_o(t_\ell)). 
\end{multline*}
Since $t_\ell \to \infty$ as $\ell \to \infty$, by Lemma \ref{lem:phi-f-no-prcolatn} 
we can choose $\ell$ large enough so that the \abbr{RHS} above is at most $\vep$, 
thereby completing the proof. 
\end{proof}

Recall that, by Lemma \ref{lem:coupling-es}(ii) at $B=0$, given a random cluster bond configuration,
the Potts spin configuration is obtained by assigning a single color to all vertices in each connected component, uniformly at random, and independently across all components. 
Our next `complicated' procedure of generating such independent discrete 
uniform random variables is the key to our promised coupling of $\mu_{n,k}^{\be,0}$
and $\mu_{n,k'}^{\be,0}$.

\begin{lem}\label{lem:sim-unif}
Suppose $\sM:=(\sM_1, \sM_2, \ldots, \sM_q) \stackrel{d}{=}{\rm Mult}_q(M, q^{-1}, q^{-1}, \ldots,q^{-1})$, follows the multinomial distribution with $M$ trials and equal probability
$q^{-1}$ for each of the $q$ categories. Conditioned on $\sM$ choose uniformly at random
a labeled partition of $[M]$
to 
distinguished sets $\{B_k, \wt B_k\}$ of sizes
$|B_k|= \sM_\star := \min_{k} \{\sM_k\}$ and
$|\wt B_k| = \sM_k - \sM_\star$. 
For a uniformly chosen permutation $\ul \gam$ of $[q]$,
set $Y_i=k$ whenever $i \in B_k \cup \wt B_{\gam(k)}$. This yields \abbr{iid} variables
$\{Y_i\}_{i=1}^M$, each following the discrete uniform law $\dU([q])$ on $[q]$. 
\end{lem}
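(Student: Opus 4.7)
The plan is to compute $P(Y_1 = y_1, \ldots, Y_M = y_M)$ directly for an arbitrary $\ul y \in [q]^M$ and show it equals $q^{-M}$, which immediately yields the iid-$\dU([q])$ conclusion. First I would condition on both $\sM = \ul m$ and the permutation $\ul \gam$, and reduce everything to a counting problem on labeled set partitions.

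By construction $Y_i = k$ exactly when $i \in B_k \cup \wt B_{\gam(k)}$, so the count $|\{i: Y_i = k\}|$ is forced to equal $|B_k| + |\wt B_{\gam(k)}| = \sM_\star + (m_{\gam(k)} - \sM_\star) = m_{\gam(k)}$. Writing $n_k := |\{i : y_i = k\}|$, this already shows that $P(\ul Y = \ul y \mid \sM = \ul m, \ul \gam) = 0$ unless the compatibility condition $n_k = m_{\gam(k)}$ holds for every $k$. When compatibility does hold, fixing $\ul y$ determines each union block $B_k \cup \wt B_{\gam(k)} = \{i : y_i = k\}$, and the remaining freedom is the split of each such block of size $n_k$ into $B_k$ of size $\sM_\star$ and $\wt B_{\gam(k)}$ of size $n_k - \sM_\star$. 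These splits are independent across $k$, giving $\prod_k \binom{n_k}{\sM_\star} = \prod_k \binom{m_k}{\sM_\star}$ favorable labeled partitions out of a total of $M! / [(\sM_\star!)^q \prod_k (m_k - \sM_\star)!]$. Simplifying yields
\[
P(\ul Y = \ul y \mid \sM = \ul m, \ul \gam) = \binom{M}{\ul m}^{-1} \mathbf{1}\{n_k = m_{\gam(k)} \text{ for all } k\}.
\]

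Next I average against the multinomial law $P(\sM = \ul m) = \binom{M}{\ul m} q^{-M}$ and the uniform $\ul \gam \in S_q$. The multinomial weight cancels the combinatorial denominator, leaving
\[
P(\ul Y = \ul y) = \frac{q^{-M}}{q!} \sum_{\ul \gam \in S_q} \sum_{\ul m} \mathbf{1}\{m_{\gam(k)} = n_k \text{ for all } k\}.
\]
For each $\ul \gam$ there is a unique $\ul m$ (namely $m_k = n_{\gam^{-1}(k)}$) making the indicator equal to one, so the double sum collapses to $|S_q| = q!$, whence $P(\ul Y = \ul y) = q^{-M}$ for every $\ul y \in [q]^M$, as desired.

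The only delicate step is the favorable-partition count: I would want to verify carefully that, once $\ul y$ is fixed together with the compatibility condition, the remaining choice of partition decouples cleanly across the $q$ color classes and produces the clean factorization $\prod_k \binom{m_k}{\sM_\star}$; everything after that is elementary multinomial algebra. The argument is purely combinatorial and uses no input from the rest of the paper.
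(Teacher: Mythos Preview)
Your proof is correct and follows essentially the same combinatorial route as the paper: both count favorable labeled partitions and arrive at the same ratio $\binom{M}{\ul m}^{-1}$ times the multinomial weight, yielding $q^{-M}$. The only cosmetic difference is that the paper conditions on $\ul\gam$ alone (observing that this, together with $\ul Y = \ul y$, already pins down $\sM$) and thereby obtains the slightly stronger statement $\P(\ul Y=\ul y\mid\ul\gam)=q^{-M}$ for every fixed permutation, whereas you condition on both $\sM$ and $\ul\gam$ and then average; for the lemma as stated the two are equivalent.
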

\begin{proof} It suffices to show that for ${\bm Y}:=(Y_1, Y_2, \ldots, Y_M)$, 
any fixed ${\bm k}:=(k_1, k_2, \ldots, k_M) \in [q]^M$ and a fixed permutation
${\bm \gamma}^0$ of $[q]$,
\beq\label{eq:sim-unif}
\P({\bm Y}={\bm k}\,|\, \ul \gamma = {\bm \gam}^0) = q^{-M}.
\eeq
Now, given $\ul \gam={\bm \gam}^0$, 
the event $\Omega_{{\bm k}}:=\{{\bm Y}={\bm k}\}$ induces the partition $\wh B_i:= \{j \in [M] : k_j =i\}$
of $[M]$ with $\wh B_i = B_i \cup \wt B_{{\bm \gam}^0 (i)}$. Setting $|\wh B_i|=y_i$, since
$|B_i|=y_\star := \min_{j}\{y_j\}$ for all $i \in [q]$, we also have $|\wt B_{{\bm \gam}^0 (i)}|=y_i-y_\star$.
That is, given $\ul \gam$, the event $\Omega_{\bm k}$ determines the 
sizes and colors of $\{B_i,\wt B_i\}$, with only 
the choices of $B_i \subset \wh B_i$ indeterminate. In fact, any such choice  of $\{B_i\}$
with $|B_i|=y_\star$ produces a realization of the event $\Omega_{{\bm k}}$. 
There are $\prod_{i=1}^q\binom{y_i}{y_\star}$ such choices
and the probability of observing each realization of $\{B_i,\wt B_i\}_{i=1}^q$ is then
\[
\f{1}{\binom{M}{y_1, y_2, \ldots, y_q} \prod_{i=1}^q \binom{y_i}{y_\star}}\P(\sM_i=y_i, i \in [q]) = \f{1}{\prod_{i=1}^q \binom{y_i}{y_\star}} q^{-M}\,.
\]
Hence, taking a union over the set of all possible choices of $\{B_i\}_{i=1}^q$ 
we arrive at \eqref{eq:sim-unif}. 
\end{proof}

In Lemma \ref{lem:sim-unif} the dominant color is completely determined by the 
$\sum_{k}|\sM_k - \sM_\star|$ colors involving $\ul \gamma$, and   
by standard concentration bounds with high probability there are only $o(M)$ such colors.
We can thus 
produce an identical copy $\{Y_i'\}_{i=1}^M$ with only $o(M)$ discrepancy from $\{Y_i\}_{i =1}^M$ 
and two different specified dominant colors. Indeed, building on Lemmas 
\ref{lem:no-large-cluster} and \ref{lem:sim-unif} we proceed this way 
to establish \eqref{eq:small-disagreement} (and thereby get Theorem \ref{thm:main-2}(i)).

\begin{proof}[Proof of \eqref{eq:small-disagreement}]
Fixing $k_1 \ne k_2 \in [q]$, our proof hinges on producing Potts spin configurations 
$\ul \sigma^1, \ul \sigma^2 \in [q]^n$ 
that with high probability agree up to $o(n)$ sites, while
$\sK_n(\ul \sigma^1)=k_1 \iffa \sK_n(\ul \sigma^2)=k_2$. To this end, equip 
$\Graph_n$ with bond configuration $\ul \eta \in \{0,1\}^{E_n}$ and 
let $W_r \subset [n]$, $r \ge 1$, denote the vertex disjoint union 
of its $\sN_n(r):=\sN_n(r, \ul \eta)$ open connected components of size $r$. 
Next, fix a uniformly random permutation $\ul \gam$ of $[q]$ and set
$\ul \gam'$ such that $\gam'(k)=\gam(k)$ for $k \notin \{k_1, k_2\}$ with 
$\gamma'(k_1)=\gamma(k_2)$ and $\gamma'(k_2)=\gamma(k_1)$. Choosing 
\[
\sM=\sM^{(r)}:=(\sN_n(r,1), \sN_n(r,2), \ldots, \sN_n(r,q)) \stackrel{d}{=}{\rm Mult}_q(\sN_n(r), q^{-1}, q^{-1}, \ldots,q^{-1}) \,,
\]
and the corresponding partition of $[\sN_n(r)]$ as in 
Lemma \ref{lem:sim-unif}, induces via $\ul \gam$ a color coding for 
each open connected component of size $r$. Thereby assigning that same color to all vertices of
such a component, we obtain a spin configuration $\ul \sigma_{W_r}^1$. Following the same 
procedure except for replacing $\ul \gam$ by $\ul \gam'$ yields another spin configuration 
$\ul \sigma_{W_r}^2$. With $\sW_r \subset W_r$ denoting the (random) set of sites 
whose color is independent of the choice of $\ul \gam$, we have by construction that
$\ul \sigma^1_{\sW_r} = \ul \sigma^2_{\sW_r}$ and
\[
|W_r \setminus \sW_r| \le q r \max_{k, k' \in [q]}|\sN_n(r,k) - \sN_{n}(r,k')| \,.
\] 
Repeating this procedure for each $r \in \N$, independently across different $r$'s yields
a random set of sites $\sW \subset [n]$ with color independent of $\ul \gam$ and
spin configurations $\ul\sigma^1, \ul \sigma^2 \in [q]^n$ with 
$\ul\sigma_{\sW}^1 = \ul\sigma_{\sW}^2$ and
\beq\label{eq:d-H-bd}
|\sW^c| \le q \sum_{r \in \N} r \max_{k \ne k' \in [q]}|\sN_n(r,k) - \sN_n(r,k')|. 
\eeq
In view of Lemmas \ref{lem:coupling-es}(ii) and \ref{lem:sim-unif}, for
$\ul \eta$ drawn according the \abbr{rcm} $\varphi_n^{\be, 0}$, the
marginal laws of $\ul\sigma^1$ and $\ul \sigma^2$ are both given by the 
Potts measure $\mu_n^{\be, 0}$. 
Further, these spin configurations are such that $\sK_n(\ul \sigma^1)= k_1 \iffa \sK_n(\ul \sigma^2)=k_2$. Indeed, since the number of sites in $\sW$ with any given color is exactly $q^{-1}|\sW|$ in both 
$\ul\sigma^1$ and $\ul\sigma^2$, the event $\sK_n(\ul \sigma^1)=k_1$ amounts to 
spin configuration $\ul\sigma_{\sW^c}^1$ of dominating color $k_1$. The relation between 
$\ul\gam$ and $\ul \gam'$ dictates that this happens if and only if the same holds 
for $\ul \sigma^2$ with $k_1$ replaced by $k_2$, namely that equivalently 
$\sK_n(\ul\sigma^2)=k_2$.

Armed with these observations, for any $t \in \N$ and $\ul \sigma \in \bS_t$, we deduce that
\begin{multline}\label{eq:couplin-ne-bd}
\f1n\sum_{i=1}^n \left|{\bf 1}(\ul \sigma^1_{\sfB_i(t)}= \ul \sigma, \sK_n(\ul \sigma^1)=k_1) - {\bf 1}(\ul \sigma^2_{\sfB_i(t)}= \ul \sigma, \sK_n(\ul \sigma^2)=k_2)\right|  \\
\le \f1n\sum_{i=1}^n {\bf 1}(\sfB_i(t) \cap \sW^c \ne \emptyset, \sfB_i(2t) \cong \Tree_d(2t)) + \f1n\sum_{i=1}^n {\bf 1}(\sfB_i(2t) \ncong \Tree_d(2t))\red{.} 
\end{multline}
Clearly, ${\bf 1}(\sfB_i(t) \cap \sW^c \ne \emptyset) \le \sum_{j \in \sW^c} {\bf 1}(\sfB_i(t) \ni j)$ and if $j \in \sfB_i(t)$ for $\sfB_i(2t) \cong \Tree_d(2t)$ then also 
$\sfB_j(t) \cong \Tree_d(t)$. Thus, the first term in the \abbr{RHS} of \eqref{eq:couplin-ne-bd} is bounded above by
\begin{equation}\label{eq:couplin-ne-bd-1}
\f1n\sum_{j \in \sW^c} \sum_{i =1}^n {\bf 1}(\sfB_i(2t) \cong \Tree_d(2t), \sfB_i(t) \ni j) 
\le \f1n\sum_{j \in \sW^c}|\sfB_j(t)| {\bf 1}(\sfB_j(t) \cong \Tree_d(t))  \le (d+1)^t \f{|\sW^c|}{n}. 
\end{equation}
Recalling that $\mu_{n,k}^{\be,0}(\cdot)=q \mu_n^{\be,0}(\cdot,\sK_n(\cdot)=k)$ and
$\Graph_n \loc \Tree_d$, by \eqref{eq:couplin-ne-bd}, \eqref{eq:couplin-ne-bd-1} and the triangle inequality, we find that for any $k_1 \ne k_2 \in [q]$ and $t \in \N$,
\[
\limsup_{n \to \infty}  \E_n \Big| {\sf P}_{\mu_{n,k_1}^{\be,0}}^t(I_n)
(\ul \sigma_{\sfB_{I_n}(t)}=\ul \sigma) - 
{\sf P}_{\mu_{n,k_2}^{\be,0}}^t(I_n)
(\ul \sigma_{\sfB_{I_n}(t)}=\ul \sigma) \Big|
\le q(d+1)^t 
\limsup_{n \to \infty} \f{1}{n} \E [|\sW^c|],
\]
where the expectation on the \abbr{RHS} is with respect to both the underlying \abbr{rcm} 
and our two color assignments via Lemma \ref{lem:sim-unif}. Thus, to conclude with 
\eqref{eq:small-disagreement} it suffices to fix $\vep>0$ and show that 
\beq\label{eq:sW-bd}
\limsup_{n \to \infty} n^{-1} \E[|\sW|^c] \le 3 q\vep \,.
\eeq
To this end, set for $\ell_\star(\vep)$ as in Lemma \ref{lem:no-large-cluster} and
$0 < \vep_\star \le \vep \ell_\star(\vep)^{-2}$ the events
\begin{align*}
\cC:= \cC_1 \!\!\! \bigcap_{r < \ell_\star(\vep)} \!\!\! \cC_2(r), \;\; 
\cC_1:= \Big\{ \sum_{r \ge \ell_\star(\vep)} r \sN_n(r) \le \vep n \Big\}, \;\;  \cC_2(r):= \Big\{ \max_{k, k' \in [q]}|\sN_n(r, k) -  \sN_n(r, k')| \le \vep_\star n \Big\}.
\end{align*}
Note that on the event $\cC$ we have from \eqref{eq:d-H-bd} and our choice of $\vep_\star$ that
\[
q^{-1} |\sW^c| \le \sum_{r \in \N} r \max_{k, k' \in [q]} |\sN_n(r,k) - \sN_n(r,k')| \le 
\sum_{r \ge \ell_\star(\vep)} r \sN_n(r) + \vep_\star n \sum_{r < \ell_\star(\vep)} r \le  2\vep n \,.
\]
With $|\sW^c| \le n$, we thus arrive at \eqref{eq:sW-bd} and thereby at \eqref{eq:small-disagreement},
upon showing that 
\beq\label{eq:sW-bd-1}
\limsup_{n \to \infty} \P(\cC_1^c) + \limsup_{n \to \infty} 
\sum_{r<\ell_\star(\vep)}  \P(\cC_2(r)^c) \le q \vep \,.
\eeq
Next note that for some $c=c(q) >0$ and any $\vep_\star>0$, $r \in \N$, by the union bound  
\begin{align*}
\P( \max_{k \ne k' \in [q]} |\sN_n(r,k) -  \sN_n(r,k')| \ge \vep_\star \sN_n(r)|\ul\eta) & \le 
\sum_{k \in [q]} \P(|\sN_n(r,k) -  \sN_n(r)/q| \ge \vep_\star \sN_n(r)/2 \,|\,\ul\eta) \\
& \le 2 q \exp(-c \, \vep_\star^2 \sN_n(r))
\end{align*}
(the last step is a standard Binomial$(m,q^{-1})$ tail bound). 
Further $\cC_2(r)^c \subseteq \{ \sN_n(r) > \vep_\star n \}$, hence 
\[
\sum_{r < \ell_\star(\vep)} \P(\cC_2(r)^c) \le 2 q \ell_\star(\vep) e^{-c \, \vep_\star^3 \, n}\,.
\]
The latter bound goes to zero as $n \to \infty$, so by Lemma \ref{lem:no-large-cluster} 
we have that 
\eqref{eq:sW-bd-1} holds, as claimed. 
\end{proof}

\subsection{Pure states ($\be > \be_c(0)$): Proof of Theorem \ref{thm:main-2}(ii)}\label{subsec-pure-states}
 
For any $\ell \in \N$ we have the following local proxy at $u \in V$  
for the dominant color $\sK_\Graph(\cdot)$ of a graph $\Graph=(V,E)$ (possibly infinite):
\[
\sfcK_{\ell, \Graph}(u):={\rm arg}\max_{k \in [q]} \Big\{ {\sf N}_{\ell, \Graph}(u,k) \Big\}\,, 
\qquad
{\sf N}_{\ell, \Graph}(u,k):= \f{1}{|\sfB_{u, \Graph}(\ell)|}\sum_{v \in \sfB_{u,\Graph}(\ell)} 
{\bf 1}(\sfB_{v,\Graph}(2\ell) \cong \Tree_d(2\ell)) \delta_{\sigma_v, k}  
\,, 
\]
and we break ties uniformly among all maximizer values (as done in Definition 
\ref{dfn:dom-spin}). Further let 
\[
{\sf N}_{\ell, \Graph}^{(1)}(u):= \max_{k \in [q]} \{ {\sf N}_{\ell, \Graph}(u,k) \} 
\quad \text{ and } \quad {\sf N}_{\ell, \Graph}^{(2)}(u):= \max_{k \ne \sfcK_{\ell, \Graph}(u)} \{ {\sf N}_{\ell, \Graph}(u,k) \},
\]
suppressing the dependency on $\Graph$ when it is clear from the context. Our next result identifies the behavior of the dominating color of $\Tree_d(\ell)$, under the Potts measures 
$\{\mu_k^{\be, 0}\}_{k \in [q]}$.  
\begin{lem}\label{lem:pott-inifinite-prop}
Fix $\be \ge \be_{\free}(0)$ and $d \ge 3$. Then, for any $\ell \in \N$,
\beq\label{eq:potts-inf-prop-1}
 \mu_{k}^{\be, 0} \left( {\sf N}_{\ell, \Tree_d}(o,k')\right) = \left\{ \begin{array}{ll}
\mu_{1}^{\be, 0}(\sigma_o=1) & \mbox{if } k=k',\\
\mu_{1}^{\be, 0}(\sigma_o=2)& \mbox{if } k \ne k'.
\end{array}
\right.
\eeq
Further,
\beq\label{eq:potts-inf-prop-2}
\lim_{\ell \to \infty}\Var_{\mu_{k}^{\be, 0}} \left({\sf N}_{\ell, \Tree_d}(o,k') \right) =0, \qquad k, k' \in [q], 
\eeq
and consequently,
\beq\label{eq:potts-inf-prop-3}
\lim_{\ell \to \infty} \mu_{k}^{\be, 0} \left( \sfcK_{\ell, \Tree_d}(o)=k'\right) = \delta_{k,k'}, \qquad k, k' \in [q].
\eeq
\end{lem}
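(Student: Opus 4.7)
The plan starts with the observation that $\Tree_d$ is vertex-transitive and $d$-regular, so ${\bf 1}(\sfB_{v,\Tree_d}(2\ell) \cong \Tree_d(2\ell)) = 1$ for every $v \in V(\Tree_d)$ and every $\ell \in \N$. Consequently
\[
{\sf N}_{\ell,\Tree_d}(o,k') = |\Tree_d(\ell)|^{-1} \sum_{v \in \Tree_d(\ell)} \delta_{\sigma_v, k'},
\]
which is the representation used throughout the proof.

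For \eqref{eq:potts-inf-prop-1}, the wired-boundary construction of $\mu_k^{\be,0}$ is translation invariant on $\Tree_d$, hence $\mu_k^{\be,0}(\sigma_v = k') = \mu_k^{\be,0}(\sigma_o = k')$ for every $v$. Moreover, $\mu_k^{\be,0}$ is invariant under any permutation of $[q]$ fixing $k$, so $\mu_k^{\be,0}(\sigma_o = k')$ depends on $k'$ only through the indicator ${\bf 1}(k' = k)$; a relabeling identifying $k$ with $1$ and any $k' \neq k$ with $2$ then gives \eqref{eq:potts-inf-prop-1}.

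To prove \eqref{eq:potts-inf-prop-2}, the key input is the correlation-decay estimate
\[
\varepsilon(R) := \sup_{u,v \in V(\Tree_d),\; {\rm dist}_{\Tree_d}(u,v) \ge R} \Big| \mu_k^{\be,0}(\sigma_u = k', \sigma_v = k') - \mu_k^{\be,0}(\sigma_u = k')\mu_k^{\be,0}(\sigma_v = k') \Big| \to 0 \quad \text{as } R \to \infty,
\]
which I plan to establish as a consequence of the extremality (equivalently, tail triviality) of $\mu_k^{\be,0}$, itself a consequence of its construction as a monotone weak limit of wired-boundary Potts measures combined with the \abbr{FKG} property of the underlying wired \abbr{rcm} $\varphi_{\wired}^{\be,0}$ on $\Tree_d$. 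Given $\varepsilon(R) \to 0$, I expand the variance as an average of covariances over $(u,v) \in \Tree_d(\ell)^2$, split the sum at distance $R$, and bound the number of pairs at distance $\le R$ by $|\Tree_d(\ell)|(d+1)^R$ (each contributing at most $1$ in absolute value), which yields
\[
\Var_{\mu_k^{\be,0}}({\sf N}_{\ell,\Tree_d}(o,k')) \le \frac{(d+1)^R}{|\Tree_d(\ell)|} + \varepsilon(R);
\]
sending first $\ell \to \infty$ and then $R \to \infty$ gives \eqref{eq:potts-inf-prop-2}.

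Finally, \eqref{eq:potts-inf-prop-3} follows from \eqref{eq:potts-inf-prop-1}, \eqref{eq:potts-inf-prop-2}, Chebyshev's inequality, and a union bound over $[q]$, once I establish the strict inequality $\mu_1^{\be,0}(\sigma_o = 1) > \mu_1^{\be,0}(\sigma_o = 2)$ for every $\be \ge \be_{\free}(0)$. To this end, since $(\be, 0) \in {\sf R}_{\ne}$, Proposition \ref{prop:non-unique-regime}(ii) gives $\nu_1^{\be,0}(1) > \nu_{\free}^{\be,0}(1) = 1/q$, the latter equality from the $[q]$-symmetry of $\nu_{\free}$ at $B = 0$. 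Since the ${\sf BP}$ iteration preserves symmetry under permutations of $\{2,\ldots,q\}$ when initialized at the point mass on $1$, $\nu_1^{\be,0}$ inherits this symmetry, giving $\nu_1^{\be,0}(k') = (1 - \nu_1^{\be,0}(1))/(q-1) < 1/q < \nu_1^{\be,0}(1)$ for every $k' \neq 1$. Combining Lemma \ref{lem:marginals} with the strict monotonicity of $x \mapsto ((e^\be - 1)x + 1)x$ on $[0,1]$ then yields the desired strict inequality. The principal obstacle is justifying the correlation-decay estimate used in \eqref{eq:potts-inf-prop-2}, in particular establishing extremality of $\mu_k^{\be,0}$ on $\Tree_d$ throughout the full range $\be \ge \be_{\free}(0)$.
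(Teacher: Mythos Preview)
Your treatment of \eqref{eq:potts-inf-prop-1}, the strict inequality $\mu_1^{\be,0}(\sigma_o=1)>\mu_1^{\be,0}(\sigma_o=2)$, and the derivation of \eqref{eq:potts-inf-prop-3} via Chebyshev all coincide with the paper's proof.

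The gap you flag in \eqref{eq:potts-inf-prop-2} is genuine, and your proposed justification does not close it. Tail-triviality of the wired \abbr{rcm} $\varphi_{\wired}^{\be,0}$ concerns \emph{bond} variables; under Edwards--Sokal at $B=0$ its spin marginal is $\mu_{\wired}^{\be,0}=q^{-1}\sum_{k}\mu_k^{\be,0}$, not any individual $\mu_k^{\be,0}$, and this mixture is \emph{not} tail-trivial in the spin variables once the $\mu_k^{\be,0}$ are distinct (as they are throughout $\be\ge\be_{\free}(0)$, since $\mu_k^{\be,0}(\sigma_o=k)>1/q>\mu_{k'}^{\be,0}(\sigma_o=k)$ for $k'\ne k$). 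There is also no monotone-limit argument available for the Potts spins themselves when $q\ge 3$: the set $[q]^V$ carries no total order for which the interaction is attractive, so the maximality argument that yields extremality of $\mu_+$ in the Ising case has no analogue here. Whether $\mu_k^{\be,0}$ is extremal on $\Tree_d$ for all $q\ge 3$ and $\be\ge\be_{\free}(0)$ is a separate, nontrivial question that the paper never addresses.

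The paper sidesteps extremality entirely. By automorphism invariance of $\mu_1^{\be,0}$ the covariance $\Cov_{\mu_1^{\be,0}}(\delta_{\sigma_u,k'},\delta_{\sigma_v,k'})$ depends only on ${\rm dist}(u,v)$, so it suffices to control it along a single ray from $o$. The spatial Markov property of any tree-indexed Gibbs measure (together with the explicit marginal structure of Lemma~\ref{lem:marginals} and Remark~\ref{rmk:infinite-vol-exist}) makes the restriction of $\mu_1^{\be,0}$ to that ray a time-homogeneous finite-state Markov chain with strictly positive transition probabilities, hence exponentially mixing; thus $c_{k'}(s)\to 0$ and your variance-splitting bound goes through without any appeal to the tail $\sigma$-algebra of $\mu_k^{\be,0}$.
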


\begin{rmk}\label{rmk:pott-inifinite-prop}
As $\Tree_d$ is a vertex transitive graph, the measures $\{\mu_k^{\be, 0}\}_{k \in [q]}$ 
are translation invariant on $V(\Tree_d)$, so Lemma \ref{lem:pott-inifinite-prop} 
applies even when the root $o$ is replaced by any $j \in V(\Tree_d)$.  
\end{rmk}

\begin{proof}[Proof of Lemma \ref{lem:pott-inifinite-prop}]
We first show \eqref{eq:potts-inf-prop-3} given \eqref{eq:potts-inf-prop-1}-\eqref{eq:potts-inf-prop-2}. Indeed, fixing $k \ne k' \in [q]$ note that
\begin{align}\label{eq:potts-inf-prop-4}
\mu_{k}^{\be, 0} \left( \sfcK_{\ell}(o)=k'\right) \le \mu_{k}^{\be, 0} \left( {\sf N}_{\ell}(o, k)  \le  {\sf N}_{\ell}(o, k')\right). 
\end{align}
Applying \eqref{eq:potts-inf-prop-1}-\eqref{eq:potts-inf-prop-2} and Chebychev's inequality, we see that, for any $\delta >0$,
\[
\limsup_{\ell \to \infty} \mu_{k}^{\be, 0}  \Big( {\sf N}_{\ell}(o, k) \le \mu_{1}^{\be, 0}(\sigma_o=1) -\delta\Big) \le \f{1}{\delta^2} \lim_{\ell \to \infty}\Var_{\mu_{k}^{\be, 0}} \left( {\sf N}_{\ell}(o, k)\right) =0.
\]
By a similar argument
\[
\lim_{\ell \to \infty} \mu_{k}^{\be, 0}  \left( {\sf N}_{\ell}(o, k') \ge \mu_{1}^{\be, 0}(\sigma_o=2) +\delta\right) =0.
\]
From Proposition \ref{prop:non-unique-regime}(ii) and the definition of $\nu_{\free}^{\be, 0}$ it follows that $q^{-1}= \nu_{\free}^{\be, 0}(1) < \nu_1^{\be, 0}(1)$ for $\be \ge \be_{\free}(0)$. 
The definition of $\nu_1$ further yields that $\nu_1^{\be, 0}(2)= (q-1)^{-1} (1- \nu_1^{\be, 0}(1))$ and thus $\nu_1^{\be, 0}(1) > \nu_1^{\be, 0}(2)$ for $\be \ge \be_{\free}(0)$. As the map $\nu \mapsto \Big( 
(e^\be -1)\nu+1\Big) \nu$ is strictly increasing on $(0, \infty)$ it follows from 
\eqref{eq:mg-single} that
$\mu_{1}^{\be, 0}(\sigma_o=1) > \mu_{1}^{\be, 0}(\sigma_o=2)$, for any $\be \ge \be_{\free}(0)$.  
Now setting $\delta:= \f13(\mu_{1}^{\be, 0}(\sigma_o=1) - \mu_{1}^{\be, 0}(\sigma_o=2))$, 
we deduce that for any $k \ne k' \in [q]$,
\beq\label{eq:dom-col-nuk}
\lim_{\ell \to \infty}  \mu_{k}^{\be, 0} \left( {\sf N}_{\ell}(o, k)  -  {\sf N}_{\ell}(o, k') \ge \delta \right) =1.
\eeq
Comparing \eqref{eq:potts-inf-prop-4} with \eqref{eq:dom-col-nuk}
and using that $\sum_{k'=1}^q \mu_{k}^{\be, 0} \left( \sfcK_{\ell}(o)=k'\right)=1$, results with  \eqref{eq:potts-inf-prop-3}.

Setting $\pi_k(1)=k$, $\pi_k(k)=1$ and $\pi(k')=k'$ whenever $k' \ne 1$, $k' \ne k$,
the proof of \eqref{eq:potts-inf-prop-1} is immediate by the translation invariance
of $\{\mu_{k}^{\be, 0}\}_{k \in [q]}$ and having that for any finite set $W \subset V(\Tree_d)$,
\beq\label{eq:mu-1-k}
\mu_{1}^{\be,0} (\sigma _i =k_i', i \in W) = \mu_{k}^{\be,0} (\sigma _i =\pi_{k}(k'_i), i \in W), \qquad k, k_i' \in [q]. 
\eeq
Turning to establish \eqref{eq:potts-inf-prop-2}, by \eqref{eq:mu-1-k} we can set 
$k=1$, further noting that by the translation invariance of $\mu_1^{\be,0}$ 
and the vertex transitivity of $\Tree_d$,
\beq\label{eq:cov-1}
\Cov_{\mu_{1}^{\be,0}}\Big(\delta_{\sigma_v, k'}, \delta_{\sigma_u,k'}\Big) = c_{k'}({\rm dist} (u,v) ), \qquad 
\forall u,v \in V(\Tree_d).
\eeq
By definition $c_{k'}(s) \le 1$ and thus, for any $t \in \N$,
\[
\Var_{\mu_{1}^{\be, 0}} \left({\sf N}_{\ell, \Tree_d}(o,k') \right) = \frac{1}{|\Tree_d(\ell)|^2}
\sum_{u,v \in \Tree_d(\ell)} c_{k'}({\rm dist}(u,v)) \le \frac{|\Tree_d(t)|}{|\Tree_d(\ell)|} + \max_{s \ge t} \{
c_{k'}(s)\} \,.
\]
As $|\Tree_d(\ell)| \to \infty$, it remains only to show that $c_{k'} (t) \to 0$ when $t \to \infty$.
This in turn follows from the fact that the marginal of $\mu_{1}^{\be,0}$ on any fixed ray in $\Tree_d$ 
is a time homogeneous Markov chain of finite state space ($[q]$), and strictly positive transition 
probabilities. Indeed, generalizing the proof of Lemma \ref{lem:marginals} along the lines of Remark 
\ref{rmk:infinite-vol-exist}, shows that such marginal must be a (possibly inhomogeneous)
Potts measure on $\N$, hence a Markov chain with strictly positive transition matrices, which 
thanks to the translation transitivity of $\mu^{\be,0}_1$ must also be time homogeneous.
\end{proof}

Setting for $\delta, \eta >0$ and $i \in [n]$, 
\begin{equation*}
\cA_{\ell, n}^{\delta} := \left\{ \left|\left\{(i,j) \in E_n: \sfcK_\ell(i) \ne \sfcK_\ell(j)\right\} \right| 
\ge \delta n \right\}  \quad \text{ and } \quad
\cB_{\ell, n}^{\eta}(i) := 
\Delta_i {\bf 1} \left\{{\sf N}_{\ell, \Graph_n}^{(1)}(i) - {\sf N}_{\ell, \Graph_n}^{(2)}(i) \le \eta\right\},
\end{equation*}
we proceed to show that as $n,\ell \to \infty$, under $\mu_n^{\be,0}$ both 
$\cA_{\ell, n}^{\delta}$ and $\cB_{\ell, n}^{\eta}(I_n)$ become negligible. This observation together with the assumed edge expansion property of $\Graph_n$ will ensure that $\sK_\ell(I_n)$ and $\sK_n$ are same on an event with arbitrarily large probability for all large $n$ and $\ell$.
\begin{lem}\label{lem:cr-AB}
For any $\beta > \be_{c}(0)$, $\delta>0$ and small $\eta=\eta(\beta)>0$, under  \purple{the assumption $\Graph_n \loc \Tree_d$}, 
\beq\label{eq:cr-AB}
\lim_{\ell \to \infty} \limsup_{n \to \infty} \mu_n^{\be,0}(\cA_{\ell, n}^\delta) = 0 \quad \text{ and } \quad \lim_{\ell \to \infty} \limsup_{n \to \infty} \E_n[\mu_n^{\be,0}(\cB_{\ell, n}^\eta(I_n))] =0. 
\eeq
\end{lem}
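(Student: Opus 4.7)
The plan is to combine three ingredients: Theorem~\ref{thm:main-1}(ii), which yields $\mu_n^{\be,0} \lwc \mu_{\wired}^{\be,0} = q^{-1}\sum_{k=1}^q \mu_k^{\be,0}$ for $\be > \be_c(0)$; Lemma~\ref{lem:pott-inifinite-prop} (together with Remark~\ref{rmk:pott-inifinite-prop}), which controls the local dominating color under each pure state $\mu_k^{\be,0}$ on $\Tree_d$; and uniform sparsity, which lets us truncate unbounded degrees. The key locality observation is that both ${\sf N}_{\ell,\Graph}(v,k)$ and $\sfcK_{\ell,\Graph}(v)$ are determined by the graph structure of $\sfB_v(3\ell)$ (the isomorphism test $\sfB_w(2\ell) \cong \Tree_d(2\ell)$ for $w \in \sfB_v(\ell)$ probes balls contained in $\sfB_v(3\ell)$) and by the spins in $\sfB_v(\ell)$; in particular, on the event $\sfB_v(3\ell) \cong \Tree_d(3\ell)$ they agree with their tree analogs at the root via the canonical isomorphism.

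For the $\cB_{\ell,n}^\eta$ limit, I set $\Delta_\star := \mu_1^{\be,0}(\sigma_o=1) - \mu_1^{\be,0}(\sigma_o=2)$, which is strictly positive for $\be > \be_c(0)$ by the argument preceding \eqref{eq:dom-col-nuk}, and take $\eta = \eta(\be) < \Delta_\star/2$. For any $L \ge d$ I split
\begin{align*}
\E_n[\mu_n^{\be,0}(\cB_{\ell,n}^\eta(I_n))] &\le \E_n\bigl[\Delta_{I_n}\mathbf{1}(\Delta_{I_n} > L)\bigr] + L\,\E_n\bigl[\mathbf{1}(\sfB_{I_n}(3\ell) \ncong \Tree_d(3\ell))\bigr] \\
&\quad + L\,\E_n\bigl[\mathbf{1}(\sfB_{I_n}(3\ell) \cong \Tree_d(3\ell))\,\mu_n^{\be,0}\bigl\{{\sf N}_\ell^{(1)}(I_n) - {\sf N}_\ell^{(2)}(I_n) \le \eta\bigr\}\bigr],
\end{align*}
using $\Delta_{I_n} = d \le L$ on the tree-like event. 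The first term vanishes as $L\to\infty$ by uniform sparsity, the second as $n\to\infty$ since $\Graph_n\loc\Tree_d$, and by local weak convergence in probability the third converges as $n\to\infty$ to $L\,\mu_{\wired}^{\be,0}({\sf N}_{\ell,\Tree_d}^{(1)}(o) - {\sf N}_{\ell,\Tree_d}^{(2)}(o) \le \eta)$. Finally, Lemma~\ref{lem:pott-inifinite-prop} gives that under each $\mu_k^{\be,0}$ the quantities ${\sf N}_\ell^{(1)}(o)$ and ${\sf N}_\ell^{(2)}(o)$ concentrate at $\mu_1^{\be,0}(\sigma_o=1)$ and $\mu_1^{\be,0}(\sigma_o=2)$, respectively, so the gap concentrates at $\Delta_\star > \eta$ and the limit vanishes after averaging over $k$.

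For the $\cA_{\ell,n}^\delta$ limit, Markov's inequality together with the edge-vertex double-counting identity $|\{(i,j) \in E_n: \sfcK_\ell(i) \ne \sfcK_\ell(j)\}| = \frac{1}{2}\sum_i\sum_{j\in\partial i} \mathbf{1}(\sfcK_\ell(i) \ne \sfcK_\ell(j))$ yields
\[
\mu_n^{\be,0}(\cA_{\ell,n}^\delta) \le \frac{1}{2\delta}\,\E_n\Bigl[\mu_n^{\be,0}\Bigl[\sum_{j \in \partial I_n} \mathbf{1}(\sfcK_\ell(I_n) \ne \sfcK_\ell(j))\Bigr]\Bigr].
\]
The same truncation-plus-localization scheme (now with radius $3\ell+1$, since $\sfcK_\ell(j)$ for $j \in \partial I_n$ requires knowledge of $\sfB_j(3\ell) \subseteq \sfB_{I_n}(3\ell+1)$) combined with local weak convergence reduces matters to showing that $\mu_{\wired}^{\be,0}[\sum_{j \in \partial o}\mathbf{1}(\sfcK_{\ell,\Tree_d}(o) \ne \sfcK_{\ell,\Tree_d}(j))] \to 0$ as $\ell \to \infty$. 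Under each $\mu_k^{\be,0}$, Lemma~\ref{lem:pott-inifinite-prop} and its translation-invariant extension Remark~\ref{rmk:pott-inifinite-prop} give $\sfcK_\ell(o) = \sfcK_\ell(j) = k$ with probability tending to $1$, so averaging over $k$ closes the argument. The main technical obstacle is the combination of unbounded $\Delta_{I_n}$ with the requirement that local weak convergence be applied to bounded local functionals; the $(L,\ell)$-truncation above resolves this cleanly, after which Lemma~\ref{lem:pott-inifinite-prop} supplies the pure-state concentration that makes both limits collapse.
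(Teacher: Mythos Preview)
Your proof is correct and follows essentially the same approach as the paper's: Markov's inequality plus Theorem~\ref{thm:main-1}(ii) reduces both statements to tree quantities, which are then controlled by Lemma~\ref{lem:pott-inifinite-prop} and Remark~\ref{rmk:pott-inifinite-prop}. The only difference is presentational: where the paper invokes uniform integrability of $\Delta_{I_n}$ directly to pass to the limit (obtaining the exact factor $d$), you make this explicit via an $L$-truncation before sending $L\to\infty$, which is the same mechanism spelled out.
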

\begin{proof} Fixing $\ell \in \N$, by Markov's inequality, the local weak convergence of Theorem \ref{thm:main-1}\red{(iii)} and the
uniform integrability of $\Delta_{I_n}$ 
\begin{align}\label{eq:cr-AB-eq1}
\limsup_{n \to \infty} \mu_n^{\be,0}(\cA_{\ell, n}^\delta) \le \delta^{-1} & \limsup_{n \to \infty}  \E_n\Big[ \sum_{j \in \partial I_n} \mu_n^{\be,0}(\sfcK_\ell(I_n) \ne \sfcK_\ell(j))\Big] \nonumber \\
&= (q\delta)^{-1}  \sum_{k=1}^q \sum_{j \in \partial o} \mu_k^{\be, 0}(\sfcK_\ell(o) \ne \sfcK_\ell(j)).
\end{align}
As $\{\sfcK_\ell(o) \ne \sfcK_\ell(j)\}\subset \{\sfcK_\ell(o) \ne k\} \cup \{\sfcK_\ell(j) \ne k\}$, 
the proof of the first assertion in \eqref{eq:cr-AB} completes upon combining \eqref{eq:cr-AB-eq1} and  \eqref{eq:potts-inf-prop-3} (see also Remark \ref{rmk:pott-inifinite-prop}).  

Turning to the second assertion of \eqref{eq:cr-AB}, denote  
\[
\cB_{\ell, n}^{\eta}(i) := \sum_{k=1}^q \cB_{\ell, n}^{\eta, k}(i), \quad \text{ where } \quad 
\cB_{\ell, n}^{\eta, k}(i):= 
\Delta_i {\bf 1} \left\{\sfcK_{\ell, \Graph_n}(i)= k ,\;
{\sf N}_{\ell, \Graph_n}^{(1)}(i) - {\sf N}_{\ell, \Graph_n}^{(2)}(i) \le \eta \right\}.
\]
Hence, by Theorem \ref{thm:main-1}\red{(iii)} and 
the uniform integrability of $\Delta_{I_n}$ we have that
\beq\label{eq:bn-eta-lim}
\lim_{n \to \infty} \E_n \left[ \mu_n^{\be,0}\left(\cB_{\ell, n}^\eta(I_n) \right)\right] = \frac{d}{q} \sum_{j, k=1}^q \mu_j^{\be, 0}\left(\sfcK_\ell(o)=k, {\sf N}_{\ell,\Tree_d}^{(1)}(o) - {\sf N}_{\ell,\Tree_d}^{(2)}(o) \le \eta \right). 
\eeq
By \eqref{eq:dom-col-nuk} and the union bound, for small enough $\eta=\eta(\be)>0$ 
and all $k \in [q]$
\[
\lim_{\ell \to \infty} \mu_k^{\be, 0}\left(\sfcK_\ell(o)=k, {\sf N}_{\ell,\Tree_d}^{(1)}(o) - 
{\sf N}_{\ell,\Tree_d}^{(2)}(o) \ge \eta \right) = 1, 
\]
which in combination with \eqref{eq:bn-eta-lim} yields the second assertion of \eqref{eq:cr-AB}.
\end{proof}

\begin{proof}[Proof of \eqref{eq:cond-limit-large}]
Recall from Theorem \ref{thm:main-1}\red{(iii)} that for any $\be > \be_c(0)$, $k \in [q]$, {$\ell \ge t$}, and 
$\ul \sigma \in \bS_t$,
\beq\label{eq:cll-aux2}
\lim_{n \to \infty} \E_n \big[ {\sf P}_{\mu_n^{\be,0}}^{\ell}(I_n)(\ul\sigma_{\sfB_{I_n}(t)} =\ul\sigma, \sfcK_\ell(I_n)=k) \big] = \f1q\sum_{k'=1}^q \mu_{k'}^{\be, 0}(\ul\sigma_{\Tree_d(t)} =\ul\sigma, \sfcK_{\ell, \Tree_d}(o)=k).
\eeq
Furthermore, for any such $k$, $\ell \ge t,$ and $\ul \sigma$,
\begin{align}
\E_n 
\big| \mu_{n}^{\be,0} (\ul \sigma_{\sfB_{I_n}(t)}=\ul \sigma, \sK_n=k) 
 -  {\sf P}_{\mu_n^{\be,0}}^{\ell}(I_n)  & (\ul\sigma_{\sfB_{I_n}(t)} =\ul\sigma, \sfcK_\ell(I_n)=k)
 \big| 
 \nonumber  \\ &
  \le 
 \sum_{k=1}^q \sum_{k' \ne k}
  \E_n[\mu_n^{\be,0} (\sfcK_\ell(I_n)=k', \sK_n=k)],
 \label{eq:cll-aux1}
\end{align}
and from \eqref{eq:potts-inf-prop-3} of Lemma \ref{lem:pott-inifinite-prop}, 
for any $k,k' \in [q]$,
\beq\label{eq:cll-aux3}
\lim_{\ell \to \infty} \mu_{k'}^{\be, 0}(\ul\sigma_{\Tree_d(t)} =\ul\sigma, \sfcK_{\ell, \Tree_d}(o)=k) = \delta_{k,k'} \cdot \mu_{k}^{\be, 0}(\ul\sigma_{\Tree_d(t)} =\ul\sigma). 
\eeq
In view of \eqref{eq:cll-aux2}-\eqref{eq:cll-aux3} and the triangle inequality, 
\eqref{eq:cond-limit-large} is a direct consequence of 
\beq\label{eq:cll-claim}
\lim_{\ell \to \infty} \limsup_{n \to \infty} \E_n [\mu_n^{\be,0} (\sfcK_\ell(I_n)=k',\sK_n=k)] =0, 
\quad \forall k' \ne k \,.
\eeq
\red{To prove \eqref{eq:cll-claim} we use the edge-expansion property to show that,  outside an event of negligible probability, there exists a color $k \in [q]$ such that up to at most $o(n)$ vertices $v \in \Graph_n$, 
the $\ell$-neighborhood around $v$ has dominating color $k$ with a bounded away from zero 
difference between the fraction of vertices of that neighborhood possessing color $k$ and the fraction of 
those possessing any other color.  Combining this with $\Graph_n \loc \Tree_d$,  we deduce that 
$\sK_n= \sK_\ell(I_n)$ with probability approaching one,  as $\ell \to \infty$,  yielding \eqref{eq:cll-claim}.
Turning to carry out the details}, 
fix $\eta=\eta(\be)$ as in Lemma \ref{lem:cr-AB} and set
 \[
 S_{k, \ell}:= \{i \in [n]: \sfcK_\ell(i)=k, {\sf N}_\ell^{(1)}(i) - {\sf N}_\ell^{(2)}(i)  > \eta \}, \qquad k \in [q].
\]
We next show that the assumed edge expansion property of $\Graph_n$ allows us to restrict
attention when $\ell$ is large and $\vep<1/(2q)$, to the event
\beq\label{dfn:wtSkl}
\cS_\ell := \bigcup_{k=1}^q \cS_{k,\ell} \quad \text{ where } \quad 
\cS_{k, \ell}:=\{\max_{k' \ne k} \{|S_{k', \ell}|\} < \vep n, \; |S_{k, \ell}| \ge (1-q\vep) n\} \,.
\eeq
Indeed, with $\sum_{k} |S_{k,\ell}| \le n$, there is at most one $k \in [q]$ with $|S_{k,\ell}| > n/2$ 
and consequently,
\[
\cS_\ell^c \subset  \Big\{ \sum_{k=1}^q |S_{k,\ell}| \le (1-\vep) n \Big\} \bigcup_{k=1}^q 
\Big\{ |S_{k,\ell}| \in [\vep n,n/2] \Big\} \,.
\]
Now, fix $\vep \in (0,\frac{\eta}{4q})$, let $\lambda = \lambda_{\vep} \wedge 2$
for $\lambda_\vep>0$ of Theorem \ref{thm:main-2}(ii). The event $|S_{k, \ell}| \in [\vep n,n/2]$ implies 
by the edge expansion property of $\Graph_n$ that $|\partial S_{k, \ell}| \ge \lambda \vep n
:=2 \delta n$. 
Further, noting that 
\[
\sum_{i=1}^n \Delta_i {\bf 1}\big({\sf N}_\ell^{(1)}(i) -{\sf N}_\ell^{(2)}(i) \le \eta\big) +
\left|\left\{(i,j) \in E_n: \sfcK_\ell(i) \ne \sfcK_\ell(j)\right\}\right| \ge 
\max_{k} \{ |\partial S_{k, \ell}| \} \,,
\]
it follows from the preceding that under 
$\cS_\ell^c$ either the event 
$\big\{ \sum_{i=1}^n \cB_{\ell, n}^{\eta}(i) \ge n \delta\big\}$ or $\cA^\delta_{\ell,n}$ holds.
Thus, from Lemma \ref{lem:cr-AB} we deduce that, as claimed earlier, 
\beq\label{eq:cS-ell}
\lim_{\ell \to \infty} \limsup_{n \to \infty} \mu_n^{\be,0}(\cS^c_\ell) =0.
\eeq
Having to contend only with the (disjoint) events 
$\cS_{k,\ell}$ of \eqref{dfn:wtSkl}, we 
arrive at \eqref{eq:cll-claim} upon showing that for any $k \ne k' \in [q]$ and $\ell \in \N$,
\beq\label{eq:wtcS-kl}
\E_n\left[\mu_n^{\be,0}(\cS_{k, \ell},\sfcK_\ell(I_n)=k')\right] 
\le q \vep \quad \text{ and } \quad \limsup_{n \to \infty}\mu_n^{\be,0}(\cS_{k', \ell}, \sK_n =k) =0.
\eeq
To this end, as $|S_{k, \ell}| \ge (1-q\vep) n$ on $\cS_{k,\ell}$, it is immediate that for any $k' \ne k$,
\[
 {\bf 1}(\cS_{k, \ell}) \frac{1}{n} \sum_{i=1}^n {\bf 1}(\sfcK_\ell(i) =k')  \le q \vep \,,
\]
which upon taking the expectation gives the \abbr{lhs} of \eqref{eq:wtcS-kl}. 
Next, ${\sf N}_{\red{\ell}}(i,k) \le 1$, hence on $\cS_{k', \ell}$ 
\[
\wh{\sf N}_\ell (k',k) := \frac{1}{n} \sum_{i=1}^n [ {\sf N}_\ell(i,k')  -  {\sf N}_\ell(i,k) ] 
\ge
\frac{1}{n} \sum_{i \in S_{k', \ell}} [ {\sf N}^{(1)}_\ell(i)  -  {\sf N}^{(2)}_\ell(i)] - \vep q
\ge (1-\vep q) \eta  -\vep q \ge \f{\eta}{2}\,
\]
(due to our choice of $\vep q <\eta/4<1/4$). Thus,
\beq\label{eq:wtcS-kl-1}
\mu_n^{\be,0}\big(\wh{\sf N}_\ell (k',k) {\bf 1}\{ \cS_{k', \ell}, \sK_n =k\} \big) \ge \frac{\eta}{2} \mu_n^{\be,0}(\cS_{k', \ell},\sK_n =k).
\eeq 
Moreover, note that for any $k' \ne k$,
\[
\wh{\sf N}_\ell (k',k) =\f1n \sum_{v=1}^n {\bf 1}\big( \sfB_v(2\ell) \cong \Tree_d(2\ell)\big) 
[ \delta_{\sigma_v, k'} -\delta_{\sigma_v,k} ] \,.
\]
Hence, with $\Graph_n \loc \Tree_d$, 
\beq\label{eq:Nl-conv}
\lim_{n \to \infty} \sup_{{\ul \sigma} \in [q]^n} \Big|\wh {\sf N}_\ell(k',k) - \wh {\sf N} (k',k) \Big| = 0,
\qquad \text{where} \qquad 
\wh {\sf N} (k',k) :=  \f1n \sum_{v=1}^n (\delta_{\sigma_v, k'}  - \delta_{\sigma_v, k}) \,.
\eeq
Clearly, $\wh{\sf N} (k',k) {\bf 1}(\sK_n =k) \le 0$, hence \eqref{eq:Nl-conv} entails that 
\beq\label{eq:wtcS-kl-2}
\limsup_{n \to \infty} \mu_n^{\be,0}\big(\wh{\sf N}_\ell (k',k) {\bf 1} \{\cS_{k', \ell}, \sK_n =k\} \big) \le 0.
\eeq
From \eqref{eq:wtcS-kl-1} and \eqref{eq:wtcS-kl-2} we get the \abbr{rhs} of \eqref{eq:wtcS-kl},
thus completing the proof of Theorem \ref{thm:main-2}(ii). 
\end{proof}

\section{The critical line: {proof of Theorem \ref{thm:crit-1}}}\label{sec:crit-line}

Recall Lemma \ref{lem:limit-point-properties}(a) on the existence of sub-sequential 
local weak limit points for both $\mu_n^{\be,B}$ and $\varphi_n^{\be,B}$ and that  
as in {\bf Step I} of the proof of Theorem \ref{thm:main-1}, fixing 
$d, q \ge 3$, $B>0$ and $(\be,B) \in \sfR_c$, it suffices to show that such limit 
point $\gm_{\bond}$ of $\varphi_n^{\be,B}$ is supported on $\cQ_{\rm m}:=\cM_{\bond}^{\be,B}$. 
Recalling 
Definition \ref{dfn:btwn-f-w} of $\cQ_\star(t)$ and Lemma \ref{lem:limit-point-properties}(d) 
that {such} $\gm_{\bond}$ must be supported on 
$\cQ_\star:=\{\varphi \in \cP(\B_\infty^\star): \varphi^t \in \cQ_\star(t) \text{ for all } t \in \N\}$,
we
show that $\gm_{\bond}(\cQ_\star \setminus \cQ_{\rm m}) >0$  
contradicts \cite[Theorem 1]{DMSS}. Indeed, Section \ref{subsec-rcm-messages}
identifies $\cQ_{\rm m}$ in terms of the support of the \abbr{rcm} messages, whereby
Section \ref{subsec-d-even} completes this argument by combining this characterization
with the symmetric form $\Psi^{\rm sym}(\cdot)$ of the free energy density limit 
of the \abbr{rcm} in case $d \in 2 \N$.

\subsection{Properties of the \abbr{rcm} messages}\label{subsec-rcm-messages}

We start with the definition of our \abbr{RCM} messages.
\begin{dfn}[\abbr{RCM} messages]\label{dfn:rcm-msg2}
Let $\sT^\star$ denote the tail $\sigma$-algebra of the bonds on $\Tree_d^\star$. That is, 
for the subsets $\B_t^\star := \{0,1\}^{E(\Tree_d^\star(t))}$ of $\B_\infty^\star
:=\{0,1\}^{E(\Tree_d^\star)}$,
\[
\sT^\star := \bigcap_{t \ge 1} \sT^\star_t\,, \quad \text{where} \quad 
\sT^\star_t := \sigma(\bigcup_{r > t} \sT_t^{\star,r}) \quad \text{and} \quad 
\sT_t^{\star,r}:=\sigma\left(\B_r^\star \setminus \B_t^\star \right) \,.
\]
For $(u,v) \in E(\Tree_d)$ and $\varphi \in \cQ_\star$, 
the
\abbr{rcm} message from $u$ to $v$ is then
\beq\label{eq:rcm-msg-dfn}
s_{u \to v} (\varphi) := \varphi [ u \bij v^\star | \eta_{(u,v)}=0, \sT^\star ].
\eeq
\end{dfn}

\begin{rmk}
While not needed here, one can define the infinite volume 
random cluster Gibbs measures on $\Tree_d$, in presence of an external field, via the so called 
Dobrushin-Lanford-Ruelle condition, as done in \cite[Section III]{BBCK00}, 
and show that the messages of \eqref{eq:rcm-msg-dfn}
are indeed those 
{that characterize such Gibbs measures}.
\end{rmk}  

The \abbr{rcm} messages are limits of the \abbr{rcm} pre-messages which we define next 
(and later relate to the local functions $\htgs^{t,r}_{u \to w}$ on the finite graphs $\Graph_n$). 
{To this end, for finite graph 
$\Graph$, $w \in V(\Graph)$ and $t<r$, we denote hereafter}
$\red{\ueta}_{w,t}^{\star,r}:=\ul \eta_{\sfB_w^\star(r)\setminus \sfB_w^\star(t)}$ {and 
$\red{\ueta}_{w,t}^r :=\ul \eta_{\sfB_w (r)\setminus \sfB_w (t)}$ with}   
$\ueta_t^{\star,r}:=\ueta_{o,t}^{\star,r}$ and $\ueta_t^{r}:=\ueta_{o,t}^{r}$
the corresponding objects on the tree $\Tree_d$ rooted at $o$. 
\begin{dfn}[\abbr{rcm} pre-messages]\label{dfn:rcm-msg}
For $(u,v) \in E(\Tree_d(t))$ consider 
the (local) function 
\beq
\nonumber
\hts_{u \to v}^{t,r}(\ul y):= \varphi_{\Tree^{\star}_d(r)}^{\be,B} (u \bij v^\star| \eta_{(u,v)}=0, 
 \ueta_{t}^{\star,r}=\ul y), \quad \text{ of } \quad \ul y \in \B_r^\star \setminus \B_t^\star\,.
\eeq
Similarly,  for finite graph $\Graph$, $w \in V(\Graph)$ and $(u,v) \in \sfB_w(t)$ let
\beq
\nonumber
 \htgs_{u \to v}^{t,r}(w, \ul y) := \varphi_{\sfB_w(r)}^{\be, B} (u \bij v^\star | \eta_{(u,v)}=0, \ueta_{w,t}^{\star,r}=\ul y), \quad \ul y \in \{0,1\}^{\sfB_w^\star(r)\setminus \sfB_w^\star(t)},
\eeq
using hereafter $s_{\partial v}(\varphi)$ for the 
vector $(s_{u \to v}(\varphi))_{u \in \partial v}$ and similarly 
$\hts_{\partial v}^{t,r}$ and $\htgs_{\partial v}^{t,r}:=(\htgs_{u \to v}^{t,r}(v, \cdot))_{u \in \partial v}$.
\end{dfn}

Clearly $\sT_t^{\star,r} \uparrow \sT^\star_t$ and therefore the 
Doob's martingale $\{ {\hts}_{u \to v}^{t,r}(\ueta_t^{\star,r})  \}_r$ 
converges $\varphi$-a.e.~to $\varphi [u \bij v^\star| \eta_{(u,v)}=0, 
\sT^\star_{t} ]$.  Further, 
$\sT^\star_t \downarrow \sT^\star$ and hence the latter backward martingale
converges $\varphi$-a.e.~to $s_{u \to v}(\varphi)$. 
In conclusion,  fixing any $\varphi \in \cQ_\star$,  for 
$\varphi$-a.e.~bond configuration, 
\begin{equation}\label{eq:lim-pre-msg}
s_{\partial v}(\varphi) = \lim_{t \to \infty} \lim_{r \to \infty} {\hts}_{\partial v}^{t,r}(\ueta_t^{\star,r} ) \,, \qquad 
\forall v \in V(\Tree_d) \,.
\end{equation}

Now, for $\ddagger \in \{\free, 1\}$ and
$\nu_\ddagger^{\be,B}$ of Definition \ref{dfn:bethe-recursion}, let 
\beq\label{eq:b-ddagger}
b_\ddagger=b_\ddagger^{\be, B}:= \frac{q \nu_\ddagger^{\be,B}(1)-1}{q-1} \ge 0 \,,
\eeq 
setting hereafter $b_{\wired}:=b_1^{\be,B}$, ${\gamma := (e^\beta -1)/(e^\beta+q-1)}$, and
\begin{equation}\label{def:BP-tilde}
\wh{\sf BP} (s_1,\ldots,s_{d-1}; x):=\frac{e^x\displaystyle{\prod_{i=1}^{d-1}} \big(1+(q-1) \gamma s_i\big) 
- \prod_{i=1}^{d-1} \big(1-\gamma s_i\big)}{e^x\displaystyle{\prod_{i=1}^{d-1}} \big(1+(q-1) \gamma s_i 
\big) + (q-1)\prod_{i=1}^{d-1} \big(1-\gamma s_i \big) }.
\end{equation}

Utilizing these objects, we proceed
to characterize $\cQ_{\rm m}$ via the support of the \abbr{RCM} messages. 
\begin{lem}
\label{lem:prop-rcm-msg}
Fix $B >0$,  $\be \ge 0$ and $\varphi \in \cQ_\star$.
\begin{enumeratea}
\item For  $\varphi$-a.e.~bond configurations and any $(u,v) \in E(\Tree_d)$,
\begin{align}\label{eq:bp-lim-msg}
s_{u \to v}(\varphi) = \wh{\sf BP}((& s_{w \to u}(\varphi))_{w \in \partial u \setminus \{v\}}; B), \\
\label{eq:lem-prop-rcm-msg1}
b_{\free} \le & s_{u \to v}(\varphi)  \le b_{\wired} \,.
\end{align}
\item For $\ddagger \in \{\free, \wired\}$ and any $(u,v) \in E(\Tree_d)$,
we have $s_{u \to v}(\varphi_\ddagger^{\be,B})=b_\ddagger$,  for 
$\varphi_\ddagger^{\be, B}$-a.e.~configurations.  
\item If $\varphi$-a.e.~the random vector $s_{\partial o}(\varphi)$ 
is supported on $\{(b_\ddagger, b_\ddagger, \ldots, b_\ddagger)\}_{\ddagger \in \{\free, \wired\}}$, 
then $\varphi \in \cQ_{{\rm m}}$. 
\end{enumeratea}
\end{lem}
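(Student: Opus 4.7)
For part (a), my plan is to first derive an exact one-step recursion for the pre-messages $\hts_{u \to v}^{t,r}$ of Definition \ref{dfn:rcm-msg}. Closing the edge $(u,v)$ in $\Tree_d^\star(r)$ detaches the subtree hanging off $u$ in the direction away from $v$, and by partitioning the event $\{u \bij v^\star\}$ according to which edges incident to $u$ are open and which of the $d-1$ child subtrees (via $\partial u \setminus \{v\}$) carry $v^\star$ in their open cluster of $u$'s neighbor, while carefully tracking the cluster factor $q^{|{\sf C}|}$, one obtains the identity $\hts_{u \to v}^{t,r} = \wh{\sf BP}((\hts_{w \to u}^{t,r})_{w \in \partial u \setminus \{v\}}; B)$ at every finite level. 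Passing $r \to \infty$ and then $t \to \infty$ via the backward-martingale convergence around \eqref{eq:lim-pre-msg}, together with continuity of $\wh{\sf BP}$ in its arguments, yields \eqref{eq:bp-lim-msg}. For the sandwich \eqref{eq:lem-prop-rcm-msg1}, I would combine the identification in part (b) with FKG monotonicity: since $\varphi_\free^{\be,B} \st \varphi \st \varphi_\wired^{\be,B}$ and the event $\{u \bij v^\star\}$ is increasing in the bond configuration, the conditional probabilities defining the messages inherit the same ordering almost surely.

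For part (b), the key input is tail-triviality of $\varphi_\free^{\be,B}$ and $\varphi_\wired^{\be,B}$. These arise as monotone weak limits of finite-volume RCMs with free and wired boundary specifications on $\wh{\Tree}_d^\star(t)$, and are the minimal and maximal elements (in FKG order) of the set of infinite-volume RCM Gibbs states on $\Tree_d^\star$; this forces $\sT^\star$ to be $\varphi_\ddagger$-trivial by the standard FKG-extremality argument. Hence the $\sT^\star$-measurable message $s_{u \to v}(\varphi_\ddagger)$ is $\varphi_\ddagger$-a.s. constant. To identify the constant as $b_\ddagger$, I would apply the recursion of part (a) at this constant, reducing to the scalar fixed-point equation $b = \wh{\sf BP}(b, \ldots, b; B)$; a direct algebraic manipulation using $\gamma = (e^\be-1)/(e^\be+q-1)$ shows that under the substitution $b = (q\nu(1)-1)/(q-1)$ this is equivalent to $\nu = {\sf BP}\nu$ of Definition \ref{dfn:bethe-recursion}, so the constant must lie in $\{b_\free, b_\wired\}$; monotonicity of the FKG order matches each extremal measure with its namesake.

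For part (c), split the probability space under $\varphi$ into the disjoint $\sT^\star$-events $A_\ddagger := \{s_{\partial o}(\varphi) = (b_\ddagger, \ldots, b_\ddagger)\}$, $\ddagger \in \{\free, \wired\}$, which by hypothesis exhaust $\varphi$, and set $\alpha := \varphi(A_\wired)$ so that $\varphi = \alpha\, \varphi(\cdot \mid A_\wired) + (1-\alpha)\, \varphi(\cdot \mid A_\free)$. It then suffices to show $\varphi(\cdot \mid A_\ddagger) = \varphi_\ddagger^{\be,B}$. First, iterating the BP recursion of part (a) outward from $o$ propagates constancy of the messages: on $A_\ddagger$, the recursion $s_{u \to v} = \wh{\sf BP}(\cdot ; B)$ applied at each vertex together with \eqref{eq:lem-prop-rcm-msg1} and the fact that $\wh{\sf BP}$ is strictly monotone in each of its scalar arguments forces, by induction on ${\rm dist}(u, o)$, every $s_{u \to v}(\varphi)$ to equal $b_\ddagger$ on $A_\ddagger$. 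Second, I would establish a DLR-type identity expressing the conditional law of the bonds of $\Tree_d^\star(t)$ given $\sT^\star$ as a finite-volume RCM on $\wh{\Tree}_d^\star(t)$ with boundary connection-to-ghost probabilities prescribed by $(s_{u \to v})_{(u,v) \in E(\partial \Tree_d(t))}$. Since the same formula governs $\varphi_\ddagger$ (whose messages are $\equiv b_\ddagger$), the $t$-marginals of $\varphi(\cdot \mid A_\ddagger)$ and $\varphi_\ddagger$ agree for every $t$, giving the claim.

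The main obstacle I expect is the DLR-type identification in the second step of part (c): the RCM is not a genuine Markov random field because of the nonlocal cluster weight $q^{|{\sf C}(\ul\eta)|}$, so the information from $\sT^\star$ relevant to the finite-window conditional law is not the bond configuration on $\partial \Tree_d(t)$ but rather the connectivity-to-ghost pattern at the boundary, which is precisely what $s_{u \to v}$ encodes. Making this rigorous---either by approximating $\varphi$ by the finite-$r$ measures underlying $\hts_{u \to v}^{t,r}$ and then exchanging limits, or by appealing to a DLR characterization of the ghost-amended RCM on $\Tree_d$ in the spirit of \cite{BBCK00}---is the crux of the proof.
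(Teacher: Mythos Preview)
Your overall architecture matches the paper's, but there is one genuine gap and one methodological difference worth flagging.

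\textbf{The gap: the sandwich \eqref{eq:lem-prop-rcm-msg1}.} Your argument ``since $\varphi_\free \st \varphi \st \varphi_\wired$ and $\{u\bij v^\star\}$ is increasing, the conditional probabilities defining the messages inherit the same ordering almost surely'' does not stand as written. Stochastic domination controls expectations of increasing observables, hence would give you $\varphi_\free(u\bij v^\star\mid\eta_{(u,v)}=0)\le\varphi(u\bij v^\star\mid\eta_{(u,v)}=0)\le\varphi_\wired(\cdots)$, i.e.\ bounds on the \emph{integrated} message. It does not give $\varphi$-a.s.\ pointwise bounds on the $\sT^\star$-conditional version $s_{u\to v}(\varphi)$, which is a random variable. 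The paper avoids this by working at the finite level with \emph{fixed} boundary data: for every $(\ul y,\sC)\in(\B_r^\star\setminus\B_{t'}^\star)\times\gD_\star(r)$ the conditional measure in \eqref{dfn:ws-tr} is an \abbr{rcm} on $\Tree_{u\to v}\cap\wh\Tree_d^\star(t')$ with some boundary $\sC'\in\gD_\star(t')$, and monotonicity of that finite \abbr{rcm} gives the deterministic sandwich \eqref{eq:stoch-order-t-r} between $\wt b_{\free,t'}$ and $\wt b_{\wired,t'}$. Letting $t'\to\infty$ then yields \eqref{eq:lem-prop-rcm-msg1} via $\wt b_{\ddagger,t'}\to b_\ddagger$. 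The passage through a finitely-generated refinement of $\sT_t^{\star,r}$ (the $\sigma$-algebra $\wh\sT_t^r$ that also records the boundary partition $\sC^r\in\gD_\star(r)$) is what makes the a.s.\ bound go through.

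\textbf{Methodological difference in the recursion.} You propose deriving $\hts_{u\to v}^{t,r}=\wh{\sf BP}((\hts_{w\to u}^{t,r});B)$ by a direct cluster-count computation on the \abbr{rcm} side. The paper instead passes through the Edwards--Sokal coupling: via \eqref{eq:varpi-bar-hts} it converts $\ws_{u\to v}^{t,r}$ into a Potts marginal $\bar s_{u\to v}^{t,r}=\varpi_\sC(\sigma_u=1\mid\cdots)$, and then the recursion \eqref{eq:prop-rcm-msg-2} in the form $\ol{\sf BP}$ of \eqref{def:hat-BP} is a transparent Potts-on-a-star computation (using that for $\sC\in\gD_\star(r)$ the boundary spins are conditionally independent and uniform on $\{2,\dots,q\}$ outside the ghost block). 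Your route can be made to work, but tracking $q^{|{\sf C}|}$ through the marginalization is more delicate than the spin-side argument.

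\textbf{Part (b).} Your plan---tail-triviality forces constancy, then identify the constant as a fixed point of $b\mapsto\wh{\sf BP}(b,\ldots,b;B)$---leaves open that in the non-uniqueness regime there is a third (unstable) scalar fixed point, so ``the constant must lie in $\{b_\free,b_\wired\}$'' needs justification. The paper sidesteps this: it computes $\wt b_\ddagger=\lim_t \wt b_{\ddagger,t}$ directly via \eqref{eq:wt-b-ddagger}, recognizing the finite-$t$ quantity as the root marginal of the Potts measure on the $(d-1)$-ary tree with $\ddagger$ boundary, which by Definition~\ref{dfn:bethe-recursion} converges to $\nu_\ddagger^{\be,B}(1)$, hence to $b_\ddagger$.

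\textbf{Part (c).} Your plan is essentially what the paper does. The propagation of constancy from $s_{\partial o}$ to all $s_{w\to u}$ is carried out exactly as you describe (strict monotonicity of $\wh{\sf BP}$ plus the sandwich). The DLR-type identity you anticipate is \eqref{eq:rcm-cond}--\eqref{eq:rcm-mgnl}: the conditional law of $\ul\eta_{\sfB_o^\star(t)}$ given $\sT^\star$ is a mixture over $\gD_\star(t)$ with weights $\rho_\varphi(\cdot\mid\sT^\star)$, and the paper pins down $\rho_\varphi(\cdot\mid\sT^\star)$ by a stochastic-ordering sandwich together with matching of the one-dimensional marginals $\tilde s_{u\to\gp(u)}$ (which obey a $\wh{\sf BP}(\cdot;0)$ recursion, the $0$ because the edge $(u,v^\star)$ is also closed). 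You correctly identified this as the crux.
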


\begin{proof}
(a).  For any $(i,j) \in E(\Tree_d)$ let $\Tree_{i \to j}$ be the connected component of the sub-tree of $\Tree_d$ rooted at $i$ after deleting the edge $(i,j)$. Set $\Tree_{i \to j}^\star$ to be the graph obtained 
from $\Tree_{i \to j}$ by adding the edges from $v^\star$ to
$V(\Tree_{i \to j})\setminus\{i\}$ (so there is {\em no} edge between $i$ and $v^\star$ in 
$\Tree_{i \to j}^\star$). Denoting by $\Omega_{i \to j}$ the event that $i$ is connected 
to $v^\star$ using {\em only} the open bonds within $\Tree_{i \to j}^\star$ and by
$\gp(i)$ the (unique) parent of $i \ne o$ (the root) in $\Tree_d$, let
\[
\wh \sT_t^{r} :=\sigma (\sT_t^{\star,r}, \left\{\Omega_{i \to \gp(i)}, i \in  \partial \Tree_d(r)\right\} ).
\]
By the one-to-one correspondence between the power set of $\gD_\star(r)$ (\red{recall Definition \ref{dfn:btwn-f-w}}) and
$\sigma(\{\Omega_{i \to \gp(i)}, i \in \partial \Tree_d(r)\})$, the 
$\sigma$-algebra $\wh\sT_t^r$ is generated by the finite partition 
of $(\B_{r}^\star \setminus \B_t^\star) \times \gD_\star(r)$ 
to pairs $(\ueta_t^{\star,r}, \sC^r)$, \red{using the notation $\sC^r$ for an element from $\gD_\star(r)$}.  Thus,  
for any $B>0$,  $\be \ge 0$, $\varphi \in \cQ_\star$ and $r>t$
\beq\label{eq:prop-rcm-msg-1}
\varphi(\cdot| \wh\sT_t^r)(\ueta_t^{\star,r}=\ul y, \sC^r =\sC)=\varphi_\sC^{\be,B,r}(\cdot|\ueta_t^{\star,r}=\ul y)\,, \qquad
\forall \ul y \in \B_{r}^\star \setminus \B_t^\star \,, \;\; 
\forall \sC \in \gD_\star(r) \,.
\eeq
In particular, 
\[
\varphi [u \bij v^\star| \eta_{(u,v)}=0,  \wh\sT_t^r ] = \ws_{u \to v}^{t,r}(\ueta_t^{\star,r},\sC^r) \,, 
\qquad \forall (u,v) \in E(\Tree_d(t))\,,
\]
where
\beq\label{dfn:ws-tr}
\ws_{u \to v}^{t,r}(\ul y,\sC):= 
\varphi_{\sC}^{\be,B,r}(u \bij v^\star| \eta_{(u,v)}=0, \ueta_t^{\star,r}=\ul y) \,.
\eeq
As $\wh \sT_t^r \uparrow \sT_t^\star$,  fixing any $\varphi \in \cQ_\star$,  we get  
similarly to \eqref{eq:lim-pre-msg},  that $\varphi$-a.e.
\beq\label{eq:ws-tr-lim}
s_{u \to v}(\varphi) 
=
\lim_{t \to \infty} \lim_{r \to \infty} \ws_{u \to v}^{t,r}(\ueta_t^{\star,r},\sC^r) \,,
\qquad \forall (u,v) \in E(\Tree_d) 
\eeq
and by the continuity of the mapping $\wh{\sf BP}(\cdot;B)$
it suffices for \eqref{eq:bp-lim-msg} to show that for any $t<r$,
\beq\label{eq:bp-pre-msg}
\ws^{t,r}_{u \to v} = \wh{\sf BP} ((\ws^{t,r}_{w \to u})_{w \in \partial u \setminus \{v\}}; B)\,,
\qquad \forall (u,v) \in E(\Tree_d(t)).
\eeq
By Lemma \ref{lem:coupling-es}, using an argument similar to the proof of Corollary \ref{cor:fin-rc-potts} 
we find that
\begin{align}\label{eq:varpi-bar-hts}
\bar s^{t,r}_{u \to v}(\ul y,\sC)
:=\varpi^{\be,B,r}_{\sC}(\sigma_u =1|\eta_{(u,v)}=0, \ueta_t^{\star,r}=\ul y) 
&=  \big(1 -\f1q\big) \ws^{t,r}_{u \to v} (\ul y,\sC) + \f1q \,,
\end{align}
for any $\ul y \in \B_{r}^\star \setminus \B_t^\star$, $\sC \in \gD_\star(r)$.
Fixing such $(u,v)$, $\ul y$, and $\sC$,  the identity \eqref{eq:bp-pre-msg} thus amounts to
\beq\label{eq:prop-rcm-msg-2}
\bar s^{t,r}_{u \to v} = \ol{\sf BP}(
(\bar s^{t,r}_{w \to u})_{w \in \partial u \setminus \{v\}}),
\eeq
where $\ol{\sf BP}(\bar s_1,\ldots,\bar s_{d-1}):=\big(1-\f1q\big) 
\wh{\sf BP}\Big(\frac{q \bar s_1-1}{q-1},\ldots, \frac{q \bar s_{d-1}-1}{q-1};B\Big) +\f1q$.  Alternatively,
in view of \eqref{def:BP-tilde},
\begin{equation}\label{def:hat-BP}
\ol{\sf BP}(\bar s_1,\ldots,\bar s_{d-1}) =
\frac{e^B\displaystyle{\prod_{i=1}^{d-1}} \left(1+ (e^{\be}-1) \bar s_i \right) }{e^B
\displaystyle{\prod_{i=1}^{d-1}} \left(1+ (e^{\be}-1) \bar s_i \right) + (q-1)
\displaystyle{\prod_{i=1}^{d-1}} \Big(1+ \frac{e^\beta -1}{q-1} (1-\bar s_i) \Big) }.
\end{equation}
The spin marginal of $\varpi^{\be,B,r}_{\sC}(\cdot|\eta_{(u,v)}=0, \ueta_t^{\star,r}=\ul y)$ 
on the sub-tree $\Tree_{u \to v} \cap \Tree_d(t)$ is a Potts measure.  Further, with $\sC \in \sD_\star(r)$,
the boundary condition for the spins at $\Tree_{u \to v} \cap \partial \Tree_{d}(t)$, 
as determined by $(\ul y, \sC)$, must be a product measure, where each marginal 
spin is either uniform over $[q]$ or supported on $1$. Thus, restricting this 
Potts measure to $u$ and $\partial u \setminus \{v\}$, yields a Potts measure 
on {$q$} spins,  whose boundary marginals on $\partial u \setminus \{v\}$ are 
mutually independent and uniform over $\{2,\ldots,q\}$. Thus, upon expressing the \abbr{rhs} of 
\eqref{eq:prop-rcm-msg-2} in terms of \eqref{def:hat-BP}, the identity \eqref{eq:prop-rcm-msg-2}
follows by a 
direct computation.

Setting $\wt b_{\ddagger,t'} (u,v) := \varphi_{\ddagger, t'}^{\be, B}(u \bij v^\star|\eta_{(u,v)}=0)$
for $\ddagger \in \{\free, \wired\}$, $(u,v) \in E(\Tree_d(t))$ and
$\varphi_{\ddagger,t'}^{\be,B}$ as in \eqref{eq:f-w-rc-t},  we show in part (b) that 
$\wt b_{\ddagger,t'}(u,v) \to b_{\ddagger}$ as $t' \to \infty$.  Thus,
in view of \eqref{dfn:ws-tr} and \eqref{eq:ws-tr-lim},  it suffices for 
\eqref{eq:lem-prop-rcm-msg1} to fix such $(u,v)$ and show that for
$r> t' > t$, 
any
$\ul y \in \B_{r}^\star \setminus \B_{t'}^\star$ and $\sC \in \gD_\star(r)$,
\begin{equation}\label{eq:stoch-order-t-r}
\wt b_{\free,t'} (u,v)
 \le
\varphi_{\sC}^{\be,B,r}(u \bij v^\star|\eta_{(u,v)}=0, \ueta_{t'}^{\star,r} = \ul y)
\le
\wt b_{\wired,t'} (u,v)
\,.
\end{equation}
The  conditional measure in \eqref{eq:stoch-order-t-r} is merely the \abbr{rcm}
on $\Tree_{u \to v} \cap \wh \Tree^\star_d(t')$ with the induced boundary condition 
$\sC' (\ul y,\sC) \in \gD_\star(t')$ at $\Tree_{u \to v} \cap \sfK^\star(\partial \Tree_d(t'))$.
Any such $\sC'$ stochastically dominates the free boundary condition and 
is stochastically dominated by the corresponding wired boundary condition (see
Definition \ref{dfn:inf-vol-RC}), with \eqref{eq:stoch-order-t-r} thus a consequence
of the monotonicity of the \abbr{RCM} 
$\varphi^{\be,B}_{\Tree_{u \to v} \cap \wh \Tree_d^\star(t')}(\cdot)$.

\noindent
(b). The \abbr{rcm}-s $\varphi_\ddagger^{\be, B}$, for $\ddagger \in \{\free, \wired\}$, are both 
tail trivial (indeed, this follows as in the proof of \cite[Theorem 4.19(d)]{G-RC}).
Moreover, by definition $\varphi^{\be,B}_\ddagger \in \cQ_\star$ are invariant 
under automorphisms of $\Tree_d$. Hence, 
$\varphi^{\be,B}_{\ddagger}$-a.s.~the values of $s_{u \to v}(\varphi^{\be, B}_\ddagger)$ for 
$(u,v) \in E(\Tree_d)$, must equal the same non-random constant 
$\wt b_\ddagger:=\varphi^{\be,B}_\ddagger (w \bij v^\star | \eta_{(w,o)}=0)$,  where $w \in \partial o$.
Recall \eqref{eq:f-w-rc-limit} that 
$\wt b_{\ddagger,t} (w,o) \to \wt b_\ddagger$ as $t \to \infty$.
Now, for any finite graph $\Graph$ and $e \in E(\Graph)$, the conditional measure $\varphi_{\Graph}^{\be, B}(\cdot| \eta_e=0)$ is merely the \abbr{RCM} 
$\varphi_{\wt \Graph}^{\be, B}(\cdot)$, for the subgraph $\wt \Graph$ obtained upon
deleting the edge $e$ from $\Graph$. Hence, it follows from Lemma \ref{lem:coupling-es} that
\beq\label{eq:wt-b-ddagger}
\wt b_\ddagger =\lim_{t \to \infty} \wt b_{\ddagger, t}  
=\lim_{t \to \infty}  \f{q \mu^{\be, B}_{w \to o, \ddagger, t}(\sigma_w =1)-1}{q-1},
\eeq
where $\mu^{\be, B}_{w \to o, \ddagger, t}$ denotes the Potts measure on 
$\Tree_{w \to o} \cap \Tree_d(t)$ with free and $1$-boundary conditions at the spins
of $\Tree_{w \to o} \cap \partial \Tree_d (t)$, for $\ddagger=\free$ and $\ddagger=\wired$, respectively. Since $\Tree_{w \to o}$ is a $(d-1)$-ary tree (i.e.~every vertex has $(d-1)$ children), it thus follows 
from Definition \ref{dfn:bethe-recursion} of $\nu_\ddagger^{\be,B}$, that the \abbr{RHS} of 
\eqref{eq:wt-b-ddagger} is precisely $b_\ddagger^{\be,B}$ of \eqref{eq:b-ddagger}, as claimed.

\noindent
(c). 
Setting $\sA_\ddagger := \{(s_{\partial o}(\varphi))
 = (b_\ddagger, b_\ddagger, \ldots, b_\ddagger)\}$ and fixing 
$\varphi \in \cQ_\star$ with $\varphi(\sA_{\free}\cup \sA_{\wired})=1$, our claim 
that $\varphi \in \cQ_{\rm m}$ amounts to having for any $t \in \N$ that
$\varphi$-a.e.  
\beq\label{eq:rcm-mixture}
\varphi(\ul \eta_{\sfB_o^\star(t)}=\cdot \mid  \sT^\star) =  \varphi_{\free}^{\be,B}(\ul \eta_{\sfB_o^\star(t)}=\cdot) {\bf 1}_{\sA_{\free}}+  \varphi_{\wired}^{\be,B}(\ul \eta_{\sfB_o^\star(t)}=\cdot) {\bf 1}_{\sA_{\wired}}\,.
\eeq
 To this end, fixing $t<t'<r$, recall that by \eqref{eq:prop-rcm-msg-1} 
\[
\varphi(\ul \eta_{\red{\tsfB}_o^\star(t)}=\ul y \mid \wh \sT_{t'}^r) =\sum_{\sC \in \gD_\star(t)} 
\varphi_{\sC}^{\be,B,t}(\ul \eta_{\sfB_o^\star(t)}=\ul y)\rho_\varphi (\sC|\wh \sT_{t'}^r), \qquad 
\forall \ul y \in \B_o^\star(t),
\]
with $\rho_\varphi (\cdot\mid \wh \sT_{t'}^r)$ the conditional distribution induced on $\gD_\star(t)$ 
by the finitely generated 
$\wh \sT_{t'}^r$.  Recall the one-to-one correspondence between 
$\gD_\star(t)$ and subgraphs $C_\star \subset \{(u,v^\star), u \in \partial \Tree_d(t)\}$, with
\[
\rho_\varphi (\{u \in C_\star\} \mid \wh \sT_{t'}^r) = 
\varphi \big[u \bij v^\star| \eta_{(u,\gp(u))}=0, \eta_{(u,v^\star)}=0, \wh\sT_{t'}^r \big]
=: \tilde s_{u \to \gp(u)}^{t',r}(\ueta_{t'}^{\star,r},\sC^r), \quad 
\forall u \in \partial \Tree_d(t) \,. 
\]
Using standard and backward martingale convergence theorems, it then follows that $\varphi$-a.e.
\begin{align}\label{eq:rcm-cond}
\varphi(\ul \eta_{\sfB_o^\star(t)}=\cdot \mid \sT^\star) &=\sum_{\sC \in \gD_\star(t)} \varphi_{\sC}^{\be,B,t}(\ul \eta_{\sfB_o^\star(t)}=\cdot)\rho_\varphi (\sC|\sT^\star),\, \\
\rho_\varphi (\{u \in C_\star \}\mid \sT^\star) &= \tilde s_{u \to \gp(u)}(\varphi) :=
\lim_{t' \to \infty} \lim_{r \to \infty} \tilde s_{u \to \gp(u)}^{t',r} \,.
\label{eq:rcm-mgnl}
\end{align}
In view of \eqref{eq:rcm-cond}, we get \eqref{eq:rcm-mixture} and thereby complete the proof, 
upon showing that $\varphi$-a.e.
\beq\label{eq:rcm-mix-t}
\rho_\varphi (\cdot|\sT^\star) = \rho_{\varphi_{\free}^{\be,B}} (\cdot) {\bf 1}_{\sA_{\free}}
+  \rho_{\varphi_{\wired}^{\be,B}}(\cdot) {\bf 1}_{\sA_{\wired}} \,.
\eeq
Turning to prove \eqref{eq:rcm-mix-t}, recall that $\varphi \in \cQ_\star$ and that 
the \abbr{RCM} $\varphi_{\wh \Tree_d^\star(t')}^{\be, B}$ is monotonic. Hence,
\[
\rho_{\varphi_{\free,t'}^{\be,B}}(\cdot
) \st \rho_{\varphi}(\cdot|\wh \sT_{t'}^r) \st \rho_{\varphi_{\wired,t'}^{\be,B}}(\cdot)\,,
\]
which upon taking first $r \to \infty$ and then $t' \to \infty$, yields that $\varphi$-a.e. 
\[
\rho_{\varphi_{\free}^{\be,B}}(\cdot) \st \rho_{\varphi}(\cdot|\sT^\star) \st \rho_{\varphi_{\wired}^{\be,B}}(\cdot)\,.
\]
Given such stochastic ordering, \eqref{eq:rcm-mix-t} follows once all the corresponding
one-dimensional marginals match (see \cite[Proposition 4.6]{G-RC}). That is, in view of
\eqref{eq:rcm-mgnl} and our assumption that  $\varphi(\sA_{\free}\cup \sA_{\wired})=1$,  
it remains only to show that $\varphi$-a.e. on the event $\sA_\ddagger$
\[
\tilde s_{u \to \gp(u)}(\varphi) = \tilde s_{u \to \gp(u)}(\varphi^{\be,B}_{\ddagger}) \,,
\qquad \forall u \in \Tree_d \,.
\]
To this end, note that as in the proof of \eqref{eq:bp-lim-msg}, we also have that $\varphi$-a.e.
\[ 
\tilde s_{u \to \gp(u)}(\varphi) = \wh{\sf BP}((s_{w\to u}(\varphi))_{w \in \partial u \setminus \{\gp(u)\}};0), 
\qquad \forall u \in \Tree_d \,.
\]
Recalling from part (b) that $s_{w\to u}(\varphi^{\be,B}_\ddagger)=b_\ddagger$ for all $(w,u) \in E(\Tree_d)$,
is thus suffices to verify that on the event $\sA_\ddagger$, also $s_{w\to u}(\varphi)=b_\ddagger$
(outside a null set). Indeed, the latter is a direct consequence of \eqref{eq:bp-lim-msg} and the 
identity
\[
b_\ddagger^{\be,B} = \wh{\sf BP}(b^{\be,B}_\ddagger,\ldots, b^{\be,B}_\ddagger;B) \,,
\]
which in view of \eqref{eq:b-ddagger}, is merely the fact that $\nu_{\free}^{\be,B}$ and 
$\nu_1^{\be,B}$ are both fixed points of 
\eqref{eq:BP-recursion}.
\end{proof}

\begin{rmk}\label{rmk:rcm-msg-bd-graph}
With $\wt b_{\wired,\ell}(w,o) \downarrow b_{\wired}$ and 
$\wt b_{\free,\ell} (w,o) \uparrow b_{\free}$ (for $w \in \partial o$), 
we deduce from \eqref{eq:stoch-order-t-r} that for some
finite $\kappa_o=\kappa_o(\vep)$ and any $r >t + \kappa_o$,
\beq
\nonumber
b_{\free} - \vep \le \hts_{u \to v}^{t+\kappa_o,r} (\ul y) \le b_{\wired}+\vep, \qquad \forall (u,v) \in 
E(\Tree_d(t)),  \;
\ul y  \in \B_r^\star \setminus \B_{t+\kappa_o}^\star \,.
\eeq
Moreover, if $w \in V(\Graph)$ for a finite graph $\Graph$ such that $\sfB_w(r) \cong \Tree_d(r)$
for $r > t + \kappa_o$, then
\beq\label{eq:lem-prop-rcm-msg3}
b_{\free} -\vep \le \htgs_{u \to v}^{t+\kappa_o,r}(w, \ul y) \le b_{\wired}+\vep, \quad \forall
(u,v) \in \sfB_w(t), \; \ul y \in \{0,1\}^{\sfB_w^\star(r)\setminus \sfB_w^\star(t+\kappa_o)}\,.
\eeq
\end{rmk}
Denoting hereafter by $\Graph^-(w)$ the graph $\Graph$ without $w \in V(\Graph)$ 
and all edges to it, we next show that for $B >0$, when $r \gg t$,
uniformly over $\Graph$
the law induced on $\sfB_{w}^\star(t)$ by the \abbr{RCM}  on $\Graph$ does 
not depend {\em much} on the boundary conditions outside $\sfB_{w}^\star(r)$.
\begin{lem}\label{l:decay.corel}
Fix $B >0$, $\be \ge 0$, $1< t < r$ and $o\in V(\Graph)$ for a finite graph $\Graph$. 
Denoting by $\sS_{t,r}^c$ \red{the complement of $\sS_{t,r}$,  namely,} 
the set of 
$\ueta_{o,t}^{\star,r}$ with an open cluster,  not containing $v^\star$,  that intersects both 
$\partial \sfB_o(t)$ and $\partial \sfB_o(r)$, 
\beq\label{eq:decay-corel1}
 {\varphi_\Graph^{\be, B}(\ul\eta_{\sfB_o^\star(t)}=\cdot \mid \ul \eta_{\Graph^\star 
 \setminus \sfB^\star_o(t)} = \ul z)}
 = {\varphi^{\be, B}_{\sfB_o(r)} (\ul \eta_{\sfB_o^\star(t)}=\cdot 
 \mid \ueta_{o,t}^{\star,r} = \ul z_{\sfB^\star_o(r) \setminus\sfB_o^\star(t)})},
\eeq
whenever $\ul z_{\sfB_o^\star(r)\setminus \sfB_o^\star(t)} \in \sS_{t,r}$.
Further, for any $\vep>0$, some $r_0=r_0(\vep,q,B,t, |\sfB_o(t)|)<\infty$,  
\begin{align}
\label{eq:decay-corel2}
\sup_{r \ge r_0} \sup_{\Graph,\ul y} \big\{ \varphi^{\be, B}_\Graph(\sS^c_{t,r} 
\mid \ul \eta_{\Graph \setminus \sfB_o(t)} = \ul y) \big\} & \le \vep , \\
\label{eq:decay-corel3}
\sup_{r \ge r_o} \max_{\Graph,\ul y} \Big|\frac{\varphi_{\Graph}^{\be, B}(\ul\eta_{\sfB_o(t)}=\cdot \mid \ul \eta_{\Graph \setminus \sfB_o(t)} = \ul y)}{\varphi^{\be, B}_{\sfB_o(r)} (\ul \eta_{\sfB_o(t)}=\cdot \mid \ueta_{o,t}^r = \ul y_{\sfB_o(r) \setminus \sfB_o(t)})}-1\Big|& \le \vep\,.
\end{align}
In addition, \eqref{eq:decay-corel2} holds for $\varphi^{\be, B}_{\Graph^-(o)}(\cdot)$.
\end{lem}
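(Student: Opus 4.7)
The plan rests on three ideas: the spatial Markov (Dobrushin--Lanford--Ruelle) property of the \abbr{rcm}, a uniform conditional bound on closures of star edges coming from $B>0$, and a coupling of $\varphi^{\be,B}_\Graph(\cdot \mid \ul y)$ with $\varphi^{\be,B}_{\sfB_o(r)}(\cdot \mid \ul y|_{\text{annulus}})$ which matches on the event $\sS_{t,r}$.

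For \eqref{eq:decay-corel1}, I would invoke the \abbr{DLR} property to identify $\varphi^{\be,B}_\Graph(\ul \eta_{\sfB^\star_o(t)}=\cdot \mid \ul \eta_{\Graph^\star\setminus \sfB^\star_o(t)}=\ul z)$ with the \abbr{rcm} on $\sfB^\star_o(t)$ equipped with the boundary partition $\sC(\ul z)$ of $\partial \sfB_o(t)\cup\{v^\star\}$ induced by the open-cluster connectivity of $\ul z$ outside $\sfB^\star_o(t)$. It then suffices to show that on $\sS_{t,r}$ the partition induced by $\ul z$ agrees with the one induced by the annulus portion $\ul z_{\sfB^\star_o(r)\setminus \sfB^\star_o(t)}$. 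One direction is automatic since the annulus partition refines the outside one; for the reverse, any outside path connecting $u_1,u_2 \in \partial \sfB_o(t)\cup\{v^\star\}$ that exits $\sfB^\star_o(r)$ must first hit $\partial \sfB_o(r)$ at some vertex $w$, and the initial sub-path from $u_1$ (or $u_2$) to $w$ lies entirely within the annulus; on $\sS_{t,r}$ this sub-path is forced to pass through $v^\star$, so every outside connection is already realized inside the annulus via $v^\star$.

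For \eqref{eq:decay-corel2}, the essential input is the pointwise weight-ratio bound
\[
\varphi^{\be,B}_\Graph(\eta_e=0 \mid \text{all other bonds}) \le \alpha_B := \frac{q e^{-B}}{1+(q-1) e^{-B}} < 1
\]
for every star edge $e=(v, v^\star)$, derived by a direct computation whose worst case is when $v$ is otherwise disconnected from $v^\star$, since closing $e$ then produces one extra cluster and picks up the factor of $q$. Under the conditioning $\ul \eta_{\Graph\setminus \sfB_o(t)}=\ul y$, the non-star bonds in the annulus are frozen, so the non-star clusters there are deterministic, and at most $|\partial \sfB_o(t)| \le |\sfB_o(t)|$ of them meet $\partial \sfB_o(t)$. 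Any cluster witnessing $\sS^c_{t,r}$ must in addition reach $\partial \sfB_o(r)$, which by the triangle inequality forces $|C|\ge r-t$. Applying the pointwise bound successively to the star edges of $C$ yields $\varphi^{\be,B}_\Graph(\text{all star edges of } C \text{ closed}\mid \ul y) \le \alpha_B^{|C|} \le \alpha_B^{r-t}$, and a union bound gives
\[
\varphi^{\be,B}_\Graph(\sS^c_{t,r}\mid \ul y) \le |\sfB_o(t)|\, \alpha_B^{r-t},
\]
which falls below $\vep$ once $r \ge t + \log(|\sfB_o(t)|/\vep)/\log(1/\alpha_B)$; this fixes $r_0$. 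Since deleting $o$ alters neither $\alpha_B$ nor $|\sfB_o(t)|$ nor the combinatorics of crossing clusters, the same reasoning delivers the bound for $\varphi^{\be,B}_{\Graph^-(o)}$ as well.

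For \eqref{eq:decay-corel3}, the plan is to enlarge the conditioning in $\varphi^{\be,B}_\Graph(\cdot \mid \ul y)$ to also fix the star bonds outside $\sfB^\star_o(t)$, and then to average back. By \eqref{eq:decay-corel1}, on the event $\sS_{t,r}$ the conditional law of $\ul \eta_{\sfB_o(t)}$ depends on the outside only through the annulus and coincides with the corresponding conditional law under $\varphi^{\be,B}_{\sfB_o(r)}$, while on $\sS^c_{t,r}$ both integrands are bounded by $1$; absorbing the complementary contribution via \eqref{eq:decay-corel2} bounds the total variation of the two non-star marginals on $\sfB_o(t)$ by a constant multiple of $\vep$. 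This additive bound is then converted into the multiplicative bound \eqref{eq:decay-corel3} using a standard lower bound $\varphi^{\be,B}_{\sfB_o(r)}(\ul \eta_{\sfB_o(t)}=\ul a\mid \eta^r_{o,t})\ge c_t > 0$, uniform in $\Graph$ and $\ul y$, obtained by toggling the finitely many bonds of $\sfB_o(t)$ one at a time. The main technical subtlety I anticipate lies in the partition bookkeeping of part \eqref{eq:decay-corel1}, in particular making sure that outside paths passing through $v^\star$ do not create spurious connections between boundary vertices that the annulus configuration would miss.
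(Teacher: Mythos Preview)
Your arguments for \eqref{eq:decay-corel1} and \eqref{eq:decay-corel2} are correct and close in spirit to the paper's. For \eqref{eq:decay-corel1} the paper does the same Markov-property reduction, phrased as a direct weight-ratio computation rather than a boundary-partition comparison. For \eqref{eq:decay-corel2} the paper obtains the slightly sharper bound $q\,e^{-B|C|}$ per crossing cluster (by noting that closing all star edges of one non-star annulus cluster changes the total cluster count by at most one, while the Bernoulli sum in the denominator is at least $q^{-1}$ times the all-closed term), but your edge-by-edge bound $\alpha_B^{|C|}$ is equally sufficient.

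Your plan for \eqref{eq:decay-corel3} has a real gap. After enlarging the conditioning to include the star bonds outside $\sfB^\star_o(t)$ and restricting to $\sS_{t,r}$, both the numerator and the denominator become averages of the \emph{same} integrand
\[
g(\ul z_{\mathrm{ann}}) \;=\; \varphi^{\be,B}_{\sfB_o(r)}\big(\ul\eta_{\sfB_o(t)}=\cdot \;\big|\; \ul y_{\mathrm{ann}},\,\ul z_{\mathrm{ann}}\big),
\]
but over \emph{different} laws of the annulus star bonds $\ul z_{\mathrm{ann}}$: the numerator averages with respect to $\varphi^{\be,B}_{\Graph}(\ul z_{\mathrm{ann}}\mid \ul y)$, the denominator with respect to $\varphi^{\be,B}_{\sfB_o(r)}(\ul z_{\mathrm{ann}}\mid \ul y_{\mathrm{ann}})$. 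Nothing in \eqref{eq:decay-corel1} or \eqref{eq:decay-corel2} controls the discrepancy between these two averaging measures, and $g$ is genuinely non-constant on $\sS_{t,r}$ (the partition of $\partial\sfB_o(t)\cup\{v^\star\}$ still depends on which \emph{non-crossing} annulus clusters happen to be wired to $v^\star$). So the claimed total-variation bound does not follow.

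The paper sidesteps this by passing to the ghost-free marginal $\wt\varphi^{\be,B}_\Graph$ of \eqref{eq:mar-rcm}, whose cluster weight is $(1+(q-1)e^{-B|C|})$, and bounding the four-term weight ratio $\sX(\ul y,\ul{\wh y})$ directly: the Bernoulli edge factors cancel exactly, clusters not meeting $\sfB_o(t)$ cancel between the $\ul y$ and $\ul{\wh y}$ contributions, clusters contained in $\sfB_o(r-1)$ cancel between the $\Graph$ and $\sfB_o(r)$ contributions, and the at most $2|\sfB_o(t)|$ remaining ``crossing'' clusters each have size at least $r-t$, hence each contributes at most $1+(q-1)e^{-B(r-t)}$. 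This yields the multiplicative bound with no need to compare the two star-bond laws.
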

\begin{proof} Note that \eqref{eq:decay-corel1} amounts to having
\begin{equation}\label{eq:eta.ratio.bound}
 \sX_\star(\ul y, \ul {\wh y}):=\frac{\varphi_\Graph^{\be, B}(\ul y)}{\varphi_\Graph^{\be, B}(\ul {\wh y})}
 \frac{\varphi_{\sfB_o(r)}^{\be, B}(\ul {\wh y}_{\sfB_o^\star(r)})}{\varphi_{\sfB_o(r)}^{\be, B}
 (\ul y_{\sfB_o^\star(r)})} 
 =1,
\end{equation}
whenever $\ul y_{\sfB_o^\star(r)\setminus \sfB_o^\star(t)} \in \sS_{t,r}$ and
$y_e=\wh y_e$ for all $e \notin \sfB^\star_o(t)$.
Turning to prove
\eqref{eq:eta.ratio.bound}, recall by Definition \ref{dfn:rcm_dfn} of the \abbr{rcm} on a finite graph, that
\beq\label{eq:eta.ratio.bound1}
\sX_\star(\ul y, \ul {\wh y}) =  q^{|{{\sf C}_{\Graph^\star}(\ul y)}|  - |{\sf C}_{\Graph^\star}(\ul {\wh y})| + 
|{\sf C}_{\sfB_o^\star (r)}(\ul {\wh y}_{\sfB_o^\star(r)})| - |{\sf C}_{\sfB_o^\star(r)} (\ul y_{\sfB_o^\star(r)})|}.
\eeq
The open cluster containing $v^\star$ appears once in each of these four counts. Thus, 
with $y_e = \wh y_e$ for all $e \notin \sfB^\star_o(t)$, the 
\abbr{RHS} of \eqref{eq:eta.ratio.bound1} is one, unless some other open cluster induced by 
$\ul y$ or by $\ul {\wh y}$ intersects both $\partial \sfB_o(t)$ and $\partial \sfB_o(r)$,
a scenario which is precluded for $\ul {\wh y}_{\sfB_o^\star(r)\setminus \sfB_o^\star(t)} =
\ul y_{\sfB_o^\star(r)\setminus \sfB_o^\star(t)} \in \sS_{t,r}$.

As for \eqref{eq:decay-corel2}, we can assume \abbr{wlog} that 
{$\ul y_{\Graph \setminus \sfB_o(t)}$ induces} $L \ge 1$ disjoint open clusters $C_i$ 
that intersect both $\partial \sfB_o(t)$ and $\partial \sfB_o(r)$. In particular, 
$L \le |\sfB_o(t)|$ and $|C_i| \ge r-t$. Since $\sS_{t,r}^c$ implies that all
edges from $v^\star$ to some $C_i$ must be closed, we thus deduce by a union over $i \le L$,
that 
\[
 \varphi^{\be, B}_\Graph(\sS^c_{t,r}\mid \ul \eta_{\Graph \setminus \sfB_o(t)} = \ul y) \le 
 \sum_{i=1}^L q e^{-B |C_i|} \le q |\sfB_o(t)| e^{-B(r-t)},
\]
which since $B>0$, completes the proof for $\varphi_\Graph^{\be,B}$. {With $t>1$, the same applies for $\varphi^{\be,B}_{\Graph^-(o)}$.

Turning to prove \eqref{eq:decay-corel3}, we move to the marginal \abbr{rcm}-s
$\wt \varphi^{\be,B}_{\Graph} (\cdot)$ of \eqref{eq:mar-rcm} and note that
as in our proof of \eqref{eq:decay-corel1}, 
it suffices to find
$\vep_r=\vep_r(q,B,t,|\sfB_o(t)|) \to 0$, such that if $y_e=\wh y_e$ for all $e \notin \sfB_o(t)$, then
\begin{equation}\label{eq:ratio-tilde}
 \sX (\ul y, \ul {\wh y}):=\frac{\wt \varphi_\Graph^{\be, B}(\ul y)}{\wt \varphi_\Graph^{\be, B}(\ul {\wh y})}
 \frac{\wt \varphi_{\sfB_o(r)}^{\be, B}(\ul {\wh y}_{\sfB_o (r)})}{\wt \varphi_{\sfB_o(r)}^{\be, B}
 (\ul y_{\sfB_o(r)})} 
 \le 1 + \vep_r \,.
\end{equation}
To this end, since $\ul y$ and $\ul {\wh y}$ agree outside $\sfB_o(r)$, the four
products over edges in \eqref{eq:ratio-tilde} 
match and perfectly cancel each other, with $\sX(\ul y, \ul {\wh y})$ thereby 
being the ratio of product of contributions from open clusters.
Further, as $\ul y$ coincides with $\ul {\wh y}$ outside $\sfB_o(t)$, the collections 
${\sf C}_{\Graph}(\ul y)$ and ${\sf C}_{\Graph}(\ul {\wh y})$ contain exactly the 
same open clusters $C$ which do not intersect $\sfB_o(t)$, and this applies also for 
${\sf C}_{\sfB_o(r)} (\ul y_{\sfB_o(r)})$ versus ${\sf C}_{\sfB_o(r)} (\ul {\wh y}_{\sfB_o(r)})$.
Similarly, a cluster $C \subset \sfB_o(r-1)$ appears in ${\sf C}_{\Graph}(\ul y)$ if and only if
it appears in ${\sf C}_{\sfB_o(r)} (\ul y_{\sfB_o(r)})$ (and the same holds for 
$\ul{\wh y}$). Denoting by ${\sf C}_+$ those elements of 
${\sf C}_{\Graph}(\ul y) \cup {\sf C}_{\sfB_o(r)} (\ul {\wh y}_{\sfB_o(r)})$, 
counted twice if needed, which intersect both $\partial \sfB_o(t)$ and $\partial \sfB_o(r)$,
and by ${\sf C}_-$ all such elements in 
 ${\sf C}_{\Graph}(\ul {\wh y}) \cup {\sf C}_{\sfB_o(r)} (\ul y_{\sfB_o(r)})$, we thus deduce that
\[
\sX(\ul y, \ul {\wh y}) 
=\frac{\displaystyle{\prod_{C\in {\sf C}_+}} \big(1+(q-1)e^{-B|C|}\big)}
{\displaystyle{\prod_{C\in {\sf C}_-}} \big(1+(q-1) e^{-B|C|} \big)} 
\le (1+(q-1) e^{-B(r-t)})^{2|\sfB_o(t)|} =: 1+\vep_r 
\]
(at most $2 |\sfB_o(t)|$ elements in ${\sf C}_+$, each of at least size $r-t$,
and product over ${\sf C}_-$ exceeding one).}
\end{proof}


\subsection{The \abbr{rcm} free energy density for $d \in 2 \N$: 
proof of Theorem \ref{thm:crit-1}}\label{subsec-d-even} 

Recall \cite[Proposition~4.2]{DMSS} that for $(\beta,B) \in \sfR_c$ and $d  \in 2\N$
the Potts free energy density limit 
\[
\wh\Phi=\wh\Phi(\be,B) := \max\{ \Phi(\nu_{\free}^{\be,B}), \Phi(\nu_1^{\be,B}) \}  \,,
\]
has the symmetric \abbr{rcm} formulation, 
\begin{align}
\wh\Phi = \log (\Psi^{{\rm sym}}(b_{\free},\ldots,b_{\free})) 
=\log (\Psi^{{\rm sym}}(b_{\wired},\ldots,b_{\wired})) 
& = \sup_{\underline{b}\in[b_{\free},b_{\wired}]^d} 
\{\log (\Psi^{{\rm sym}}(\underline{b})) \}, 
\label{eq:wh-Phi}
\end{align}
where for the symmetric group $\sfS_d$ of size $d$,
\[
\Psi^{{\rm sym}}(\underline{b}):=\f{\Psi^{{\rm vx}}(\underline{b})}{\Psi^{{\rm e},{\rm sym}}(\underline{b})}, \qquad \Psi^{{\rm e},{\rm sym}}(\underline{b}):= \frac1{d!} \sum_{\pi \in \sfS_d} 
 \Psi^{{\rm e}}(b_{\pi(1)},\ldots,b_{\pi(d)}), 
\]
while for any $\ul b=(b_1,b_2,\ldots, b_d) \in [0,1]^d$ and $d \in 2\N$,
\begin{align*}
\Psi^{{\rm vx}}(\ul b)&:=(1-\gamma)^{-d} \Big(e^B\prod_{i=1}^d \big(1+
(q-1) \gamma b_i \big) + (q-1)\prod_{i=1}^d \big(1-\gamma b_i \big) \Big),
\quad
\gamma := \frac{e^\beta -1}{e^\beta+q-1},
\\
\Psi^{{\rm e}}(\ul b)&:= (1-\gamma)^{-d/2}  \prod_{i=1}^{d/2} 
\Big(1+(q-1) \gamma b_{2i-1}b_{2i}
\Big).
\end{align*}
We prove Theorem \ref{thm:crit-1} by passing \abbr{wlog} to a locally weakly convergent 
sub-sequence of $\varphi_{n}^{\be,B}$ and procuring from 
$\gm_{\bond}(\cQ_\star \setminus \cQ_{\rm m}) >0$ some graphs $\Graph'_n \loc 
\Tree_d$ of free energy densities such that
${\varlimsup_n} \; \Phi'_n(\be,B) > \wh \Phi$. To do so, we rely on our next observation, 
that the supremum in \eqref{eq:wh-Phi} over 
$\ul b
$ 
with at least 
two coordinates strictly inside $(b_{\free}, b_{\wired})$, 
is smaller than $\wh \Phi$. 
\begin{lem}\label{l:Psi.msup.Lambda}
For $(\be,B) \in \sfR_c$ and any $\delta>0$ there exists $\vep_o\in (0, \delta)$ such that
\[
\sup_{\underline{b}\in \Lambda_{\delta}} \{ \log (\Psi^{{\rm sym}}(\underline{b})) \} < \wh\Phi-\vep_o,
\]
where
\beq\label{def:Lambda-del}
\Lambda_\delta:=\left\{\ul b\in[b_{\free},b_{\wired}]^d: \left|\left\{i: b_i\in[b_{\free}+\delta,b_{\wired}-\delta]\right\}\right|\geq  2\right\}.
\eeq
\end{lem}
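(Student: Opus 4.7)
\emph{Plan of proof.} Since $\log\Psi^{\rm sym}$ is continuous and $\Lambda_\delta$ is compact, the supremum is attained at some $\ul b^\diamond\in\Lambda_\delta$; if I can establish $\log\Psi^{\rm sym}(\ul b^\diamond)<\wh\Phi$, then
\[
\vep_o := \tfrac12\bigl(\wh\Phi-\log\Psi^{\rm sym}(\ul b^\diamond)\bigr) \wedge \tfrac{\delta}{2}
\]
meets the requirement. By \eqref{eq:wh-Phi} the value $\wh\Phi$ is the global maximum of $\log\Psi^{\rm sym}$ on $[b_{\free},b_{\wired}]^d$, so the task reduces to showing that no point of $\Lambda_\delta$ is a global maximizer. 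Exploiting the $\gS_d$-symmetry of $\Psi^{\rm sym}$, I may relabel the coordinates so that $b_1^\diamond,b_2^\diamond\in(b_{\free},b_{\wired})$.

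The heart of the argument is the claim that the only global maximizers of $\log\Psi^{\rm sym}$ on the hypercube are the two constant vectors $(b_{\free},\ldots,b_{\free})$ and $(b_{\wired},\ldots,b_{\wired})$. To prove this I would combine first-order conditions with the Bethe fixed-point analysis. Since $b_1^\diamond,b_2^\diamond$ are strictly interior, the stationarity conditions $\partial_{b_i}\log\Psi^{\rm sym}(\ul b^\diamond)=0$ hold for $i=1,2$. Writing $\log\Psi^{\rm sym}=\log\Psi^{\rm vx}-\log\Psi^{\rm e,sym}$ and using that $\Psi^{\rm vx}$ and each permutation summand of $\Psi^{\rm e,sym}$ are affine in every $b_i$, these conditions rearrange into a BP-type system
\[
b_i^\diamond \;=\; \wh{\sf BP}\bigl(\text{an affine mixture of }(b_j^\diamond)_{j\ne i};\,B\bigr), \qquad i=1,2,
\]
with $\wh{\sf BP}$ as in \eqref{def:BP-tilde}. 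On the diagonal $\ul b=(b,\dots,b)$ this collapses to the scalar recursion $b=\wh{\sf BP}(b,\dots,b;B)$, whose fixed points in $[b_{\free},b_{\wired}]$ are (by Lemma \ref{lem:prop-rcm-msg}(b)) $b_{\free}$, $b_{\wired}$ and possibly one unstable intermediate value; of these, only $b_{\free}$ and $b_{\wired}$ saturate $\wh\Phi$ in the Bethe functional, by \eqref{eq:free-v-1} on ${\sf R}_c$.

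To transfer this diagonal classification to genuinely ``mixed'' maximizers I would use the random-pairing form
\[
\Psi^{\rm e,sym}(\ul b)=\frac{(1-\gamma)^{-d/2}}{d!}\sum_{\pi\in\gS_d}\prod_{i=1}^{d/2}\bigl(1+(q-1)\gamma b_{\pi(2i-1)}b_{\pi(2i)}\bigr),
\]
which uses $d\in 2\N$, together with a Jensen-type symmetrization comparing $\log\Psi^{\rm sym}(\ul b^\diamond)$ with its diagonal average $\log\Psi^{\rm sym}(\bar b,\ldots,\bar b)$ for an appropriate $\bar b\in[b_{\free},b_{\wired}]$; invoking the diagonal classification then forces $\bar b\in\{b_{\free},b_{\wired}\}$ and strict inequality along the symmetrization, contradicting $\log\Psi^{\rm sym}(\ul b^\diamond)=\wh\Phi$.

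\emph{Main obstacle.} The delicate point is that on ${\sf R}_c$ both constant vectors $(b_{\free},\ldots,b_{\free})$ and $(b_{\wired},\ldots,b_{\wired})$ are \emph{simultaneously} global maxima, so $\log\Psi^{\rm sym}$ is not globally concave and a Hessian argument is unavailable; worse, in any single coordinate $b_i$ the function $\Psi^{\rm sym}$ is merely a ratio of affine expressions, not strictly concave. The crux is therefore the symmetrization/Jensen step above: one must extract from the precise multilinear structure of $\Psi^{\rm vx}$ and the random-pairing form of $\Psi^{\rm e,sym}$ a strict comparison that pushes any off-diagonal critical point onto the diagonal while strictly decreasing $\log\Psi^{\rm sym}$, reducing the whole problem to the one-dimensional Bethe-fixed-point classification on the critical line already encoded in \eqref{eq:free-v-1} and Lemma \ref{lem:prop-rcm-msg}.
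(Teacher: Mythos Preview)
Your compactness reduction is correct and matches the paper's opening. The genuine gap is the ``Jensen-type symmetrization'' step, which you yourself flag as the main obstacle and do not resolve: since $\log\Psi^{\rm sym}$ is not concave on ${\sf R}_c$ (it has two distinct maximizers), there is no averaging inequality available to push an off-diagonal critical point to the diagonal, and the vague reference to ``an appropriate $\bar b$'' does not supply one. The first-order/BP reformulation is also not quite right as stated, because the stationarity condition $\partial_{b_i}\log\Psi^{\rm sym}=0$ equates two ratios of constants and does not literally take the form $b_i=\wh{\sf BP}(\cdot;B)$ with the remaining coordinates as arguments.

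The observation you need is already in your hands: you note that in each coordinate $\Psi^{\rm sym}$ is a ratio of affine expressions. A M\"obius map $b_i\mapsto(\alpha b_i+\beta)/(\gamma b_i+\delta)$ is monotone on any interval where it is finite, so it attains its maximum on $[b_{\free},b_{\wired}]$ only at an endpoint \emph{unless it is constant}. Hence if a global maximizer $\ul b^\diamond$ has $b_i^\diamond\in(b_{\free},b_{\wired})$, the function is constant in $b_i$ and you may replace $b_i^\diamond$ by either endpoint while remaining a maximizer. With two interior coordinates you can send one to $b_{\free}$ and the other to $b_{\wired}$ (and push any remaining interior coordinates to endpoints as well), producing a maximizer $\ul b''\in\{b_{\free},b_{\wired}\}^d$ that is neither all-$b_{\free}$ nor all-$b_{\wired}$. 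The paper then invokes the proof of \cite[Proposition~4.2]{DMSS}, which shows that on ${\sf R}_c$ no such mixed corner of the hypercube can be a maximizer of $\Psi^{\rm sym}$; this is the replacement for your symmetrization step, and it closes the argument.
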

\begin{proof}
Since $\Lambda_\delta$ is compact and $\Psi^{{\rm sym}}(\cdot)$ is continuous, the supremum over $\Lambda_\delta$ is achieved at some $\underline{b}'\in \Lambda_\delta$ {and it suffices} to 
show that $\log (\Psi^{{\rm sym}}(\underline{b}'))< \wh\Phi$. {Now, suppose that}
$\underline{b}^\circ\in[b_{\free},b_{\wired}]^d$ is such that $\log (\Psi^{{\rm sym}}(\underline{b}^\circ))= \wh\Phi$ and $b_i^\circ\in (b_{\free},b_{\wired})$ for some $i \in [d]$.  Since both $\Phi^{{\rm vx}}$ and $\Phi^{{\rm e}, {\rm sym}}$ are affine in each $b_j$, $j \in [d]$, and $\underline{b}^\circ$ is a maximizer, the function must be constant as we vary the $i$-th coordinate of $\ul b^\circ$.  This, in particular implies that if we replace $b_i^\circ$ with either $b_{\free}$ or $b_{\wired}$ then the new vector continues be a maximizer of $\log \Psi^{{\rm sym}}(\cdot)$. {In particular, if}
$\log (\Psi^{{\rm sym}}(\underline{b}'))= \wh\Phi$ for $\ul b'$ {with at least} two coordinates $i\ne j \in [d]$ such that $b'_i,b'_j \in (b_{\free}, b_{\wired})$, {then} by the preceding argument there exists 
$\ul b'' \in \{b_{\free}, b_{\wired}\}^d$, still a maximizer, with the property $2 \le |\{i: b''_i =b_{\free}\}| \le d-2$.   
%
However, the proof of \cite[Proposition~4.2]{DMSS} shows that such a $\underline{b}''$ can never be a maximizer, yielding a contradiction. 
\end{proof}

Combining Lemma \ref{lem:prop-rcm-msg}(c) with Lemma \ref{l:Psi.msup.Lambda}, we proceed to show that  if a local weak limit point of \abbr{RCM}-s
puts a positive mass on $\cQ_\star \setminus \cQ_{\rm m}$ then it must also put a positive mass on those $\varphi \in \cQ_\star$ for which 
$\log \Psi^{{\rm sym}}(\hts_{\partial o}^{t,r})$ is strictly smaller than $\wh \Phi$,  for large $t < r$, with non-negligible probability.}  
\begin{lem}\label{l:s.in.Lambda}
For $\vep >0 $ and $ t < r \in \N$ set 
\beq\label{dfn:F-vep-tr}
\sB_\vep^{t,r}:= \left\{ \log \Psi^{{\rm sym}}(\hts_{\partial o}^{t,r}) < \wh \Phi -\vep \right\} \,.
\eeq
If $\gm_{\bond}(\cQ_\star \setminus \cQ_{{\rm m}}) >0$ for a local weak limit point $\gm_{\bond}$ 
of $\{\varphi_n^{\be,B}\}$,  then there exist $\xi >0$,  $t' \in \N$, and $r'(t) < \infty$, 
such that $\int \varphi (\sB_\xi^{t,r}) d \gm_{\bond} \ge \xi$ for any $t \ge t'$ and $r \ge r'(t)$.
\end{lem}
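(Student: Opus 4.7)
The plan is to reduce the claim to a $\Psi^{{\rm sym}}$-deficit statement for the pre-message vector $\hts_{\partial o}^{t,r}$ and then invoke Lemma~\ref{l:Psi.msup.Lambda}. Working with the joint law $\tilde\gm(d\varphi,d\eta) := d\gm_{\bond}(\varphi)\,d\varphi(\eta)$ on $\cQ_\star \times \B_\infty^\star$, the desired bound becomes $\tilde\gm(\sB_\xi^{t,r}) \ge \xi$ for $t \ge t'$ and $r \ge r'(t)$. By Lemma~\ref{lem:prop-rcm-msg}(c), the hypothesis $\gm_{\bond}(\cQ_\star \setminus \cQ_{\rm m}) > 0$ immediately yields some $\xi_0 > 0$ with
\[
\tilde\gm\bigl(s_{\partial o}(\varphi) \notin \{(b_{\free})^d, (b_{\wired})^d\}\bigr) \ge \xi_0.
\]

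The key step is to upgrade this into the stronger assertion that $\tilde\gm(s_{\partial o}(\varphi) \in \Lambda_{2\delta}) \ge \xi_1$ for some $\delta,\xi_1 > 0$, i.e.\ that with positive probability at least two coordinates of $s_{\partial o}$ lie strictly in $(b_{\free}+2\delta, b_{\wired}-2\delta)$. For this I would combine the BP consistency equation \eqref{eq:bp-lim-msg} with strict monotonicity of $\wh{\sf BP}(\cdot;B)$, inherited from the \abbr{FKG} monotonicity of the underlying \abbr{RCM}: $\wh{\sf BP}(s_1,\ldots,s_{d-1};B)$ equals $b_\ddagger \in \{b_{\free},b_{\wired}\}$ only when every $s_i = b_\ddagger$, so any ``mixing'' among boundary values in its input forces the output strictly into the interior at the adjacent vertex. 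Together with the automorphism invariance of $\gm_{\bond}$ coming from the local-weak-limit structure ($\Graph_n \loc \Tree_d$ is vertex-transitive in the limit), this would rule out that $s_{\partial o}$ be $\tilde\gm$-a.s.\ concentrated on the degenerate set of vectors having at most one interior coordinate: iterating the BP recursion one or two levels down the tree starting from such a configuration produces a vertex $v$ at which $s_{\partial v}$ has two interior coordinates with positive probability, and the invariance then transfers this to $v=o$.

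Once this $\Lambda_{2\delta}$-mass statement is in hand, the rest is quick. By \eqref{eq:lim-pre-msg}, $\hts_{\partial o}^{t,r}(\eta_t^{\star,r}) \to s_{\partial o}(\varphi)$ under $\tilde\gm$ as $r \to \infty$ and then $t \to \infty$. Egorov's theorem then gives $t' \in \N$ and $r'(t) < \infty$ so that for all $t \ge t'$, $r \ge r'(t)$, $\tilde\gm(\hts_{\partial o}^{t,r} \in \Lambda_\delta) \ge \xi_1/2$; on this event Lemma~\ref{l:Psi.msup.Lambda} gives $\log \Psi^{{\rm sym}}(\hts_{\partial o}^{t,r}) < \wh\Phi - \vep_o$, whence $\tilde\gm(\sB_{\vep_o}^{t,r}) \ge \xi_1/2$, and taking $\xi := \min(\vep_o, \xi_1/2)$ finishes the argument.

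The entire obstacle lies in the upgrade step: Lemma~\ref{l:Psi.msup.Lambda} demands two coordinates strictly away from both $b_{\free}$ and $b_{\wired}$, whereas Lemma~\ref{lem:prop-rcm-msg}(c) only excludes the two monochromatic extremal vectors. Unbalanced boundary vectors such as $(b_{\free}, b_{\wired},\ldots,b_{\wired})$, or vectors with exactly one interior coordinate, are \emph{a priori} compatible with the latter and are not ruled out by Lemma~\ref{l:Psi.msup.Lambda} either; the delicate BP-monotonicity-plus-invariance argument sketched above is what is needed to exclude these configurations at the root $o$ on a set of positive $\tilde\gm$-mass.
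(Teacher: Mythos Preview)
Your overall strategy is right and matches the paper's: show that the message vector lands in $\Lambda_\delta$ with positive probability, then combine Lemma~\ref{l:Psi.msup.Lambda} with the convergence \eqref{eq:lim-pre-msg}. You also correctly locate the only real obstacle, namely upgrading ``$s_{\partial o}\notin\{(b_{\free})^d,(b_{\wired})^d\}$'' to ``at least two coordinates strictly interior.''

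The gap is in your upgrade sketch. Iterating BP ``one or two levels'' cannot manufacture a second interior coordinate: from $s_{\partial o}=(b_{\free},b_{\wired},\ldots,b_{\wired})$ one checks via \eqref{eq:bp-lim-msg} and strict monotonicity of $\wh{\sf BP}$ that at \emph{every} vertex $v$ the vector $s_{\partial v}$ has at most one coordinate in $(b_{\free},b_{\wired})$, at every depth. Invariance alone does not rescue this either --- a law on $s_{\partial o}$ supported on configurations with exactly one interior coordinate is itself permutation-invariant yet never hits $\Lambda_\delta$. The paper's resolution is a genuinely probabilistic long-path argument. Working with the averaged (hence automorphism-invariant) measure $\bar\varphi:=\int\varphi\,d\gm_{\bond}\in\cQ_\star$, one first shows (via \eqref{eq:s-in-L0}--\eqref{ineq-H0}) that a \emph{single} message $s_{u_1\to o}(\bar\varphi)$ is $\vep$-interior with $\bar\varphi$-probability at least $\vep>0$. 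One then fixes a path $o=v_1,\ldots,v_{\ell+1}$ of length $\ell=\lceil 3/\vep\rceil$ together with off-path neighbors $w_i\in\partial v_i\setminus\{v_{i+1}\}$; by invariance each $A_i:=\{s_{w_i\to v_i}(\bar\varphi)\text{ is }\vep\text{-interior}\}$ has $\bar\varphi(A_i)\ge 3/\ell$, whence a first-moment bound gives $\bar\varphi\big(\sum_i{\bf 1}_{A_i}\ge 2\big)\ge 1/\ell$. The key point: on $A_j$, BP propagation \emph{along the path} forces every $s_{v_{i+1}\to v_i}$, $i<j$, into a (smaller) open interval; hence on $A_i\cap A_j$ with $i<j$, both $s_{w_i\to v_i}$ and $s_{v_{i+1}\to v_i}$ are interior at the same vertex $v_i$, so $s_{\partial v_i}\in\Lambda_\delta$. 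Invariance transfers this back to $o$, giving $\bar\varphi(s_{\partial o}(\bar\varphi)\in\Lambda_\delta)\ge 1/\ell^2$, and the rest goes as in your last paragraph (the paper uses a Fatou-type argument rather than Egorov, but either suffices).
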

\begin{proof}
With $\gm_{\bond}$ supported on the convex set $\cQ_\star$ (see Lemma \ref{lem:limit-point-properties}(d)), also $\bar \varphi(\cdot):= \int \varphi (\cdot) d\gm_{\bond} \in \cQ_\star$.  Further,  
for any non-negative $F \in C_b(\R^d)$ and $\varphi \in \cQ_\star$,  it follows from \eqref{eq:lim-pre-msg} that
\[
\lim_{t \to \infty}\lim_{r \to \infty} \varphi(F(\hts_{\partial o}^{t,r})) = \varphi(F(s_{\partial o}(\varphi))) \,,
\]
hence by definition of $\bar \varphi$ and Fatou's lemma,
\beq\label{eq:s-in-L0}
\bar \varphi(F(s_{\partial o}(\bar \varphi))) = 
 \lim_{t \to \infty}\lim_{r \to \infty} \bar \varphi(F(\hts_{\partial o}^{t,r}))= 
\lim_{t \to \infty} \lim_{r \to \infty} \int  \varphi(F(\hts_{\partial o}^{t,r})) d \gm_{\bond} 
\ge  \int  \varphi(F(s_{\partial o}(\varphi)) d \gm_{\bond} \,.
\eeq
Considering $F(\ul s) := \sum_{i=1}^d f_\vep(s_i)$ for continuous $f_\vep$ 
such that $0 \le f_\vep(\cdot) \uparrow {\bf 1}_{(b_{\free}, b_{\wired})}(\cdot)$ as $\vep \downarrow 0$, 
we deduce from \eqref{eq:s-in-L0} by monotone convergence {and Fatou's lemma},  that 
in view of \eqref{eq:lem-prop-rcm-msg1},
\beq\label{ineq-H0}
\sum_{u \in \partial o} \bar \varphi(s_{u \to o}(\bar \varphi) \ne \{b_{\free},b_{\wired}\}) 
\ge
\int \sum_{u \in \partial o} \varphi(s_{u \to o}(\varphi) \ne \{b_{\free},b_{\wired}\} ) d \gm_{\bond}
\ge \int \varphi( (\sA_{\free}\cup \sA_{\wired})^c ) d \gm_{\bond} \,.
\eeq
Recall Lemma \ref{lem:prop-rcm-msg}(c) that 
$\varphi( (\sA_{\free}\cup \sA_{\wired})^c )>0$ whenever $\varphi \in \cQ_\star \setminus \cQ_{\rm m}$.
Hence,  our assumption that $\gm_{\bond}(\cQ_\star \setminus \cQ_{{\rm m}})>0$ implies the strict positivity of the 
\abbr{rhs} of \eqref{ineq-H0} and thereby {the same holds for the \abbr{LHS}}. 
By 
definition of the local weak convergence,  the law $\gm_{\bond}$ must be invariant to our 
choice of the root of $\Tree_d$.  Hence,  with $\Tree_d$ a regular tree,  necessarily 
$\bar \varphi$ is invariant under any automorphism of $\Tree_d$.  In particular,
fixing $u_1 \in \partial o$ and setting  
\[
H_\vep:=\big\{\min_{\ddagger \in \{\free, \wired\}}|s_{u_1 \to o}(\bar \varphi) - b_{\ddagger}| > \vep\big\},
\]
we deduce that $\bar \varphi(H_0)>0$.  This in turn allows us to fix $\vep>0$ small enough so that $\bar\varphi(H_\vep) \ge \vep$. Next,  fix $\ell:=\lceil 3/\vep \rceil$ and a non-random path $o=:v_1,v_2, \ldots,v_{\ell+1}$ in $\Tree_d$
such that $v_{i+1} \in \partial \Tree_d(i)$.  Then,  fixing for each
$1 \le i \le \ell$,  some $w_{i} \in \partial v_i$,  $w_i \neq v_{i+1}$ (which as $d \ge 3$,  is always possible), 
we have by the automorphism invariance of $\bar\varphi$ that 
\beq\label{eq:lbd-pAi}
\bar \varphi(A_i)=\bar \varphi({\sf H}_\vep) \ge \vep \ge \frac{3}{\ell} \quad 
\text {for } \quad 
A_i:=\Big\{\min_{\ddagger \in \{\free, \wired\}} |s_{w_{i}\to v_i}(\bar \varphi)-b_\ddagger|>\vep\Big\} \,.
\eeq
The smooth map $\wh{\sf BP}(\cdot;B):[0,1]^{d-1} \to [0,1]$ of \eqref{def:BP-tilde} 
is coordinate-wise strictly increasing, having $(b_{\ddagger},\ldots,b_{\ddagger})$
as its fixed points (see \eqref{eq:bp-lim-msg} and Lemma \ref{lem:prop-rcm-msg}(b)).
Thus,  for some  
$\delta_0=\delta_0(\vep, \be, B,q,d)>0$,  it follows from \eqref{eq:bp-lim-msg} and
\eqref{eq:lem-prop-rcm-msg1} that 
$s_{v_{j}\to v_{j-1}}(\bar\varphi) \in (b_{\free}+\delta_0,b_{\wired}-\delta_0)$ on the event $A_j$.
Iterating this argument,  while reducing $\delta_0$ to 
$\delta=\delta(\delta_0,\ell,\be, B,q,d) \in (0, \vep)$,  it 
follows that on each $A_j$,  $j \le \ell$, 
\[
\{s_{v_{i+1}\to v_{i}} (\bar \varphi),  1 \le i < j \} \subset (b_{\free}+\delta, b_{\wired}-\delta) \,.
\]
So,  if $A_{i} \cap A_{j}$ occurs for $i<j$,  then
$s_{v_{i+1}\to v_{i}}(\bar\varphi),s_{w_i \to v_{i}}(\bar \varphi) \in (b_{\free}+\delta, b_{\wired}-\delta)$ 
and in particular $s_{\partial v_{i}} \in \Lambda_{\delta}$ 
of 
\eqref{def:Lambda-del}.  Namely,  any pair from $\{A_1,\ldots, A_\ell\}$ results with
$s_{\partial v_{i}}(\bar \varphi) \in \Lambda_{\delta}$ for some $i < \ell$.  Consequently,
by the union bound and the automorphism invariance property of $\bar \varphi$,
\begin{align}
\ell \bar\varphi(s_{\partial o}(\bar \varphi) \in \Lambda_{\delta}) &\ge 
\bar\varphi \big(
\bigcup_{i=1}^\ell \{ s_{\partial v_{i}}(\bar \varphi) \in \Lambda_{\delta} \} \big)
\geq \bar\varphi\big(\sum_{i=1}^\ell \red{\bf 1}_{A_i} \geq 2\big) \geq 
\frac{1}{\ell} \bar\varphi\big(\sum_{i=1}^\ell \red{\bf 1}_{A_i} - 2\big) \geq \frac{1}{\ell}
\label{eq:s-in-L3}
\end{align}
(where the right-most inequality is due to \eqref{eq:lbd-pAi}).  In view of Lemma \ref{l:Psi.msup.Lambda}
we deduce from \eqref{eq:s-in-L3} that for some $\vep_o>0$,
\[
 \bar \varphi(\log \Psi^{{\rm sym}}(s_{\partial o}(\bar \varphi)) < \wh \Phi -\vep_o) \ge \frac{1}{\ell^2}\,.
\]
With $\log \Psi^{{\rm sym}} \in C_b([0,1]^d)$,  it follows from \eqref{eq:lim-pre-msg} that 
$\bar\varphi$-a.e. ~$\log \Psi^{{\rm sym}}(\hts_{\partial o}^{t,r}) \to \log \Psi^{{\rm sym}}(s_{\partial o}(\bar \varphi))$ as $r \to \infty$ and then $t \to \infty$.  
Thus,  by the preceding 
\[
\lim_{t \to \infty} \lim_{r \to \infty} \bar\varphi(\sB_{\vep_o}^{t,r}) \ge \frac{1}{\ell^2} \,,
\]
yielding our claim for $\xi=\vep_o \wedge (2 \ell^2)^{-1}$.
\end{proof}

To prove Theorem \ref{thm:crit-1} we keep \red{removing} from a finite graph $\Graph$
some $w \in V(\Graph)$ \red{such that $\sfB_w(r) \cong \Tree_d(r)$ (for some $r \ge 2$),
and all the edges to $w$,  to get the} graph $\Graph^-(w)$, 
thereby re-connecting the neighbors $x_1, x_2,\ldots,x_d$ of $w$ to form
for $\pi \in \sfS_d$ the graph $\Graph^\pi(w)$ as 
$\Graph^-(w)$ with the additional edges $(x_{\pi(2i-1)},x_{\pi(2i)})$, $i=1\ldots,d/2$.
Key to the proof is thus to estimate
the ratios of contributions to the partition functions from \abbr{RCM}-s 
on $\Graph$ versus $\Graph^-(w)$, and on $\Graph^\pi(w)$ versus $\Graph^-(w)$, when
fixing certain bonds. To this end, recall $\be_e^\star$ of 
\eqref{dfn:Potts-star} and $p_e$, ${\sf C}(\cdot)$ of Definition \ref{dfn:rcm_dfn}, where the 
un-normalized Edwards-Sokal probability mass
\beq\label{eq:wh-varpi}
\wh \varpi_{\Graph^\star}^{\be,B}(\ul \sigma, \ul \eta) := \prod_{e \in E^\star} 
e^{\be^\star_e}\left[ (1-p_e)({1-\eta_e}) + p_e {\eta_e} \delta_e(\ul{\sigma}) \right] \cdot  \delta_{\sigma_{v^\star},1},  \quad \ul \sigma \in [q]^{V^\star},\;  \ul \eta \in \{0,1\}^{E^\star}
\eeq
(see Definition \ref{dfn:es_dfn}), has total mass matching the Potts partition function of $\Graph$,
as in \eqref{dfn:Potts-star}.  Namely, 
\beq\label{eq:wh-varpin}
 \sum_{\ul \sigma, \ul \eta}
\wh \varpi_{\Graph^\star}^{\be,B}(\ul \sigma, \ul \eta) = Z_{\Graph}(\be,B) \,.
\eeq
More generally, restricting \eqref{eq:wh-varpi} to bond values 
$\ul y_{W^\star} \in \{0,1\}^{E(W^\star)}$ on the edges of $W \subset \Graph$
(or, to only $\ul y_{W} \in \{0,1\}^{E(W)}$ on non-ghosted edges), 
yields the {\abbr{rcm}-restricted} partition functions
\begin{align}\label{eq:cZ-es}
\cZ_{\Graph, W^\star}(\ul y_{W^\star}) &:= {\frac{1}{q}}
 \sum_{\ul \eta: \ul \eta_{W^\star}= \ul y_{W^\star}} 
q^{|{\sf C}(\ul \eta)|} \prod_{e \in E^\star} e^{\be^\star_e} p_e^{\eta_e}(1-p_e)^{1-\eta_e} 
= \sum_{\ul \eta: \ul \eta_{W^\star} =\ul y_{W^\star}} 
\sum_{\ul \sigma} \wh \varpi_{\Graph^\star}^{\be,B}(\ul \sigma, \ul \eta) \,,\\
\label{eq:cZ-def}
\cZ_{\Graph, W}(\ul y_W)&:= {\frac{1}{q}}
\sum_{\ul \eta: \ul \eta_{W}= \ul y_{W}} 
q^{|{\sf C}(\ul \eta)|} \prod_{e \in E^\star} e^{\be^\star_e} p_e^{\eta_e}(1-p_e)^{1-\eta_e} 
= \sum_{\ul \eta: \ul \eta_{W} =\ul y_{W}} 
\sum_{\ul \sigma} \wh \varpi_{\Graph^\star}^{\be,B}(\ul \sigma, \ul \eta) \,.
\end{align}
For any $v \in V(\Graph)$ with $\partial v = (u_i)_{i=1}^d$, $\pi \in \sfS_d$ and
$\ul y \in \{0,1\}^{\sfB_v(r)\setminus \sfB_v(t)}$, $t<r$, we set 
 the functions 
\begin{align}\label{eq:bar-Psi1}
\Psi^{{\rm vx}}_{t,r,\Graph}(v, \ul y) &:= \varphi^{\be,B}_{\Graph^-(v)}
 \big[ \Psi^{{\rm vx}
 } (\red{\mathfrak{s}}^{t,r}_{\, \partial v}) \mid \ueta_{v,t}^r = \ul y \, \big],
  \quad
 \Psi^{{\rm e}, {\rm sym}}_{t,r,\Graph}
 (v, \ul y) 
  := \frac{1}{d!} \sum_{\pi \in \sfS_d}
  \Psi^{{\rm e}, \pi}_{t,r,\Graph}(v, \ul y) \,, \\
\Psi^{{\rm e}, \pi}_{t,r,\Graph}(v, \ul y)
&:=  \varphi^{\be, B}_{{\red{\tGraph}^-(v)}} \big[ 
\Psi^{{\rm e}}(\htgs^{t,r}_{u_{\pi(1)} \to v},\ldots,\htgs^{t,r}_{u_{\pi(d)} \to v}) \mid \ueta_{v,t}^r = \ul y \, \big].
\nonumber
\end{align}
Utilizing Lemma \ref{l:decay.corel} and \eqref{eq:wh-varpi}-\eqref{eq:cZ-def}, we proceed to
estimate the ratios of various {\abbr{rcm}-restricted} partition functions, in terms of these functions.
\begin{lem}\label{l:z.ratio.estimate}
Fix $B >0$ and $\be \ge 0$. \red{Fix $\vep \in (0,1)$, $t \in \N$, and a finite graph $\wh \Graph$. There exists some  
$r_o'=r_o'(\vep, t)$ such that} if
$o\in V(\wh \Graph)$ and $\sfB_o(r)\cong \Tree_d(r)$, $r \ge r_o'$, then 
for any $\sfB_o(r) \subset \Graph \subset \wh \Graph$, we get upon setting 
$\wh W:=\Graph \setminus \sfB_o(t)$ and $W_r:=\sfB_o(r) \setminus \sfB_o(t)$, that 
\begin{align}\label{eq:z.ratio.estimate1}
 \max_{\ul y_{\wh W}} \Big\{\big| \frac{\cZ_{\Graph,\wh W}(\ul y_{\wh W})}{
 \cZ_{\Graph^-(o),\wh W}(\ul y_{\wh W})} - 
 \Psi^{{\rm vx}}_{t,r,\wh \Graph}(o, \ul y_{W_r})  \big| \Big\} &\le \vep \,, \\
\max_{\pi \in \red{\sfS}_d} \max_{\ul y_{\wh W}} \Big\{
\big| \frac{\cZ_{\Graph^\pi(o),\wh W}(\ul y_{\wh W})}{\cZ_{\Graph^-(o),\wh W}(\ul y_{\wh W})} - 
\Psi^{{\rm e}, \pi}_{t,r,\wh \Graph}(o, \ul y_{W_r}) \big| \Big\} &\leq \vep.
\label{eq:z.ratio.estimate2}
\end{align}
\end{lem}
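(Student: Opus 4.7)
The strategy is to prove both estimates in parallel via two main steps: (i) an exact Edwards--Sokal cavity identity that writes each ratio as a conditional spin expectation on $(\Graph^-(o))^\star$, and (ii) an approximation of that expectation by the claimed $\Psi$-functional of the messages $\htgs^{t,r}_{\partial o}$, using the tree structure $\sfB_o(r)\cong\Tree_d(r)$ together with Lemma \ref{l:decay.corel}.

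For step (i) I factor the un-normalized Edwards--Sokal mass $\wh\varpi^{\be,B}_{\Graph^\star}$ of \eqref{eq:wh-varpi} along the data that distinguishes $\Graph$ from $\Graph^-(o)$ (the vertex $o$, the edges $\{(o,u_i)\}_{i=1}^d$, and the ghost edge $(o,v^\star)$) and sum out $\sigma_o$ together with these bonds, producing the cavity weight
\[
F_{\rm vx}(\ul \sigma_{\partial o}):=\sum_{\sigma_o\in[q]} e^{B\delta_{\sigma_o,1}}\prod_{i=1}^d e^{\be\delta_{\sigma_o,\sigma_{u_i}}}.
\]
Since every edge at $o$ lies in $\sfB_o(1)\subset\sfB_o(t)$ and hence outside $\wh W$, this yields the \emph{exact} identity
\[
\frac{\cZ_{\Graph,\wh W}(\ul y_{\wh W})}{\cZ_{\Graph^-(o),\wh W}(\ul y_{\wh W})}
=\varpi^{\be,B}_{(\Graph^-(o))^\star}\!\big[F_{\rm vx}(\ul \sigma_{\partial o})\,\big|\,\ul \eta_{\wh W}=\ul y_{\wh W}\big],
\]
while the analogous identity for $\cZ_{\Graph^\pi(o),\wh W}/\cZ_{\Graph^-(o),\wh W}$ uses the matching-edge weight $F_\pi(\ul \sigma_{\partial o}):=\prod_{i=1}^{d/2} e^{\be\delta_{\sigma_{u_{\pi(2i-1)}},\sigma_{u_{\pi(2i)}}}}$. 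Applying Lemma \ref{lem:coupling-es}(ii), these spin expectations become conditional expectations, under $\varphi^{\be,B}_{\Graph^-(o)}(\cdot\,|\,\ul \eta_{\wh W}=\ul y_{\wh W})$, of functions of the cluster partition of $\{u_1,\ldots,u_d,v^\star\}$; the calculation that underlies the definitions of $\Psi^{\rm vx}$ and $\Psi^{\rm e}$ then shows that on the event $\mathsf{G}:=\{\text{the clusters of }u_1,\ldots,u_d\text{ are pairwise disjoint modulo that of }v^\star\}$, with $b_i(\ul \eta):={\bf 1}\{u_i\bij v^\star\}$, the two conditional expectations evaluate to $\Psi^{\rm vx}(\ul b(\ul \eta))$ and $\Psi^{\rm e}(b_{\pi(1)},\ldots,b_{\pi(d)})$, respectively.

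For step (ii), the tree hypothesis $\sfB_o(r)\cong\Tree_d(r)$ ensures that the $d$ sub-trees rooted at the $u_i$'s in $\sfB_o(r)^-(o)$ are disjoint, so any non-$v^\star$ cluster linking two distinct $u_i$'s on $(\Graph^-(o))^\star$ must exit $\sfB_o(r)$; restricted to the annulus this forces $\sS_{t,r}^c$, and the $\Graph^-(o)$-version of \eqref{eq:decay-corel2} bounds the contribution of $\mathsf{G}^c$ by $\vep$ uniformly in $\ul y_{\wh W}$. On $\mathsf{G}\cap\sS_{t,r}$ I decompose the conditional expectation into an inner average over bonds of $\sfB_o^\star(t)$ and an outer average over the annulus ghost bonds together with bonds beyond $\sfB_o(r)$. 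By \eqref{eq:decay-corel1} the inner conditional law on $(\Graph^-(o))^\star$ coincides with the corresponding law on the forest $\sfB_o(r)^-(o)^\star$, whereupon each $b_i$ depends only on the bonds of its own sub-tree so that the $\{b_i\}_{i=1}^d$ become conditionally independent with inner conditional means equal to $\htgs^{t,r}_{u_i\to o}(o,\cdot)$; multi-affine dependence of $\Psi^{\rm vx}$ and $\Psi^{\rm e}$ in each $b_i$ then gives the exact identity $E[\Psi(\ul b)\mid\text{outer}]=\Psi(\htgs^{t,r}_{\partial o})$. A final application of Lemma \ref{l:decay.corel} on $\wh\Graph^-(o)$ replaces the outer $\Graph^-(o)$-average by the $\wh\Graph^-(o)$-average appearing in $\Psi^{\rm vx}_{t,r,\wh\Graph}$ and $\Psi^{{\rm e},\pi}_{t,r,\wh\Graph}$, at the cost of another $\vep$.

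The main obstacle is the clean identification of the inner conditional probability with the message $\htgs^{t,r}_{u_i\to o}(o,\cdot)$ of Definition \ref{dfn:rcm-msg} (the latter being defined on $\sfB_o(r)$ with $(u_i,o)$ closed but $o$ still present, rather than on the forest $\sfB_o(r)^-(o)$). In the tree $\sfB_o(r)\cong\Tree_d(r)$, closing $(u_i,o)$ disconnects the sub-tree rooted at $u_i$ from the remaining sub-trees, so the event $\{u_i\bij v^\star\}$ depends only on bonds within that sub-tree and its ghost edges; the residual discrepancy between the marginal laws on these bonds under $\varphi^{\be,B}_{\sfB_o(r)}(\cdot\,|\,\eta_{(u_i,o)}=0,\text{annulus})$ and under $\varphi^{\be,B}_{\sfB_o(r)^-(o)}(\cdot\,|\,\text{annulus})$ is controlled via \eqref{eq:decay-corel3}. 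Uniform boundedness of $\Psi^{\rm vx}$ and $\Psi^{\rm e}$ on the admissible message range (Remark \ref{rmk:rcm-msg-bd-graph}) then converts all of these probability bounds into the uniform $\vep$-estimates \eqref{eq:z.ratio.estimate1}--\eqref{eq:z.ratio.estimate2}, with $r_o'(\vep,t)$ taken as the larger of the Lemma \ref{l:decay.corel} thresholds applied in turn to $\Graph^-(o)$ and $\wh\Graph^-(o)$.
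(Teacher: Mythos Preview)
Your overall strategy---cavity identity via Edwards--Sokal, tree factorization on $\sfB_o^-(r)$, and the decay estimates of Lemma~\ref{l:decay.corel}---is precisely the paper's. Two points on the execution.

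First, what you flag as the ``main obstacle'' is in fact no obstacle at all. Since $\sfB_o(r)\cong\Tree_d(r)$, both graphs $\sfB_o(r)\setminus\{(u_i,o)\}$ and $\sfB_o^-(r)$ have the sub-tree $T_i$ attached to the rest only through $v^\star$; the \abbr{rcm} on any such $v^\star$-bouquet $G_1^\star\cup_{v^\star} G_2^\star$ factorizes exactly (because $|{\sf C}|=|{\sf C}_1|+|{\sf C}_2|-1$), so the marginal on $T_i^\star$-bonds---and hence the conditional probability of $\{u_i\bij v^\star\}$---is identical in the two settings. Thus $\htgs^{t,r}_{u_i\to o}$ coincides \emph{exactly} with the forest message, and no appeal to \eqref{eq:decay-corel3} is needed here.

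Second, your final step---invoking ``Lemma~\ref{l:decay.corel} on $\wh\Graph^-(o)$'' to replace the $\varphi^{\be,B}_{\Graph^-(o)}$-average of $\Psi^{\rm vx}(\htgs^{t,r}_{\partial o})$ by the $\varphi^{\be,B}_{\wh\Graph^-(o)}$-average---is not supported by that lemma as stated: \eqref{eq:decay-corel3} compares conditional laws of bonds inside $\sfB_o(t)$, whereas you need to compare conditional laws of the \emph{ghost annulus} bonds (on which $\htgs^{t,r}_{\partial o}$ depends). The paper avoids this by observing the exact identity
\[
\frac{\cZ_{\Graph^-(o),W}(\ul y_W)}{\cZ_{\Graph,W}(\ul y_W)}=\frac{1}{e^B+q-1}\,\varphi^{\be,B}_{\Graph}\big(\ul\eta_{\sfB_o(1)}\equiv 0\,\big|\,\ul\eta_W=\ul y_W\big),
\]
which \emph{does} fall under \eqref{eq:decay-corel3} (the event concerns bonds in $\sfB_o(1)\subset\sfB_o(t)$). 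Applying this to both $(\Graph,\wh W)$ and $(\wh\Graph,W_r)$ and comparing each to $\varphi^{\be,B}_{\sfB_o(r)}(\ul\eta_{\sfB_o(1)}\equiv 0\mid\eta_{o,t}^r)$ shows the two partition-function ratios are within $\vep$ of one another; since the $(\wh\Graph,W_r)$-ratio is already $\vep$-close to $\Psi^{\rm vx}_{t,r,\wh\Graph}(o,\ul y_{W_r})$ by your earlier steps, the triangle inequality finishes. Replacing your last sentence with this partition-function-ratio comparison closes the gap.
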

\begin{rmk}\label{rmk:z.ratio}
In view of \eqref{eq:cZ-def}, the bounds
\eqref{eq:z.ratio.estimate1}-\eqref{eq:z.ratio.estimate2} actually hold also for any  
subgraph $\wh{W}$ of $\Graph \setminus \sfB_o(t)$ which contains $\sfB_o(r) \setminus \sfB_o(t)$.
\end{rmk}
\begin{proof}  With $\Graph \cap \sfB_o(r) = \wh \Graph \cap \sfB_o(r) \cong \Tree_d(r)$ 
and $W_r = \wh W \cap \sfB_o(r)$, we first examine the special 
case $\Graph = \wh \Graph = \sfB_o(r)$. Specifically, using $\sfB^-_o(r):=\Graph^-(o)$ and 
$\sfB^\pi_o(r):=\Graph^\pi(o)$ when $\Graph=\sfB_o(r)$, we show that for any 
$\ul y \in \B_r^\star \setminus \B_t^\star$ and $\pi \in  \red{\sfS}_d$, 
\begin{align}\label{eq:claim-cZ}
\wh \cZ_r(\ul y)&:=\f{\cZ_{\sfB_o(r), W_r^\star}(\ul y)}{\cZ_{\sfB^-_o(r), W_r^\star}(\ul y)} = \Psi^{{\rm vx}}(\htgs_{\partial o}^{t,r}(\ul y)), \\
\wh \cZ^\pi_r(\ul y)&:=\f{\cZ_{\sfB^\pi_o(r), W_r^\star}(\ul y)}{\cZ_{\sfB^-_o(r), W_r^\star}(\ul y)} =
\Psi^{{\rm e}}(\htgs^{t,r}_{u_{\pi(1)} \to o}(\ul y),\ldots,\htgs^{t,r}_{u_{\pi(d)} \to o}(\ul y)).
\label{eq:claim-cZ-pi}
\end{align}
Indeed, it follows 
from \eqref{eq:cZ-es} that 
\begin{align*}
\cZ_{\sfB^-_o(r), W_r^\star}(\ul y) = \!\!\!\!\!\!\!
\sum_{\{\ul \eta' : \ueta_{o,t}^{\star,r} = \ul y\}} 
\sum_{\ul \sigma'} \wh \varpi_{\sfB^-_o(r)^\star}^{\be,B}(\ul \sigma', \ul \eta').
\end{align*}
Further, for any $e \in E^\star$ and $\ul \sigma \in [q]^{V^\star}$,
\[
\sum_{\eta_e \in \{0,1\}} e^{\be^\star_e} \left[ (1-p_e)({1-\eta_e}) + p_e {\eta_e} \delta_e(\ul{\sigma})  
\right] 
= 1 + (e^{\be^\star_e} -1 ) \delta_e(\ul \sigma)  \,.
\]
Thus, with
$\ul \eta_{\sfB_o^\star(r)} = (\ul \eta^{(1)}, \ul \eta')$ for
$\ul \eta^{(1)} :=(\eta_e)_{e \in \sfB_o(1) \cup \{(o,v^\star)\}}$,
we similarly get from \eqref{eq:wh-varpi}-\eqref{eq:cZ-es} that 
\begin{align*}
\cZ_{\sfB_o(r), W_r^\star}(\ul y) &= \!\!\!
\sum_{\{\ul \eta' : \ul \eta'_{W_r^\star}= \ul y\}} 
\sum_{\ul \sigma'} \sum_{\sigma_o,\ul \eta^{(1)}} \wh \varpi_{\sfB^\star_o(r)}^{\be,B}((\sigma_o,\ul \sigma'), (\ul \eta^{(1)},\ul \eta'))
= \!\!\!
\sum_{\{\ul \eta' : \ueta_{o,t}^{\star,r} = \ul y\}} 
\sum_{\ul \sigma'} 
f(\sigma'_{\partial o}) \wh \varpi_{\sfB^-_o(r)^\star}^{\be,B}(\ul \sigma', \ul \eta'),
\end{align*}
for some function $f(\cdot)$, {where by} enumerating over the $[q]$-valued $\sigma_o$,  
{one verifies that}
\[
f(\sigma_1,\ldots,\sigma_d)=\sum_{k=1}^q e^{B\delta_{k,1}}\prod_{i=1}^d 
\big(1+(e^{\be}-1) \delta_{\sigma_i,k} \big) \,.
\] 
Consequently,  
\beq\label{eq:claim-cZ-1}
\wh \cZ_r(\ul y) = 
\varpi_{\sfB^-_o(r)^\star}^{\be,B}
 \Big[f(\sigma_{\partial o})\big| \ueta_{o,t}^{\star,r}= \ul y \Big].
\eeq
By the same reasoning, now with $\ul \eta^{(1)} := \ul \eta_{\sfB_o^\pi(1) \setminus \sfB_o^-(1)}$,
we find that, for any $\pi \in  \red{\sfS}_d$, 
\begin{align}\label{eq:claim-cZpi-1}
 \wh \cZ^\pi_r(\ul y) &= 
\varpi_{\sfB^-_o(r)^\star}^{\be,B}
 \Big[f^\pi(\sigma_{\partial o})\big| \ueta_{o,t}^{\star,r} = \ul y \Big],\\
f^\pi(\sigma_1,\ldots,\sigma_d) & :=\prod_{i=1}^{d/2} 
\big(1+(e^{\be}-1) \delta_{\sigma_{\pi(2i-1)},\sigma_{\pi(2i)}} \big) \,. \notag
\end{align}
Since $\sfB_o(r) \cong \Tree_d(r)$, 
the spins $(\sigma_u, u \in \partial o)$ are mutually
independent under $\varpi_{\sfB^-_o(r)^\star}^{\be,B}(\cdot \mid \ueta_{o,t}^{\star,r})$, with 
each of them uniformly distributed on $\{2,\ldots,q\}$. Thus, upon setting for $i=1,\ldots,d$,
\[
b_i := \frac{1}{1-1/q} \varpi_{\sfB^-_o(r)^\star}^{\be,B}
 \big(\sigma_{u_i}=1\big| \ueta_{o,t}^{\star,r}= \ul y
 \big) - \frac{1/q}{1-1/q}  \,,
\]
a straightforward computation shows that the \abbr{rhs} of \eqref{eq:claim-cZ-1} 
and \eqref{eq:claim-cZpi-1} are given by $\Psi^{\rm vx} (\ul b)$ and 
$\Psi^{{\rm e}}(b_{\pi(1)},\ldots,b_{\pi(d)})$, respectively.
Finally, the identification $\ul b = \htgs^{t,r}_{\partial o}$ follows from Lemma \ref{lem:coupling-es},
analogously to the derivation of \eqref{eq:varpi-bar-hts}.

Armed with \eqref{eq:claim-cZ}, we proceed to prove \eqref{eq:z.ratio.estimate1}.
To this end, writing $\ul y = \ul y_{W^\star}$ and $\ul y^o = \ul y_{W}$, we
get by following the derivation of \eqref{eq:claim-cZ-1},
that for any $\Graph$ with $|\partial o|=d$ and $W_r \subset W \subset \Graph \setminus \sfB_o(t)$,
\begin{align*}
\f{\cZ_{\Graph,W^\star}(\ul y)}{\cZ_{\Graph^-(o),W^\star}(
\ul  y)} & = 
\varpi_{\Graph^- (o)^\star}^{\be,B}
 \Big[f(\sigma_{\partial o})\big| \ul \eta_{W^\star}= \ul y \Big] \text{ and  } 
 \f{\cZ_{\Graph,W}(\ul y^o)}{\cZ_{\Graph^-(o),W}(
\ul  y^o)}
= 
\varpi_{\Graph^- (o)^\star}^{\be,B}
 \Big[f(\sigma_{\partial o})\big| \ul \eta_{W}= \ul y^o\Big] \,.
\end{align*}
Consequently, 
\begin{align}\label{eq:W-to-W-star}
\frac{\cZ_{\Graph,W}(\ul y^o)}{\cZ_{\Graph^-(o),W}(\ul y^o)} 
& =  \sum_{\ul y \setminus \ul y^o}  
\f{\cZ_{\Graph, W^\star}(\ul y)}{\cZ_{\Graph^-(o), W^\star}(
\ul  y)}  \varphi^{\be,B}_{\Graph^-(o)}({\ul \eta_{ W^\star}=\ul y} \mid {\ul \eta_{W}=\ul y^o}).
\end{align}
Further, from \eqref{eq:wh-varpi}-\eqref{eq:cZ-es} we have that for 
some $\bar c = \bar c (q,\be,B,d) >0$,
\begin{equation}\label{eq:vertex.removal.Z}
\frac{\cZ_{\Graph^-(o),{W}^\star}(\ul y)}{\cZ_{\Graph, W^\star}(\ul y)}
= \frac{1}{q} \varphi^{\be,B}_{\Graph}(\ul\eta^{(1)} 
\equiv 0\mid \ul \eta_{W^\star}=\ul y) \ge \bar c \,.
\end{equation}
Comparing \eqref{eq:claim-cZ} with \eqref{eq:vertex.removal.Z} for $(\sfB_o(r),W_r)$,
we deduce that
\beq\label{eq:claim-cZ-ratio}
\frac{\cZ_{\Graph,W^\star}(\ul y)}{\cZ_{\Graph^-(o),{W}^\star}
(\ul y)} =
\frac{\varphi^{\be,B}_{\sfB_o(r)}(\ul\eta^{(1)}\equiv 0\mid \ueta_{o,t}^{\star,r}=\ul y_{W_r^\star})}{\varphi^{\be,B}_{\Graph}(\ul\eta^{(1)}\equiv 0\mid {\ul \eta_{W^\star}=\ul y})} \Psi^{{\rm vx}}(\htgs_{\partial o}^{t,r} (\ul y_{W_r^\star})) \,,
\eeq
which, in view of \eqref{eq:decay-corel1} equals to 
$\Psi^{{\rm vx}}(\htgs_{\partial o}^{t,r} (\ul y_{W_r^\star}))$ whenever $\ul y_{W_r^\star} \in \sS_{t,r}$.
Utilizing this observation when plugging \eqref{eq:claim-cZ-ratio} into \eqref{eq:W-to-W-star}, then 
using the uniform lower bound of \eqref{eq:vertex.removal.Z} and the triangle inequality, we find that
for $r \ge r_0(\vep \bar c/6,t)$ of Lemma \ref{l:decay.corel},
\begin{align}
\Big| \frac{\cZ_{\Graph,W}(\ul y_W)}{\cZ_{\Graph^-(o),W}(\ul y_W)}  - 
\varphi^{\be,B}_{\Graph^-(o)} (\Psi^{{\rm vx}}(\htgs_{\partial o}^{t,r}) 
\mid {\ul \eta_{W}=\ul y_W})
\Big|  & \le (2/\bar c)
 \varphi^{\be,B}_{\Graph^-(o)}(\sS_{t,r}^c \mid {\ul \eta_{W}=\ul y_W}) \le \f{\vep}{3} \,,
\label{eq:claim-cZ-ratio-1}
\end{align}
where in the last inequality we have employed \eqref{eq:decay-corel2} for $\varphi^{\be, B}_{\Graph^-(o)}$. 

Analogously to the derivation of \eqref{eq:vertex.removal.Z}, except for now using  
\eqref{eq:cZ-def}, we get for any $W_r \subset W \subset \wh{W}$, 
\[
\frac{\cZ_{\Graph^-(o),W}(\ul y_W)}{\cZ_{\Graph,W}(\ul y_W)}
= \frac{1}{e^B +q-1} \varphi^{\be,B}_{\Graph}(\ul\eta_{\sfB_o(1)}  
\equiv 0\mid \ul \eta_{W}=\ul y_W)  \,.
\]
Hence, by \eqref{eq:decay-corel3}, if $\wh \Graph \supset \Graph$ matches
$\Graph$ on their respective $r$ balls around $o$ and $r \ge r_0(\vep/9,t)$, then 
\beq\label{eq:claim-cZ-ratio-2}
\max_{\ul y_{\wh W}} \Big|\frac{\cZ_{\wh \Graph,W_r}(\ul y_{W_r})}{\cZ_{\wh \Graph^-(o),W_r}(\ul y_{W_r})}  \frac{\cZ_{\Graph^-(o),\wh W}(\ul y_{\wh W})}{\cZ_{\Graph,\wh W}(\ul y_{\wh W})} -1 \Big| \, 
\le \frac{\vep}{3}.
\eeq
We combine \eqref{eq:claim-cZ-ratio-2} with 
\eqref{eq:claim-cZ-ratio-1} at $(\wh\Graph,W_r)$, and recall \eqref{eq:bar-Psi1} 
that $\Psi^{{\rm vx}}_{t,r,\wh \Graph}(o,\cdot) = 
\varphi^{\be,B}_{\wh \Graph^-(o)} (\Psi^{{\rm vx}}(\htgs_{\partial o}^{t,r}) 
| {\ul \eta_{W_r}})$, to arrive at \eqref{eq:z.ratio.estimate1}.

Upon changing $f$ to $f^{\pi}$ and $\ul \eta^{(1)}$ to $\ul \eta_{\sfB_o^\pi(1) \setminus \sfB_o^-(1)}$
(as in the derivation of \eqref{eq:claim-cZpi-1}),
the proof of \eqref{eq:z.ratio.estimate2} out of \eqref{eq:claim-cZ-pi}
is the same, hence omitted.
\end{proof}

Hereafter we fix the sequence $\{\Graph_n\}$, using
for $v \in [n]$ the shorthand $\varphi_{n}^v:= \varphi_{\Graph_n^-(v)}^{\be, B}$ and define
\begin{align}\label{eq:bar-Psi2}
\Psi^{{\rm sym}}_{t,r,n}(v, \ul y) &:= \Psi^{{\rm vx}}_{t,r,n}(v, \ul y)/ 
\Psi^{{\rm e}, {\rm sym}}_{t,r,n}(v, \ul y),
\end{align}
for $\Psi^{{\rm vx}}_{t,r,n} := \Psi^{{\rm vx}}_{t,r,\Graph_n}$ and 
$\Psi^{{\rm e}, {\rm sym}}_{t,r,n} := \Psi^{{\rm e},{\rm sym}}_{t,r,\Graph_n}$ of \eqref{eq:bar-Psi1}.
For any $ t < r$, $v \in [n]$ and $\vep>0$, we further define, analogously to \eqref{dfn:F-vep-tr}, 
the bond events
\beq\label{dfn:barB-vep-tr}
 \cA_{\vep,v,n}^{t,r} := \Big\{ \ul y : \log  
\Psi_{t,r,n}^{{\rm sym}}(v,\ul y_{\sfB_v(r)\setminus \sfB_v(t)}) 
< \wh \Phi -\vep \Big\} \,. 
\eeq
Aiming to later utilize Lemma \ref{l:z.ratio.estimate}, 
we first adapt Lemma \ref{l:s.in.Lambda} to lower bound the average over $v$
of $\varphi^{\be,B}_{\Graph_n} (\cA_{\vep,v,n}^{t,r})$, when
$\varphi^{\be,B}_n$ converges locally weakly to $\gm_{\bond}$.
Out of this we produce vertex subsets $\sV_n$, having well-spaced, tree-like 
neighborhoods in $\Graph_n$, at which with non-negligible probability
a positive fraction of the events \eqref{dfn:barB-vep-tr} hold. 
\begin{lem}\label{c.lowermean}
Suppose $\gm_{\bond}(\cQ_\star \setminus \cQ_{{\rm m}}) >0$.\\ 
The following then holds
for some $\delta_o >0$, $t'$, large $r'(t)<\infty$, and small enough $\zeta(\delta_o,r)>0$.\\
(a). If $\Graph_n \loc \Tree_d$ and $\{\varphi_n:=\varphi_{\Graph_n}^{\be,B}\}$ 
converges locally weakly to $\gm_{\bond}$, then for $t \ge t'$ and $r \ge r'(t)$, 
\[
\liminf_{n \to \infty} \frac{1}{n} \sum_{v=1}^n \varphi_n
(\cA_{\delta_o,v,n}^{t,r}) \ge 4\delta_o \,.
\]
(b). \red{There exists $n'(\zeta)$ so that for any} $n \ge n'(\zeta)$, there exist $\sV_n \subset [n]$ of size $|\sV_n|=\zeta n$, such that 
\begin{align}\label{eq:top}
\inf_{v\ne v' \in \sV_n} \{ {\rm dist}_{\Graph_{n}} (v,v') \} > 3r, \quad & \quad
\sfB_v(3r) \cong \Tree_d(3r), \;\; \forall v \in \sV_n \,, \\
\label{eq:Qk.property}
\varphi_n \Big(\f1{|\sV_n|}\sum_{v \in \sV_n} {\bf 1}_{\cA^{t,r}_{\delta_o,v,n}} &\geq 
\delta_o \Big)\geq \delta_o.
\end{align}
\end{lem}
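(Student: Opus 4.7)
Set $\delta_o := \xi/8$ with $(\xi, t', r''(\cdot))$ delivered by Lemma \ref{l:s.in.Lambda}, and fix $t \geq t'$ and $r \geq r''(t)$ to be further enlarged. For part (a) the plan is to transfer the tree bound $\int \varphi(\sB_\xi^{t,r})\, d\gm_{\bond} \geq \xi$ to the averaged graph probability, in three steps. The first is a stabilization via Lemma \ref{l:z.ratio.estimate}: applying \eqref{eq:z.ratio.estimate1} with $\wh \Graph = \Graph_n$ and $\Graph = \sfB_v(R)$ for some $R = R(\vep, t, r)$ large enough shows that, whenever $\sfB_v(R) \cong \Tree_d(R)$, the quantity $\Psi^{{\rm vx}}_{t,r,n}(v, \ul y)$ is within $\vep$ of the same ratio of partition functions computed purely on $\sfB_v(R)$; identical reasoning handles $\Psi^{{\rm e},{\rm sym}}_{t,r,n}(v, \ul y)$. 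Under the isomorphism $\sfB_v(R) \cong \Tree_d(R)$ this identifies $\Psi^{{\rm sym}}_{t,r,n}(v, \cdot)$, up to $O(\vep)$, with a canonical function $F_{t,r,R}(\cdot)$ on $\{0,1\}^{\Tree_d(r) \setminus \Tree_d(t)}$ that is independent of $v$ and $n$.

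The second step is the limit identification: as $R \to \infty$, $F_{t,r,R}(\ul y)$ should converge to a function $F_{t,r}(\ul y)$ admitting the representation $\int \varphi [ \Psi^{{\rm sym}}(\hts_{\partial o}^{t,r}) \mid \eta_t^r = \ul y ]\, d\gm_{\bond}(\varphi)$ up to an error vanishing as $r$ grows. Here the $\cR_\star$-mixture representation of $\gm_{\bond}^R$ furnished by Lemma \ref{lem:limit-point-properties}, together with Lemma \ref{l:decay.corel}, is used to control the conditional distribution of ghost bonds in the shell $\sfB_o^\star(r) \setminus \sfB_o^\star(t)$. The third step is passage to the limit: by local weak convergence $\varphi_n \to \gm_{\bond}$, the Steps 1--2 approximations, and uniform continuity of $\log\Psi^{{\rm sym}}$ on $[b_{\free}, b_{\wired}]^d$, the distribution of $\log\Psi^{{\rm sym}}_{t,r,n}(I_n, \eta_{I_n,t}^r)$ under $\E_n[\varphi_n(\cdot)]$ weakly approaches that of $\log\Psi^{{\rm sym}}(\hts_{\partial o}^{t,r})$ under $\int \varphi\, d\gm_{\bond}$, up to an error at most $\xi/4$ once $r$ is sufficiently large. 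The Portmanteau theorem, combined with $\delta_o < \xi$, then yields $\liminf_n \E_n[\varphi_n(\cA_{\delta_o, I_n, n}^{t,r})] \geq \xi/2 \geq 4\delta_o$, which is (a).

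Part (b) is a short combinatorial deduction. Set $a^n_v := \varphi_n(\cA_{\delta_o,v,n}^{t,r}) \in [0,1]$ and $S_n := \{v \in [n] : a^n_v \geq 2\delta_o\}$; by part (a) and a first-moment inequality, $|S_n| \geq \delta_o n$ for all $n$ large. Since $\Graph_n \loc \Tree_d$, the set $S_n' := \{v \in S_n : \sfB_v(3r) \cong \Tree_d(3r)\}$ has density at least $\delta_o/2$ for $n$ large. Greedily select $\sV_n \subseteq S_n'$ by iteratively picking a vertex and removing the $3r$-ball around it, producing $|\sV_n| \geq \delta_o n / (2 (d+1)^{3r}) =: \zeta n$ with the spacing property \eqref{eq:top}. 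Since every $v \in \sV_n$ has $a^n_v \geq 2\delta_o$, the random variable $X_n := |\sV_n|^{-1} \sum_{v \in \sV_n} {\bf 1}_{\cA_{\delta_o, v, n}^{t,r}}$ satisfies $\varphi_n[X_n] \geq 2\delta_o$, and the reverse Markov inequality $\varphi_n(X_n \geq \delta_o) \geq (\varphi_n[X_n] - \delta_o)/(1 - \delta_o) \geq \delta_o$ yields \eqref{eq:Qk.property}.

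The main technical obstacle lies in Step 2 above: pinning down the identification $F_{t,r}(\ul y) = \int \varphi [\Psi^{{\rm sym}}(\hts_{\partial o}^{t,r}) \mid \eta_t^r = \ul y]\, d\gm_{\bond}(\varphi)$ requires a careful account of how the $-(v)$ operation interacts with the ghost-bond marginals along the shell, and this is precisely where Lemma \ref{l:decay.corel} together with the $\cR_\star$/$\cQ_\star$ machinery from Section \ref{sec:subseq-limit} must be carefully combined.
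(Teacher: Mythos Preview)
Your part (b) is correct and in fact cleaner than the paper's argument: you use a first–moment bound to extract $S_n=\{v:a_v^n\ge 2\delta_o\}$, intersect with the tree-like vertices, and greedily thin to a $3r$-separated subset. The paper instead runs a probabilistic construction (iid uniform samples $w_i$, inclusion--exclusion for collisions) to produce $\sV_n$; your deterministic greedy scheme reaches the same conclusion more directly, since after thinning every $v\in\sV_n$ still has $a_v^n\ge2\delta_o$, whence the reverse Markov inequality gives \eqref{eq:Qk.property}.

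Your part (a), however, has a genuine gap at Step~2--3. After localization (your Step~1, which is fine), the function $F_{t,r,R}(\cdot)$ is a \emph{ratio of conditional expectations} under the free-boundary \abbr{rcm} on $\Tree_d^-(R)$, conditioned on \emph{non-ghost} bonds only; it is a fixed tree quantity that does not depend on $\gm_{\bond}$ at all. So the asserted identification $F_{t,r}(\ul y)=\int\varphi[\Psi^{\rm sym}(\hts_{\partial o}^{t,r})\mid\eta_t^r=\ul y]\,d\gm_{\bond}$ cannot hold: the right side depends on $\gm_{\bond}$, the left does not. More fundamentally, $\Psi^{\rm sym}_{t,r,n}(v,\eta_{v,t}^r)$ is a ratio of averages over ghost bonds, whereas $\Psi^{\rm sym}(\hts_{\partial o}^{t,r})$ is the pointwise ratio, a function of $\eta_t^{\star,r}$ including ghost bonds; these are different random variables, and Lemma~\ref{l:s.in.Lambda} speaks only of the latter. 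There is no reason the two should have the same law, and your Step~3 convergence claim is therefore unsupported.

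The paper bridges exactly this point by a conditional Jensen-type argument: it introduces $Y_{v,n}(\ul y):=\varphi_n^v[\Psi^{\rm e,sym}(\htgs_{\partial v}^{t,r}){\bf 1}_{\sB_{2C\vep,v,n}^{t,r}}\mid\eta_{v,t}^r=\ul y]-c\vep\,\Psi_{t,r,n}^{\rm e,sym}(v,\ul y)$ and shows the implication $\{Y_{v,n}(\ul y)\ge 0\}\Rightarrow\{\log\Psi_{t,r,n}^{\rm sym}(v,\ul y)<\wh\Phi-\delta\}$, using \eqref{eq:Phi-ubd} to bound the contribution outside $\sB_{2C\vep}^{t,r}$. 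Since $\varphi_n^v(Y_{v,n}\ge 0)\ge c\,\varphi_n^v[Y_{v,n}]\ge c(\varphi_n^v(\sB_{2C\vep,v,n}^{t,r})-\vep)$, and $\varphi_n\asymp\varphi_n^v$ up to a constant $C$, averaging over $v$ and passing to the limit via Lemma~\ref{l:s.in.Lambda} gives the result. This step---passing from ``pointwise ratio small on a set of positive conditional probability'' to ``ratio of conditional expectations small''---is the missing idea in your plan.
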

\begin{proof} (a). Note that for some finite $C=C(q,d,\be,B)$, uniformly over $n$ 
and $v \in [n]$ with $|\partial v| \le d$, 
\beq
\nonumber
C^{-1} \varphi_n(\ul \eta_{{\Graph_n^-(v)}}) \le
\varphi_n^v(\ul \eta_{\Graph_n^-(v)}) \le C \varphi_n(\ul \eta)
\qquad \forall
\ul \eta \in \{0,1\}^{E_n^\star} \,.
\eeq
Then, from $\Graph_n \loc \Tree_d$ and Lemma \ref{l:s.in.Lambda},
we have that for any $\vep \le \xi/(2C)$, $t \ge t'$ and $r \ge r'(t)$,
\beq\label{eq:mod-bd}
\liminf_{n \to \infty} \frac{1}{n} \sum_{v=1}^n {\bf 1}_{\{\red{\tsfB}_v(r) \cong \red{\tTree}_d(r)\}} 
\big( \varphi_n^v ( \sB_{2C \vep,v,n}^{t,r} ) - \vep \big)
 \ge C^{-1} \int \varphi (\sB_{2 C \vep}^{t,r}) d \gm_{\bond} - \vep \ge \vep \,,
\eeq
where $\sB_{\vep,v,n}^{t,r} := \{ \log \Psi^{{\rm sym}}(\htgs_{\partial v}^{t,r}) < \wh \Phi -\vep \}$.
Further, for some $c=c(q,d,\be,B)>0$, 
\beq\label{eq:Theta}
\Psi^{{\rm vx}}(\ul b),\Psi^{{\rm e}, {\rm sym}}(\ul b) \in [1,c^{-1}], 
\qquad \forall \ul b \in [0,1]^d, 
\eeq
and we now set for any $v \in [n]$, 
\beq\label{dfn:Y-phi}
Y_{v,n}(\ul y) := \varphi_n^v \left[\Psi^{{\rm e}, {\rm sym}}(\htgs_{\partial v}^{t,r}) {\bf 1}_{\sB^{t,r}_{2 C \vep,v,n}} \mid \ueta_{v,t}^r=\ul y \right] -  c \vep \Psi_{t,r,n}^{{\rm e}, {\rm sym}}(v,\ul y).
\eeq
Recall from \eqref{eq:lem-prop-rcm-msg3} that 
$\htgs_{\partial v}^{t,r} \in [b_{\free}-\vep^4, b_{\wired}+\vep^4]^d$
whenever $t > \kappa_o(\vep^4)$ and $\sfB_v(r) \cong \Tree_d(r)$.
For such $v$ we have by \eqref{eq:wh-Phi} and the Lipschitz continuity of 
$\log \Psi^{{\rm sym}}(\cdot)$, that for some $\vep'>0$ and all $\vep \le \vep'$, 
\beq\label{eq:Phi-ubd}
\sup_{r  > t > \kappa_o(\vep^4)} \sup_{\ul y \in \B_r^\star \setminus \B_t^\star} 
\{ \log \Psi^{{\rm sym}}\big( \htgs_{\partial v}^{t,r}(\ul y) \big) \} \le \wh \Phi+ \vep^3
\eeq
\red{(see Definition \ref{dfn:rcm-msg} for $\htgs_{\partial v}^{t,r}(\cdot)$).}
In view of \eqref{eq:bar-Psi1}, we have for any $t>\kappa_o(\vep^4)$ that if $\sfB_v(r) \cong \Tree_d(r)$
and $Y_{v,n}(\ul y) \ge 0$, then
\begin{align*}
\Psi^{{\rm vx}}_{t,r,n}& (v,\ul y)  = {\varphi_n^v} \Big[\Psi^{{\rm vx}}(\htgs_{\partial v}^{t,r}) 
{\bf 1}_{\sB^{t,r}_{2{C} \vep,v,n}} | \ueta_{v,t}^r = \ul y \Big]
+ 
{\varphi_n^v} \left[\Psi^{{\rm vx}}(\htgs_{\partial v}^{t,r}) 
{\bf 1}_{(\sB^{t,r}_{2 {C} \vep,v,n})^c} \mid \ueta_{v,t}^r = \ul y \right]\notag\\
& < e^{\wh \Phi -2 {C} \vep} 
{\varphi_n^v} \left[\Psi^{{\rm e},{\rm sym}}(\htgs_{\partial v}^{t,r}) 
{\bf 1}_{ \sB^{t,r}_{2 {C} \vep,v,n}}\mid \ueta_{v,t}^r = \ul y \right]+ e^{\wh \Phi +\vep^3} 
{\varphi_n^v} \left[\Psi^{{\rm e},{\rm sym}}(\htgs_{\partial v}^{t,r}) 
{\bf 1}_{(\sB^{t,r}_{2 {C} \vep,v,n})^c} \mid \ueta_{v,t}^r = \ul y \right]\notag\\
&= e^{\wh \Phi -2 {C} \vep} \left(
\Psi^{{\rm e}, {\rm sym}}_{t,r,n}(v,\ul y) + (e^{2 {C} \vep+\vep^3}-1) 
{\varphi_n^v} \big[\Psi^{{\rm e},{\rm sym}}(\htgs_{\partial v}^{t,r}) 
{\bf 1}_{(\sB^{t,r}_{2{C} \vep,v,n})^c} \mid \ueta_{v,t}^r = \ul y \, \big] \right) \notag\\
& \le e^{\wh \Phi -2 {C} \vep}(1+ (e^{2 {C} \vep+\vep^3}-1)(1-c\vep)) \Psi^{{\rm e}, {\rm sym}}_{t,r,n}
(v,\ul y) \le e^{\wh \Phi -c\vep^2} \Psi^{{\rm e}, {\rm sym}}_{t,r,n}(v,\ul y),
\end{align*}
where the first inequality uses the definition 
of $\sB^{t,r}_{2 {C} \vep,v,n}$ and the uniform bound \eqref{eq:Phi-ubd}, the second one holds
for $Y_{v,n}(\ul y) \ge 0$ (see \eqref{dfn:Y-phi}),
and the last one applies when $\vep \in (0,\vep'')$. 
That is, in view of \eqref{eq:bar-Psi2}, for any $\vep \le \vep' \wedge \vep''$ if $r>t>\kappa_o(\vep^4)$
and $\delta \le c \vep^2$, then
\beq\label{eq:wh-sF-tR-ineq-2}
\{ \sfB_v(r) \cong \Tree_d(r) \text { and } Y_{v,n}(\ul y) \ge 0 \} \Longrightarrow 
\{ \log \Psi_{t,r,n}^{{\rm sym}}(v,\ul y) < \wh \Phi -\delta \} \,.
\eeq
As $Y_{v,n} \le 1/c$, clearly ${\varphi_n^v}  (Y_{v,n} \ge 0) \ge c {\varphi_n^v} [Y_{v,n}]$. 
Further, by \eqref{dfn:Y-phi}, \eqref{eq:bar-Psi1} and \eqref{eq:Theta},
\begin{align}\label{eq:wh-sF-tR-ineq-1}
{\varphi_n^v}  [Y_{v,n}(\ueta_{v,t}^r)] & =  {\varphi_n^v}  \big[\Psi^{{\rm e}, {\rm sym}}(\htgs_{\partial v}^{t,r}) {\bf 1}_{\sB^{t,r}_{2{C} \vep,v,n}}\big] 
- c \vep  {\varphi_n^v}  \big[ \Psi^{{\rm e}, {\rm sym}}(\htgs_{\partial v}^{t,r})\big] 
 \ge {\varphi_n^v}  ({\sB^{t,r}_{2{C} \vep,v,n}}) - \vep \,. 
\end{align}
Combining \eqref{dfn:barB-vep-tr}, \eqref{eq:wh-sF-tR-ineq-2} and \eqref{eq:wh-sF-tR-ineq-1}, 
we find that
\[
\varphi_n (\cA_{\delta,v,n}^{t,r}) \ge  C^{-1} {\bf 1}_{\{\sfB_v(r) \cong \Tree_d(r)\}} 
{\varphi_n^v}  (Y_{v,n} (\ueta_{v,t}^r) \ge 0) \ge 
\frac{c}{C} {\bf 1}_{\{\sfB_v(r) \cong \Tree_d(r)\}} \big( {\varphi_n^v}  ( \sB_{2{C} \vep,v,n}^{t,r} ) - \vep \big) \,.
\]
The preceding inequality, together with \eqref{eq:mod-bd}, completes our proof 
(with $\delta_o := \delta \wedge { c \vep/(4C)}$). 

\noindent
(b). We establish the existence of $\cV_n$ via a probabilistic construction based on the auxiliary
\abbr{iid}, uniform over $[n]$, samples $\{w_i, i \le 2 \zeta n\}$. Specifically,
set $\sU:=\sU_0\setminus \bar \sU$ where
\[
\sU_0:=\{i:   \sfB_{w_i}(3r)\cong \Tree_d(3r)\}, \quad \bar\sU:= \{i \in \sU_0: \exists j \ne i \text{ such that }  {\rm dist}_{\Graph_{n}} (w_i, w_j) \le 3 r\}\,.
\]
Observe that $n^{-1} \E|\sU_0| \to 2\zeta$ since $\Graph_{n} \loc \Tree_d$. Moreover,
\[
\E|\bar\sU| \le \sum_{i \ne j} \P(w_j \in \sfB_{w_i}(3r) \cong \Tree_d(3r) ) \le 
2 \zeta |\Tree_d(3r)| \, \E|\sU_0 | \,.
\]
Thus, $\E|\sU| \ge (1- \sqrt{\zeta}) \E |\sU_0|$ for any $\zeta \le \zeta_o(r)$, \red{where $\zeta_o(r)$ is some sufficiently small constant depending only on $r$}, in which case
\beq\label{eq:sU-bd}
\liminf_{n \to \infty} \P\big(|\sU| \ge 2\zeta n (1-3 \sqrt{\zeta}) \big) 
\ge \liminf_{n \to \infty} \f{\E|\sU|}{2\zeta n} - (1 - 3 \sqrt{\zeta})  \ge 2 \sqrt{\zeta}. 
\eeq
With $q_n(v):=\varphi_n(\cA_{\delta_o,v,n}^{t,r})$, recall from part (a) that
the \abbr{iid} $[0,1]$-valued $\{q_n(w_i)\}$ satisfy 
\[
\liminf_{n \to \infty} \E [q_n(w_i)] = 
\liminf_{n \to \infty} \frac{1}{n} \sum_{v=1}^n q_n(v) \ge 4 \delta_o \,.
\]
Hence, by Chebychev's inequality
\beq\label{eq:cheb}
\lim_{n \to \infty} \P ( \frac{1}{2\zeta n} \sum_{i=1}^{2 \zeta n} q_n(w_i) \ge 3 \delta_o ) = 1 
\eeq
and in view of \eqref{eq:sU-bd}, at any $n \ge n'(\zeta)$ we have with probability 
$\sqrt{\zeta}$ that $|\sU| \ge 2\zeta n (1-3\sqrt{\zeta})$ and the event in \eqref{eq:cheb} holds.
For $3 \sqrt{\zeta} \le \delta_o \le \frac{1}{2}$, it then follows that  
\[
\frac{1}{|\sU|} \sum_{i \in \sU} q_n(w_i) \ge
\frac{1}{2\zeta n} \sum_{i \in \sU} q_n(w_i) \ge 
\frac{1}{2\zeta n} \sum_{i=1}^{2 \zeta n} q_n(w_i) - 3 \sqrt{\zeta} \ge 3 \delta_o -  3 \sqrt{\zeta} 
\ge 2 \delta_o \,.
\]
Take for $\sV_n$ the $[\zeta n]$ elements of largest 
$q_n(\cdot)$ in $\{w_i : i \in \sU\}$. Then, \eqref{eq:top} holds and moreover
\[
\frac{1}{|\sV_n|} \sum_{v \in \sV_n} \varphi_n(\cA_{\delta_o,v,n}^{t,r}) \ge 2 \delta_o \,,
\]  
from which \eqref{eq:Qk.property} immediately follows.
\end{proof}

\begin{proof}[Proof of Theorem \ref{thm:crit-1}] Fix $(\be,B) \in \sfR_c$, $B>0$,
$d \in 2 \N$, $d \ge 3$ and $\Graph_n \loc \Tree_d$, further assuming \abbr{wlog} that 
$\varphi_n=\varphi_{\Graph_n}^{\be, B}$ converges locally weakly to $\gm_{\bond}$.  
In view of \cite[Theorem 1]{DMSS}, postulating further that 
$\gm_{\bond}(\cQ_\star \setminus \cQ_{\rm m})>0$, it suffices to produce 
$\Graph'_n \loc \Tree_d$ such that ${\varlimsup_n} \; \Phi'_n(\be,B) > \wh \Phi$. 
To this end, for $\delta_o>0$ of Lemma \ref{c.lowermean} and $c>0$ from \eqref{eq:Theta}, set
\beq\label{eq:vep-star-def}
\vep_\star:=- \frac{1}{3} \log\Big( 1- (1-e^{-\delta_o}) \frac{\delta_o c^2}{3} \Big).
\eeq
Similarly to the derivation of \eqref{eq:Phi-ubd}, with $\log \Psi^{\rm vx}(\cdot)$ and $\log \Psi^{{\rm e},{\rm sym}}(\cdot)$ Lipschitz on $[0,1]^d$, 
taking $\kappa_o=\kappa_o(\vep_\star^2)$ as in Remark \ref{rmk:rcm-msg-bd-graph} 
(and shrinking $\delta_o$ as needed for $\vep_\star$ to be small enough), guarantees by
\eqref{eq:wh-Phi}, \eqref{eq:bar-Psi1}, and \eqref{eq:bar-Psi2} that for $r>t>\kappa_o$ and any
finite graph $\Graph_n$,
\beq\label{eq:Psi.sym.Ubound}
\sup_{\ul y \in \B_r \setminus \B_t} \log \Psi_{t,r,n}^{{\rm sym}}(v, \ul y) \le \wh \Phi + \vep_\star
\eeq
provided $v \in V(\Graph_n)$ is such that $\sfB_v(r) \cong \Tree_d(r)$.
Hereafter we further take $r>r'(t)$ and $t>t'$ as needed in 
Lemma \ref{c.lowermean}, possibly 
increasing $r'(t)$ till Lemma \ref{l:z.ratio.estimate} holds with $\vep=\frac{\vep_\star}{4}$.
Then, for $\sV_n$ of Lemma \ref{c.lowermean}(b), set the disjoint union 
$W_n :=\cup_{v\in \sV_n} (\sfB_v(r)\setminus\sfB_v(t))$ and
for $\cA_v=\cA_{\delta_o,v,n}^{t,r}$ of \eqref{dfn:barB-vep-tr} let
\beq\label{dfn:cY}
\mathcal{Y}_n :=\Big\{\ul y \in \{0,1\}^{E(W_n)}: \frac1{|\sV_n|}\sum_{v\in \sV_n} {\bf 1}_{\cA_{v}}\geq \delta_o  \Big\}.
\eeq
Our graph decomposition starts 
at $\Graph_{n}^{(0)}=\Graph_{n}$ and 
iteratively has $\Graph_n^{(\ell)} :=\Graph_{n}^{(\ell-1),\pi_\ell}(v_\ell)$ for 
\begin{align*}
v_\ell & := \argmaxx_{v \in \sV_n \setminus \{v_1, \ldots, v_{\ell-1}\}} \Big\{ \sum_{\ul y \in \mathcal{Y}_n} \cZ_{\Graph_{n}^{(\ell-1)},W_n}(\ul y) {\bf 1}_{\cA_v}(\ul y) \Big\}, \\
\pi_\ell & := \argmaxx_{\pi \in \red{\sfS}_d} \Big\{ \sum_{\ul y \in \mathcal{Y}_n} 
\cZ_{\Graph_{n}^{(\ell-1),\pi}(v_\ell),W_n}(\ul y) \Big\} .
\end{align*}
We will show in the sequel that for any $\ell \le \ell_n := \frac{\delta_o}{3} \zeta n$,
\beq\label{eq:delta-cZ}
\Delta\cZ_\ell:= \log\Big( \sum_{\ul y \in \mathcal{Y}_n} \cZ_{\Graph_n^{(\ell-1)}, W_n}(\ul y) \Big) - \log\Big( \sum_{\ul y \in \mathcal{Y}_n} \cZ_{\Graph_n^{(\ell)}, W_n}(\ul y) \Big) \le \wh \Phi - \vep_\star .
\eeq
We claim that $\Graph_n' :=\Graph_n^{(\ell_n)}$ then have too large free densities $\Phi'_n(\be,B)$. 
Indeed, by \eqref{eq:wh-varpi}, \eqref{eq:wh-varpin}, \red{\eqref{eq:cZ-def}} and the fact that the Potts measure is the spin marginal of the Edwards-Sokal measure, 
$$
Z_{\red{\Graph_n'}}(\be,B) = \sum_{\ul y \red{ \in \{0,1\}^{E(W_n)}}} \cZ_{\red{\Graph_n'}, W_n}(\ul y). 
$$
Thus, by the \abbr{lhs} of
\eqref{eq:Qk.property},  \red{\eqref{dfn:cY}} and \eqref{eq:delta-cZ},
\begin{align*}
\log Z_{\Graph'_{n}}(\be,B) \ge \log \bigg(\sum_{\ul y \in \cY_n} \cZ_{\Graph^{(\ell_n)}_{n},W_n}(\ul y)\bigg) &
= \log \bigg(\sum_{\ul y \in \cY_n} \cZ_{\Graph_{n},W_n}(\ul y)\bigg) - \sum_{\ell=1}^{\ell_n} \Delta\cZ_\ell \\
  &\ge \log Z_{\Graph_{n}}(\be,B) +\log\delta_o -  (\wh\Phi- \vep_\star)\ell_n\,.
\end{align*}
With $\Graph_n \loc \Tree_d$ we have from \cite[Theorem 1]{DMSS} that $\Phi_n(\be,B) \to \wh \Phi$,
so by the preceding, 
\beq\label{eq:cZ-lbd-limit}
\liminf_{n \to \infty} \frac1{n}\log Z_{\Graph'_{n}}(\be,B) \geq \wh\Phi (1-   \frac{\delta_o}{3} \zeta
) + \frac{\delta_o}{3} \zeta \vep_\star .
\eeq
Note that $|V(\Graph'_n)|=n-\ell_n=(1-\frac{\delta_o}{3} \zeta) n$ and the degree in $\Graph'_n$ of any 
$w \in V(\Graph'_n)$ matches its degree in $\Graph_n$. Moreover, 
the sets $\sfB_{v_\ell}(1)$ on which $\Graph'_n$ and $\Graph_n$ differ, 
are $2r$-separated on $\Graph'_n$ and if $(u,u') \in E(\Graph'_n)$ within 
such a changed set $\sfB_{v}(1)$, then ${\rm dist}_{\Graph_n}(u,u') \le 2$.
Consequently, any cycle of length $k$ in $\Graph_n'$ must have been obtained 
from a cycle of length at most $k/(1+(2r)^{-1})$ in $\Graph_n$.
The last two observations together imply that $\Graph_n' \loc \Tree_d$ as $n \to \infty$. 
Hence, by \cite[Theorem 1]{DMSS} 
\[
\limsup_{n \to \infty} \frac1{n}\log Z_{\Graph_n'} (\be,B)\leq \wh\Phi (1-   \frac{\delta_o}{3} \zeta),
\]
in contradiction to \eqref{eq:cZ-lbd-limit}.

We thus complete the proof of Theorem \ref{thm:crit-1} upon establishing \eqref{eq:delta-cZ}.
To this end, we shall apply Lemma~\ref{l:z.ratio.estimate} with $\wh{\Graph}=\Graph_n$,
$\Graph=\Graph^{(\ell-1)}_n$, and $o=v_\ell \in \cV_n$ (so $\sfB_o(r) \cong \Tree_d(r)$ in view of \eqref{eq:top}), where as $\inf_{i<\ell} {\rm dist}_{\Graph_n}(v_\ell,v_i) > 3r$ (see \eqref{eq:top}), 
indeed $\Graph^{(\ell-1)}_n$ coincides with $\Graph_n$ on its $r$-ball around $v_\ell$. Further, 
Remark \ref{rmk:z.ratio} allows us to do so for $\wh{W}=W_n$ (which by 
construction contains the relevant annuli $\sfB_{v_\ell}(r) \setminus \sfB_{v_\ell}(t)$ while 
excluding $\sfB_{v_\ell}(t)$). Specifically, using the shorthand
\[
\wh \cZ_\ell(\ul y):=\cZ_{\Graph_{n}^{(\ell)},W_n}(\ul y), \quad 
\wh \cZ^-_\ell(\ul y):=\cZ_{\Graph_{n}^{(\ell-1),-}(v_\ell),W_n}(\ul y),
\quad \wh \cZ^\pi_\ell(\ul y):=\cZ_{\Graph_{n}^{(\ell-1),\pi}(v_\ell),W_n}(\ul y),
\]
and 
$\Gamma_\ell := \sum_{\ul y\in \mathcal{Y}_n} \Psi^{{\rm e}, {\rm sym}}_{t,r,n}(v_\ell, \ul y) \wh \cZ_\ell^-(\ul y)$, we have by Lemma~\ref{l:z.ratio.estimate} and our choice of $r$ that,
\begin{align}\label{eq:cY.partion.comp.bound}
\Delta \cZ_\ell 
&= \log\bigg( \sum_{\ul y \in \mathcal{Y}_n} \frac{\wh \cZ_{\ell-1}(\ul y)}{\wh \cZ_\ell^-(\ul y)}\wh \cZ_\ell^-(\ul y) \bigg) - \log\bigg( \max_\pi \sum_{\ul y\in \mathcal{Y}_n} \frac{ \wh \cZ_\ell^\pi(\ul y) }{\wh \cZ_\ell^-(\ul y)} \wh \cZ_\ell^-(\ul y) \bigg)\nonumber \\
&\leq\vep_\star +\log\bigg( \sum_{\ul y\in \mathcal{Y}_n} \Psi^{{\rm vx}}_{t,r,n}(v_\ell, \ul y) 
\wh \cZ_\ell^-(\ul y) \bigg) -\log\bigg( \max_\pi \sum_{\ul y\in \mathcal{Y}_n} 
\Psi^{{\rm e},\pi}_{t,r,n}(v_\ell, \ul y) \wh \cZ_\ell^-(\ul y) \bigg)\nonumber\\
 & \leq \vep_\star +\log\bigg( \sum_{\ul y\in \mathcal{Y}_n} \Psi^{{\rm vx}}_{t,r,n}(v_\ell, \ul y) \wh 
 \cZ_\ell^-(\ul y) \bigg) -\log \Gamma_\ell 
 \,.
 \end{align}
To simplify the \abbr{RHS} of \eqref{eq:cY.partion.comp.bound}, note that by our choice of $v_\ell$,
\begin{align}
\sum_{\ul y\in \mathcal{Y}_n} \wh\cZ_{\ell-1}(\ul y) {\bf 1}_{\cA_{v_\ell}}(\ul y) &\ge \f{1}{|\sV_n|} \sum_{v \in \sV_n \setminus \{v_1,\ldots, v_{\ell-1}\}} \sum_{\ul y\in \mathcal{Y}_n} \wh\cZ_{\ell-1}(\ul y) {\bf 1}_{\cA_v}(\ul y)\notag \\
&\ge \f{1}{|\sV_n|}  \sum_{\ul y\in \mathcal{Y}_n} \wh\cZ_{\ell-1}(\ul y) \bigg(\sum_{v \in \sV_n } {\bf 1}_{\cA_v}(\ul y) - (\ell-1)\bigg) \ge \f{2\delta_o}{3}\sum_{\ul y\in \mathcal{Y}_n} \wh\cZ_{\ell-1}(\ul y),
\label{eq:partition-fn-ratio-bd}
\end{align}
with the right-most inequality due to \eqref{dfn:cY} and the size of $\sV_n$. Moreover, by Lemma \ref{l:z.ratio.estimate} and \eqref{eq:Theta} 
\begin{align*}
\sum_{\ul y\in \mathcal{Y}_n} \Psi^{{\rm e}, {\rm sym}}_{t,r,n}(v_\ell, \ul y) \wh\cZ_\ell^{-}(\ul y) {\bf 1}_{\cA_{v_\ell}}(\ul y) 
\geq \frac{3c}{4} \sum_{\ul y\in \mathcal{Y}_n}  \wh\cZ_{\ell-1}(\ul y) 
{\bf 1}_{\cA_{v_\ell}}(\ul y)
\geq \frac{\delta_o c}{2} \sum_{\ul y\in \mathcal{Y}_n} \wh\cZ_{\ell-1}(\ul y)
\geq 
\frac{\delta_o c^2}{3} \Gamma_\ell
\end{align*} 
where in the penultimate step we have used \eqref{eq:partition-fn-ratio-bd}. Substituting the latter
inequality in~\eqref{eq:cY.partion.comp.bound}, upon recalling the definitions of $\cA_{v_\ell}$ and
$\Psi^{{\rm sym}}_{t,r,n}(\cdot, \ul y)$, and using \eqref{eq:Psi.sym.Ubound} we get
\begin{align*}\label{eq:cY.partion.comp.bound2}
e^{\Delta \cZ_\ell -\vep_\star} &\leq 
 \f{1}{\Gamma_\ell} 
 \sum_{\ul y\in \mathcal{Y}_n} \exp\big(\wh\Phi+\vep_\star - \delta_o{\bf 1}_{\cA_{v_\ell}}(\ul y)
\big)\Psi^{{\rm e},{\rm sym}}_{t,r,n}(v_\ell, \ul y) \wh \cZ_\ell^-(\ul y)\\
&\leq e^{\wh \Phi +\vep_\star} \big( 1-\frac{1-e^{-\delta_o}}{\Gamma_\ell}
\sum_{\ul y\in \mathcal{Y}_n} \Psi^{{\rm e},{\rm sym}}_{t,r,n}(v_\ell, \ul y) \wh \cZ_\ell^-(\ul y) {\bf 1}_{\cA_{v_\ell}}(\ul y)
\big)\nonumber\\
&\leq e^{\wh \Phi +\vep_\star} 
\big( 1-(1-e^{-\delta_o}) \frac{\delta_o c^2}{3} \big)= e^{\wh \Phi -2\vep_\star}
\end{align*}
(with the last equality by our choice \eqref{eq:vep-star-def} of $\vep_\star$).
This is \eqref{eq:delta-cZ}, which thus completes our proof.
\end{proof}

\appendix
\section{Infinite volume Potts measures and Bethe fixed points}\label{sec:app}
{We prove here} various properties of the {\em phase diagram} associated to the
Potts measures {on $\Tree_d$. To this end, for any $(i,j) \in E(\Tree_d)$ let}
$\Tree_{i \to j}$ be the tree rooted at $i$ obtained {upon deleting $(i,j)$ from $E(\Tree_d)$ and with
$\Tree_{i \to j}(t)$ denoting for $t \in \N$} the ball $\sfB_i(t)$ in $\Tree_{i \to j}$, we first prove
Lemma \ref{lem:marginals}. 


\begin{proof}[Proof of Lemma \ref{lem:marginals}]
{Summing in \eqref{eq:mg-pairs} over $\sigma_j$ results with \eqref{eq:mg-single}.}
The proof of \eqref{eq:mg-pairs} in case $q=2$ is well known (e.g. see \cite[Lemma 4.1]{DM}), 
and the extension for $q \ge 3$ {(which we now give),} follows a similar scheme. 
{With $\mu^{\be,B}_{\ddagger}$ a translation invariant measure, it suffices to} consider
$i=o$ and $j=1$ a specific neighbor of the root $o$ of $\Tree_d$. {Fixing $t \in \N$,} 
as there are no cycles in $\Tree_d(t)$,  we have under $\mu_{\ddagger, t}^{\be, B}$ of 
Definition \ref{dfn:potts-bounary-B-0}, that for some 
{$\widehat \nu_t^1,\widehat \nu_t^o \in \cP([q])$} 
\[
\mu_{\ddagger, t}^{\be, B} (\sigma_o,\sigma_1) \propto e^{\beta \delta_{\sigma_o,\sigma_1}} \widehat \nu^1_t(\sigma_1) \widehat \nu^o_t(\sigma_o)\,, \qquad \forall \sigma_o,\sigma_1 \in [q].
\]
{Further,} both $T_{o \ra 1}$ and $T_{1 \ra o}$ are infinite $(d-1)$-ary trees (i.e.~each vertex has $(d-1)$ children). {Hence,} by induction we deduce from Definitions \ref{dfn:potts-bounary-B-0} 
and \ref{dfn:bethe-recursion}, that $\widehat \nu_t^o$ and $\widehat \nu_t^1$ are the probability measures obtained by running the ${\sf BP}$ recursion $t$ and $(t-1)$ times, respectively, starting from 
$\nu_{\ddagger}$ {(i.e. starting at the uniform measure if $\ddagger=\free$ and at Dirac at $1$ 
when $\ddagger=1$). By definition} the limit of these recursions is $\nu_{\ddagger}^{\be, B}$
{and \eqref{eq:mg-pairs}} follows. 
\end{proof}

\begin{rmk}\label{rmk:infinite-vol-exist}
{By a similar reasoning one finds also} that for any $\be, B \ge 0$ and $\ddagger \in \{\free, 1\}$, 
all finite dimensional marginals of $\mu_{\ddagger, t}^{\be, B}$ converge as $t \to \infty$. Furthermore, it can be checked that these marginals are consistent, {thus implying} the existence of $\mu_{\ddagger}^{\be, B}$ for $\ddagger \in \{\free, 1\}$. The existence of $\mu_{i}^{\be, 0}$ for all $i \in [q]$ 
{is proved similarly (now starting the ${\sf BP}$ recursion at Dirac at $i$).}
\end{rmk}
Next we prove that $\nu_{\free}$ and $\nu_1$, when viewed as functions of $\be$ and $B$, are continuously differentiable except on $\partial \sfR_{\neq}^{+}$ and $\partial \sfR_{\neq}^{\free}$, respectively.

\begin{proof}[Proof of Lemma \ref{lem:smoothness-fixed-pt}]
(i) 
{Since}
$\nu_1^{\be, B}$ is a fixed point of the ${\sf BP}$ recursion of \eqref{eq:BP-recursion}, starting from the probability measure on $[q]$ that is Dirac at $1$, {necessarily} $\nu_1^{\be,B}(\sigma)=\nu_1^{\be,B}(\sigma')$ for all $\sigma, \sigma' \in [q]\setminus \{1\}$ {and as} $\nu_1^{\be, B} \in \cP([q])$, 
{it suffices} to show that $(\be,B) \mapsto \nu_1^{\be,B}(1)$ is continuously differentiable
{at any fixed} $(\be_0,B_0) \notin \partial \sfR_{\ne}^{\free}$. Further, {after some algebra we
deduce from  \eqref{eq:BP-recursion} that}
\[
r_1(\be,B):=\log \frac{\nu_{1}^{\be,B}(1)}{\nu_1^{\be,B}(2)} =
   \log \frac{(1-q)\nu_{1}^{\be,B}(1)}{1-\nu_1^{\be,B}(1)} \,, 
\]
satisfies the fixed point equation 
\beq\label{eq:fixed-pt}
F(r; \be, B)=r,
\eeq
where
\[
F(r; \be, B):= B+ (d-1) \log \left( \f{e^{\be+r}+q-1}{e^r +e^\be +q-2}\right).
\]
Using this representation it suffices to show that $(\be, B) \mapsto r_1(\be,B)$ is continuously differentiable,
{and this follows by} an application of the implicit function theorem,  once we {have verified} that  
\beq\label{eq:implicit-cond}
\f{\partial }{\partial r} F(r; \be, B) \ne 1, \qquad \text{ for } (r, \be, B) = (r_1(\be_0,B_0), \be_0,B_0).
\eeq
To obtain \eqref{eq:implicit-cond} we borrow results from \cite{DMS}. From the proof of \cite[Lemma 4.6]{DMS} it follows that for $d \ge 3$ there is some $\be_- \in (0, \infty)$ such that for $\be < \be_-$ there does not exist any solution to the equation 
\beq\label{eq:implicit-cond-1}
\frac{\partial}{\partial r} F(r; \be, B) = \frac{\partial}{\partial r} F(r; \be, 0)=1.
\eeq
In contrast, for $\be \ge \be_-$ there exist solutions $\rho_{-}(\beta) \le \rho_+(\be)$ of the equation \eqref{eq:implicit-cond-1}, with equality if and only if $\be=\be_-$. Thus, to complete the proof of \eqref{eq:implicit-cond} we need to show that if $(\be_0, B_0) \in \sfR_{\neq} \setminus \partial \sfR_{\neq}^{\free}$ (and therefore $\be_0 > \be_-$, see \cite[Lemma 4.6]{DMS}) then $r_1(\be_0,B_0) \ne \rho_{\pm}(\beta_0)$.

If possible, let us assume that $r_1(\be_0,B_0) = \rho_+(\be_0)$. Denoting
\beq\label{eq:B-pm}
B_\pm(\be):= \rho_\mp (\be) - F(\rho_\mp(\be); \be, 0),
\eeq
as $r_1(\be,B)$ is a fixed point of the equation \eqref{eq:fixed-pt}, we note that $B_0=B_-(\be_0)$. From the proof of \cite[Theorem 1.11(a)]{DMS} we have that the map $B \mapsto \be_{\free}(B)$ is the inverse of the map $\be \mapsto B_-(\be)$. So, $\be_0=\be_{\free}(B_0)$ implying that $(\be_0, B_0) \in \partial \sfR_{\ne}^{\free}$. As $(\be_0, B_0) \notin \partial \sfR^{\free}_{\neq}$ we arrive at a contradiction. 

To rule out the other possibility that $r_1(\be_0,B_0) = \rho_-(\be_0)$ we again proceed by contradiction. As 
$\be_0 >\be_-$ we have that $\rho_-(\be_0) < \rho_+(\be_0)$.  Now note
\[
B_+(\be) - B_-(\be) = \int_{\rho_-(\be)}^{\rho_+(\be)} \left[\f{\partial}{\partial r} F(r;\be, 0) -1\right] dr.
\]
It can be checked that $\frac{\partial^2}{\partial r^2}F(r; \be,0)$ is negative for sufficiently large $r$ and has a single change of sign. As $\rho_{\pm}(\be)$  are the solutions of \eqref{eq:implicit-cond-1} one therefore have that $\f{\partial}{\partial r} F(r;\be,0) > 1$ for $r \in (\rho_-(\be), \rho_+(\be))$. Hence, it follows from above that $B_-(\be_0) < B_+(\be_0)$. 
Thus
\beq\label{eq:implicit-cond-2}
F(\rho_+(\be_0); \be_0, B_0) = \rho_+(\be_0) + B_0 - B_-(\be_0) = \rho_+(\be_0) + B_+(\be_0) - B_-(\be_0) > \rho_+(\be_0),
\eeq
where in the penultimate step we have used the fact that the assumption $r_1(\be_0,B_0)=\rho_-(\be_0)$ implies that $B_0=B_+(\be_0)$.  

Noting that $\lim_{r \to \infty} F(r; \be_0 ,B_0) <\infty$, \eqref{eq:implicit-cond-2} implies that there exists some $r_\star \in (\rho_+(\be_0),\infty)$ such that $r_\star = F(r_\star; \be_0,B_0)$. However, this contradicts the fact that $r_1(\be_0,B_0)$ is the largest root of that fixed point equation \eqref{eq:fixed-pt}. 

(ii) {As in part (i), it now suffices} to show that $(\be,B) \mapsto r_{\free}(\be,B)$ is continuously differentiable for $(\be, B) \in \sfR_{\neq}\setminus \partial \sfR_{\ne}^+$, 
where $r_{\free}(\be,B) := \log \frac{(1-q) \nu_{\free}^{\be,B}(1)}{1-\nu_{\free}^{\be,B}(1)}$ 
{for $\nu_{\free}^{\be,B}(\cdot)$ of} Definition \ref{dfn:bethe-recursion}.
{To this end, fixing} $(\be_0,B_0) \in \sfR_{\neq}\setminus \partial \sfR_{\ne}^+$
we {only} need to show that \eqref{eq:implicit-cond-1} {fails at} $(r,\be,B)=(r_{\free}(\be_0,B_0),\be_0,B_0)$ {which analogously to part (i) be a direct consequence of having that}
$r_{\free}(\be_0,B_0) \ne \rho_{\pm}(\be_0)$.

Proceeding to do this task, the same argument as {in part (i)} shows that 
$r_{\free}(\be_0,B_0) \ne \rho_-(\be_0)$, but a slightly different argument is needed 
{for ruling} out the other choice. {Specifically,} we claim that  
$r_{\free}(\be_0,B_0)=\rho_+(\be_0)$ would imply 
$\rho_-(\be_0) >0$. To see this, from the definition of $B_{\pm}(\be)$ we find that $B_0=B_-(\be_0)$. From the proof of \cite[Theorem 1.11]{DMS} we also have that the map $B \mapsto \be_{\free}(B)$ is the inverse of the map $\be \mapsto B_-(\be)$. Moreover, the assumption $(\be_0,B_0) \in \sfR_{\ne}\setminus \partial \sfR_{\ne}^+$ implies that $B_0 \in [0,B_+)$. Therefore, 
\beq\label{eq:be_+}
\be_0=\be_{\free}(B_0) < \be_+(B_0) \le \be_+(0) =:\be_+,
\eeq
where the first inequality follows from the fact that $\be_{\free}(B) < \be_+(B)$ for $B \in [0,B_+)$ and the second inequality follows from the fact that the map $\be_+(\cdot)$ is decreasing in $B$. Using \cite[Eqn.~(4.10)]{DMS} we {thus} deduce that $\rho_-(\be_0) >0$ whenever 
$r_{\free}(\be_0,B_0)=\rho_+(\be_0)$. Now, similarly to part (i), 
{if both} $\rho_-(\be_0) >0$ and $r_{\free}(\be_0,B_0) = \rho_+(\be_0)$, {then} 
there {exists a solution} $r_\star \in [0, r_{\free}(\be_0,B_0))$ of \eqref{eq:fixed-pt}. From Definition \ref{dfn:bethe-recursion} we {know} that $r_{\free}(\be_0,B_0)$ is the smallest 
nonnegative solution of \eqref{eq:fixed-pt}, {so having arrived} at a contradiction, 
the proof of the lemma is thus complete.  
\end{proof}

\begin{proof}[Proof of Proposition \ref{prop:non-unique-regime}]
From \cite[Theorem 1.11(a)]{DMS} we have the existence of the smooth curves $\be_{\free}(B)$ and $\be_+(B)$ with the desired properties. It also follows from there that for $(\be, B) \in [0,\infty)^2 \setminus \sfR_{\neq}$ one has $\nu_{\free}^{\be, B} = \nu_1^{\be, B}$. This together with Lemma \ref{lem:marginals} implies that all two dimensional marginals of $\mu_{\free}^{\be, B}$ and $\mu_{1}^{\be, B}$ coincide.
{Further, as} described in Remark \ref{rmk:infinite-vol-exist}, any finite dimensional marginal of $\mu_{\free}^{\be, B}$ then coincides with that of $\mu_{1}^{\be, B}$, {thereby yielding our} claim \eqref{eq:f-eq-1-B-ge-0} {from which} \eqref{eq:f-eq-all-B-eq-0} also follows {(since at} $B=0$ the Potts measures are invariant with respect to permutations on $[q]$). Since $r_{\free}^{\be, B} < r_1^{\be, B}$ for $(\be, B) \in \sfR_{\ne}$ our claim that $\nu_{\free}^{\be, B}(1) < \nu_1^{\be, B}(1)$ follows from the fact that map $r \mapsto e^r/(e^r+q-1)$ is strictly increasing on $\R$. 

{Now, for the existence of} smooth $B \to \be_c(B)$ of the required properties,
{it suffices to verify that}
\begin{align}\label{eq:partial-deri-ineq}
\f{\partial}{\partial \be}\Phi(\nu_{\free}^{\be, B}) < \frac{\partial}{\partial \be}\Phi(
\nu_1^{\be, B}), \qquad & \text{ for all } (\be, B) \in {\rm Int}(\sfR_{\neq})\,,\\
\label{eq:val-bdry-1}
\Phi(\nu_{\free}^{\be,B}) > \Phi(\nu_1^{\be,B}), \qquad & \text{ for } (\be, B) \in \partial \sfR_{\neq}^{\free},
\\
\label{eq:val-bdry-2}
\Phi(\nu_{\free}^{\be,B}) < \Phi(\nu_1^{\be,B}), \qquad & \text{ for } (\be, B) \in \partial \sfR_{\neq}^{+}.
\end{align}
{Indeed, \eqref{eq:partial-deri-ineq}-\eqref{eq:val-bdry-2} imply} that \eqref{eq:free-v-1} holds
{for some} $B \mapsto \be_c(B)$ such that $\be_{\free}(B) < \be_c(B) < \be_+(B)$ for $B \in [0, B_+)$, 
{with smoothness of $\be_c(\cdot)$ due to} the implicit function theorem
{(where}
\(
\f{\partial}{\partial \be} \Phi(\nu_{\free}^{\be, B}) \ne \f{\partial}{\partial \be} \Phi(\nu_1^{\be, B})
\)
{throughout} ${\rm Int}(\sfR_{\ne})$ thanks to \eqref{eq:partial-deri-ineq}). 

Turning to prove \eqref{eq:partial-deri-ineq}, {we set} $\bar \nu := \frac{1-\nu}{q-1}$
{and note} that the map
\[
\nu \mapsto \frac{\bar \nu (1+\nu - \bar \nu)}{\bar \nu^2 +(q-1)\nu^2}
\] 
is strictly decreasing on $[\f{1}{q},1)$ (having positive numerator and denominator, 
whose derivatives are strictly negative and positive, respectively, on the interval $(\f1q, 1)$). Using Lemma \ref{lem:marginals} and the fact that $\nu_\ddagger(i) =\nu_\ddagger(2)$ for $\ddagger \in \{\free, 1\}$ 
and all $i \in [q]\setminus \{1\}$, we also have at any $i \in \partial o$,
\[
\mu_{\ddagger}^{\be, B}(\sigma_o=\sigma_i) = \f{e^\be \left[ \nu_\ddagger(1)^2 + (q-1) \nu_\ddagger(2)^2\right]}{e^\be \left[ \nu_\ddagger(1)^2 + (q-1) \nu_\ddagger(2)^2\right]+ 2(q-1) \nu_\ddagger(1) \nu_\ddagger(2)+ (q-1) (q-2) \nu_\ddagger(2)^2}\,.
\]
With $\frac{1}{q} \le \nu_{\free}^{\be, B}(1) < \nu_1^{\be, B}(1)$ at any $(\be, B) \in \sfR_{\neq}$,
combining the last two observations results with
\[
\sum_{i \in \partial o} \mu_{\free}^{\be, B}(\sigma_o=\sigma_i) < \sum_{i \in \partial o} \mu_{1}^{\be, B}(\sigma_o=\sigma_i),  \qquad \qquad \forall \, (\be, B) \in \sfR_{\ne}
\]
from which \eqref{eq:partial-deri-ineq} follows upon using \eqref{eq:phi-derivative}. 

Moving next to the proof of \eqref{eq:val-bdry-1}, for any $\be \in [\be_-, \be_{\free}]$ let 
\[
\Psi_-(\be):=\Phi^{\be, B_-(\be)}(r_1(\be, B_-(\be)))- \Phi^{\be, B_-(\be)} (r_{\free}(\be, B_-(\be)))\,,
\]
with the convention that $\Phi^{\be, B}(r)= \Phi^{\be, B}(r(\nu)):= \Phi^{\be, B}(\nu)$ 
for $r(\nu):= \log(\nu(1)/\nu(2))$ and any $\nu \in \cP([q])$ such that $\nu(2)=\cdots=\nu(q)$.
Fix any $(\be_0, B_0) \notin \partial \sfR_{\neq}^+$. By Lemma \ref{lem:smoothness-fixed-pt}(ii), proceeding as in the steps leading to \eqref{eq:phi-derivative}, and using the definition of $r(\nu)$ we obtain that
\beq\label{eq:der-phi-beB-f}
\f{\partial}{\partial \be} \Phi^{\be_0, B_0}(r_{\free}(\be_0, B_0))= h_1(r_{\free}(\be_0, B_0); \be_0) \text{ and } \f{\partial}{\partial B} \Phi^{\be_0, B_0}(r_{\free}(\be_0, B_0))= h_2(r_{\free}(\be_0, B_0); \be_0), 
\eeq
where
\[
h_1(r;\be):= \f{d}{2} \cdot \f{e^\be(e^{2r}+q-1)}{e^r(e^{\be+r}+q-1)+ (q-1)(e^r+e^\be +q-2)}
\]
and
\[
 h_2(r;\be):= \f{e^r(e^{\be+r}+q-1)}{e^r(e^{\be+r}+q-1)+ (q-1)(e^r+e^\be +q-2)}. 
\]
By \eqref{eq:B-pm} {we see that} $\rho_+(\be_0)$ is a fixed point of \eqref{eq:fixed-pt} for $(\be,B)=(\be_0, B_-(\be_0))$. Recall that $\rho_+(\be)$ is the largest solution of \eqref{eq:implicit-cond-1} {and since} $r_1(\be, B)$ is the largest {solution of} \eqref{eq:fixed-pt}, we deduce that $\rho_+(\be_0)=r_1(\be_0, B_-(\be_0))$ for $\be_0 \in [\be_-, \be_{\free}]$. From \eqref{eq:implicit-cond-1} it can be further checked that $\rho_+(\be)$ is the $\log$ of the largest solution of a quadratic equation with coefficients smooth in $\be$ that does not admit a double root for $\be >\be_-$. Therefore, the map $\be \mapsto \rho_+(\be)$ is differentiable on $(\be_-, \be_{\free}]$. Hence, repeating a computation 
{as in the} steps leading to \eqref{eq:phi-derivative}, we find that \eqref{eq:der-phi-beB-f} continues to hold when $B_0$ is replaced by $B_-(\be_0)$ and $r_{\free}(\be_0, B_0)$ is replaced by $r_1(\be_0, B_-(\be_0))$. Moreover, by \eqref{eq:B-pm} we have that $\partial_\be B_-(\be_0)=-\partial_\be F(\rho_+(\be_0);\be_0,0)$ for $\be_0 \in (\be_-, \be_{\free}]$. Thus, using the chain rule of differentiation, we find that
\begin{multline}\label{eq:Psi-der}
\Psi_-'(\be_0)= \Delta h_1(\be_0) - \Delta h_2(\be_0) \cdot \f{\partial}{\partial \be}F(\rho_+(\be_0); \be_0, 0) \\
= \int_{r_{\free}(\be_0, B_-(\be_0))}^{r_1(\be_0, B_-(\be_0))} \left(\f{\partial}{\partial r} h_1(r; \be_0) - \f{\partial}{\partial r} h_2(r, \be_0) \cdot \f{\partial}{\partial \be}F(\rho_+(\be_0); \be_0, 0)\right) dr,
\end{multline}
where  
\[
\Delta h_i(\be_0):= h_i(r_1(\be_0, B_-(\be_0)); \be_0) - h_i(r_{\free}(\be_0, B_-(\be_0)); \be_0), \quad \text{ for } i=1,2. 
\]

Next denoting 
\[
g_{\be_0}(r):= \f{\f{\partial}{\partial r} h_1(r;\be_0)}{\f{\partial}{\partial r} h_2(r;\be_0)}  = d \cdot \f{e^{2r} +(q-2)e^r -(q-1)}{e^{2r} + 2(e^{\be_0}+q-2)e^r + (q-1)(1+e^{-\be_0}(q-2))}
\]
we find that $\sgn(g'_{\be_0}(r))= \sgn(2e^{\be_0}+q-2)\cdot \sgn(Q_{\be_0}(e^r))$, where $Q_{\be_0}(t):=t^2 +2 (q-1)e^{-\be_0} + (q-1)(1+(q-2)e^{-\be_0})$. Since the roots of $Q_{\be_0}$ are either negative or complex conjugates of each other we deduce that $g'_{\be_0}(r) >0$ for all $r \in \R$. This entails that
\beq\label{eq:der-gminus}
g_{\be_0}(r) < g_{\be_0}(r_1(\be_0, B_-(\be_0))) = \frac{\partial}{\partial \be} F(\rho_+(\be_0); \be_0, 0) \text{ for } r \in [0, r_1(\be_0, B_-(\be_0))),
\eeq
where to obtain the right most equality we use the fact that $\rho_+(\be_0)=r_1(\be_0, B_-(\be_0))$ satisfies \eqref{eq:implicit-cond-1}. So, noticing that $\f{\partial}{\partial r}h_2(\cdot;\be_0)$ is strictly positive, by \eqref{eq:der-gminus}, we find that the integrand in the \abbr{RHS} of \eqref{eq:Psi-der} is strictly negative for $r\in[r_{\free}(\be_0, B_-(\be_0), r_1(\be_0,B_-(\be_0)))$, {and consequently} 
that $\Psi_-'(\be_0) <0$ for $\be_0 \in (\be_-, \be_{\free}]$. {By} definition of the non-uniqueness regime, {necessarily} $\Psi_-(\be_-)=0$ {and} \eqref{eq:val-bdry-1} {follows}.

The proof of \eqref{eq:val-bdry-2} is similar. Indeed, noting that $r_{\free}(\be, B)= \rho_-(\be)$ for any $(\be, B) \in \partial \sfR_{\ne}^+$, denoting $\Psi_+(\be):=\Phi^{\be, B_+(\be)}(r_1(\be, B_+(\be)))- \Phi^{\be, B_+(\be)} (r_{\free}(\be, B_+(\be)))$ for any $\be\in [\be_-, \be_+]$, and proceeding as in the steps leading to \eqref{eq:Psi-der} we find that \eqref{eq:Psi-der} continues to hold for $\Psi_+'(\be_0)$ 
{at} any $\be_0 \in (\be_-,\be_+]$, provided we replace {there}
$\rho_+(\be_0)$ and $B_-(\be_0)$ by $\rho_-(\be_0)$ and $B_+(\be_0)$, respectively. Moreover, a same reasoning as in \eqref{eq:der-gminus} yields that
\[
g_{\be_0}(r) > g_{\be_0}(r_{\free}(\be_0, B_+(\be_0))) = \frac{\partial}{\partial \be} F(\rho_-(\be_0); \be_0, 0) \text{ for } r \in (r_{\free} (\be_0, B_+(\be_0)),\infty). 
\]
Repeating now the rest of the argument in the proof of \eqref{eq:val-bdry-1} 
we obtain \eqref{eq:val-bdry-2} and thereby complete the proof of the proposition.
\end{proof}

\subsection*{Acknowledgements} \red{We thank the anonymous referees for helpful comments that improved our presentation.  Research of AB was partially supported by DAE Project no.~RTI4001 via ICTS, the Infosys Foundation via the Infosys-Chandrashekharan Virtual Centre for Random Geometry, an Infosys–ICTS Excellence Grant, and a MATRICS grant (MTR/2019/001105) from Science and Engineering Research Board.  Research of AD was partially supported by NSF grants DMS-1954337 and DMS-2348142.  Research of AS was partially supported by Simons Investigator Grant.}

\subsection*{Declarations} The authors have no competing interests to declare that are relevant to the content of this article. Data sharing is not applicable -- no new data is generated.

\end{document} 

\appendix

\section{Comments on the proof of Theorem \ref{thm:main-1}}

\begin{enumerate}

\item Some (standard) definitions are not yet included in the draft. Please let me know if some of the notations are unclear.


\item I have checked that Lemma \ref{lem:connect-RC-Potts-T_d} holds when $B=0$. \red{Needs to check that it continues to hold for $B >0$.}

\item I have worked out the proof of Lemma \ref{lem:limit-point-properties} on paper. Needs to be typed up.



\item Proof of Lemma \ref{lem:RC-local-limit} follows from the scanned notes. Have verified that there are no gaps in the proof at the moment.

\item To complete the proof of Theorem \ref{thm:main-1} we need to find the conditional distribution of the spin variables given the bond variables. When $B=0$ this is done. \red{Needs to do it for $B >0$. I checked it earlier. It seemed okay then. But not entirely confident about this.}
\end{enumerate}